\newtheorem{lemma}{Lemma}[section]
\newtheorem{theorem}[lemma]{Theorem}
\newtheorem*{theorem*}{Theorem}
\newtheorem{corollary}[lemma]{Corollary}
\newtheorem{question}{Question}
\newtheorem{proposition}[lemma]{Proposition}
\newtheorem*{proposition*}{Proposition}
\newtheorem{conjecture}{Conjecture}
\newtheorem*{problem*}{Problem}
\theoremstyle{definition}
\newtheorem*{claim*}{Claim}
\newtheorem*{definition}{Definition}
\newtheorem{example}{Example}
\newtheorem*{remark}{Remark}
\newcommand{\C}{{\mathbb C}}
\newcommand{\N}{{\mathbb N}}
\newcommand{\R}{{\mathbb R}}
\newcommand{\T}{{\mathbb T}}
\newcommand{\Z}{{\mathbb Z}}
\newcommand{\CA}{{\mathcal A}}
\newcommand{\CB}{{\mathcal B}}
\newcommand{\CC}{{\mathcal C}}
\newcommand{\CD}{{\mathcal D}}
\newcommand{\CE}{{\mathcal E}}
\newcommand{\CI}{{\mathcal I}}
\newcommand{\CX}{{\mathcal X}}
\newcommand{\CY}{{\mathcal Y}}
\newcommand{\CZ}{{\mathcal Z}}
\newcommand{\FD}{{\mathfrak D}}
\newcommand{\FI}{{\mathfrak I}}
\newcommand{\FL}{{\mathfrak L}}
\newcommand{\ba}{{\mathbf{a}}}
\newcommand{\bc}{{\mathbf{c}}}
\newcommand{\be}{{\mathbf{e}}}
 \renewcommand{\b}{{\textbf{b}}}
 \newcommand{\uh}{{\underline{h}}}
 \newcommand{\um}{{\underline{m}}}
\newcommand{\bzero}{{\boldsymbol{0}}}
\newcommand{\eps}{\epsilon}
\newcommand{\veps}{\varepsilon}
\newcommand{\ueps}{{\underline{\epsilon}}}
\newcommand{\un}{{\underline{n}}}
\newcommand{\norm}[1]{\left\Vert #1\right\Vert}
\newcommand{\nnorm}[1]{\lvert\!|\!| #1|\!|\!\rvert}
\newcommand{\inv}{^{-1}}
\DeclareMathOperator*{\E}{\mathbb{E}}
\DeclareMathOperator{\Spec}{Spec}
\DeclareMathOperator{\supp}{Supp}
\newcommand{\abs}[1]{\mathopen{}\left| #1\mathclose{}\right|}
\newcommand{\brac}[1]{\mathopen{}\left( #1 \mathclose{}\right)}
\begin{document}
	
	%%	\title[]{Seminorm control for pairwise dependent polynomial  ergodic averages with %%commuting transformations}

	%	\title[]{Ergodic averages with commuting transformations: Seminorm control for pairwise dependent polynomial iterates}
	%%	\title[]{Ergodic averages with commuting transformations: Seminorm control for pairwise %%dependent polynomials}
		\title[]{Seminorm control for ergodic averages	with commuting transformations along pairwise dependent polynomials}
		
		%%\title[Seminorm control and joint ergodicity]{Seminorm control and joint ergodicity of polynomial ergodic averages with commuting transformations}
	\thanks{The authors were supported  by the Hellenic Foundation for Research and Innovation, Proj. No: 1684.}

\author{Nikos Frantzikinakis and Borys Kuca}
\address[Nikos Frantzikinakis]{University of Crete, Department of mathematics and applied mathematics, Voutes University Campus, Heraklion 71003, Greece} \email{frantzikinakis@gmail.com}

\address[Borys Kuca]{University of Crete, Department of mathematics and applied mathematics, Voutes University Campus, Heraklion 71003, Greece}
\email{boryskuca@uoc.gr}

\subjclass[2020]{Primary: 37A05; Secondary: 37A30,    28D05.}

\keywords{Joint ergodicity,  ergodic averages, ergodic seminorms, characteristic factors, Host-Kra factors.}

	\maketitle

\begin{abstract}
   We examine multiple ergodic averages of commuting transformations with polynomial iterates in which the polynomials may be pairwise dependent. In particular, we show that such averages are controlled by the  Gowers-Host-Kra seminorms whenever the system satisfies some mild ergodicity assumptions. Combining this result with the general criteria for joint ergodicity established in our earlier work, we determine a necessary and sufficient condition under which such averages are jointly ergodic, in the sense that they converge in the mean to the product of integrals, or weakly jointly ergodic, in that they converge to the product of conditional expectations. As a corollary, we deduce a special case of a conjecture by Donoso, Koutsogiannis, and Sun in a stronger form.
    %The main new ingredient is a seminorm estimate for ergodic averages of commuting transformations with polynomial iterates under mild ergodicity assumptions on the system. We also make use of the general criteria for joint ergodicity of sequences for commuting transformations established in an earlier work of ours.
\end{abstract}

			\tableofcontents

\section{Introduction}
\subsection{Main results} An important question in ergodic theory is to examine the limiting behaviour of multiple ergodic averages of the form
\begin{align}\label{E:general ergodic average}
    \frac{1}{N}\sum_{n=1}^N T_1^{p_1(n)}f_1 \cdots T_\ell^{p_\ell(n)}f_\ell.
\end{align}
Here and throughout the paper, we consider a \emph{system} $(X, \CX, \mu, T_1, \ldots, T_\ell)$, i.e. invertible commuting measure-preserving transformations  $T_1, \ldots, T_\ell$ acting on a Lebesgue probability space $(X, \CX, \mu)$, polynomials $p_1, \ldots, p_\ell\in\Z[n]$ that need not be distinct but are always assumed to have zero constant terms, and functions $f_1, \ldots, f_\ell\in L^\infty(\mu)$. The motivation for studying the limiting behaviour of \eqref{E:general ergodic average} comes from the proof of the multidimensional polynomial Szemer\'edi theorem \cite{BL96}, in which the averages \eqref{E:general ergodic average} are the central object of investigation.

%The motivation for studying the limiting behaviour of \eqref{E:general ergodic average} comes from the proof of the multidimensional polynomial Szemer\'edi theorem \cite{?}, the main point of which is to establish the positivity of the weak limit of \eqref{E:general ergodic average} whenever $f_1, \ldots, f_\ell$ are the indicator function of a measurable set $A$ of positive density.

It has been proved by Walsh \cite{Wal12} that the averages \eqref{E:general ergodic average} converge in $L^2(\mu)$; however, little is known about the nature of the limit except in several special cases. In this paper, we examine the following question.
\begin{question}\label{Q:joint ergodicity}
When are the polynomials $p_1, \ldots, p_\ell\in\Z[n]$
\begin{enumerate}
    \item \label{i:joint erg} \emph{jointly ergodic} for the system $(X, \CX, \mu, T_1, \ldots, T_\ell)$, in the sense that
\begin{align}\label{E:convergence to product of integrals}
    \lim_{N\to\infty}\norm{\frac{1}{N}\sum_{n=1}^N T_1^{p_1(n)}f_1 \cdots T_\ell^{p_\ell(n)}f_\ell -
    %\E(f_1|\CI(T_1)) \cdots \E(f_\ell|\CI(T_\ell))}_{L^2(\mu)} = 0
    \int f_1\ d\mu \cdots \int f_\ell\ d\mu}_{L^2(\mu)} = 0
\end{align}
for all $f_1, \ldots, f_\ell\in L^\infty(\mu)$?
    \item \label{i:weak joint erg} \emph{weakly jointly ergodic} for the system, in the sense that
\begin{align}\label{E:convergence to expectations}
    \lim_{N\to\infty}\norm{\frac{1}{N}\sum_{n=1}^N T_1^{p_1(n)}f_1 \cdots T_\ell^{p_\ell(n)}f_\ell -
    \E(f_1|\CI(T_1)) \cdots \E(f_\ell|\CI(T_\ell))}_{L^2(\mu)} = 0
    %\int f_1\ d\mu \cdots \int f_\ell\ d\mu}_{L^2(\mu)} = 0
\end{align}
for all $f_1, \ldots, f_\ell\in L^\infty(\mu)$?
\end{enumerate}

\end{question}

The first step in deriving the identities \eqref{E:convergence to product of integrals} and \eqref{E:convergence to expectations} is usually to establish control over the $L^2(\mu)$ limit of \eqref{E:general ergodic average} by one of the Gowers-Host-Kra seminorms constructed in \cite{HK05a}, leading to the following question.

%The first step in deriving an equality \eqref{E:convergence to product of integrals} is usually to establish that one of the factors constructed by Host and Kra in \cite{host_kra_2005b} is characteristic for the $L^2(\mu)$ convergence of \eqref{E:general ergodic average}, in the sense that the $L^2(\mu)$ limit of \eqref{E:general ergodic average} vanishes when one of the functions $f_1, \ldots, f_\ell$ is orthogonal to an appropriate Host-Kra factor. This leads to the second question.

\begin{question}\label{Q:factors}
When are the polynomials $p_1, \ldots, p_\ell\in\Z[n]$ \emph{good for seminorm control} for the system $(X, \CX, \mu, T_1, \ldots, T_\ell)$, in the sense that there exists $s\in\N$ such that
\begin{align}\label{E:factors}
    \lim_{N\to\infty}\norm{\frac{1}{N}\sum_{n=1}^N T_1^{p_1(n)}f_1 \cdots T_\ell^{p_\ell(n)}f_\ell}_{L^2(\mu)} = 0
    %\lim_{N\to\infty}\norm{\frac{1}{N}\sum_{n=1}^N T_1^{p_1(n)}f_1 \cdots T_\ell^{p_\ell(n)}f_\ell - \frac{1}{N}\sum_{n=1}^N T_1^{p_1(n)}\E(f_1|\CZ_s(T_1)) \cdots T_\ell^{p_\ell(n)}\E(f_\ell|\CZ_s(T_\ell))}_{L^2(\mu)}
\end{align}
holds for all functions $f_1, \ldots, f_\ell\in L^\infty(\mu)$ whenever $\nnorm{f_j}_{s,T_j} = 0$ for some $j \in\{1, \ldots, \ell\}$?
\end{question}

Question \ref{Q:joint ergodicity}\eqref{i:joint erg} was originally posed by Bergelson and was motivated by
 a result of Berend and Bergelson \cite{BB84} that covered the case of linear polynomials.
It was investigated thoroughly by Donoso, Koutsogiannis, and Sun \cite{DKS19}, as well as in subsequent work of the three authors and Ferr\'e-Moragues \cite{DFMKS21},
%%When $T_1=\cdots=T_\ell$ previously positive results were established  %%by Bergelson \cite{Be87a} for weakly mixing systems and essentially %%distinct polynomials and  the first author and Kra \cite{FrK05a} for %%totally ergodic systems and linearly independent polynomials.
 %%In the case $p_1 = \cdots = p_\ell$  these questions were also %%answered in \cite{DKS19} generalising earlier results of Berend and %%Bergelson \cite{BB84}.
  in which they identified a set of sufficient (but not necessary) conditions under which Questions \ref{Q:joint ergodicity}\eqref{i:joint erg} and \ref{Q:factors} can be answered affirmatively for general polynomials. Their conditions turned out to  also be necessary when all the polynomials are equal.

In \cite{FrKu22}, we have addressed Questions \ref{Q:joint ergodicity} and \ref{Q:factors} under the assumption that the polynomials $p_1, \ldots, p_\ell$ are pairwise independent, without any extra assumption on the system. Specifically, we have showed that for every family of pairwise independent polynomials $p_1, \ldots, p_\ell\in\Z[n]$,
%with zero constant terms,
there exists $s\in\N$ such that the identity \eqref{E:factors} holds for all systems and all $L^\infty(\mu)$ functions under the stated seminorm assumptions. We then gave a necessary and sufficient spectral condition under which the identities \eqref{E:convergence to product of integrals} and \eqref{E:convergence to expectations} hold.

In this paper, we drop the assumption of pairwise independence. We are thus interested in answering Questions \ref{Q:joint ergodicity} and \ref{Q:factors} for averages \eqref{E:general ergodic average} in which some of the polynomial sequences $p_1, \ldots, p_\ell$ may be pairwise dependent or even identical. An example of this is the average
\begin{align}\label{E:123ex}
    \frac{1}{N}\sum_{n=1}^N T_1^{n^2}f_1 \cdot T_2^{n^2}f_2 \cdot T_3^{n^2+n} f_3.
\end{align}
%in which some of the polynomial iterates may be pairwise dependent, or even identical.
The pairwise dependence of the polynomials $n^2, n^2, n^2+n$ means that, contrary to the results in \cite{FrKu22}, we cannot establish the seminorm control described in Question~\ref{Q:factors} %%\eqref{E:convergence to product of integrals}, %%\eqref{E:convergence to expectations} and
 for all systems. Rather, we need to identify a special property of the system that makes the seminorm control possible.   The needed property turns out to be the following.
\begin{definition}[Good and very good ergodicity property]
	Let $\ell\in\N$ and $p_1,\ldots, p_\ell\in \Z[n]$. We say that the system $(X,\CX,\mu,T_1,\ldots, T_\ell)$ has the \emph{good ergodicity property for the polynomials $p_1,\ldots, p_\ell$}\footnote{Sometimes we also say that  the polynomials $p_1,\ldots, p_\ell$ have  the {\em good ergodicity property for the system} $(X,\CX,\mu,T_1,\ldots, T_\ell)$, or the tuple $(T_j^{p_j(n)})_{j=1,\ldots, \ell}$ has the {\em good ergodicity property}.},  if whenever
	$p_i/c_i=p_j/c_j$ for some $i\neq j$ and nonzero $c_i,c_j\in\Z$ with $\gcd(c_i,c_j)=1$, then
	$
	\CI(T_i^{c_i}T_j^{-c_j}) = \CI(T_i)\cap \CI(T_j)$,\footnote{We assume throughout that all the equalities and inclusions of $\sigma$-algebras hold up to null sets with respect to a given measure on the system.}
	 i.e. a function cannot be invariant under $T_i^{c_i}T_j^{-c_j}$ except in the trivial case when it is simultaneously invariant under $T_i$ and $T_j$. If  $T_i^{c_i}T_j^{-c_j}$ is ergodic for all the aforementioned indices $i,j$ and values $c_i, c_j$, then we say that the system has the \emph{very good ergodicity property for the polynomials $p_1,\ldots, p_\ell$}.
\end{definition}
\begin{remark}
  As we work under the standing assumption that the polynomials have zero constant terms, the equality $p_i/c_i=p_j/c_j$ holds for some nonzero $c_i, c_j\in\Z$ precisely when the polynomials $p_i, p_j$ are linearly dependent.
\end{remark}

For instance, the system $(X, \CX, \mu, T_1, T_2, T_3)$ has the good ergodicity property for the families $n^2, n^2, n^2 + n$ or $2n^2, 2n^2, n^2+n$ if and only if the only functions invariant under $T_1 T_2^{-1}$ are those invariant under $T_1$ and $T_2$, and it has the very good ergodicity property if $T_1 T_2^{-1}$ is ergodic, i.e. only constant functions are invariant under $T_1 T_2^{-1}$.

We first address Question \ref{Q:factors} for systems with the good ergodicity property.
\begin{theorem}[Seminorm control]\label{T:Host Kra characteristic}
    Let $\ell\in\N$ and $p_1, \ldots, p_\ell\in \Z[n]$ be polynomials with the good ergodicity property for a system  $(X,\CX, \mu,T_1, \ldots, T_\ell)$. Then there exists $s\in\N$, depending only on $p_1, \ldots, p_\ell$, such that for all  functions $f_1, \ldots, f_\ell\in L^\infty(\mu)$, we have
\begin{align*}
        \lim_{N\to\infty}\norm{\frac{1}{N}\sum_{n=1}^N T_1^{p_1(n)}f_1 \cdots T_\ell^{p_\ell(n)}f_\ell}_{L^2(\mu)} = 0
\end{align*}
    whenever $\nnorm{f_j}_{s, T_j} = 0$ for some $j\in\{1, \ldots, \ell\}$.
\end{theorem}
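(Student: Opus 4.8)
The plan is to run a PET-type (polynomial exhaustion technique) induction on the ``complexity'' of the tuple $(p_1,\dots,p_\ell)$, reducing at each step to averages where fewer polynomials are present or where the surviving polynomials have smaller degree, until one lands on a base case controlled either by a single Gowers--Host--Kra seminorm or by the results for pairwise independent families from \cite{FrKu22}. First I would fix $j$ with $\nnorm{f_j}_{s,T_j}=0$ and, after relabeling, assume $j=1$ and separate the indices into \emph{blocks}: those $i$ for which $p_i$ is a rational multiple of $p_1$, and the rest. For the indices $i$ with $p_i/c_i = p_1/c_1$, the good ergodicity hypothesis tells us that $T_1^{c_1}T_i^{-c_i}$ behaves ``ergodically relative to'' $\CI(T_1)\cap\CI(T_i)$; the point of this hypothesis is precisely to let us pass, via van der Corput, from a seminorm bound with respect to a transformation like $T_1^{c_1}T_i^{-c_i}$ to one with respect to $T_1$ itself, which is what Question~\ref{Q:factors} demands.

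The core technical step is a van der Corput (vdC) iteration. Shifting $n\mapsto n+h$ and applying vdC once, the average \eqref{E:factors} squared is controlled by an average over $h$ of $2\ell$-term averages of the form
\begin{align*}
    \frac{1}{N}\sum_{n=1}^N \prod_{i=1}^\ell T_i^{p_i(n+h)}(f_i)\cdot\overline{T_i^{p_i(n)}(f_i)},
\end{align*}
which after composing with $T_1^{-p_1(n)}$ becomes an $\ell'$-term polynomial average (with $\ell'\le 2\ell-1$ after merging the two copies attached to index $1$, or cancelling them when $p_1$ has degree $\le$ the degree being reduced) in the new transformations $T_i, T_1^{-1}T_i$ and polynomials $p_i(n+h)-p_1(n)$, $p_i(n)-p_1(n)$. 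The degrees of the surviving ``active'' polynomials drop, so a standard PET weight argument terminates. One must track carefully what happens to the index $1$: when $\deg p_1 = D$ is maximal the two copies of $f_1$ may collapse and one gains control by $\nnorm{f_1}_{s-1,T_1}$-type quantities; more generally one arranges the vdC ordering (by the usual ``leading term'' bookkeeping of PET) so that the last reduction isolates a factor whose transformation is $T_1$ (or $T_1^{c_1}$), at which point the good ergodicity property is invoked to convert invariance under auxiliary transformations into invariance under $T_1$, and the Host--Kra inverse theorem / the characteristic factor machinery finishes the bound. Throughout, the pairwise-dependent indices are handled together as a block using good ergodicity, while blocks of pairwise-independent polynomials are dispatched by quoting the seminorm-control theorem of \cite{FrKu22} as a black box.

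I expect the main obstacle to be the bookkeeping in the PET induction when several polynomials are pairwise dependent simultaneously: after one vdC step, the new family $\{p_i(n+h)-p_1(n),\ p_i(n)-p_1(n)\}_i$ can create \emph{new} linear dependencies among previously independent polynomials (and destroy old ones), so the induction hypothesis must be formulated with a hypothesis that is stable under these operations — essentially, one needs a quantitative invariant (a vector of degrees together with the partition into dependence classes, ordered lexicographically) that provably strictly decreases, and one must check that the good ergodicity property is inherited by the relevant auxiliary systems (e.g. that $T_1^{c_1}T_i^{-c_i}$-invariance statements propagate to the new transformations $T_i, T_1^{-1}T_i, \dots$ appearing after the vdC step). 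Getting this invariant right, and verifying that the ergodicity hypotheses survive each reduction, is the delicate part; the seminorm estimates themselves are then routine applications of the Cauchy--Schwarz/vdC inequality together with the already-available inverse theory for Host--Kra seminorms. The bound on $s$ in terms of $D$ and $\ell$ will come out of counting the number of vdC steps, which depends only on the degree sequence and $\ell$, not on the system.
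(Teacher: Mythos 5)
The key gap is that a direct PET/van der Corput iteration does not yield control by a Gowers--Host--Kra seminorm $\nnorm{f_j}_{s,T_j}$ with respect to a single transformation; it only yields control by a \emph{box} seminorm $\nnorm{f_j}_{\b_1,\ldots,\b_s}$ where the vectors $\b_i$ encode compositions such as $T_j^{c_j}T_i^{-c_i}$ (this is precisely what Proposition~\ref{strong PET bound} delivers, and nothing more). Your proposal treats the final conversion as a ``routine application of Cauchy--Schwarz and the inverse theory,'' but this conversion is in fact the entire content of the theorem. When $p_i/c_i = p_j/c_j$ for some $i\neq j$, the good ergodicity hypothesis lets Lemma~\ref{bounding seminorms} replace a vector $c_j\be_j - c_i\be_i$ by $\be_j$; however, when $p_i$ and $p_j$ are \emph{independent} and both have top degree, the box-seminorm vector $b_j\be_j - b_i\be_i$ output by PET cannot be discharged this way, and no amount of vdC reordering fixes this --- it is intrinsic to the PET output, not to the order in which one differentiates.

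The paper's resolution is the ``seminorm smoothing'' mechanism (Propositions~\ref{P:smoothing} and~\ref{P:iterated smoothing}), which your outline does not contain. One replaces $f_j$ by a weak limit $\tilde f_j$ of averaged products (Lemma~\ref{P:dual}), applies the dual-difference interchange (Proposition~\ref{dual-difference interchange}) to trade one ``bad'' direction $\b_{s+1}$ in the box seminorm for an auxiliary tuple in which $T_{\eta_j}$ at one slot has been replaced by a different $T_{\eta_i}$, and then runs a two-step ping--pong: first gain auxiliary control by a seminorm of $f_i$, then feed that back to upgrade the control of $f_j$ to $\nnorm{f_j}_{\b_1,\ldots,\b_s,\be_{\eta_j}^{\times s'}}$. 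Making this induction close requires tracking (a) a bespoke ``type'' invariant on indexing tuples with a nonstandard partial order --- not the lexicographic ordering on degree sequences you propose, which does not decrease under the relevant moves --- and (b) composed invariance properties of the intermediate functions (``$\gamma$-invariance along $\eta$''), which are essential because some intermediate tuples are \emph{uncontrollable} (PET cannot even be applied to them) and must be unstuck by the ``flipping'' device (Proposition~\ref{P:flipping}, Corollary~\ref{C: Flipping uncontrollable tuples}) that substitutes $T_j$ for $T_{\eta_j}$ using these invariances. Finally, quoting \cite{FrKu22} as a black box for the ``independent blocks'' does not work: the presence of dependent polynomials changes the structure of the intermediate averages, and the paper explicitly notes that its argument is \emph{not} a generalization of the one in \cite{FrKu22} --- the two proofs diverge precisely because of the invariance bookkeeping. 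Your instinct that some invariant must decrease and that good ergodicity must propagate to auxiliary systems is correct, but the actual invariant and the actual propagation mechanism (via descendant tuples and Proposition~\ref{P: properties of descendants}) are more intricate than a vdC-degree count, and the central smoothing idea is absent from your proposal.
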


Subsequently, we use Theorem \ref{T:Host Kra characteristic} and results from \cite{FrKu22} to address Question \ref{Q:joint ergodicity}. All the concepts appearing in the results below will be defined precisely in Section \ref{S:background}.
\begin{theorem}[Weak joint ergodicity]\label{T: weak joint ergodicity}
    The polynomials $p_1, \ldots, p_\ell\in \Z[n]$ are weakly jointly ergodic for the system $(X,\CX, \mu,T_1, \ldots, T_\ell)$ if and only if the following two conditions hold:
    \begin{enumerate}
        \item the system has the good ergodicity property for the polynomials;
        \item for all nonergodic eigenfunctions $\chi_j\in \CE(T_j)$, $j\in\{1, \ldots, \ell\}$, we have
        \begin{align}\label{E: good for eq nonerg}
            \lim_{N\to\infty}\norm{\frac{1}{N}\sum_{n=1}^N T_1^{p_1(n)}\chi_1 \cdots T_\ell^{p_\ell(n)}\chi_\ell -
            \E(\chi_1|\CI(T_1)) \cdots \E(\chi_\ell|\CI(T_\ell))}_{L^2(\mu)} = 0.
        \end{align}
    \end{enumerate}
\end{theorem}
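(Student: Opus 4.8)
The plan is to prove the two implications separately. The forward direction (necessity of (i) and (ii)) is elementary; the reverse direction (sufficiency) combines Theorem~\ref{T:Host Kra characteristic} with the general joint ergodicity criteria of \cite{FrKu22} (the relevant notions being recalled in Section~\ref{S:background}).

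\emph{Sufficiency.} Assume (i) and (ii). By Theorem~\ref{T:Host Kra characteristic}, the good ergodicity property (i) ensures that $p_1,\ldots,p_\ell$ are good for seminorm control for the system: there is $s\in\N$ so that \eqref{E:factors} holds whenever $\nnorm{f_j}_{s,T_j}=0$ for some $j$. This seminorm control is exactly the input required by the criteria for weak joint ergodicity established in \cite{FrKu22}; applying them reduces the verification of \eqref{E:convergence to expectations}, for arbitrary $f_1,\ldots,f_\ell\in L^\infty(\mu)$, to the same identity when each $f_j$ is an eigenfunction of $T_j$. When the $f_j$ are invariant under $T_j$ the identity is immediate, as both sides of \eqref{E:convergence to expectations} then equal $f_1\cdots f_\ell$; for the remaining, nonergodic, eigenfunctions it is precisely hypothesis (ii). Hence \eqref{E:convergence to expectations} holds for all bounded $f_1,\ldots,f_\ell$, that is, $p_1,\ldots,p_\ell$ are weakly jointly ergodic for the system.

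\emph{Necessity.} Suppose $p_1,\ldots,p_\ell$ are weakly jointly ergodic for the system. Since eigenfunctions belong to $L^\infty(\mu)$, the identity \eqref{E: good for eq nonerg} is a special case of \eqref{E:convergence to expectations}, so (ii) holds. To establish (i), I argue by contradiction: suppose there are $i\ne j$ and coprime nonzero $c_i,c_j\in\Z$ with $p_i/c_i=p_j/c_j=:p$ such that, writing $S:=T_i^{c_i}T_j^{-c_j}$, one has $\CI(S)\supsetneq\CI(T_i)\cap\CI(T_j)$; fix a set $A\in\CI(S)$ with $A\notin\CI(T_i)\cap\CI(T_j)$. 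Since $A$ is $S$-invariant, $S^{p(n)}\mathbf{1}_A=\mathbf{1}_A$, and since $T_i^{p_i(n)}=T_j^{p_j(n)}S^{p(n)}$, we get $T_i^{p_i(n)}\mathbf{1}_A=T_j^{p_j(n)}\mathbf{1}_A$ for every $n$. Now apply \eqref{E:convergence to expectations} three times, always with $f_k=1$ for $k\notin\{i,j\}$ and with $(f_i,f_j)$ equal, in turn, to $(\mathbf{1}_A,\mathbf{1}_A)$, $(\mathbf{1}_A,1)$, and $(1,\mathbf{1}_A)$. Using $T_i^{p_i(n)}\mathbf{1}_A=T_j^{p_j(n)}\mathbf{1}_A$ and $\mathbf{1}_A^2=\mathbf{1}_A$, in all three cases the average \eqref{E:general ergodic average} is literally $\frac{1}{N}\sum_{n=1}^N T_j^{p_j(n)}\mathbf{1}_A$, so the three limits coincide; equating the three right-hand sides of \eqref{E:convergence to expectations} gives
\[
\E(\mathbf{1}_A|\CI(T_i))\,\E(\mathbf{1}_A|\CI(T_j))=\E(\mathbf{1}_A|\CI(T_i))=\E(\mathbf{1}_A|\CI(T_j)).
\]
Writing $u:=\E(\mathbf{1}_A|\CI(T_i))=\E(\mathbf{1}_A|\CI(T_j))$, the first equality gives $u^2=u$ a.e., hence $u=\mathbf{1}_B$ with $B:=\{u=1\}$. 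Since $u$ is measurable with respect to both $\CI(T_i)$ and $\CI(T_j)$, we have $B\in\CI(T_i)\cap\CI(T_j)$, and since $\E(\mathbf{1}_A|\CI(T_i))=\mathbf{1}_B$ a routine computation yields $\mu(A\triangle B)=0$. Therefore $A\in\CI(T_i)\cap\CI(T_j)$, contradicting the choice of $A$; this proves the good ergodicity property.

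\emph{Main obstacle.} The genuinely hard input is Theorem~\ref{T:Host Kra characteristic}; once it and the criteria of \cite{FrKu22} are granted, the only new argument of substance is the necessity of the good ergodicity property carried out above, whose sole idea is the identity $T_i^{p_i(n)}\mathbf{1}_A=T_j^{p_j(n)}\mathbf{1}_A$ for $S$-invariant $A$ together with the choice of the three test-function tuples that forces the limits to agree. Everything else---checking that (i) (through Theorem~\ref{T:Host Kra characteristic}) and (ii) match the hypotheses of the imported criteria, and the elementary manipulations with conditional expectations---is bookkeeping, and I do not anticipate a serious obstacle there.
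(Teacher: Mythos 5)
Your sufficiency argument is essentially identical to the paper's: Theorem~\ref{T:Host Kra characteristic} supplies condition~(i) of the criterion in Theorem~\ref{T: criteria for joint ergodicity}, and your hypothesis~(ii) supplies condition~(ii); the short aside about $T_j$-invariant functions is harmless but redundant, since $\CE(T_j)$ already contains them.

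Your necessity argument is correct but uses a different closing mechanism than the paper. Both begin with the same core observation that $T_i^{p_i(n)}f=T_j^{p_j(n)}f$ for any $S$-invariant $f$, where $S:=T_i^{c_i}T_j^{-c_j}$, exploiting the coprimeness of $c_i,c_j$ to get $p:=p_i/c_i\in\Z[n]$. You then restrict to indicators and apply weak joint ergodicity three times, to the tuples $(\mathbf{1}_A,\mathbf{1}_A)$, $(\mathbf{1}_A,1)$, $(1,\mathbf{1}_A)$; since the resulting averages are literally the same sequence, the three limits coincide and you get $u^2=u$ with $u=\E(\mathbf{1}_A|\CI(T_i))=\E(\mathbf{1}_A|\CI(T_j))$, from which $A$ is shown to lie in $\CI(T_i)\cap\CI(T_j)$ by a short measure-theoretic computation. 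The paper instead works with a general $S$-invariant $f$, applies weak joint ergodicity only once to $(f,\overline f)$ to obtain $\E(|f|^2|\CI(T_i))=\E(f|\CI(T_i))\cdot\overline{\E(f|\CI(T_j))}$, and then integrates and invokes the Cauchy--Schwarz inequality together with its equality case to force $f=\E(f|\CI(T_i))=\E(f|\CI(T_j))$. Your indicator-based route avoids Cauchy--Schwarz in favor of elementary Boolean and measure arithmetic; the paper's route is a bit more compact and handles an arbitrary $f$ in one stroke. Both are valid because the failure of $\CI(S)=\CI(T_i)\cap\CI(T_j)$ (an equality of $\sigma$-algebras up to null sets) is witnessed by an indicator, so no generality is lost.
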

\begin{remark}
	Using terminology from \cite{Fr21, FrKu22}, Condition~(ii) equivalently states that the polynomials $p_1,\ldots, p_\ell$ are good for equidistribution for the system $(X,\mu,T_1,\ldots, T_\ell)$.
\end{remark}
If additionally the transformations $T_1, \ldots, T_\ell$ are ergodic, we get the following result.
\begin{corollary}[Joint ergodicity]\label{C: joint ergodicity}
    The polynomials  $p_1, \ldots, p_\ell\in \Z[n]$ are jointly ergodic for the system $(X,\CX, \mu,T_1, \ldots, T_\ell)$ if and only if the following two conditions hold:
    \begin{enumerate}
    	 \item all the transformations $T_1, \ldots, T_\ell$ are  ergodic
    	 and the system has the very good ergodicity property for the polynomials;
        \item for eigenvalues $\alpha_j\in \Spec(T_j)$, $j\in\{1, \ldots, \ell\}$, we have
        \begin{align}\label{E: good for eq}
            \lim_{N\to\infty} \frac{1}{N}\sum_{n=1}^N e(\alpha_1 p_1(n) +\cdots+ \alpha_\ell p_\ell(n)) = 0
        \end{align}
        unless $\alpha_1 = \cdots = \alpha_\ell = 0$.
    \end{enumerate}
\end{corollary}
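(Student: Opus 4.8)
The plan is to deduce Corollary~\ref{C: joint ergodicity} from Theorem~\ref{T: weak joint ergodicity}. The key observation is that once all the transformations are ergodic, \emph{joint ergodicity} and \emph{weak joint ergodicity} are the same property, and conditions (i)--(ii) of Theorem~\ref{T: weak joint ergodicity} then translate directly into conditions (i)--(ii) of the corollary. No analytic input beyond Theorem~\ref{T: weak joint ergodicity} (hence ultimately beyond Theorem~\ref{T:Host Kra characteristic} and \cite{FrKu22}) will be needed; the argument merely unwinds definitions.

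First I would establish the reduction. If $p_1,\ldots,p_\ell$ are jointly ergodic for the system and $f_j\in L^\infty(\mu)$ is $T_j$-invariant, then applying \eqref{E:convergence to product of integrals} with $f_k=1$ for $k\neq j$ collapses the average to the constant function $f_j$ and forces $f_j=\int f_j\, d\mu$; hence every $T_j$ is ergodic, so $\E(f_j\mid\CI(T_j))=\int f_j\, d\mu$ and \eqref{E:convergence to product of integrals} coincides with \eqref{E:convergence to expectations}, i.e.\ $p_1,\ldots,p_\ell$ are weakly jointly ergodic. Conversely, if all the $T_j$ are ergodic then $\E(f_j\mid\CI(T_j))=\int f_j\, d\mu$ again, so weak joint ergodicity upgrades to joint ergodicity. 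Thus $p_1,\ldots,p_\ell$ are jointly ergodic for the system if and only if all the $T_j$ are ergodic \emph{and} $p_1,\ldots,p_\ell$ are weakly jointly ergodic for it. By Theorem~\ref{T: weak joint ergodicity}, it remains to verify, \emph{under the assumption that all the $T_j$ are ergodic}, that condition~(i) of Theorem~\ref{T: weak joint ergodicity} is equivalent to the very good ergodicity property and condition~(ii) of Theorem~\ref{T: weak joint ergodicity} is equivalent to \eqref{E: good for eq}.

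For the ergodicity conditions: the very good ergodicity property always implies the good ergodicity property, since $\CI(T_i)\cap\CI(T_j)\subseteq\CI(T_i^{c_i}T_j^{-c_j})$ holds for trivial reasons, so ergodicity of $T_i^{c_i}T_j^{-c_j}$ makes both $\sigma$-algebras trivial and hence equal; conversely, when $T_i$ and $T_j$ are ergodic the $\sigma$-algebra $\CI(T_i)\cap\CI(T_j)$ is trivial, so the good ergodicity property makes each $\CI(T_i^{c_i}T_j^{-c_j})$ trivial, which is the very good ergodicity property. For the equidistribution conditions: when the $T_j$ are ergodic, $\CI(T_j)$ is trivial, so a nonergodic eigenfunction $\chi_j\in\CE(T_j)$ has a constant eigenvalue $\alpha_j\in\Spec(T_j)$ and (its modulus being $T_j$-invariant) may be normalised so that $\abs{\chi_j}=1$; then $T_1^{p_1(n)}\chi_1\cdots T_\ell^{p_\ell(n)}\chi_\ell=e(\alpha_1p_1(n)+\cdots+\alpha_\ell p_\ell(n))\,\chi_1\cdots\chi_\ell$ and $\E(\chi_j\mid\CI(T_j))=\int\chi_j\, d\mu$, the latter being $0$ exactly when $\alpha_j\neq0$. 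Consequently \eqref{E: good for eq nonerg} asserts that $\brac{\frac{1}{N}\sum_{n=1}^N e(\alpha_1p_1(n)+\cdots+\alpha_\ell p_\ell(n))}\,\chi_1\cdots\chi_\ell$ tends to $0$ in $L^2(\mu)$ whenever some $\alpha_j\neq0$; dividing by the modulus-one function $\chi_1\cdots\chi_\ell$ yields precisely \eqref{E: good for eq}, and the converse implication follows by reversing the computation (noting that both sides equal $\prod_j\int\chi_j\, d\mu$ when all $\alpha_j=0$). Putting everything together, $p_1,\ldots,p_\ell$ are jointly ergodic for the system precisely when all the $T_j$ are ergodic, the very good ergodicity property holds, and \eqref{E: good for eq} holds, which is the statement of Corollary~\ref{C: joint ergodicity}.

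I do not expect a genuine obstacle: the real content — seminorm control and its use in producing (weak) joint ergodicity — is already contained in Theorem~\ref{T:Host Kra characteristic} and \cite{FrKu22}. The only step needing a little attention is the dictionary between the eigenfunction form of ``good for equidistribution'' (condition~(ii) of Theorem~\ref{T: weak joint ergodicity}) and the exponential-sum condition \eqref{E: good for eq}, which hinges on the facts that ergodicity of $T_j$ turns every relevant eigenvalue into a genuine scalar in $\Spec(T_j)$ and makes $\E(\chi_j\mid\CI(T_j))=\int\chi_j\, d\mu$.
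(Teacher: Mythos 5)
Your proof is correct and follows essentially the same route as the paper's: reduce joint ergodicity to weak joint ergodicity plus ergodicity of each $T_j$ (this is Lemma \ref{L:joint vs. weak joint} in the paper), invoke Theorem \ref{T: weak joint ergodicity}, and then check that under the ergodicity of all $T_j$ the good ergodicity property coincides with the very good one and condition \eqref{E: good for eq nonerg} reduces to \eqref{E: good for eq}. You supply slightly more detail than the paper (spelling out both directions of the good/very-good equivalence and the full eigenfunction-to-exponential-sum dictionary), but there is no substantive difference.
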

 Theorem \ref{T:Host Kra characteristic} and Corollary \ref{C: joint ergodicity}
extend Theorems~2.8 and 2.14 in \cite{FrKu22} that cover the case of pairwise independent polynomials.

Theorem \ref{T: weak joint ergodicity} and Corollary \ref{C: joint ergodicity}
 can be put in the context of the following conjecture by Donoso, Koutsogiannis, and Sun (the version presented below is a special case of {\cite[Conjecture 1.5]{DKS19}}) that was motivated by previous results of  Berend and Bergelson \cite{BB84}.
In the statement that follows, we  say that a sequence of commuting transformations $(T_n)_{n\in\N}$ on a probability space $(X, \CX, \mu)$ is \emph{ergodic} for $\mu$ if
 \begin{align*}
 	\lim_{N\to\infty}\norm{\frac{1}{N}\sum_{n=1}^N T_n f - \int f d\mu}_{L^2(\mu)}=0
 \end{align*}
 for every $f\in L^\infty(\mu)$.
\begin{conjecture}\label{C:DKS conjecture}
The polynomials $p_1, \ldots, p_\ell\in \Z[n]$ are jointly ergodic for the system $(X,\CX, \mu,T_1, \ldots, T_\ell)$ if only if the following two conditions are satisfied:
\begin{enumerate}
    \item for all distinct $i, j\in\{1, \ldots, \ell\}$, the sequence $(T_i^{p_i(n)} T_j^{-p_j(n)})_{n\in\N}$ is ergodic for $\mu$;
    \item the sequence $(T_1^{p_1(n)}, \ldots, T_\ell^{p_\ell(n)})_{n\in\N}$ is ergodic for $\mu\times \cdots \times \mu$.
\end{enumerate}
\end{conjecture}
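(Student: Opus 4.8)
The plan is to deduce Conjecture \ref{C:DKS conjecture} from Corollary \ref{C: joint ergodicity} by showing that conditions (i) and (ii) of the conjecture are equivalent to conditions (i) and (ii) of that corollary; the conjecture then follows at once. I would split this equivalence into the routine implication — the conjecture's conditions imply the corollary's — and the harder converse.

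For the routine implication, assume the conjecture's (i) and (ii). Projecting the product sequence $(T_1^{p_1(n)}\times\cdots\times T_\ell^{p_\ell(n)})$ onto its $j$-th coordinate shows that $(T_j^{p_j(n)})$ is ergodic for $\mu$, and since any $T_j$-invariant set is fixed by each $T_j^{p_j(n)}$, the transformation $T_j$ is itself ergodic. Next, whenever $p_i/c_i=p_j/c_j$ with $\gcd(c_i,c_j)=1$ one can write $p_i=c_i q$ and $p_j=c_j q$ with $q\in\Z[n]$, so that $T_i^{p_i(n)}T_j^{-p_j(n)}=(T_i^{c_i}T_j^{-c_j})^{q(n)}$; the same fixed-set argument turns the conjecture's (i) into ergodicity of $T_i^{c_i}T_j^{-c_j}$, i.e.\ the \emph{very good ergodicity property}. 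Finally, applying the product sequence to a tensor product of unit-modulus eigenfunctions $\chi_j$ of $T_j$ (with eigenvalues $\alpha_j$) and using ergodicity for $\mu^{\otimes\ell}$ gives $\frac{1}{N}\sum_{n=1}^N e(\alpha_1 p_1(n)+\cdots+\alpha_\ell p_\ell(n))\to\prod_j\int\chi_j\,d\mu$, which vanishes as soon as some $\alpha_j\ne 0$; this is condition (ii) of Corollary \ref{C: joint ergodicity}. The ``if'' direction of the conjecture is then immediate from that corollary.

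For the converse, assume conditions (i) and (ii) of Corollary \ref{C: joint ergodicity}, so that the polynomials are jointly ergodic. The conjecture's (i) follows from joint ergodicity by a standard Hilbert-space argument: taking $f_i=g$, $f_j=\bar g$ and the remaining functions equal to $1$, then integrating over $X$ and using measure preservation, yields $\frac{1}{N}\sum_{n=1}^N\langle T_i^{p_i(n)}T_j^{-p_j(n)}g,g\rangle\to|\int g\,d\mu|^2$, and since $\frac{1}{N}\sum_n T_i^{p_i(n)}T_j^{-p_j(n)}$ converges in $L^2(\mu)$ by \cite{Wal12}, polarization over $\C$ forces its limit to be the projection onto the constants, which is the conjecture's (i). For the conjecture's (ii) I would pass to the product system $(X^\ell,\mu^{\otimes\ell},\tilde T_1,\ldots,\tilde T_\ell)$ with $\tilde T_j=\id\times\cdots\times T_j\times\cdots\times\id$: since each $T_j$ is ergodic, $\CI(\tilde T_j)$ is the $\sigma$-algebra of the coordinates other than $j$, so $\E(f_j\mid\CI(\tilde T_j))=\int f_j\,d\mu$ for $f_j$ depending only on the $j$-th coordinate, and hence weak joint ergodicity of $p_1,\ldots,p_\ell$ for the product system is precisely the conjecture's (ii). By Theorem \ref{T: weak joint ergodicity} it then suffices to check that the product system has the \emph{good ergodicity property} and that $p_1,\ldots,p_\ell$ are \emph{good for equidistribution} for it, and both follow from conditions (i)--(ii) of Corollary \ref{C: joint ergodicity}. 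The good ergodicity property amounts to ergodicity of $T_i^{c_i}\times T_j^{-c_j}$ on $(X^2,\mu^{\otimes 2})$ when $p_i/c_i=p_j/c_j$; condition (ii) of the corollary, applied with $\alpha_i=k/c_i$ and the other eigenvalues zero, shows $k/c_i\notin\Spec(T_i)$ for $0<k<c_i$, so $T_i^{c_i}$ and likewise $T_j^{c_j}$ are ergodic, and applied to pairs of eigenvalues it shows $T_i^{c_i}$ and $T_j^{-c_j}$ share no nontrivial eigenvalue, whence the standard criterion for ergodicity of a product of two ergodic transformations applies. Good-for-equidistribution is checked on the spanning eigenfunctions $\psi_j=h_j\otimes\chi_j$ of $\CE(\tilde T_j)$ (with $h_j$ on the remaining coordinates and $\chi_j$ an $\alpha_j$-eigenfunction of $T_j$), for which $\prod_j\tilde T_j^{p_j(n)}\psi_j=e(\sum_j\alpha_j p_j(n))\prod_j\psi_j$ while $\prod_j\E(\psi_j\mid\CI(\tilde T_j))$ vanishes unless all $\alpha_j=0$; the required convergence is once more exactly condition (ii) of Corollary \ref{C: joint ergodicity}.

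I expect the main difficulty to be the verification that the product system has the good ergodicity property: the transformations $\tilde T_j$ are highly non-ergodic, so one must keep careful track of the invariant $\sigma$-algebras $\CI(\tilde T_i^{c_i}\tilde T_j^{-c_j})$ and locate precisely which instances of the spectral condition (ii) of Corollary \ref{C: joint ergodicity} rule out resonances between $\Spec(T_i)$ and $\Spec(T_j)$. The identification of the $L^2$-limit of $\frac{1}{N}\sum_n T_i^{p_i(n)}T_j^{-p_j(n)}$ in the derivation of the conjecture's (i), and the passage from weak joint ergodicity on tensor products to ergodicity on all of $L^2(\mu^{\otimes\ell})$, are secondary points that require the usual care.
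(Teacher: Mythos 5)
Your derivation "conjecture's conditions $\Rightarrow$ corollary's conditions" coincides with the paper's proof of the reverse direction: the same fixed-set argument yields the very good ergodicity property, and evaluating the product sequence on tensor products of eigenfunctions gives the spectral condition. For the other direction, the paper simply cites \cite[Proposition 5.3]{DKS19}, while you give a self-contained argument; this is a genuinely different route that uses the present paper's own machinery (Theorem~\ref{T: weak joint ergodicity} applied to the product system) rather than an external reference.

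Your derivation of the conjecture's condition (i) — via Cauchy--Schwarz, Walsh's convergence theorem, and the complex-Hilbert-space fact that an operator $A$ with $\langle Ag,g\rangle=0$ for all $g$ vanishes — is correct and clean. The derivation of the conjecture's condition (ii) via Theorem~\ref{T: weak joint ergodicity} applied to $(X^\ell,\mu^{\otimes\ell},\tilde T_1,\ldots,\tilde T_\ell)$ is plausible but has a gap you underestimate: condition (ii) of that theorem must be checked against \emph{all} nonergodic eigenfunctions $\psi_j\in\CE(\tilde T_j)$, and since each $\tilde T_j$ is far from ergodic these are not simple tensor products $h_j\otimes\chi_j$ with a constant eigenvalue $e(\alpha_j)$. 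A nonergodic eigenfunction of $\tilde T_j$ has an eigenvalue that is a measurable $e(\Spec(T_j))$-valued function of the remaining coordinates, so $\psi_j$ decomposes only as a countable, disjointly supported sum over $\Spec(T_j)$ of tensor-type pieces. Because the criterion in Theorem~\ref{T: weak joint ergodicity} is multilinear in $(\psi_1,\ldots,\psi_\ell)$, a check on tensor pieces alone is not a "spanning" reduction; one must exploit the disjoint supports and countability of $\Spec(T_j)$ (plus dominated convergence) to reduce to the tensor case, and that reduction needs to be spelled out rather than relegated to "the usual care." Relatedly, you flag the good ergodicity property of the product system as the main difficulty, but its reduction to ergodicity of $T_i^{c_i}\times T_j^{-c_j}$ on $(X^2,\mu\otimes\mu)$ — together with the spectral verification you sketch (that the corollary's condition (ii) rules out $k/c_i\in\Spec(T_i)$ and any resonance $c_i\alpha_i\equiv c_j\alpha_j\pmod{\Z}$) — is in fact routine once written down; the equidistribution check is where the genuine care is required.
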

Conjecture \ref{C:DKS conjecture} thus lists conditions that have to be checked in order to verify the joint ergodicity of a family of polynomials for a system.

\begin{corollary}\label{C:DKS conjecture holds}
    Conjecture \ref{C:DKS conjecture} holds.
\end{corollary}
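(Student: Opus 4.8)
The plan is to deduce Corollary~\ref{C:DKS conjecture holds} from Corollary~\ref{C: joint ergodicity} by showing that conditions (1) and (2) of Conjecture~\ref{C:DKS conjecture} are equivalent to conditions (i) and (ii) of Corollary~\ref{C: joint ergodicity}. So I would first suppose that conditions (i) and (ii) of Corollary~\ref{C: joint ergodicity} hold, whence by that corollary the polynomials are jointly ergodic for the system. For Conjecture~\ref{C:DKS conjecture}(1), fix distinct $i,j$, apply joint ergodicity with the $i$-th function $f$, the $j$-th function $\bar g$, and all other functions equal to $1$, and integrate the resulting $L^2(\mu)$-limit against $\mathbf 1$; the identity $\int (T_i^{p_i(n)}f)(T_j^{p_j(n)}\bar g)\,d\mu=\int f\cdot T_i^{-p_i(n)}T_j^{p_j(n)}\bar g\,d\mu$ then shows that $\tfrac1N\sum_{n\le N}T_i^{-p_i(n)}T_j^{p_j(n)}g$ converges weakly in $L^2(\mu)$ to $\bigbrac{\int g\,d\mu}\mathbf 1$, and since this average also converges in $L^2(\mu)$ by Walsh's theorem~\cite{Wal12}, it converges to that constant; this is exactly the ergodicity for $\mu$ of $(T_i^{-p_i(n)}T_j^{p_j(n)})_n$, equivalently of $(T_i^{p_i(n)}T_j^{-p_j(n)})_n$.

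For Conjecture~\ref{C:DKS conjecture}(2) I would pass to the product system $(X^\ell,\mu^{\otimes\ell},S_1,\ldots,S_\ell)$, where $S_k$ acts by $T_k$ on the $k$-th coordinate and trivially on the others, so that $T_1^{p_1(n)}\times\cdots\times T_\ell^{p_\ell(n)}=\prod_k S_k^{p_k(n)}$. It then suffices to prove that this product system is weakly jointly ergodic for $p_1,\ldots,p_\ell$: evaluating that statement on functions each depending on a single coordinate---for which $\E(\cdot\,|\,\CI(S_k))$ becomes an integral over the $k$-th coordinate, as each $T_k$ is ergodic---gives (2) on the dense linear span of tensor products in $L^2(\mu^{\otimes\ell})$. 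By Theorem~\ref{T: weak joint ergodicity} this reduces to checking that the product system has the good ergodicity property and is good for equidistribution. For the good ergodicity property, whenever $p_i/c_i=p_j/c_j$ with $\gcd(c_i,c_j)=1$ one needs $T_i^{c_i}\times T_j^{-c_j}$ to be ergodic on $X\times X$; here condition (ii) forces $T_i^{c_i}$ and $T_j^{c_j}$ to be ergodic (a nonzero $T_i$-eigenvalue lying in $\tfrac1{c_i}\Z$ would make the exponential sum in \eqref{E: good for eq} identically equal to $1$) and also forces $c_i\,\Spec(T_i)\cap c_j\,\Spec(T_j)=\{0\}$ (test \eqref{E: good for eq} on a putative nonzero common value), which together yield the needed spectral disjointness. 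For good-for-equidistribution, the eigenfunctions of $S_k$ are spanned in $L^2(\mu^{\otimes\ell})$ by functions $g_k\cdot\chi_{\theta_k}$ where $g_k$ depends only on the coordinates other than $k$ and $\chi_{\theta_k}$ is a unit $T_k$-eigenfunction with eigenvalue $\theta_k\in\Spec(T_k)$; for these one has $\prod_k S_k^{p_k(n)}\chi_k=e\bigbrac{\sum_k\theta_k p_k(n)}\,\Phi$ for a fixed $\Phi\in L^\infty(\mu^{\otimes\ell})$, so \eqref{E: good for eq nonerg} reduces at once to \eqref{E: good for eq}, and the general case follows by multilinearity.

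Conversely, I would assume conditions (1) and (2) of Conjecture~\ref{C:DKS conjecture}. Specialising the ergodicity in (2) to functions supported on a single coordinate shows that each $(T_k^{p_k(n)})_n$ is ergodic for $\mu$, hence that $T_k$ is ergodic (a $T_k$-invariant function would be invariant along $p_k(n)$). If $p_i/c_i=p_j/c_j=:q$ with $\gcd(c_i,c_j)=1$, then $(T_i^{p_i(n)}T_j^{-p_j(n)})_n=((T_i^{c_i}T_j^{-c_j})^{q(n)})_n$, so its ergodicity, granted by (1), forces $T_i^{c_i}T_j^{-c_j}$ to be ergodic; hence condition (i) of Corollary~\ref{C: joint ergodicity} holds. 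For condition (ii), given $\alpha_k\in\Spec(T_k)$ not all zero with unit eigenfunctions $\chi_k$, the tensor $\chi_1\otimes\cdots\otimes\chi_\ell$ is an eigenfunction of $T_1^{p_1(n)}\times\cdots\times T_\ell^{p_\ell(n)}$ with eigenvalue $e\bigbrac{\sum_k\alpha_k p_k(n)}$ and has zero integral against $\mu^{\otimes\ell}$; comparing $L^2(\mu^{\otimes\ell})$-norms in the ergodicity statement (2) then gives $\tfrac1N\sum_{n\le N}e\bigbrac{\sum_k\alpha_k p_k(n)}\to 0$, which is \eqref{E: good for eq}. Corollary~\ref{C: joint ergodicity} now yields joint ergodicity, completing the equivalence.

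I expect the main obstacle to be the forward implication, and inside it the verification that the product system $(X^\ell,\mu^{\otimes\ell},S_1,\ldots,S_\ell)$ inherits both the good ergodicity property and the good-for-equidistribution property from the hypotheses on the original system. The good ergodicity property can genuinely fail for the product system if one assumes only that the original system has the very good ergodicity property, so it is essential to use the full strength of condition~\eqref{E: good for eq} through the spectral disjointness argument; by contrast, the equidistribution part collapses to a single exponential sum once the structure of the $S_k$-eigenfunctions is identified.
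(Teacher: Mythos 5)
Your proposal is correct, and the reverse implication coincides with the paper's argument: derive ergodicity of each $T_k$ from condition (2) (via Lemma~\ref{L:ergodic sequence}), obtain the spectral condition~\eqref{E: good for eq} by testing condition (2) on eigenfunction tensors, and obtain the very good ergodicity property from condition (1). The difference lies in the forward implication: the paper simply cites \cite[Proposition 5.3]{DKS19}, whereas you give a self-contained proof. For condition (1), your argument via integrating the joint ergodicity limit against $\mathbf{1}$, using $\int (T_i^{p_i(n)}f)(T_j^{p_j(n)}\bar g)\,d\mu = \int f\cdot T_i^{-p_i(n)}T_j^{p_j(n)}\bar g\,d\mu$, and upgrading the resulting weak convergence to $L^2(\mu)$-convergence via \cite{Wal12}, is correct. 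For condition (2), you pass to the product system $(X^\ell,\mu^{\otimes\ell},S_1,\ldots,S_\ell)$ and verify the hypotheses of Theorem~\ref{T: weak joint ergodicity}; the crucial observation, which the paper leaves buried in the citation, is that the good ergodicity property of the product system requires the spectral disjointness $c_i\,\Spec(T_i)\cap c_j\,\Spec(T_j)=\{0\}$, which you correctly extract from~\eqref{E: good for eq} (it does \emph{not} follow from the very good ergodicity property of the original system alone). Your route therefore trades the external citation for a longer but more transparent account of why the spectral hypothesis cannot be dropped. One point of rigour you gloss over with ``the general case follows by multilinearity'': for the product system, the nonergodic $S_k$-eigenvalues are $\CI(S_k)$-measurable \emph{functions}, not constants, so the eigenvalue $\theta_k$ of $\chi_k\in\CE(S_k)$ varies with the coordinates other than $k$, and the reduction of~\eqref{E: good for eq nonerg} to~\eqref{E: good for eq} needs a fiberwise application of~\eqref{E: good for eq} combined with dominated convergence rather than a single exponential-sum estimate. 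This is straightforward to fill in but is not automatic from multilinearity over a fixed basis of eigenfunctions.
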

In fact, our Theorem \ref{T: weak joint ergodicity} is stronger than Corollary \ref{C:DKS conjecture holds} in a number of ways. First, Theorem \ref{T: weak joint ergodicity} gives a criterion for weak joint ergodicity, not just for joint ergodicity, meaning that the transformations $T_1, \ldots, T_\ell$ need not be ergodic for us to be able to say anything meaningful. Second, our good ergodicity property lists strictly fewer conditions to check in order to verify joint ergodicity than the condition (i) in Conjecture \ref{C:DKS conjecture}. For instance, for the average \eqref{E:123ex}, the condition (i) in Conjecture \ref{C:DKS conjecture} requires us to check the ergodicity of the three sequences $((T_1 T_2\inv)^{n^2})_{n\in\N}$, $(T_1^{n^2} T_3^{-(n^2+n)})_{n\in\N}$, and $(T_2^{n^2}T_3^{-(n^2+n)})_{n\in\N}$. %In particular, the first of these conditions implies that $\CI(T_1 T_2^{-1}) = \CI(T_1)\cap\CI(T_2)$.
By contrast, the good ergodicity property of \eqref{E:123ex} holds if and only if $\CI(T_1 T_2^{-1}) = \CI(T_1)\cap\CI(T_2)$, which is in any way a necessary condition for $((T_1 T_2\inv)^{n^2})_{n\in\N}$ to be ergodic.

%The condition (ii) in Conjecture \ref{C:DKS conjecture} implies the condition (ii) in Theorem \ref{T: joint ergodicity} by taking eigenfunctions. Similarly, the condition (i) in Conjecture \ref{C:DKS conjecture} implies the condition (i) in Theorem \ref{T: joint ergodicity}. However, the condition (i) in Theorem \ref{T: joint ergodicity} of having good ergodicity property is strictly weaker than the condition (i) in Conjecture \ref{C:DKS conjecture}. For the average \eqref{E:123ex}, the condition (i) in Conjecture \ref{C:DKS conjecture} is satisfied whenever ...

 Finally, we remark that the original version of Conjecture \ref{C:DKS conjecture} from \cite{DKS19} is stated for more general tuples
\begin{align}\label{E:multi general average}
	(T_1^{p_{11}(\un)}\cdots T_\ell^{p_{1\ell}(\un)}, \ldots, T_1^{p_{\ell 1}(\un)}\cdots T_\ell^{p_{\ell\ell}(\un)}),
\end{align}
a simple example of which would be the tuple
\begin{align*}
	(T_1^{n^2}T_2^{n^2+n}, T_3^{n^2}T_4^{n^2+n}).
\end{align*}
It is possible that an extension of our method would establish an analogue of Theorem \ref{T:Host Kra characteristic} for such averages. However, besides the fact that new problems arise, the technical complexity of some of our arguments in this paper is already formidable, and it would likely grow significantly if we wanted to tackle the more complicated averages \eqref{E:multi general average}. We have therefore refrained from seeking an extension of Theorem \ref{T:Host Kra characteristic} to averages of tuples as in  \eqref{E:multi general average}, sticking instead to the simpler and  arguably more natural averages \eqref{E:general ergodic average}.

\subsection{Extensions to other averaging schemes}
 Our arguments can be modified  to cover multivariate polynomials and averages over arbitrary F\o lner sequences\footnote{A sequence $(I_N)_{N\in\N}$ of finite subsets of $\Z^D$  is called {\em F\o lner}, if $\lim_{N\to\infty}\frac{|(I_N+\uh)\triangle I_N|}{|I_N|}=0$ for every $\uh\in\Z^D$.}.  While these modifications do not require any new ideas, they force us to introduce even more complicated notation and deal with straightforward but tedious technicalities. For this reason, we omit their proofs. We start with a generalisation of Theorem \ref{T:Host Kra characteristic}.
\begin{theorem}\label{T:Host Kra characteristic Folner}
    Let $D, \ell\in\N$ be integers, $(I_N)_{N\in\N}$ be a F\o lner sequence on $\Z^D$, and  $p_1, \ldots, p_\ell\in \Z[\un]$ have  the good ergodicity property for a system  $(X,\CX, \mu,T_1, \ldots, T_\ell)$. Then there exists $s\in\N$, depending only on $p_1, \ldots, p_\ell$, such that for all $1$-bounded functions $f_1, \ldots, f_\ell\in L^\infty(\mu)$, we have
\begin{align*}
        \lim_{N\to\infty}\norm{\frac{1}{|I_N|}\sum_{\un\in I_N} T_1^{p_1(\un)}f_1 \cdots T_\ell^{p_\ell(\un)}f_\ell}_{L^2(\mu)} = 0
\end{align*}
    whenever $\nnorm{f_j}_{s, T_j} = 0$ for some $j\in\{1, \ldots, \ell\}$.
\end{theorem}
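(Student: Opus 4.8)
The plan is to re-run the proof of Theorem~\ref{T:Host Kra characteristic} with essentially no conceptual change, replacing the one-dimensional Ces\`aro averages $\frac1N\sum_{n=1}^N$ by the F\o lner averages $\frac1{|I_N|}\sum_{\un\in I_N}$ and the scalar iterate $n$ by the tuple $\un\in\Z^K$. The engine of that proof is a PET-induction scheme: iterated applications of the van der Corput inequality lower the ``complexity'' of the polynomial family $(p_1,\dots,p_\ell)$ until one is left with a family of degree-one iterates, at which point a direct argument --- in which the good ergodicity property is used to dispose of the coincidences among the resulting linear iterates --- yields control by a Host--Kra seminorm $\nnorm{\cdot}_{s,T_j}$. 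I would check that this machinery survives the passage to $\Z^K$ and to a general F\o lner sequence, and it does, for the following two reasons.

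\textbf{Van der Corput and PET over F\o lner sequences.} The van der Corput inequality holds verbatim: if $g\colon\Z^K\to\mathcal{H}$ is bounded and $(\Phi_M)$ is any F\o lner sequence on $\Z^K$, then
\[
\limsup_{N\to\infty}\norm{\frac1{|I_N|}\sum_{\un\in I_N}g(\un)}^2\ \le\ \limsup_{M\to\infty}\frac1{|\Phi_M|}\sum_{\uh\in\Phi_M}\limsup_{N\to\infty}\Bigabs{\frac1{|I_N|}\sum_{\un\in I_N}\langle g(\un+\uh),g(\un)\rangle},
\]
the F\o lner property being exactly what lets one shift $I_N\mapsto I_N+\uh$ inside the inner average at the cost of an $o(1)$ error; this is the only analytic input the one-variable van der Corput lemma was supplying. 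Likewise, the Bergelson--Leibman PET induction \cite{BL96} works for families in $\Z[\un]$: assign to each polynomial a weight vector recording its leading monomials and to the family the multiset of these vectors; each van der Corput step (followed by discarding lower-order terms and relabelling) strictly decreases this multiset in the relevant well-order, and the number of steps needed to reach a linear family, together with the degree of the seminorm finally produced, is bounded purely in terms of $D$, $K$, $\ell$. None of this touches the dynamics, so $s$ still depends only on $D,K,\ell$.

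\textbf{Dependent pairs.} The sole step interacting with the system is the treatment of linearly dependent pairs, and it is unchanged. When $p_i/c_i=p_j/c_j$ with $\gcd(c_i,c_j)=1$, a van der Corput step turns $T_i^{p_i(\un)}f_i\cdot T_j^{p_j(\un)}f_j$ into an expression whose ``diagonal'' part is governed by the single transformation $T_i^{c_i}T_j^{-c_j}$, forcing a passage to the factor $\CI(T_i^{c_i}T_j^{-c_j})$; the hypothesis $\CI(T_i^{c_i}T_j^{-c_j})=\CI(T_i)\cap\CI(T_j)$ is precisely what lets one re-split this as a $T_i$-invariant times a $T_j$-invariant factor and resume the induction. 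For multivariate $p_i,p_j\in\Z[\un]$ the identity $p_i/c_i=p_j/c_j$ has the same shape, so the argument carries over literally. Assembling these ingredients exactly as in the proof of Theorem~\ref{T:Host Kra characteristic} gives the vanishing of the limit whenever $\nnorm{f_j}_{s,T_j}=0$ for the appropriate $j$, with $s=s(D,K,\ell)$.

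I expect the main obstacle to be notational rather than mathematical: one must carry the PET weight vectors, the evolving families of multivariate linear iterates, and the set of dependent pairs coherently through the $K$-dimensional shifts $\un\mapsto\un+\uh$, while checking at each step that the F\o lner averages behave like Ces\`aro averages --- this is the ``straightforward but tedious'' bookkeeping the text refers to, and the reason the full proof is omitted.
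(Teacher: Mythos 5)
Your top-level plan --- re-run the proof of Theorem~\ref{T:Host Kra characteristic} after replacing Ces\`aro averages by F\o lner averages over $\Z^K$ --- is the same one the paper endorses in its one-paragraph gloss, but your description of what the proof of Theorem~\ref{T:Host Kra characteristic} actually is does not match the paper, and the steps you did not examine are the ones carrying the mathematical content.

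You characterize the one-variable proof as iterated van der Corput/PET down to degree-one iterates, after which the good ergodicity property ``disposes of the coincidences'' among linear iterates, and you assert that ``the sole step interacting with the system is the treatment of linearly dependent pairs.'' This is not how the paper argues, and the direct version of your plan fails. What PET gives (Proposition~\ref{strong PET bound}) is control of the average by a \emph{box} seminorm $\nnorm{f_m}_{\b_1,\ldots,\b_s}$ whose direction vectors have the form $a_{\ell d}\be_{\eta_\ell}-a_{jd}\be_{\eta_j}$; these typically pair transformations whose polynomial iterates are linearly \emph{independent}, and the good ergodicity hypothesis says nothing about $\CI(T_\ell^{a}T_j^{-b})$ for such pairs, so you cannot simply read off a single-transformation Host--Kra seminorm. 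Converting the box seminorm to $\nnorm{f_m}_{s',T_m}$ is the entire content of Sections~\ref{S: longer families}--\ref{S:smoothing}: the ping--pong smoothing argument (Proposition~\ref{P:smoothing}), the induction on types (Proposition~\ref{P: induction scheme}), flipping (Proposition~\ref{P:flipping}), dual-difference interchange (Proposition~\ref{dual-difference interchange}), and propagation of invariance (Proposition~\ref{propagation of invariance}). All of these interact with the system and with the averaging scheme, not only the dependent-pair step.

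The most concrete thing your sketch misses is Proposition~\ref{P:Us} (soft quantitative control), which is used repeatedly in the smoothing argument to pass from qualitative seminorm control to positive lower bounds along a positive-density set of shift parameters. Its proof relies on the mean convergence theorem of Walsh for $\Z$-averages, and the paper itself points out (in the paragraph preceding Proposition~\ref{P:Us}) that the F\o lner/$\Z^D$ version requires substituting the convergence theorem of Zorin-Kranich. Your proposal never reaches this because it never reaches the smoothing argument. To turn the plan into a proof you would need to carry each of the propositions above, not just van der Corput and PET, through the replacement $n\mapsto\un$, $[N]\mapsto I_N$, and in particular replace the Walsh input by Zorin-Kranich in Proposition~\ref{P:Us}.
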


Theorem \ref{T:Host Kra characteristic Folner} and  \cite[Theorem~2.7]{FrKu22} give the following generalisation of Theorem \ref{T: weak joint ergodicity}.
\begin{theorem}\label{T: weak joint ergodicity Folner}
    Let $D, \ell\in\N$ be integers and  $(I_N)_{N\in\N}$ be a F\o lner sequence on $\Z^D$.
     The polynomials $p_1, \ldots, p_\ell\in \Z[n]$ are weakly jointly ergodic for the system $(X,\CX, \mu,T_1, \ldots, T_\ell)$ along $(I_N)_{N\in\N}$, in the sense that
    \begin{align*}%\label{E:convergence to expectations}
    \lim_{N\to\infty}\norm{\frac{1}{|I_N|}\sum_{\un\in I_N} T_1^{p_1(\un)}f_1 \cdots T_\ell^{p_\ell(\un)}f_\ell -
    \E(f_1|\CI(T_1)) \cdots \E(f_\ell|\CI(T_\ell))}_{L^2(\mu)} = 0
    \end{align*}
    for all $f_1, \ldots, f_\ell\in L^\infty(\mu)$, if and only if the following two conditions hold:
    \begin{enumerate}
        \item the system has the good ergodicity property for the polynomials;
        \item for all nonergodic eigenfunctions $\chi_j\in \CE(T_j)$, $j\in\{1, \ldots, \ell\}$, we have
        \begin{align*}%\label{E: good for eq nonerg}
            \lim_{N\to\infty}\norm{\frac{1}{|I_N|}\sum_{\un\in I_N} T_1^{p_1(\un)}\chi_1 \cdots T_\ell^{p_\ell(\un)}\chi_\ell -
            \E(\chi_1|\CI(T_1)) \cdots \E(\chi_\ell|\CI(T_\ell))}_{L^2(\mu)} = 0.
        \end{align*}
    \end{enumerate}
\end{theorem}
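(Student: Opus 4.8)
The plan is to deduce Theorem~\ref{T: weak joint ergodicity Folner} from Theorem~\ref{T:Host Kra characteristic Folner} and \cite[Theorem~2.7]{FrKu22} exactly as Theorem~\ref{T: weak joint ergodicity} was deduced from Theorem~\ref{T:Host Kra characteristic} and the corresponding univariate criterion, so the argument is a bookkeeping exercise in which the F\o lner averaging sequence replaces the Ces\`aro averages throughout. First I would recall the structure of the joint ergodicity criterion of \cite[Theorem~2.7]{FrKu22}: weak joint ergodicity of a tuple of sequences is equivalent to the conjunction of (a) a \emph{seminorm control} (or \emph{good for seminorms}) property, namely that the averages vanish whenever one of the functions has vanishing Host--Kra seminorm of some finite order with respect to the corresponding transformation, and (b) a \emph{good for equidistribution} property, namely that the analogous limit identity holds when all the $f_j$ are (nonergodic) eigenfunctions. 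The F\o lner version of that criterion (which is the version actually invoked here, with the averaging over $(I_N)_{N\in\N}$ on $\Z^K$) is what \cite[Theorem~2.7]{FrKu22} supplies; I would cite it in that form.

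Next I would verify the two implications. For the ``if'' direction: assume the good ergodicity property and Condition~(ii). Condition~(ii) is precisely statement~(b) above, i.e.\ the polynomials are good for equidistribution along $(I_N)$. For statement~(a), invoke Theorem~\ref{T:Host Kra characteristic Folner}: the good ergodicity property guarantees the existence of $s\in\N$ (depending only on $D,K,\ell$, where $D$ bounds the degrees of $p_1,\ldots,p_\ell$) such that the averages vanish whenever $\nnorm{f_j}_{s,T_j}=0$ for some $j$, which is exactly the seminorm control hypothesis. Feeding (a) and (b) into \cite[Theorem~2.7]{FrKu22} yields weak joint ergodicity along $(I_N)$. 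For the ``only if'' direction: assume weak joint ergodicity along $(I_N)$. By \cite[Theorem~2.7]{FrKu22}, the good for equidistribution property holds, which is Condition~(ii); it remains to extract the good ergodicity property. Here I would argue as in the proof of Theorem~\ref{T: weak joint ergodicity}: if $p_i/c_i = p_j/c_j$ for some $i\neq j$ and coprime nonzero $c_i,c_j$, test the weak joint ergodicity identity against functions supported on the invariant $\sigma$-algebra of $T_i^{c_i}T_j^{-c_j}$. Specifically, take $f_k = 1$ for $k\notin\{i,j\}$ and choose $f_i, f_j$ to be (conjugates of) a bounded function $g$ that is $T_i^{c_i}T_j^{-c_j}$-invariant; since $p_i(\un)/c_i = p_j(\un)/c_j =: q(\un)$ for all $\un$, the product $T_i^{p_i(\un)}f_i \cdot T_j^{p_j(\un)}\bar f_j$ collapses (using invariance of $g$) and the limit identity forces a relation that, after unwinding, gives $\CI(T_i^{c_i}T_j^{-c_j}) \subseteq \CI(T_i)\cap\CI(T_j)$; the reverse inclusion is trivial. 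This is the one place where a small amount of genuine computation is needed, but it is identical in form to the single-variable case and the F\o lner averaging plays no role in it beyond preserving the limit identity.

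The main obstacle, to the extent there is one, is purely organizational: one must make sure that \cite[Theorem~2.7]{FrKu22} is genuinely stated (or immediately generalizes) for averages over an arbitrary F\o lner sequence on $\Z^K$ rather than only for Ces\`aro averages on $\Z$, and that the notions of ``good for seminorms'' and ``good for equidistribution'' there match the hypotheses of Theorem~\ref{T:Host Kra characteristic Folner} and Condition~(ii) here verbatim. Since the criterion of \cite{FrKu22} is proved by a soft argument (a combination of the van der Corput inequality and a limit-formula manipulation) that is insensitive to the shape of the averaging scheme, this compatibility is routine; as the introduction already notes, the F\o lner and multivariate modifications ``do not require any new ideas.'' I would therefore present the proof tersely, stating that it proceeds mutatis mutandis as the proof of Theorem~\ref{T: weak joint ergodicity}, with Theorem~\ref{T:Host Kra characteristic} replaced by Theorem~\ref{T:Host Kra characteristic Folner} and the univariate joint ergodicity criterion replaced by its F\o lner counterpart \cite[Theorem~2.7]{FrKu22}, and with the verification of the good ergodicity property in the ``only if'' direction carried out by the same test-function argument sketched above.
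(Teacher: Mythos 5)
Your proposal is correct and takes essentially the same approach the paper intends: the authors state Theorem~\ref{T: weak joint ergodicity Folner} without proof, saying only that it follows from Theorem~\ref{T:Host Kra characteristic Folner} and \cite[Theorem~2.7]{FrKu22} (the F\o lner analogue of Theorem~\ref{T: criteria for joint ergodicity}), and the test-function argument you sketch for extracting the good ergodicity property is the same as the one written out for Theorem~\ref{T: weak joint ergodicity} in Section~\ref{S: other proofs}.
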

%%If additionally the transformations $T_1, \ldots, %%T_\ell$ are ergodic, we get the following %%generalisation of Corollary \ref{C: joint %%ergodicity}.
%%\begin{corollary}\label{C: joint ergodicity Folner}
%%    Let $D, \ell\in\N$ be integers, $(I_N)_{N\in\N}$ %%be a F\o lner sequence on $\Z^D$, $p_1, \ldots, %%p_\ell\in \Z[\un]$ be polynomials, and $(X,\CX, %%\mu,T_1, \ldots, T_\ell)$ be a system.     Then the %%polynomials $p_1, \ldots, p_\ell$ are jointly ergodic %%for the system $(X,\CX, \mu,T_1, \ldots, T_\ell)$ %%along $(I_N)_{N\in\N}$, in the sense that
%%    \begin{align*}%\label{E:convergence to %%expectations}
%%    %%\lim_{N\to\infty}\norm{\frac{1}{|I_N|}\sum_{\un\in %%I_N} T_1^{p_1(\un)}f_1 \cdots %%T_\ell^{p_\ell(\un)}f_\ell -
%%    \int f_1\ d\mu \cdots \int f_\ell\ %%d\mu}_{L^2(\mu)} = 0
%%    \end{align*}
%%    for all $f_1, \ldots, f_\ell\in L^\infty(\mu)$,
%%    if and only if the following three conditions %%hold:
 %%   \begin{enumerate}
  %%      \item the system has the very good ergodicity %%property for the polynomials;
    %%    \item for eigenvalues $\alpha_j\in %%\Spec(T_j)$, $j\in\{1, \ldots, \ell\}$ we have
     %%   \begin{align*}%\label{E: good for eq}
     %%       %%\lim_{N\to\infty}\frac{1}{|I_N|}\sum_{\un\in %%I_N} e(\alpha_1 p_1(\un) +\cdots+ \alpha_\ell %%p_\ell(\un)) = 0
      %%  \end{align*}
       %% unless $\alpha_1 = \cdots = \alpha_\ell = 0$;
        %%\item the transformations $T_1, \ldots, %%T_\ell$ are all ergodic.
%%    \end{enumerate}
%%\end{corollary}

\subsection{Outline of the article}
We begin by recalling in Section \ref{S:background} basic notions and results from ergodic theory, especially those related to the families of Gowers-Host-Kra and box seminorms, dual functions, as well as nonergodic eigenfunctions. Next, we state in Section \ref{S:lemmas} preliminary technical lemmas that are used to prove our main results, most of which are variations of results from \cite{DFMKS21,Fr21, FrKu22}. Having stated all preliminary definitions and lemmas, we discuss at length in Section \ref{S:n^2, n^2, n^2+n} two baby cases of Theorem \ref{T:Host Kra characteristic} that illustrate some of our techniques and point out the necessity of the good ergodicity property. We then proceed in Section \ref{S: longer families} to discuss the formalism and the general strategy for handling longer families. In Section \ref{S:maneuvers}, we give more details of various maneuvers outlined in Section \ref{S: longer families}. These moves take the form of several highly technical propositions that play a crucial part in the inductive proof of Theorem \ref{T:Host Kra characteristic}. By presenting relevant examples, we also show various obstructions that need to be overcome to prove Theorem \ref{T:Host Kra characteristic} in full generality.  Section \ref{S:smoothing} is entirely devoted to the proof of Theorem \ref{T:Host Kra characteristic}: it contains an intricate induction scheme used for the proof and proofs of various intermediate results that together amount to Theorem \ref{T:Host Kra characteristic}. Lastly, in Section \ref{S: other proofs}, we derive Theorem \ref{T: weak joint ergodicity} and Corollaries \ref{C: joint ergodicity} and \ref{C:DKS conjecture holds}.

Some of the techniques used  in this paper were inspired by our earlier work in \cite{FrKu22} where we dealt with pairwise independent polynomials $p_1, \ldots, p_\ell$.  The lack of pairwise independence introduces serious additional complications. Consequently, we are forced to keep track of more information about the averages \eqref{E:general ergodic average} than in \cite{FrKu22}, particularly concerning the properties of the functions present therein and the coefficients of the polynomial iterates. The methods developed in this paper therefore differ in a number of places from the techniques employed in \cite{FrKu22}, and the argument from \cite{FrKu22} is most emphatically not a special case of the argument presented in the current paper.

The need to have a better grip on the averages necessitates more extensive formalism than one in \cite{FrKu22}, making our argument rather hard to digest on a first reading. To compensate for this, we have included numerous examples that illustrate the main new obstacles and ideas in the proofs. The reader is invited to first go over these examples  before delving into the details of the proofs.

%%In passing from Theorem \ref{T:Host Kra characteristic}, %%we are making an essential use of the joint ergodicity %%criteria for general sequences proved in our earlier paper %%\cite{FrKu22}. Generalisations of Theorem \ref{T: weak %%joint ergodicity} and Corollary \ref{C: joint ergodicity} %%would therefore necessitate either the derivation of %%extensions of the joint ergodicity criteria from %%\cite{FrKu22} to tuples \eqref{E:multi general average}, %%or performing complicated arguments on nilsystems similar %%to those in \cite[Section 6]{DFMKS21} (which could give a %%proof for an analogoue of Corollary \ref{C: joint %%ergodicity} but not Theorem \ref{T: weak joint %%ergodicity}), or finding a useful substitute for either of %%the two methods.

\section{Ergodic background and definitions}\label{S:background}
In this section, we present various notions from ergodic theory together with some basic results.
%We also state and prove basic results about these concepts, some of which are novel: this includes an inverse theorem for the degree 2 seminorm of a nonergodic transformation (Proposition \ref{U^2 inverse}) and a version of \cite[Proposition 3.2]{Fr21} in which the rational spectrum is finite.
\subsection{Basic notation}\label{SS:notation}
We start with explaining basic notation used throughout the paper.

The letters $\C, \R, \Z, \N, \N_0$ stand for the set of complex numbers, real numbers, integers, positive integers, and nonnegative integers.    With $\T$, we denote the one dimensional torus, and we often identify it with $\R/\Z$ or  with $[0,1)$. We let $[N]:=\{1, \ldots, N\}$ for any $N\in\N$.  With $\Z[n]$, we denote the collection of polynomials with integer coefficients.

  %With  $\Re(z)$, we denote the real part of the complex number $z$.

For an element $t\in \R$, we let $e(t):=e^{2\pi i t}$.

  If $a\colon \N^s\to \C$ is a  bounded sequence for some $s\in \N$ and $A$ is a non-empty finite subset of $\N^s$,  we let
  $$
  \E_{n\in A}\,a(n):=\frac{1}{|A|}\sum_{n\in A}\, a(n).
  $$

  We commonly use the letter $\ell$ to denote the number of transformations in our system or the number of functions in an average while the letter $s$ usually stands for the degree of ergodic seminorms. We normally write  tuples of length $\ell$ in bold, e.g. $\b\in\Z^\ell$, and we underline tuples of length $s$ (or $s+1$, or $s-1$) that are typically used for averaging, e.g. $\uh\in\Z^s$. For a vector $\b=(b_1,\ldots, b_\ell)\in \Z^\ell$ and a system $(X, \CX, \mu, T_1, \ldots, T_\ell)$, we let
 $$
 T^{\b}:=T_1^{b_1}\cdots T_\ell^{b_\ell},
 $$
 and we denote the $\sigma$-algebra of $T^\b$ invariant functions by $\CI(T^\b)$.
For $j\in[\ell]$, we set $\be_j$ to be the unit vector in $\Z^\ell$ in the $j$-th direction, and we let $\be_0 = \mathbf{0}$, so that $T^{\be_j} = T_j$ for $j\in[\ell]$ and $T^{\be_0}$ is the identity transformation.

We often write $\ueps\in\{0,1\}^s$ for a vector of 0s and 1s of length $s$. For $\ueps\in\{0,1\}^s$ and $\uh, \uh'\in\Z^s$, we set
\begin{itemize}
    \item $\ueps\cdot \uh:=\eps_1 h_1+\cdots+ \eps_s h_s$;
    \item $\abs{\uh} := |h_1|+\cdots+|h_s|$;
    \item $\uh^\ueps := (h_1^{\eps_1}, \ldots, h_s^{\eps_s})$, where $h_j^0:=h_j$ and $h_j^1:=h_j'$ for $j=1,\ldots, s$;
\end{itemize}

We let $\CC z := \overline{z}$ be the complex conjugate of $z\in \C$.

For a tuple $\eta\in \N_0^\ell$ and $I\subset[\ell]$, we define the restriction $\eta|_I := (\eta_i)_{i\in I}$.

%When we deal with a collection of polynomials $p_1, \ldots, p_\ell\in\Z[n]$, we sometimes use the label $p_0 = 0$ for the zero polynomial.
\subsection{Ergodic seminorms}\label{SS:seminorms}
We review some basic facts about two families of ergodic seminorms: the Gowers-Host-Kra  seminorms and the box seminorms.
\subsubsection{Gowers-Host-Kra seminorms}
Given a system $(X, \CX, \mu,T)$, we will use the family of ergodic seminorms $\nnorm{\cdot}_{s, T}$,  also known as \emph{Gowers-Host-Kra seminorms}, which were originally introduced in  \cite{HK05a} for ergodic systems. A detailed exposition of their basic properties can be found in  \cite[Chapter~8]{HK18}.
% If we work with several measures at a time, we will also use the notation $\nnorm{\cdot}_{s, T, \mu}$ to specify the  measure with respect to which the seminorm is defined.
These seminorms are inductively defined for  $f\in L^\infty(\mu)$ as follows (for convenience, we also define $\nnorm{\cdot}_0$, which is   not a seminorm):
$$
\nnorm{f}_{0,T}:=\int f\, d\mu,
$$
and for $s\in \N_0$, we let
\begin{equation*}%\label{E:seminorm1}
	\nnorm{f}_{s+1,T}^{2^{s+1}}:=\lim_{H\to\infty}\E_{h\in [H]} \nnorm{\Delta_{T; h}f}_{s,T}^{2^{s}},
\end{equation*}
where
$$
\Delta_{T;h}f:=f\cdot T^h\overline{f}, \quad h\in \Z,
$$
is the \emph{multiplicative derivative of $f$}  with respect to $T$.
The limit can be shown to exist by successive applications of the mean ergodic theorem, and for $f\in L^\infty(\mu)$ and $s\in \N_0$, we have $\nnorm{f}_{s,T}\leq \nnorm{f}_{s+1,T}$ (see \cite{HK05a} or  \cite[Chapter~8]{HK18}).
It follows immediately from the definition that
%%$$
%%\nnorm{f}_s^{2^s}=\E_{n_1\in\N}\cdots \E_{n_s\in \N} \int \Delta_{n_1,\ldots, n_s}f\, d\mu.
%%$$
$$
\nnorm{f}_{1,T}=\norm{\E(f|\CI(T))}_{L^2(\mu)},
$$
where $\CI(T):=\{f\in L^2(\mu)\colon Tf=f\}$. We also have
\begin{equation}\label{E:seminorm2}
	\nnorm{f}_{s,T}^{2^s}=\lim_{H_1\to\infty}\cdots \lim_{H_s\to\infty}\E_{h_1\in [H_1]}\cdots \E_{h_s\in [H_s]} \int \Delta_{s, T; \uh}f\, d\mu,
\end{equation}
where  for $\uh=(h_1,\ldots, h_s)\in \Z^s$, we let
$$
\Delta_{s,T;\uh}f:=\Delta_{T;h_1}\cdots \Delta_{T;h_s}f=\prod_{\ueps\in \{0,1\}^s}\mathcal{C}^{|\ueps|}T^{\ueps\cdot \uh}f
$$
be the \emph{multiplicative derivative of $f$ of degree $s$} with respect to $T$.

It can be shown that we can take any $s'\leq s$  of the  iterative limits to be simultaneous limits (i.e. average over $[H]^{s'}$ and let $H\to\infty$)
without changing the value of the limit in \eqref{E:seminorm2}. This was originally proved in \cite{HK05a} using  the main structural result of \cite{HK05a}; a more ``elementary'' proof  can be deduced from \cite[Lemma~1.12]{BL15} once the convergence of the uniform Ces\`aro averages is known (and yet another proof can be found in \cite[Lemma~1]{Ho09}).
For $s':=s$, this gives the identity
\begin{equation}\label{E:seminorm}
\nnorm{f}_{s,T}^{2^s}=\lim_{H\to\infty}\E_{\uh\in [H]^s} \int \Delta_{s, T; \uh}f\, d\mu.
\end{equation}
 Moreover,  for $1\leq s'\leq s$, we have
\begin{equation}\label{E:seminorm4}
	\nnorm{f}_{s,T}^{2^{s}}=\lim_{H\to\infty}\E_{\uh\in [H]^{s-s'}} \nnorm{\Delta_{s-s',T;\uh}f}_{s'}^{2^{s'}}.
\end{equation}

% Ergodic seminorms behave well under the ergodic decomposition: if $\mu = \int \mu_x \,d\mu(x)$ is the ergodic decomposition of $\mu$ with respect to $T$, then
%\begin{equation}\label{E:seminonerg}
%    \nnorm{f}_{s, T, \mu}^{2^s} = \int \nnorm{f}_{s, T, \mu_x}^{2^s} d\mu(x)
%\end{equation}
%for any $f\in L^\infty(\mu)$ and $s\in\N$. For the proof of this fact, see e.g. %%\cite[Section 2.2]{CFH11} or \cite[Chapter~8, Proposition~18]{HK18}.

It has been established in \cite{HK05a} for ergodic systems and in \cite[Chapter~8, Theorem~14]{HK18} for general systems that the seminorms are intimately connected with a certain family of factors of the system. Specifically, for every $s\in\N$ there exists a factor $\CZ_s(T)\subseteq\CX$, known as the \emph{Host-Kra factor} of \emph{degree} $s$, with the property that
\begin{equation*}%\label{E:semifactor}
\nnorm{f}_{s, T} = 0  \text{ if and only if } f \text{ is orthogonal to } \CZ_{s-1}(T).
\end{equation*}
 Equivalently, $\nnorm{\cdot}_{s, T}$ defines a norm on the space $L^2(\CZ_{s-1}(T))$ (for a proof see  \cite[Theorem~15, Chapter~9]{HK18}).

\subsubsection{Box seminorms} More generally,  we use analogues of \eqref{E:seminorm} defined with regards to several commuting transformations. These seminorms originally appeared in the work of Host \cite{Ho09}; their finitary versions are often called \emph{box seminorms}, and we sometimes employ this terminology. Let $(X, \CX, \mu, T_1, \ldots, T_\ell)$ be a system.
 For each $f\in L^{\infty}(\mu)$, $h\in \Z$, and $\b \in \Z^\ell$, we define
 $$
 \Delta_{\b; h} f := f \cdot T^{\b h} \overline{f}
 $$
   and for $\uh\in\Z^s$ and $\b_1,\ldots, \b_s\in \Z^\ell$, we let
$$
\Delta_{{\b_1}, \ldots, {\b_{s}}; \uh} f  := \Delta_{{\b_1; h_1}}\cdots\Delta_{{\b_s; h_s}} f = \prod_{\ueps\in\{0,1\}^s} \CC^{|\ueps|} T^{\b_1 \eps_1 h_1 + \cdots + \b_s \eps_s h_s}f.
$$
 We let
$$
\nnorm{f}_{\emptyset}:=\int f\, d\mu
$$
and
\begin{align}\label{ergodic identity}
	\nnorm{f}_{{\b_1}, \ldots, {\b_{s+1}}}^{2^{s+1}}:=\lim_{H\to\infty}\E_{h\in[H]}\nnorm{\Delta_{{\b_{s+1}}; h}f}_{{\b_1}, \ldots, {\b_{s}}}^{2^s}.
\end{align}
In particular, if $\b_1 = \cdots = \b_s:=\b$, then $ \Delta_{\b_1, \ldots, \b_s; \uh}=\Delta_{s, T^\b; \uh}$ and $  \nnorm{\cdot}_{\b_1, \ldots, \b_s}=\nnorm{\cdot}_{s, T^\b}$.
 We remark that these seminorms were defined in a slightly different way in \cite{Ho09}
 and the above identities were established in  \cite[Section~2.3]{Ho09}.

 Iterating \eqref{ergodic identity}, we get the identity
 \begin{equation}\label{E:seminormb1}
 	\nnorm{f}_{{\b_1}, \ldots, {\b_{s}}}^{2^{s+1}}=\lim_{H_1\to\infty}\cdots \lim_{H_s\to\infty}\E_{h_1\in [H_1]}\cdots \E_{h_s\in [H_s]} \int\Delta_{{\b_1; h_1}}\cdots\Delta_{{\b_s; h_s}} f\, d\mu,
 \end{equation}
which extends \eqref{E:seminorm2}. In a complete analogy with the remarks made for the Gowers-Host-Kra seminorms, we have the following: using \cite[Lemma~1]{Ho09}\footnote{Which implies  the convergence of the uniform Ces\`aro averages over $\uh\in \Z^s$ of  $\int \Delta_{{\b_1}, \ldots, {\b_{s}}; \uh} f\, d\mu$.} and \cite[Lemma~1.12]{BL15}, we get
 that we can take any $s'\leq s$  of the  iterative limits to be simultaneous limits (i.e. average over $[H]^{s'}$ and let $H\to\infty$)
without changing the value of the limit in \eqref{E:seminormb1}.
Taking $s'=s$  gives  the identity
$$
\nnorm{f}_{{\b_1}, \ldots, {\b_{s}}}^{2^s}=\lim_{H\to\infty}\E_{\uh\in [H]^s}\,\int \Delta_{{\b_1}, \ldots, {\b_{s}}; \uh} f\, d\mu.
$$
More generally, for any $1\leq s'\leq s$ and $f\in L^\infty(\mu)$, we get the identity
\begin{align}\label{inductive formula}
	\nnorm{f}_{\b_1, \ldots, \b_s}^{2^s} = \lim_{H\to\infty}\E_{\uh\in[H]^{s-s'}}\nnorm{\Delta_{\b_{s'+1}, \ldots, \b_s; \uh}f}_{\b_1, \ldots, \b_{s'}}^{2^{s'}},
\end{align}
which generalises \eqref{E:seminorm4}.

As an example of a box seminorm that  is not a Gowers-Host-Kra seminorm, consider $s=2$ and the vectors $\be_1=(1,0)$, $\be_2=(0,1)$, in which case
$$
\nnorm{f}_{{\be_1},{\be_2}}^4 = \lim_{H\to\infty}\E_{h_1, h_2\in [H]^2}\int f\cdot T_1^{h_1}\overline{f}\cdot T_2^{h_2}\overline{f}\cdot T_1^{h_1}T_2^{h_2}f\, d\mu.
$$
More generally, for  $s=2$ and $\ba=(a_1,a_2)$, $\b=(b_1,b_2)$, we have
$$
\nnorm{f}_{{\ba},{\b}}^4=\lim_{H\to\infty}\E_{h_1, h_2\in [H]^2}\int f\cdot T_1^{a_1h_1}T_2^{a_2h_1}\overline{f}\cdot T_1^{b_1h_2}T_2^{b_2h_2}\overline{f}\cdot T_1^{a_1h_1+b_1h_2}T_2^{a_2h_1+b_2h_2}f\, d\mu.
$$
If the vector $\ba$ repeats $s$ times, we abbreviate it as $\ba^{\times s}$, e.g.
\begin{align*}
    \nnorm{f}_{\ba^{\times 2}, \b^{\times 3}, \bc}=\nnorm{f}_{\ba, \ba, \b, \b, \b, \bc}.
\end{align*}

Box seminorms satisfy the following Gowers-Cauchy-Schwarz inequality  \cite[Proposition~2]{Ho09}
\begin{align}\label{E:GCS}
    \limsup_{H\to\infty}\abs{\E_{\uh\in[H]^s}\int \prod_{\ueps\in\{0,1\}^s} \CC^{|\ueps|} T^{\b_1 \eps_1 h_1 + \cdots + \b_s \eps_s h_s}f_\ueps\, d\mu}\leq \prod_{\ueps\in\{0,1\}^s}\nnorm{f_\ueps}_{\b_1, \ldots, \b_s}.
\end{align}
(One can replace the limsup with a limit since it is known to exist.)

We frequently bound one seminorm in terms of another. An inductive application of formula \eqref{ergodic identity}, or alternatively a simple application of the Gowers-Cauchy-Schwarz inequality \eqref{E:GCS}, yield the following monotonicity property:
\begin{align*}%\label{monotonicity property}
    \nnorm{f}_{\b_1, \ldots, \b_s}\leq \nnorm{f}_{\b_1, \ldots,\b_s,  \b_{s+1}},
\end{align*}
a special case of which is the aforementioned bound $\nnorm{f}_{s,T}\leq \nnorm{f}_{s+1, T}$ for any $f\in L^\infty(\mu)$ and system $(X, \CX, \mu, T)$.

In many of our arguments, we have to deal simultaneously both with a collection of transformations and their powers. The relevant box seminorms are compared in the following lemma.
%following estimate compares the box seminorms in both cases and provides a quantitative proof of \cite[Theorem 2]{L05'}.
\begin{lemma}[{\cite[Lemma 3.1]{FrKu22}}]\label{L:seminorm of power}
    Let $\ell,s\in\N$, $(X, \CX, \mu, T_1, \ldots, T_\ell)$ be a system, $f\in L^\infty(\mu)$ be a function, $\b_1, \ldots, \b_s\in\Z^\ell$ be vectors, and $r_1, \ldots, r_s\in\Z$ be nonzero. Then
    \begin{align*}%\label{seminorm of power 1}
       \nnorm{f}_{\b_1, \ldots, \b_s}\leq \nnorm{f}_{r_1 \b_1, \ldots, r_s \b_s},
    \end{align*}
    and if $s\geq 2$, we additionally get the bound
    \begin{align*}%\label{seminorm of power 2}
        \nnorm{f}_{r_1 \b_1, \ldots, r_s \b_s} \leq (r_1\cdots r_s)^{1/2^{s}}\nnorm{f}_{\b_1, \ldots, \b_s}.
    \end{align*}
\end{lemma}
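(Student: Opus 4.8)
The plan is to reduce both inequalities to the case in which a single one of the vectors is rescaled, and then to handle that case by ``peeling off'' the rescaled coordinate from the box seminorm.

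\textbf{Normalisations.} First I would record two symmetry properties of box seminorms visible from \eqref{E:iterated limit}: the value of $\nnorm{f}_{\b_1,\ldots,\b_s}$ is unchanged under permuting the vectors (noted in the excerpt) and under complex-conjugating $f$ or composing $f$ with any power of the $T_i$. Combined with the identity $\Delta_{-\b;h}f = T^{-\b h}\,\overline{\Delta_{\b;h}f}$ and \eqref{ergodic identity}, these yield $\nnorm{f}_{\ldots,-\b_i,\ldots}=\nnorm{f}_{\ldots,\b_i,\ldots}$, so I may assume $r_1,\ldots,r_s\geq 1$. Rescaling the coordinates one at a time (each time moving the coordinate to be rescaled into the first slot by a permutation), it then suffices to prove, for an arbitrary system, vectors $\b_1,\ldots,\b_s\in\Z^\ell$, $f\in L^\infty(\mu)$, and an integer $r\geq 1$, that (a) $\nnorm{f}_{\b_1,\b_2,\ldots,\b_s}\leq\nnorm{f}_{r\b_1,\b_2,\ldots,\b_s}$, and (b) if $s\geq 2$ then $\nnorm{f}_{r\b_1,\b_2,\ldots,\b_s}^{2^s}\leq r\,\nnorm{f}_{\b_1,\b_2,\ldots,\b_s}^{2^s}$.

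\textbf{Proof of (a).} I would apply \eqref{inductive formula} with $s'=1$ to both sides, getting $\nnorm{f}_{\b_1,\ldots,\b_s}^{2^s}=\lim_{H\to\infty}\E_{\uh\in[H]^{s-1}}\nnorm{\Delta_{\b_2,\ldots,\b_s;\uh}f}_{\b_1}^{2}$ and the analogue with $\b_1$ replaced by $r\b_1$. Since $\nnorm{g}_{\b_1}=\norm{\E(g\mid\CI(T^{\b_1}))}_{L^2(\mu)}$, since $\CI(T^{\b_1})\subseteq\CI(T^{r\b_1})$, and since conditional expectation onto a smaller sub-$\sigma$-algebra is an $L^2$-contraction, we have $\nnorm{g}_{\b_1}\leq\nnorm{g}_{r\b_1}$ for every $g\in L^\infty(\mu)$. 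Applying this to $g=\Delta_{\b_2,\ldots,\b_s;\uh}f$ and averaging over $\uh$ gives (a) (for $s=1$ the displayed identity is trivial and (a) is immediate).

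\textbf{Proof of (b).} Permuting so that $r\b_1$ is the last vector and using \eqref{ergodic identity}, together with $\Delta_{r\b_1;h}f=f\cdot T^{\b_1(rh)}\overline f=\Delta_{\b_1;rh}f$, I would set $c(m):=\nnorm{\Delta_{\b_1;m}f}_{\b_2,\ldots,\b_s}^{2^{s-1}}$ and obtain
\[
\nnorm{f}_{r\b_1,\b_2,\ldots,\b_s}^{2^s}=\lim_{H\to\infty}\frac1H\sum_{h=1}^{H}c(rh),\qquad
\nnorm{f}_{\b_1,\b_2,\ldots,\b_s}^{2^s}=\lim_{M\to\infty}\frac1M\sum_{m=1}^{M}c(m),
\]
the second being the first with $r=1$; both limits exist by the known convergence of the box-seminorm averages. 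Because $s\geq 2$, the list $\b_2,\ldots,\b_s$ is non-empty, so $\nnorm{\cdot}_{\b_2,\ldots,\b_s}$ is a genuine seminorm and $c$ is a \emph{non-negative} bounded sequence. Hence $\sum_{h=1}^{H}c(rh)\leq\sum_{m=1}^{rH}c(m)$ for every $H$ (the indices $r,2r,\ldots,rH$ form a subset of $\{1,\ldots,rH\}$ and every summand is $\geq 0$), and dividing by $H$ and letting $H\to\infty$ gives
\[
\nnorm{f}_{r\b_1,\b_2,\ldots,\b_s}^{2^s}\ \leq\ r\lim_{H\to\infty}\frac1{rH}\sum_{m=1}^{rH}c(m)\ =\ r\,\nnorm{f}_{\b_1,\b_2,\ldots,\b_s}^{2^s},
\]
which is (b). Taking $2^s$-th roots and undoing the coordinate-by-coordinate reduction yields the two inequalities of the lemma.

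\textbf{Main point / pitfall.} There is no serious obstacle, but (b) genuinely requires $s\geq 2$: for $s=1$ the sequence $c(m)=\int\Delta_{\b_1;m}f\,d\mu=\int f\cdot T^{\b_1 m}\overline f\,d\mu$ is complex-valued, the inequality $\sum_{h\le H}c(rh)\le\sum_{m\le rH}c(m)$ is meaningless, and (b) is in fact false (e.g.\ with $X=\Z/r\Z$, $T^{\b_1}$ the shift by $1$, and $f$ of mean zero one has $\nnorm{f}_{\b_1}=0<\nnorm{f}_{r\b_1}$). The one technical item worth double-checking is the reduction to $r_i\geq 1$, i.e.\ the sign-flip invariance $\nnorm{f}_{\ldots,-\b_i,\ldots}=\nnorm{f}_{\ldots,\b_i,\ldots}$.
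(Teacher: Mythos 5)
Your proof is correct. Since the paper only cites \cite[Lemma 3.1]{FrKu22} and does not reproduce the argument, I can only assess correctness: the reduction to $r_i\geq 1$ via sign-flip invariance, the one-coordinate-at-a-time peeling, the containment $\CI(T^{\b_1})\subseteq\CI(T^{r\b_1})$ with the $L^2$-contractivity of nested conditional expectations for (a), and the subsequence-averaging trick $\sum_{h\leq H}c(rh)\leq\sum_{m\leq rH}c(m)$ for a nonnegative bounded sequence in (b) are all sound, and your counterexample showing (b) fails for $s=1$ is correct; this is essentially the natural proof of the statement.
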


The following lemma allows us to compare box seminorms depending on the invariant $\sigma$-algebras of the transformations involved.
\begin{lemma}\label{bounding seminorms}
Let $\ell,s\in\N$, $(X, \CX, \mu, T_1, \ldots, T_\ell)$ be a system of commuting transformations, and $\b_1, \ldots, \b_s, \bc_1, \ldots, \bc_s\in\Z^\ell$ be vectors with the property that $\CI(T^{\b_i})\subseteq \CI(T^{\bc_i})$
%up to null sets
for each $i\in[s]$. Then $\nnorm{f}_{\b_1, \ldots, \b_s}\leq \nnorm{f}_{\bc_1, \ldots, \bc_s}$ for each $f\in L^\infty(\mu)$.
\end{lemma}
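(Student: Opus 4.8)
The plan is to reduce the claim to a statement about conditional expectations and to exploit the inductive definition of the box seminorms in \eqref{ergodic identity}, together with an induction on $s$.

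\textbf{Base case.} For $s=1$ the claim reads $\nnorm{f}_{\b_1}\leq \nnorm{f}_{\bc_1}$, i.e. $\norm{\E(f\mid \CI(T^{\b_1}))}_{L^2(\mu)}\leq \norm{\E(f\mid \CI(T^{\bc_1}))}_{L^2(\mu)}$. This follows because $\CI(T^{\b_1})\subseteq\CI(T^{\bc_1})$ implies the tower property $\E(f\mid\CI(T^{\b_1}))=\E\bigl(\E(f\mid\CI(T^{\bc_1}))\mid\CI(T^{\b_1})\bigr)$, and conditional expectation is an $L^2$-contraction.

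\textbf{Inductive step.} Assume the result for $s-1$ vectors. Using \eqref{ergodic identity},
\begin{align*}
\nnorm{f}_{\b_1,\ldots,\b_s}^{2^s}=\lim_{H\to\infty}\E_{h\in[H]}\nnorm{\Delta_{\b_s;h}f}_{\b_1,\ldots,\b_{s-1}}^{2^{s-1}}.
\end{align*}
For each fixed $h$, the inductive hypothesis applied to the first $s-1$ vectors gives $\nnorm{\Delta_{\b_s;h}f}_{\b_1,\ldots,\b_{s-1}}\leq\nnorm{\Delta_{\b_s;h}f}_{\bc_1,\ldots,\bc_{s-1}}$. So it remains to replace $\b_s$ by $\bc_s$ in the last slot. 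Here I use the permutation-invariance of box seminorms noted just after \eqref{E:iterated limit}, which lets me move $\b_s$ (resp.\ $\bc_s$) into the first slot; concretely, by symmetry it suffices to prove
\begin{align*}
\lim_{H\to\infty}\E_{h\in[H]}\nnorm{\Delta_{\b_s;h}f}_{\bc_1,\ldots,\bc_{s-1}}^{2^{s-1}}\leq\lim_{H\to\infty}\E_{h\in[H]}\nnorm{\Delta_{\bc_s;h}f}_{\bc_1,\ldots,\bc_{s-1}}^{2^{s-1}},
\end{align*}
i.e. $\nnorm{f}_{\b_s,\bc_1,\ldots,\bc_{s-1}}\leq\nnorm{f}_{\bc_s,\bc_1,\ldots,\bc_{s-1}}$ — which is the case $s=1$ "in disguise" after peeling off $\bc_1,\ldots,\bc_{s-1}$. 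To make this precise I would peel the other way: apply \eqref{ergodic identity} iteratively in the order that exposes $\b_s$ last, reducing to the inequality $\norm{\E(g\mid\CI(T^{\b_s}))}\leq\norm{\E(g\mid\CI(T^{\bc_s}))}$ for the relevant (multiplicatively differentiated) function $g$, which again holds by the tower property since $\CI(T^{\b_s})\subseteq\CI(T^{\bc_s})$.

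\textbf{Alternative, cleaner route.} Rather than juggling permutations, it is probably smoothest to prove directly by induction on the number $k$ of slots one has already "upgraded" from $\b_i$ to $\bc_i$: show $\nnorm{f}_{\b_1,\ldots,\b_s}\leq\nnorm{f}_{\bc_1,\ldots,\bc_k,\b_{k+1},\ldots,\b_s}$ for each $k$, where the step from $k-1$ to $k$ changes exactly one vector. By permutation-invariance one may assume the vector being changed sits in the outermost position of \eqref{ergodic identity}, so the step from $k-1$ to $k$ is precisely the $s=1$ inequality applied to $\nnorm{\Delta_{\b_k;h}f}$ versus $\nnorm{\Delta_{\bc_k;h}f}$ — wait, that is not quite it either, since the inner seminorm still depends on $h$ through a $\b_k$ that must be changed first. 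The honest way: fix all slots except slot $k$, write the $2^{k}$-th power of each side as an iterated average peeling off slot $k$ last (legitimate by the symmetry of box seminorms), arriving at $\E(\cdot\mid\CI(T^{\b_k}))$ on the left and $\E(\cdot\mid\CI(T^{\bc_k}))$ on the right of the same inner function, and conclude by the $L^2$-contraction of conditional expectation combined with $\CI(T^{\b_k})\subseteq\CI(T^{\bc_k})$.

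\textbf{Main obstacle.} The only subtlety is bookkeeping: making rigorous the interchange that isolates a single slot, so that the whole inequality reduces to the one-variable statement $\CI(T^{\b})\subseteq\CI(T^{\bc})\implies\nnorm{g}_{\b}\leq\nnorm{g}_{\bc}$. The permutation-invariance of box seminorms (established after \eqref{E:iterated limit}) and the monotone/limit structure in \eqref{ergodic identity} handle this, but one must be careful that the limits exist at each intermediate stage (they do, by \cite{Ho09}) so that the inequalities propagate cleanly through the iterated limit. Beyond this, no new ideas are needed.
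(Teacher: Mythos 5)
Your proposal is correct and is essentially the paper's argument: reduce to the single-slot case $\nnorm{g}_{\b}\le\nnorm{g}_{\bc}$ (the $L^2$-contraction of conditional expectation onto the smaller $\sigma$-algebra $\CI(T^{\b})\subseteq\CI(T^{\bc})$), and propagate it slot-by-slot through the iterated limits using the inductive formula~\eqref{inductive formula} together with the permutation invariance of box seminorms. The paper itself takes the "cleaner route" you sketch at the end — write $\nnorm{f}_{\b_1,\ldots,\b_s}^{2^s}=\lim_H\E_{\uh\in[H]^{s-1}}\nnorm{\Delta_{\b_1,\ldots,\b_{s-1};\uh}f}_{\b_s}^{2}$, apply the $s=1$ bound inside to replace $\b_s$ by $\bc_s$, and iterate $s-1$ more times — so your worry about "juggling permutations" is unfounded: the inductive formula already lets you peel off all but one designated slot, and the limits exist at each stage by \cite{Ho09}, so the inequalities pass through cleanly.
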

\begin{proof}
We prove this by induction on $s$. For $s=1$, we simply have
\begin{align*}
    \nnorm{f}_{\b_1}=\norm{\E(f|\CI(T^{\b_1}))}_{L^2(\mu)}\leq \norm{\E(f|\CI(T^{\bc_1}))}_{L^2(\mu)} = \nnorm{f}_{\bc_1},
\end{align*}
where we use the fact that $\norm{\E(f|\CA)}_{L^2(\mu)}\leq\norm{\E(f|\CB)}_{L^2(\mu)}$ whenever $\CA\subseteq\CB$.
%up to null sets.
For $s>1$, we use the induction formula for seminorms and the result for $s=1$ to deduce that
\begin{align*}
    \nnorm{f}_{\b_1, \ldots, \b_s}^{2^s} &=\lim_{H\to\infty}\E_{\uh\in[H]^{s-1}}\nnorm{\Delta_{\b_1, \ldots, \b_{s-1};\uh} f}_{\b_s}^{2^{s-1}}\\
    &\leq \lim_{H\to\infty}\E_{\uh\in[H]^{s-1}}\nnorm{\Delta_{\b_1, \ldots, \b_{s-1};\uh} f}_{\bc_s}^{2^{s-1}} = \nnorm{f}_{\b_1, \ldots, \b_{s-1}, \bc_s}^{2^s}.
\end{align*}
The claim follows by iterating this procedure $s-1$ more times.
\end{proof}

\subsection{Dual functions and sequences}\label{SS:dual}
Let $s\in\N$ and $\{0,1\}^s_* = \{0,1\}^s\setminus\{\underline{0}\}$. For a system $(X, \CX, \mu, T)$ and $f\in L^\infty(\mu),$ we define
\begin{align*}
    \CD_{s, T}(f) := \lim_{M\to\infty}\E_{\um\in [M]^s}\prod_{\ueps\in\{0,1\}^s_*}\CC^{|\ueps|}T^{\ueps\cdot\um}f
\end{align*}
(the limit exists in $L^2(\mu)$ by \cite{HK05a}).
 %%or pointwise $\mu$-a.e. by \cite{As10}).
  We call $\CD_{s,T}(f)$ the \emph{dual function} of $f$ of \emph{level} $s$ with respect to $T$.
  %In some cases, when the underlying measure is not clearly defined, we write $\CD_{s,T,\mu}(f)$.
  The name comes because of the identity
\begin{align}\label{dual identity}
    \nnorm{f}_{s, T}^{2^s} = \int f \cdot \CD_{s, T}(f)\, d\mu,
\end{align}
a consequence of which is that the span of dual functions of degree $s$ is dense in $L^1(\CZ_{s-1}(T))$.

Let $(X, \CX, \mu, T_1, \ldots, T_\ell)$ be a system.
Using  the identities \eqref{inductive formula} and  \eqref{dual identity} we get
\begin{equation*}%\label{dual inverse}
\nnorm{f}_{{\b_1}, \ldots, {\b_{s}}, \b_{s+1}^{\times s'}}^{2^{s+s'}}=\lim_{H\to\infty}\E_{\uh\in [H]^s}\int  \Delta_{{\b_1}, \ldots, {\b_{s}}; \uh} f \cdot \CD_{s', T^{\b_{s+1}}}(\Delta_{{\b_1}, \ldots, {\b_{s}}; \uh}f)\, d\mu,
\end{equation*}
the special case of which is
\begin{equation*}%\label{invariant inverse}
\nnorm{f}_{{\b_1}, \ldots, \b_{s+1}}^{2^{s+1}} = \lim_{H\to\infty}\E_{\uh\in [H]^s} \int  \Delta_{{\b_1}, \ldots, {\b_{s}}; \uh}f \cdot \E(\Delta_{{\b_1}, \ldots, {\b_{s}}; \uh}\overline{f}|\CI(T^{\b_{s+1}}))\, d\mu.
\end{equation*}

For $s\in\N$, we denote
 \begin{equation*}%\label{E:Ds}
 \FD_s := \{(T_j^n \CD_{s', T_j}f)_{n\in\Z}\colon\; f\in L^\infty(\mu),\ j\in[\ell],\ 1\leq s'\leq s\}
 \end{equation*}
%%\begin{align*}
 %%   \FD_s := \{(T^{\b n} \CD_{s', T^{\b}}f)_{n\in\Z}:\; f\in L^\infty(\mu),\ %%\b\in\Z^\ell,\ 1\leq s'\leq s\}
%%\end{align*}
to be the set of sequences of 1-bounded functions coming from dual functions of degree up to $s$ for the transformations $T_1,\ldots, T_\ell$, and moreover we define $\FD := \bigcup_{s\in\N}\FD_s$.
%For $\CD\in\FD$, we denote $\CD_x(n) := \CD(n)(x)$, $n\in\N$, $x\in X$.

The utility of dual functions comes from the following approximation result.
\begin{proposition}[Dual decomposition {\cite[Proposition 3.4]{Fr15a}}]\label{dual decomposition}
    Let $(X, \CX, \mu, T)$ be a system, $f\in L^\infty(\mu)$, $s\in\N$, and $\varepsilon>0$. Then we can decompose $f = f_1 + f_2 + f_3$, where
    \begin{enumerate}
        \item (Structured component) $f_1 = \sum_k c_k \CD_{s, T}(g_k)$ is a linear combination of finitely many dual functions of level $s$ with respect to $T$;
        \item (Small component) $\nnorm{f_2}_{L^1(\mu)}\leq \varepsilon$;
        \item (Uniform component) $\nnorm{f_3}_{s, T} = 0$.
    \end{enumerate}
\end{proposition}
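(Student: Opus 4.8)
The plan is to deduce the decomposition from the density statement recorded immediately above the proposition: the linear span of the level-$s$ dual functions $\{\CD_{s,T}(g):g\in L^\infty(\mu)\}$ is dense in $L^1(\CZ_{s-1}(T))$. Granting this, the decomposition is obtained by first peeling off the part of $f$ that lives on the relevant Host--Kra factor and then approximating that part by a finite combination of dual functions.

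First I would set $f_3 := f - \E(f\mid\CZ_{s-1}(T))$, which is again a bounded function. By the tower property $\E(f_3\mid\CZ_{s-1}(T)) = 0$, so $f_3$ is orthogonal to $\CZ_{s-1}(T)$, and by the characterisation of the seminorms recalled in Section~\ref{SS:seminorms} (namely $\nnorm{g}_{s,T}=0$ if and only if $g\perp\CZ_{s-1}(T)$) this is exactly the statement $\nnorm{f_3}_{s,T}=0$; thus (iii) holds. Next, since $\E(f\mid\CZ_{s-1}(T))\in L^1(\CZ_{s-1}(T))$, the density statement provides finitely many scalars $c_k\in\C$ and functions $g_k\in L^\infty(\mu)$ with
\[
\norm{\E(f\mid\CZ_{s-1}(T)) - \sum_k c_k\,\CD_{s,T}(g_k)}_{L^1(\mu)}\le\varepsilon.
\]
Setting $f_1 := \sum_k c_k\,\CD_{s,T}(g_k)$ and $f_2 := \E(f\mid\CZ_{s-1}(T)) - f_1$, we get that $f_1$ has the form required in (i), that $\norm{f_2}_{L^1(\mu)}\le\varepsilon$ gives (ii), and that $f_1+f_2+f_3 = \E(f\mid\CZ_{s-1}(T)) + \bigl(f-\E(f\mid\CZ_{s-1}(T))\bigr) = f$.

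I do not expect any substantive obstacle once the density input is in hand; the only point worth checking is that each of the three pieces is bounded, which is clear since $\E(f\mid\CZ_{s-1}(T))$ and the finitely many $\CD_{s,T}(g_k)$ are bounded. The genuine content sits in the density of the span of dual functions in $L^1(\CZ_{s-1}(T))$, which I would in turn establish by a Hahn--Banach argument: if the span were not dense, there would be a nonzero $h\in L^\infty(\CZ_{s-1}(T))$ with $\int h\cdot\CD_{s,T}(g)\,d\mu=0$ for all $g\in L^\infty(\mu)$; taking $g:=h$ and invoking the identity \eqref{dual identity} would force $\nnorm{h}_{s,T}^{2^s}=\int h\cdot\CD_{s,T}(h)\,d\mu=0$, and since $\nnorm{\cdot}_{s,T}$ is a genuine norm on $L^2(\CZ_{s-1}(T))$ while $h$ is $\CZ_{s-1}(T)$-measurable, this would give $h=0$, a contradiction.
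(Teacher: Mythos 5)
The paper does not prove this proposition; it cites it from \cite[Proposition 3.4]{Fr15a}, so there is no internal argument to compare yours against. That said, your proof is correct and follows the route one would expect given the facts the paper records in Section~\ref{SS:dual}: you set $f_3 := f - \E(f\mid\CZ_{s-1}(T))$, which has $\nnorm{f_3}_{s,T}=0$ by the characterisation of $\CZ_{s-1}(T)$, and then you approximate $\E(f\mid\CZ_{s-1}(T))$ in $L^1$ by a finite linear combination of level-$s$ dual functions, which is exactly the density statement the paper records as a consequence of \eqref{dual identity}. So on top of the stated density, the deduction is immediate. Your Hahn--Banach sketch of that density is also fine, but be aware that it silently uses the fact that each $\CD_{s,T}(g)$ lies in $L^1(\CZ_{s-1}(T))$, i.e.\ that level-$s$ dual functions are $\CZ_{s-1}(T)$-measurable; this is a true and standard structural fact, but the paper itself only explicitly invokes the weaker $\CZ_s(T)$-measurability later, so you should either cite the sharper measurability statement or phrase the duality argument to avoid relying on it. One small extra remark: the proposition as stated does not ask for $f_1$, $f_2$, $f_3$ to be individually bounded, so your closing observation about boundedness, while correct, is not actually needed.
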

Proposition \ref{dual decomposition} will be used as follows. Suppose that the $L^2(\mu)$ limit of the average $\E_{n\in[N]}\, T_1^{p_1(n)}f_1 \cdots T_\ell^{p_\ell(n)}f_\ell$ vanishes whenever $\nnorm{f_\ell}_{s, T_\ell} = 0$. If
\begin{align*}
    \limsup_{N\to\infty}\norm{\E_{n\in[N]}\, T_1^{p_1(n)}f_1 \cdots T_\ell^{p_\ell(n)}f_\ell}_{L^2(\mu)}>0
\end{align*}
for some functions $f_1, \ldots, f_\ell\in L^\infty(\mu)$, then we decompose $f_\ell$ as in Proposition \ref{dual decomposition} for sufficiently small $\varepsilon>0$ so that
\begin{align*}
    \limsup_{N\to\infty}\norm{\E_{n\in[N]}\prod_{j\in[\ell-1]}T_j^{p_j(n)}f_j \cdot \sum_k c_k T_\ell^{p_\ell(n)}\CD_{s, T_\ell}(g_k)}_{L^2(\mu)}>0
\end{align*}
for some (finite) linear combinations of dual functions. Applying the triangle inequality and the pigeonhole principle, we deduce that there exists $k$ for which
\begin{align*}
    \limsup_{N\to\infty}\norm{\E_{n\in[N]}\prod_{j\in[\ell-1]}T_j^{p_j(n)}f_j \cdot \CD(p_\ell(n))}_{L^2(\mu)}>0,
\end{align*}
where $\CD(n)(x) := T_\ell^{n}\CD_{s, T_\ell}g_k(x)$ for $n\in\N$ and $x\in X$. This way, we essentially replace the term $T_\ell^{p_\ell(n)}f_\ell$ in the original average by the  more structured piece $\CD(p_\ell(n))$.

\subsection{Eigenfunctions and criterion for weak joint ergodicity}\label{SS:eigenfunctions}
Following \cite{FH18}, 	we  define the notion of eigenfunctions that appears in the statement of Theorem \ref{T: weak joint ergodicity}.
	\begin{definition}
			Let  $(X, \CX, \mu,T)$ be a system,  $\chi\in L^\infty(\mu)$, and  $\lambda\in L^\infty(\mu)$ be a $T$-invariant function. We say that $\chi\in L^\infty(\mu)$ is a {\em non-ergodic eigenfunction}
			with {\em eigenvalue $\lambda$} if
			\begin{enumerate}
                \item $|\chi(x)|$ has value $0$ or $1$ for $\mu$-a.e. $x\in X$ and $\lambda(x)=0$ whenever $\chi(x)=0$;
                \item $T\chi=\lambda\, \chi$, $\mu$-a.e..
			 \end{enumerate}
		\end{definition}
		We denote the set of nonergodic eigenfunctions with respect to $T$ by $\CE(T)$.
		 For ergodic systems, a non-ergodic eigenfunction is either the  zero function  or  a classical  unit modulus eigenfunction.
		 %In general, if $\mu=\int \mu_x\, d\mu$ is the ergodic decomposition of $\mu$, then $f\in \CE(T,\mu)$ if and only if $f\in L^\infty(\mu)$ has modulus $0$ or $1$ and for  $\mu$-a.e. $x\in X$ the function $f\in \CE(T,\mu_x)$ is a usual eigenfunction, i.e. the identity $f(Tx) = e(\phi(x)) f(x)$ holds $\mu_x$-almost everywhere for some $\phi(x)\in \T$.
		 For general systems, each function $\chi\in \CE(T)$ satisfies $\chi(Tx)={\bf 1}_E(x) \, e(\phi(x))\, \chi(x)$ for some $T$-invariant set $E\in \CX$ and measurable $T$-invariant function $\phi \colon X\to \T$.

\begin{definition}[Weak joint ergodicity]        We say that  a collection of sequences  $a_1, \ldots, a_\ell\colon \N\to\Z$ is {\em weakly jointly ergodic} for the system  $(X, \CX, \mu, T_1, \ldots, T_\ell)$, if  $$
        		\lim_{N\to\infty}\norm{\frac{1}{N}\sum_{n=1}^N T_1^{a_1(n)}f_1 \cdots T_\ell^{a_\ell(n)}f_\ell -
        			\E(f_1|\CI(T_1)) \cdots \E(f_\ell|\CI(T_\ell))}_{L^2(\mu)} = 0
        $$
        for all $f_1, \ldots, f_\ell\in L^\infty(\mu)$.
        \end{definition}
        The notion of non-ergodic eigenfunction is important for us because of the following criterion for weak joint ergodicity from \cite{FrKu22}.
					\begin{theorem}[Criterion for weak joint ergodicity {\cite[Theorem 2.5]{FrKu22}}]\label{T: criteria for joint ergodicity}
				The collection of  sequences  $a_1, \ldots, a_\ell\colon \N\to\Z$ is weakly jointly ergodic for the system  $(X, \CX, \mu, T_1, \ldots, T_\ell)$,     if and only if the following two properties hold:
					\begin{enumerate}
						\item \label{i:11} there exists $s\in\N$ such that for every $m\in[\ell]$, we have
						\begin{align*}
						    \lim_{N\to\infty}\norm{\E_{n\in[N]}T_1^{a_1(n)}f_1\cdots T_\ell^{a_\ell(n)}f_\ell}_{L^2(\mu)} = 0
						\end{align*}
						for all $f_1, \ldots, f_\ell\in L^\infty(\mu)$ with $f_j\in\CE(T_j)$ for $j\in\{m+1, \ldots, \ell\}$, whenever $\nnorm{f_m}_{s, T_m} = 0$;
						
						\item \label{i:12} for all non-ergodic eigenfunctions $\chi_j\in\CE(T_j)$, $j\in[\ell]$, we have
						\begin{equation}\label{E:liminv}
						\lim_{N\to\infty}\E_{n\in[N]} \,  T_1^{a_1(n)}\chi_1\cdots T_\ell^{a_\ell(n)}\chi_\ell =\E(\chi_1|\CI(T_1))\cdots \E(\chi_\ell|\CI(T_\ell))
						\end{equation}
						in $L^2(\mu)$.
					%%	$$
						%%\lim_{N\to\infty}\E_{n\in[N]}\prod_{j\in[\ell]} T_j^{a_j(n)}f_j =\prod_{j\in[\ell]} %%T_j^{a_j(n)}\E(f_i|\CI(T_i))=0
						%%$$
						%%in $L^2(\mu)$.
					\end{enumerate}
				\end{theorem}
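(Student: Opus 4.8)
If $a_1,\dots,a_\ell$ is weakly jointly ergodic, then \eqref{i:12} follows just by plugging $f_j=\chi_j\in\CE(T_j)$ into the definition, since then the $L^2(\mu)$-norm of the relevant difference tends to $0$. For \eqref{i:11} one may take $s=1$: if $\nnorm{f_m}_{1,T_m}=0$, i.e.\ $\norm{\E(f_m|\CI(T_m))}_{L^2(\mu)}=0$, then $\E(f_m|\CI(T_m))=0$, so the product $\prod_{j\in[\ell]}\E(f_j|\CI(T_j))$ vanishes and weak joint ergodicity forces the averages to converge to $0$ in $L^2(\mu)$; no assumption on the remaining functions is needed.

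\textbf{The converse: an inductive scheme.} Assume now that \eqref{i:11} holds with some fixed $s\in\N$ and that \eqref{i:12} holds. I would prove by induction on $m\in\{0,1,\dots,\ell\}$ the statement $P(m)$: for all $f_1,\dots,f_m\in L^\infty(\mu)$ and all non-ergodic eigenfunctions $\chi_{m+1}\in\CE(T_{m+1}),\dots,\chi_\ell\in\CE(T_\ell)$, the averages $\E_{n\in[N]}\prod_{j=1}^m T_j^{a_j(n)}f_j\cdot\prod_{j=m+1}^\ell T_j^{a_j(n)}\chi_j$ converge in $L^2(\mu)$ to $\prod_{j=1}^m\E(f_j|\CI(T_j))\cdot\prod_{j=m+1}^\ell\E(\chi_j|\CI(T_j))$. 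The base case $P(0)$ is exactly \eqref{i:12}, while $P(\ell)$ is the desired weak joint ergodicity, so everything rests on the step $P(m-1)\Rightarrow P(m)$. To perform it, fix data as in $P(m)$. Since the functions in positions $m+1,\dots,\ell$ are eigenfunctions, property \eqref{i:11} applies with this very index $m$ and shows that $\CZ_{s-1}(T_m)$ is characteristic for the $m$-th coordinate; moreover $\nnorm{f_m}_{1,T_m}\le\nnorm{f_m}_{s,T_m}$, so when $\nnorm{f_m}_{s,T_m}=0$ we have $\E(f_m|\CI(T_m))=0$ and both the average and the asserted limit equal $0$. Hence, by linearity, by the dual decomposition of Proposition~\ref{dual decomposition}, and by the density of spans of level-$s$ dual functions in $L^1(\CZ_{s-1}(T_m))$ recorded after \eqref{dual identity}, a routine $\varepsilon$-approximation reduces $P(m)$ to the case in which $f_m=\CD_{s,T_m}(g)$ is a single dual function.

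\textbf{The main obstacle.} What remains --- and is, I expect, the crux of the argument --- is to pass from such a structured but non-eigen function $f_m=\CD_{s,T_m}(g)$ to a combination of (non-ergodic) eigenfunctions, so that the inductive hypothesis $P(m-1)$, which demands an eigenfunction in position $m$, becomes applicable; condition \eqref{i:12} (i.e.\ $P(0)$) then closes the induction. I would accomplish this by a degree-lowering argument internal to the factor $\CZ_{s-1}(T_m)$: passing to ergodic components via the ergodic decomposition, $\CZ_{s-1}(T_m)$ is an inverse limit of $(s-1)$-step nilsystems, and on such a system a function splits into pieces whose top-degree ``nilcharacter'' part is carried by a multiplicative derivative $\Delta_{T_m;h}$ into a function measurable with respect to a factor of strictly smaller degree. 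Running a van der Corput estimate on the full average therefore replaces $g$ by its derivatives $\Delta_{T_m;h}g$ while producing companion averages of the same shape, to which the available seminorm control \eqref{i:11} still applies uniformly in the auxiliary parameters; iterating drives the degree down to the Kronecker factor $\CZ_1(T_m)$, that is, to functions built from eigenfunctions in $\CE(T_m)$. Carrying out this reduction for non-ergodic systems --- tracking the invariant sets $E$ and phases $\phi$ in the representation $\chi(Tx)=\mathbf{1}_E(x)\,e(\phi(x))\,\chi(x)$ of the eigenfunctions in positions $>m$, and verifying that every conditional expectation over the $\CI(T_j)$ behaves correctly under these manipulations --- is the source of most of the bookkeeping, but I do not expect it to require an idea beyond the ergodic case.
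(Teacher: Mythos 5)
Your forward direction is correct, and your induction on $m$ with $P(m)$ as stated is a reasonable framework; the reduction, via property (i) with index $m$ (legitimate since the functions in positions $m+1,\dots,\ell$ are eigenfunctions), to the case $f_m=\CD_{s,T_m}(g)$ is sound and you are right that the remaining burden is a degree-lowering step from $\CZ_{s-1}(T_m)$ to $\CZ_1(T_m)$. The forward direction is also handled properly.

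However, your proposed degree-lowering is where the argument breaks. The statement concerns \emph{arbitrary} sequences $a_1,\dots,a_\ell\colon\N\to\Z$, not polynomials. A van der Corput estimate on the outer $n$-average does not produce ``companion averages of the same shape'': differencing replaces each $a_j(n)$ by $a_j(n+h)-a_j(n)$, which for general sequences carries no usable structure, so the iteration you sketch has nothing to iterate on. Moreover, after one van der Corput the eigenfunctions at positions $j>m$ become $\chi_j\cdot T_j^{a_j(n+h)-a_j(n)}\overline{\chi_j}$, which for general $a_j$ is no longer a fixed (in $n$) eigenfunction-like object, so property (i) no longer applies at the next step as you claim. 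The actual mechanism in the cited proof exploits the \emph{internal} averaging inside the dual function $\CD_{s,T_m}(g)$ rather than the external average over $n$; the tool making this precise and sequence-independent is the dual–difference interchange (Proposition 5.7 of [FrKu22], appearing here as Proposition~\ref{dual-difference interchange}(ii)), combined with the twisted Gowers–Cauchy–Schwarz inequality. This runs entirely through soft properties of seminorms and duals and does not require the Host–Kra structure theorem, ergodic decompositions, or nilcharacters; that machinery is both heavier than needed and, even if invoked, would not repair the fact that your van der Corput step does not close for non-polynomial sequences.
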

				When $T_1, \ldots, T_\ell$ are ergodic, the condition \eqref{E:liminv} can be restated as follows:
					\begin{equation*}%\label{E:liminv 2}
						\lim_{N\to\infty}\E_{n\in[N]} \,  e(\alpha_1 a_1(n) + \cdots + \alpha_\ell a_\ell(n)) = 0
					\end{equation*}
					for all $\alpha_j\in\Spec(T_j)$, $j\in[\ell]$, not all zero. Here,
					$$\Spec(T) := \{\alpha\in\T:\ T\chi = e(\alpha) \chi \textrm{ for some } \chi\in\CE(T)\}.$$
				
				We will apply Theorem \ref{T: criteria for joint ergodicity} in the proof of Theorem \ref{T: weak joint ergodicity}. The first condition will be satisfied thanks to the stronger result proved in Theorem \ref{T:Host Kra characteristic}.

\section{Preliminary results}\label{S:lemmas}
In this section, we gather auxiliary results needed in the proof of Theorem \ref{T:Host Kra characteristic}. We start with the following simple lemma from \cite[Lemma~5.2]{FrKu22}, which allows us to pass from averages of sequences $a_{\uh-\uh'}$ to averages of sequences $a_{\uh}$.
\begin{lemma}\label{difference sequences}
Let $(a_\uh)_{\uh\in\N^s}$ be a sequence of nonnegative real numbers. Then
\begin{align*}
    \E_{\uh, \uh'\in[H]^s}a_{\uh-\uh'} \leq \E_{\uh\in[H]^s}a_{\uh}
\end{align*}
for every $H\in\N$.
\end{lemma}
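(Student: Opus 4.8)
\textbf{Proof plan for Lemma \ref{difference sequences}.}

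The plan is to reduce the $s$-dimensional statement to the one-dimensional case by a Fubini-type argument, and then to prove the one-dimensional case by the classical trick of writing each difference $h-h'$ as a value of a single new variable and counting how many pairs $(h,h')\in[H]\times[H]$ produce a given difference. First I would observe that it suffices to treat $s=1$: indeed, for general $s$ one fixes all but one coordinate of $\uh$ and $\uh'$, applies the one-dimensional bound in that coordinate to the (still nonnegative) sequences obtained by freezing the other coordinates, and then averages the resulting inequalities over the frozen coordinates. Concretely, writing $\uh=(h_1,\underline{h}'')$ with $\underline{h}''\in\N^{s-1}$, one has
\begin{align*}
    \E_{\uh,\uh'\in[H]^s} a_{\uh-\uh'}
    = \E_{\underline{h}'',\underline{h}'''\in[H]^{s-1}}\ \E_{h_1,h_1'\in[H]} a_{(h_1-h_1',\,\underline{h}''-\underline{h}''')},
\end{align*}
and applying the $s=1$ estimate to the inner average (with the nonnegative sequence $b_{k}:=a_{(k,\,\underline{h}''-\underline{h}''')}$) bounds this by $\E_{\underline{h}'',\underline{h}'''\in[H]^{s-1}}\E_{h_1\in[H]} a_{(h_1,\,\underline{h}''-\underline{h}''')}$; iterating the same move in each of the remaining $s-1$ coordinates collapses all the primed variables and yields $\E_{\uh\in[H]^s}a_{\uh}$.

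For the base case $s=1$, the key step is the identity
\begin{align*}
    \E_{h,h'\in[H]} a_{h-h'} = \frac{1}{H^2}\sum_{k=-(H-1)}^{H-1} (H-|k|)\, a_k,
\end{align*}
obtained by noting that for each integer $k$ with $|k|\le H-1$ the number of pairs $(h,h')\in[H]^2$ with $h-h'=k$ is exactly $H-|k|$. Since all the $a_k$ are nonnegative and $H-|k|\le H$, the right-hand side is at most $\frac{1}{H}\sum_{k=-(H-1)}^{H-1} a_k$; discarding the nonnegative terms with $k<1$ (here I use that the hypothesis only controls $(a_\uh)_{\uh\in\N^s}$, so one keeps only the indices lying in $\N$), this is at most $\frac{1}{H}\sum_{k=1}^{H-1} a_k \le \E_{h\in[H]} a_h$, which is the claim. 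Strictly speaking one should be mildly careful about the index set: in the application the relevant sequences are indexed by all of $\Z^s$ (the differences $\uh-\uh'$ range over a symmetric box), so either one notes that the statement as used only needs the indices in $\N^s$ after the same discarding of nonnegative terms, or one simply extends $(a_\uh)$ by nonnegativity and runs the argument verbatim.

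There is essentially no serious obstacle here; the only point requiring a little attention is bookkeeping in the inductive reduction — making sure that freezing coordinates preserves nonnegativity (which is immediate) and that the averaging over frozen coordinates may be interchanged with the one-dimensional inequality (which is just linearity of $\E$ together with the fact that a pointwise inequality between nonnegative functions is preserved under averaging). I would therefore present the $s=1$ computation in full and then indicate the coordinate-by-coordinate iteration briefly, exactly as sketched above.
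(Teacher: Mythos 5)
Your approach is the standard counting argument and is almost certainly what the cited reference \cite[Lemma~5.2]{FrKu22} does; the present paper does not reproduce that proof, so there is nothing to compare line by line. The argument is correct in substance, but one point needs to be stated more carefully. Since $\uh-\uh'$ can have nonpositive coordinates while $(a_\uh)$ is given only on $\N^s$, the only convention under which the statement (and your computation) makes sense is to set $a_\bk:=0$ whenever $\bk\notin\N^s$. With that convention the $s=1$ case is simply
\begin{align*}
\E_{h,h'\in[H]}a_{h-h'}=\frac{1}{H^2}\sum_{k=1}^{H-1}(H-k)\,a_k\le\frac1H\sum_{k=1}^{H-1}a_k\le\E_{h\in[H]}a_h.
\end{align*}
Your phrase ``discarding the nonnegative terms with $k<1$'' is misleading as written: dropping genuinely nonnegative terms would move the sum in the wrong direction for an upper bound, so this must not be an inequality step but the observation that those terms are already zero by convention. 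For the same reason, your closing remark that one may ``extend $(a_\uh)$ by nonnegativity and run the argument verbatim'' is actually false --- extending by arbitrary nonnegative values breaks the lemma (take $s=1$, $a_0=1$ and $a_k=0$ for $k\neq 0$, so that $\E_{h,h'\in[H]}a_{h-h'}=1/H>0=\E_{h\in[H]}a_h$); only the extension by zero works. Finally, as a stylistic simplification you could skip the coordinate-by-coordinate iteration altogether by counting directly in $s$ dimensions: for $\bk\in\N^s$ one has $|\{(\uh,\uh')\in[H]^{2s}:\uh-\uh'=\bk\}|=\prod_{j\in[s]}(H-k_j)\le H^s$, which gives the bound in one line and avoids the bookkeeping with the frozen tuples $\underline{h}''$ and $\underline{h}'''$.
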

Subsequently, we state a result that allows us to replace a function $f_m$ in the original average by a more structured averaged term $\tilde{f}_m$ that encodes the information about the original average. This idea originates in the finitary works on the polynomial Szemer\'edi theorem by Peluse and Prendiville \cite{P19, P20, PP19}, and it has been successfully applied in the ergodic theoretic setting in \cite{Fr21,  BF22, FrKu22}. The version below differs from earlier formulations because we additionally show that if the to-be-replaced function $f_m$ is measurable with respect to some sub-$\sigma$-algebra $\CA$, then the same can be assumed about the function that replaces it. In our applications, $\CA$ will always be either the full $\sigma$-algebra $\CX$ or the invariant sub-$\sigma$-algebra of some measure preserving transformation.

\begin{lemma}[Introducing averaged functions]\label{P:dual}
	Let $a_1,\ldots, a_\ell\colon \N\to \Z$  be sequences, $(X,$ $\CX, \mu, T_1, \ldots, T_\ell)$ be a system, and 1-bounded functions $f_1,\ldots, f_\ell\in L^\infty(\mu)$ be such that
	\begin{equation*}%\label{E:aa11}
	\limsup_{N\to\infty}\norm{\E_{n\in [N]} \, T_1^{a_1(n)}f_1\cdots T_\ell^{a_\ell(n)}f_\ell}_{L^2(\mu)} \geq \delta
	\end{equation*}
	for some $\delta>0$. Let $m\in [\ell]$. Suppose moreover that $f_m$ is $\CA$-measurable for some sub-$\sigma$-algebra $\CA\subseteq\CX$. Then  there exist $N_k\to\infty$ and $g_k\in L^\infty(\mu)$, with  $\norm{g_k}_{L^\infty(\mu)}\leq 1$,  $k\in\N$, such that for
	\begin{equation*}%\label{dual function}
	\tilde{f}_m:=\lim_{k\to\infty} \E_{n\in [N_k]}
	 \, T_m^{-a_m(n)}g_k\cdot \prod_{j\in [\ell], j\neq m}T_m^{-a_m(n)} T_j^{a_j(n)}\overline{f}_j,
	\end{equation*}
	where the limit is a weak limit, we have
	\begin{equation*}%\label{E:aa13}
	\limsup_{k\to\infty}\norm{\E_{n\in[N_k]} \, T_m^{a_m(n)}\E(\tilde{f}_m|\CA)\cdot  \prod_{j\in [\ell], j\neq m}T_j^{a_j(n)}f_j}_{L^2(\mu)} \geq \delta^4.
	\end{equation*}
\end{lemma}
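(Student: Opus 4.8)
The plan is to prove the statement by a sequence of Cauchy--Schwarz maneuvers that is by now standard in the Peluse--Prendiville circle of ideas (the ``PET-free'' degree-lowering), combined with a bookkeeping step that tracks $\CA$-measurability. Write $F_N := \E_{n\in[N]}\, T_1^{a_1(n)}f_1\cdots T_\ell^{a_\ell(n)}f_\ell$, and abbreviate $\prod_{j\ne m}$ for the product over $j\in[\ell]\setminus\{m\}$. The hypothesis gives $\limsup_{N\to\infty}\norm{F_N}_{L^2(\mu)}\ge\delta$, so pick $N_k\to\infty$ along which $\norm{F_{N_k}}_{L^2(\mu)}\ge\delta$ for all $k$. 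First I would expand $\norm{F_{N_k}}_{L^2(\mu)}^2$ as a double average over $n,n'\in[N_k]$ of $\int T_m^{a_m(n)}f_m\cdot T_m^{a_m(n')}\overline{f}_m\cdot\prod_{j\ne m}\bigl(T_j^{a_j(n)}f_j\cdot T_j^{a_j(n')}\overline{f}_j\bigr)\,d\mu$, apply $T_m^{-a_m(n)}$ inside the integral (measure preservation), and read off that
\begin{align*}
\delta^2\le\limsup_{k\to\infty}\Bigabs{\int f_m\cdot\Bigl(\E_{n\in[N_k]}\,T_m^{-a_m(n)}\prod_{j\ne m}T_j^{a_j(n)}f_j\Bigr)\cdot\overline{G_k}\,d\mu},
\end{align*}
where $G_k:=\E_{n\in[N_k]}\,T_m^{-a_m(n)}\prod_{j\ne m}T_j^{a_j(n)}\overline{f}_j$ suitably conjugated; the point is to isolate $f_m$ against an averaged object. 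Passing to a weak-$*$ limit in $L^\infty(\mu)$ along a further subsequence of $(N_k)$ of the sequence $g_k:=\E_{n\in[N_k]}(\cdots)$ (whatever plays the role of the ``test against $f_m$'' piece, which is $1$-bounded) produces the claimed $g_k$'s; the freedom in choosing $g_k$ is exactly what lets us later absorb a factor into it.

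The substance is the identity defining $\tilde f_m$. I would proceed as follows: starting from $\norm{F_{N_k}}\ge\delta$, square, use Cauchy--Schwarz in the ``outer'' variable to dualize, so that for some $1$-bounded $u_k$ (which will become $g_k$) one has $\limsup_k\abs{\int u_k\cdot F_{N_k}\cdot(\textrm{something})\,d\mu}$ controlled below; then substitute the definition of $F_{N_k}$, apply $T_m^{-a_m(n)}$ inside, and recognize the resulting inner average as precisely $\tilde f_m$ after taking the weak limit. Concretely, set
\begin{align*}
\tilde f_m:=\lim_{k\to\infty}\E_{n\in[N_k]}\,T_m^{-a_m(n)}g_k\cdot\prod_{j\ne m}T_m^{-a_m(n)}T_j^{a_j(n)}\overline{f}_j
\end{align*}
as a weak limit (which exists after passing to a subsequence, by weak-$*$ compactness of the unit ball of $L^2(\mu)$ since the averages are $1$-bounded). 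Then a direct expansion shows $\int \tilde f_m\cdot\overline{h}\,d\mu = \lim_k \E_n\int g_k\cdot T_m^{a_m(n)}\overline{h}\cdot\prod_{j\ne m}T_j^{a_j(n)} f_j\,d\mu$ for every $h\in L^2(\mu)$; choosing $h = f_m$ and unwinding, the quantity $\lim_k\E_n\int g_k\cdot T_m^{a_m(n)}\overline{f}_m\cdot\prod_{j\ne m}T_j^{a_j(n)}f_j\,d\mu = \lim_k\int g_k\cdot\overline{F_{N_k}}\,d\mu$, and the lower bound $\delta$ (from the choice of $g_k$ as the dual vector to $F_{N_k}$) propagates. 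This is the ``first Cauchy--Schwarz'' half, giving a bound of the form $\limsup_k\abs{\int \tilde f_m\cdot\overline{F_{N_k}}\,d\mu}\ge\delta^2$ or so; to turn the appearance of $\tilde f_m$ (rather than $T_m^{a_m(n)}\tilde f_m$ inside a new average) into the statement's conclusion, one performs a \emph{second} Cauchy--Schwarz: bound $\abs{\int\tilde f_m\cdot\overline{F_{N_k}}\,d\mu}^2\le\norm{\tilde f_m}_{L^2}^2\cdot\norm{\E_n\,T_m^{a_m(n)}\tilde f_m\cdot\prod_{j\ne m}T_j^{a_j(n)}f_j}_{L^2}^2$-type inequality — more precisely, $\abs{\int\tilde f_m\cdot\overline{F_{N_k}}}^2\le\int\tilde f_m\cdot\overline{\tilde f_m}$-$\cdots$ — and use $\norm{\tilde f_m}_{L^\infty}\le 1$ (weak limit of $1$-bounded averages is $1$-bounded) to get $\delta^4\le\limsup_k\norm{\E_n\,T_m^{a_m(n)}\tilde f_m\cdot\prod_{j\ne m}T_j^{a_j(n)}f_j}_{L^2(\mu)}$. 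This is where the exponent $\delta^4$ comes from.

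Finally, the new ingredient over the earlier formulations: if $f_m$ is $\CA$-measurable, I want $\E(\tilde f_m\mid\CA)$ in place of $\tilde f_m$. This is immediate from the displayed inequality $\delta^4\le\limsup_k\norm{\E_n\,T_m^{a_m(n)}\tilde f_m\cdot\prod_{j\ne m}T_j^{a_j(n)}f_j}_{L^2(\mu)}$ once we observe that in \emph{every} inner product appearing when we expand this $L^2$ norm and pair it back against the original average $F_{N_k}$ (whose $m$-th function is the $\CA$-measurable $f_m$), the function $\tilde f_m$ only ever appears integrated against $\CA$-measurable functions, hence can be replaced by its conditional expectation $\E(\tilde f_m\mid\CA)$. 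Cleanly: retrace the two Cauchy--Schwarz steps above but insert $\E(\cdot\mid\CA)$ at the start — since $f_m=\E(f_m\mid\CA)$, in the identity $\int\tilde f_m\cdot\overline{h}\,d\mu$ with the relevant $h$ being (a transform of) $f_m$ paired against $\CA$-measurable data, we have $\int\tilde f_m\cdot\overline{(\cdots)}\,d\mu = \int\E(\tilde f_m\mid\CA)\cdot\overline{(\cdots)}\,d\mu$ whenever $(\cdots)$ is $\CA$-measurable; and $f_m$ being $\CA$-measurable forces exactly that. Then $\norm{\E(\tilde f_m\mid\CA)}_{L^\infty}\le\norm{\tilde f_m}_{L^\infty}\le 1$, and the same chain of inequalities yields $\delta^4\le\limsup_k\norm{\E_n\,T_m^{a_m(n)}\E(\tilde f_m\mid\CA)\cdot\prod_{j\ne m}T_j^{a_j(n)}f_j}_{L^2(\mu)}$, which is the assertion. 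I expect the main obstacle to be purely notational: keeping the two Cauchy--Schwarz applications and the weak-limit subsequence extractions organized so that the $\CA$-measurability genuinely survives both steps — there is no analytic difficulty, only the need to check that $\tilde f_m$ is never tested against a non-$\CA$-measurable function in the final estimate, which holds precisely because $f_m\in L^\infty(\CA)$.
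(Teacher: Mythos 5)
Your overall plan is the right one and largely matches the paper's proof: take $g_k$ to be the average $F_{N_k}$ itself (so that $\int g_k\overline{g_k}=\norm{F_{N_k}}_{L^2}^2\geq\delta^2$ is already the "dual pairing"), pass to a weak limit to define $\tilde f_m$, use $\CA$-measurability of $f_m$ to slip in the conditional expectation, and close with two applications of Cauchy--Schwarz to produce the exponent $4$. The paper does exactly this, and you have correctly isolated the choice of $g_k$ and the role of self-adjointness of $\E(\cdot\mid\CA)$.

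However, both Cauchy--Schwarz steps as you have written them are off, and the second one has a genuine gap. For the first step: what the expansion actually gives is $\delta^2\leq\int f_m\cdot\tilde f_m\,d\mu$ (a constant, after taking the weak limit), \emph{not} $\limsup_k\abs{\int\tilde f_m\cdot\overline{F_{N_k}}\,d\mu}\geq\delta^2$. Using $f_m=\E(f_m\mid\CA)$ and self-adjointness this becomes $\delta^2\leq\int f_m\cdot\E(\tilde f_m\mid\CA)\,d\mu\leq\norm{\E(\tilde f_m\mid\CA)}_{L^2(\mu)}$, and that scalar $\norm{\E(\tilde f_m\mid\CA)}_{L^2(\mu)}$ is the correct intermediate object. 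For the second step: the inequality you propose, $\abs{\int\tilde f_m\cdot\overline{F_{N_k}}}^2\leq\norm{\tilde f_m}_{L^2}^2\cdot\norm{\E_n T_m^{a_m(n)}\tilde f_m\prod_{j\neq m}T_j^{a_j(n)}f_j}_{L^2}^2$, is not an instance of Cauchy--Schwarz and in fact does not hold in general: when you push $T_m^{-a_m(n)}$ through $\int\tilde f_m\cdot\overline{F_{N_k}}$, the shift hits the entire product $\prod_{j\neq m}T_j^{a_j(n)}\overline f_j$ and you end up with $\norm{\E_n T_m^{-a_m(n)}\bigl(\tilde f_m\prod_{j\neq m}T_j^{a_j(n)}\overline f_j\bigr)}_{L^2}$, which is not the target average. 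The correct move (and the one the paper makes) is instead to start from $\delta^4\leq\norm{\E(\tilde f_m\mid\CA)}_{L^2}^2=\int\E(\tilde f_m\mid\CA)\cdot\overline{\tilde f_m}\,d\mu$, then \emph{unfold the weak-limit definition of $\overline{\tilde f_m}$} (rather than pairing against $F_{N_k}$): this produces $\lim_k\int\overline g_k\cdot\E_{n\in[N_k]}T_m^{a_m(n)}\E(\tilde f_m\mid\CA)\cdot\prod_{j\neq m}T_j^{a_j(n)}f_j\,d\mu$ after composing with $T_m^{a_m(n)}$, and one final Cauchy--Schwarz against the $1$-bounded $\overline g_k$ gives exactly the target norm. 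Also, your opening double-average expansion over $n,n'$ is a detour: only one copy of $F_{N_k}$ is expanded, the other is treated as the black box $g_k$.
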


\begin{proof}
Let $\{\tilde{N}_k\}_{k\in\N}$ be an increasing sequence of integers for which
\begin{align}\label{lower bound in dual}
    \norm{\E_{n\in [\tilde{N}_k]} \, T_1^{a_1(n)}f_1\cdots T_\ell^{a_\ell(n)}f_\ell}_{L^2(\mu)}\geq \delta.
\end{align}
We set
\begin{align*}
    \tilde{g}_N := \E_{n\in [N]} \, T_1^{a_1(n)}{f_1}\cdots T_\ell^{a_\ell(n)}{f_\ell}
\end{align*}
for every $N\in\N$.
%, we consider the sequence of averages
%\begin{align*}
 %    \E_{n\in [\tilde{N}_k]}
%	 \, T_l^{-a_l(n)} \tilde{g}_{\tilde{N}_k}\cdot \prod_{j\in [\ell], j\neq l}T_l^{-a_l(n)} T_j^{a_j(n)}\overline{f}_j.
%\end{align*}
 The weak compactness of $L^2(\mu)$ implies  that there exists a subsequence $(N_k)_{k\in\N}$ of $(\tilde{N}_k)_{k\in\N}$ for which the sequence
\begin{align*}
    F_k:=\E_{n\in [N_k]}
	 \, T_m^{-a_m(n)}g_k\cdot \prod_{j\in [\ell], j\neq m}T_m^{-a_m(n)} T_j^{a_j(n)}\overline{f}_j,
\end{align*}
where $g_k := \tilde{g}_{\tilde{N}_k}$, $k\in\N$,  converges weakly to a 1-bounded function $\tilde{f}_m$.

We observe from \eqref{lower bound in dual} that
\begin{align*}
    \delta^2 &\leq \int {g_k} \cdot \E_{n\in [{N}_k]} \, T_1^{a_1(n)}\overline{f}_1\cdots T_\ell^{a_\ell(n)}\overline{f}_\ell\, d\mu\\
    &= \int {f}_m \cdot \E_{n\in [{N}_k]} \, T_m^{-a_m(n)} g_k \cdot  \prod_{j\in [\ell], j\neq m}T_m^{-a_m(n)} T_j^{a_j(n)}\overline{f}_j\, d\mu.
\end{align*}
Taking $k\to\infty$, using the $\CA$-measurability of the 1-bounded function  $f_m$, and applying the Cauchy-Schwarz inequality, we get
\begin{align*}
    \delta^2 \leq \int f_m \cdot \tilde{f}_m\, d\mu = \int f_m \cdot \E(\tilde{f}_m|\CA)\, d\mu \leq \norm{\E(\tilde{f}_m|\CA)}_{L^2(\mu)}.
\end{align*}
Hence,
\begin{align*}
    \delta^4 &\leq \norm{\E(\tilde{f}_m|\CA)}_{L^2(\mu)}^2 = \int \E(\tilde{f}_m|\CA) \cdot \overline{\tilde{f}}_m\, d\mu\\
    &= \lim_{k\to\infty}\int \E(\tilde{f}_m|\CA) \cdot \E_{n\in [N_k]}
	 \, T_m^{-a_m(n)}\overline{g}_k\cdot \prod_{j\in [\ell], j\neq m}T_m^{-a_m(n)} T_j^{a_j(n)}{f}_j\\
	 &= \lim_{k\to\infty}\int  \overline{g}_k \cdot \E_{n\in [N_k]} T_m^{a_m(n)}\E(\tilde{f}_m|\CA)
	 \cdot \prod_{j\in [\ell], j\neq m}T_j^{a_j(n)}{f}_j\, d\mu.
\end{align*}
An application of the Cauchy-Schwarz inequality gives the result.
\end{proof}

We now present two different versions of the dual-difference interchange result that we use in our smoothing argument. While the second version in the proposition below has already been used in \cite{FrKu22}, the first one is novel since the extra information that it provides has not been required in earlier arguments.

\begin{proposition}[Dual-difference interchange]\label{dual-difference interchange}
	Let $(X, \CX, \mu,T_1, \ldots, T_\ell)$ be a system,  $s,$ $s'\in \N$, $\b_1, \ldots, \b_{s+1}\in\Z^\ell$ be vectors,
	$(f_{n,k})_{n,k\in\N}\subseteq L^\infty(\mu)$ be 1-bounded, and $f\in L^\infty(\mu)$ be defined by
	\begin{align*}
        f:=\, \lim_{k\to\infty}\E_{n\in [N_k]}\, f_{n,k},
	\end{align*}
	for some $N_k\to\infty$, where the average is assumed to converge weakly.
	\begin{enumerate}
	    \item If
	\begin{align}\label{seminorm of conditional expectation}
    %    \nnorm{\E(\tilde{f}_\ell|\CI(T^\bc))}_{{\b_1}, \ldots, {\b_{s+1}}}>0
	    \nnorm{\E(f|\CI(T^\bc))}_{\b_1, \ldots, \b_{s+1}}>0,
	\end{align}
	then there exist 1-bounded functions $u_{\uh, \uh'}$, invariant under both $T^{\b_{s+1}}$ and $T^{\bc}$, for which the inequality
    \begin{align*}
        \liminf_{H\to\infty}\E_{\uh, \uh'\in [H]^s}\limsup_{k\to\infty} \E_{n\in[N_k]}
        \int\Delta_{\b_1, \ldots, \b_s; \uh-\uh'}f_{n,k} \cdot u_{\uh, \uh'} \, d\mu >0
        %\norm{\E_{n\in[N_k]} \Delta_{\b_1, \ldots, \b_s; \uh-\uh'}f_{n,k} \cdot u_{\uh, \uh'} }_{L^2(\mu)}
    \end{align*}
    holds.
    \item 	If
	\begin{align*}%\label{seminorm of conditional expectation 2}
	    \nnorm{f}_{\b_1, \ldots, \b_s, \b_{s+1}^{\times s'}}>0,
	\end{align*}
	then
	\begin{align*}
            \liminf_{H\to\infty}\E_{\uh, \uh'\in [H]^s}\limsup_{k\to\infty} \E_{n\in[N_k]} \int\Delta_{\b_1, \ldots, \b_s; \uh-\uh'}f_{n,k} \cdot \prod_{j=1}^{2^s}\CD_{j,\uh,\uh'} \, d\mu >0
    \end{align*}
    for some 1-bounded  dual functions $\CD_{j, \uh, \uh'}$ of $T$ of level $s'$.
	\end{enumerate}

\end{proposition}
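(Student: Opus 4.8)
The plan is to prove both parts simultaneously by an induction on $s$, since they share the same structural mechanism: we want to move a difference-of-variables derivative $\Delta_{\b_1,\dots,\b_s;\uh-\uh'}$ past the limit defining $f$ and past an application of the dual-function/conditional-expectation operator associated to $\b_{s+1}$ (and $\bc$). The base case $s=0$ is essentially the definition: if $\nnorm{\E(f\mid\CI(T^\bc))}_{\b_1}>0$ — interpreting this for $s=0$ as $\nnorm{\E(f\mid\CI(T^\bc))}_{\b_1}=\|\E(\E(f\mid\CI(T^\bc))\mid\CI(T^{\b_1}))\|_{L^2(\mu)}>0$ — then writing out $f=\lim_k \E_{n\in[N_k]}f_{n,k}$ as a weak limit and pairing against the $T^{\b_1}$-invariant, $T^\bc$-invariant function $u:=\E(\overline{f}\mid\CI(T^{\b_1})\cap\CI(T^\bc))$ gives a positive quantity $\int f\cdot u\,d\mu = \lim_k\E_{n\in[N_k]}\int f_{n,k}\cdot u\,d\mu$; part (ii) for $s=0$ is handled the same way with $u$ replaced by a dual function via the density of duals in $L^1(\CZ_{s'-1}(T^{\b_{s+1}}))$ together with identity \eqref{dual identity}.

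For the inductive step, I would use identity \eqref{ergodic identity} (equivalently \eqref{inductive formula} with $s'=1$) to peel off the last derivative: $\nnorm{g}_{\b_1,\dots,\b_{s+1}}^{2^{s+1}}=\lim_{H\to\infty}\E_{h\in[H]}\nnorm{\Delta_{\b_{s+1};h}g}_{\b_1,\dots,\b_s}^{2^s}$ applied to $g=\E(f\mid\CI(T^\bc))$. Since $T^\bc$-invariance is preserved under $\Delta_{\b_{s+1};h}$, positivity of the left side forces $\nnorm{\Delta_{\b_{s+1};h}\E(f\mid\CI(T^\bc))}_{\b_1,\dots,\b_s}>0$ for $h$ in a set of positive lower density; for such $h$ one wants to express $\Delta_{\b_{s+1};h}\E(f\mid\CI(T^\bc))$ again as a weak limit of averages of $1$-bounded functions $f_{n,k}^{(h)}:=\Delta_{\b_{s+1};h}f_{n,k}$ up to an error supported on $\CI(T^\bc)$, then invoke the inductive hypothesis with $s-1$ to produce functions $u_{\uh,\uh',h}$ invariant under $T^{\b_{s+1}}$ and $T^\bc$. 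Averaging over $h$ and telescoping the difference variables — here is where Lemma \ref{difference sequences} enters, to pass from $\E_{\uh,\uh'\in[H]^s}a_{\uh-\uh'}$-type averages back to genuine $(s)$-fold difference averages, and where one re-indexes $(\uh,\uh')\in[H]^s\times[H]^s$ together with the extra variable $h$ into an $[H]^{s}$-average of difference derivatives of the full-strength form — yields the claimed inequality, with $u_{\uh,\uh'}$ obtained by integrating/combining the $u_{\cdot,\cdot,h}$. Part (ii) follows the same skeleton, tracking a product of dual functions of level $s'$ instead of a single invariant function; the compatibility between the recursion in $s$ and the fixed level $s'$ is exactly what makes $\CD_{j,\uh,\uh'}$ come out as duals of $T$ of level $s'$.

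The main obstacle I expect is the interchange of the weak limit $\lim_k\E_{n\in[N_k]}$ with the multilinear derivative $\Delta_{\b_1,\dots,\b_s;\uh-\uh'}$: a product of $2^s$ copies of (translates of) $f$ is genuinely nonlinear in $f$, so one cannot naively push the weak limit inside. The standard fix — and what I would implement — is to expand $\nnorm{\cdot}_{\b_1,\dots,\b_{s+1}}$ via the Gowers--Cauchy--Schwarz inequality \eqref{E:GCS} so that $f$ appears linearly in the relevant correlation (each appearance getting its own translate), apply weak convergence one copy at a time using a diagonal/telescoping argument over $k$, and absorb the accumulated cross-terms into the $u_{\uh,\uh'}$ (resp. dual) factors, controlling everything by $1$-boundedness and the monotonicity \eqref{monotonicity property}. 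A secondary technical point is ensuring the $T^\bc$-invariance is not destroyed when we drop from $\E(f\mid\CI(T^\bc))$ to $f$ itself inside the derivatives; this is handled by noting $\Delta_{\b_1,\dots,\b_s;\uh-\uh'}$ commutes with $\E(\cdot\mid\CI(T^\bc))$ up to terms that are again $T^\bc$-invariant (since $\b_1,\dots,\b_s$ need not be related to $\bc$, one instead keeps the conditional expectation attached to the test functions $u$, which is legitimate because conditional expectation is self-adjoint). Careful bookkeeping of which $\sigma$-algebras the auxiliary functions live in — precisely the "extra information" the remark before the proposition flags as new relative to \cite{FrKu22} — is the part that requires the most care but no new idea.
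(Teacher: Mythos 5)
There is a genuine gap in your inductive framework, and the remedy you sketch for it is not a valid technique.

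You correctly identify the central obstacle: the multiplicative derivative $\Delta_{\b_1,\dots,\b_s;\uh-\uh'}$ is nonlinear in $f$, so one cannot naively push the weak limit $f=\lim_k\E_{n\in[N_k]}f_{n,k}$ through it. But your inductive step then requires exactly this — writing $\Delta_{\b_{s+1};h}\E(f|\CI(T^\bc))$ ``again as a weak limit of averages of $\Delta_{\b_{s+1};h}f_{n,k}$ up to an error supported on $\CI(T^\bc)$'' — and this is false: $\E(f|\CI(T^\bc))\cdot T^{\b_{s+1}h}\overline{\E(f|\CI(T^\bc))}$ is a product of two weak limits, not a weak limit of the products $\E_{n\in[N_k]}f_{n,k}\cdot T^{\b_{s+1}h}\overline{f_{n,k}}$, and the discrepancy has no useful structure (in particular it is not ``supported on $\CI(T^\bc)$''). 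Your proposed fix — ``apply weak convergence one copy at a time using a diagonal/telescoping argument over $k$'' — is not a real operation: weak convergence lets you pass a limit against a \emph{fixed} test function, and once you have used it on one factor you cannot use it again on another factor of the same integrand; there is no diagonalization that repairs this.

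The paper's proof sidesteps all of this by not inducting on $s$ at all. Starting from \eqref{seminorm of conditional expectation} and the dual identity, one writes $\nnorm{\E(f|\CI(T^\bc))}_{\b_1,\dots,\b_{s+1}}^{2^{s+1}}$ as $\lim_{H}\E_{\uh\in[H]^s}\int\Delta_{\b_1,\dots,\b_s;\uh}\E(f|\CI(T^\bc))\cdot u_\uh\,d\mu$ with $u_\uh:=\overline{\E(\Delta_{\b_1,\dots,\b_s;\uh}\E(f|\CI(T^\bc))\mid\CI(T^{\b_{s+1}}))}$; Lemma \ref{composing invariance} shows $u_\uh$ is both $T^{\b_{s+1}}$- and $T^\bc$-invariant. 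The $T^\bc$-invariance of $u_\uh$ and self-adjointness of $\E(\cdot|\CI(T^\bc))$ then let one replace $\E(f|\CI(T^\bc))$ by $f$ at the \emph{single} position $\ueps=\underline{1}$, keeping the remaining $2^s-1$ copies as the fixed deterministic functions $f_\ueps=\CC^{|\ueps|}\E(f|\CI(T^\bc))$. Only now does $f$ appear linearly, so the weak limit can be substituted once. Finally the twisted Gowers--Cauchy--Schwarz inequality (Lemma \ref{L:GCS}) is applied with $f_{\underline{1}}:=f_{n,k}$; its conclusion already has the shape $\int\Delta_{\b_1,\dots,\b_s;\uh-\uh'}f_{n,k}\cdot u_{\uh,\uh'}\,d\mu$ with $u_{\uh,\uh'}=T^{-(\b_1h_1'+\cdots+\b_sh_s')}\prod_\ueps\CC^{|\ueps|}u_{\uh^\ueps}$ inheriting both invariances. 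In other words, the nonlinear derivative $\Delta_{\b_1,\dots,\b_s;\uh-\uh'}$ is applied to $f_{n,k}$ \emph{after} the weak-limit substitution, as the output of GCS, never to $f$ itself. Part (ii) is quoted from \cite[Proposition 5.7]{FrKu22} rather than reproved. If you want to salvage your write-up, drop the induction on $s$ entirely and use the single-position linearization via Lemma \ref{L:GCS}.
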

For the proof of Proposition \ref{dual-difference interchange}, we need the following version of the Gowers-Cauchy-Schwarz inequality from \cite{FrKu22}.
\begin{lemma}[Twisted Gowers-Cauchy-Schwarz inequality]\label{L:GCS} Let $(X, \CX, \mu,T_1, \ldots, T_\ell)$ be a system, $s\in \N$, $\b_1, \ldots, \b_s\in\Z^\ell$ be vectors, and the functions $(f_\ueps)_{\ueps\in\{0,1\}^s}, (u_\uh)_{\uh\in\N^s} \subseteq L^\infty(\mu)$ be 1-bounded. Then for every $H\in \N$, we have
\begin{align*}
&\Big|\E_{\uh\in [H]^s}\,  \int \prod_{\ueps\in \{0,1\}^s} T^{\b_1 \eps_1 h_1+\cdots+\b_s \eps_s h_s} f_\ueps\cdot u_{\uh}\, d\mu\Big|^{2^s} \leq\\
& \E_{\uh,\uh'\in [H]^s}\,  \int \Delta_{\b_1, \ldots, \b_s; \uh-\uh'}f_{\underline{1}} \cdot T^{-(\b_1 h_1'+\cdots+\b_s h'_s)}\left(\prod_{\ueps\in \{0,1\}^s}\mathcal{C}^{|\ueps|}u_{\uh^\ueps}\right)\, d\mu.
\end{align*}
\end{lemma}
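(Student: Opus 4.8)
\textbf{Proof approach for the Twisted Gowers–Cauchy–Schwarz inequality (Lemma~\ref{L:GCS}).}

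The plan is to mimic the classical proof of the Gowers–Cauchy–Schwarz inequality, inducting on $s$ and introducing one new pair of shifts $h_s, h_s'$ at each stage, while carefully carrying along the auxiliary weight $(u_\uh)_{\uh}$ and tracking where it ends up after all the Cauchy–Schwarz steps. For the base case $s=1$, I would write the left-hand side as $\bigl|\E_{h\in[H]}\int f_0\cdot T^{\b_1 h}f_1\cdot u_h\,d\mu\bigr|^2$, view this as $\bigl|\int f_0\cdot \E_{h\in[H]}(T^{\b_1 h}f_1\cdot u_h)\,d\mu\bigr|^2$, apply Cauchy–Schwarz in $x$ to pull $f_0$ out (it is $1$-bounded, so $\int|f_0|^2\,d\mu\le 1$), obtaining a bound by $\int\bigl|\E_{h\in[H]}T^{\b_1 h}f_1\cdot u_h\bigr|^2\,d\mu$. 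Expanding the square over $h,h'$ and using invariance of the measure under $T^{-\b_1 h'}$ gives $\E_{h,h'\in[H]}\int \Delta_{\b_1;h-h'}f_1\cdot T^{-\b_1 h'}(u_h\overline{u}_{h'})\,d\mu$, which is exactly the claimed right-hand side for $s=1$ after recognizing $\prod_{\ueps\in\{0,1\}}\CC^{|\ueps|}u_{h^\ueps}=u_h\,\overline{u}_{h'}$.

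For the inductive step, I would peel off the last coordinate: write the average over $\uh\in[H]^s$ as an average over $\uh''=(h_1,\dots,h_{s-1})$ of an inner average over $h_s$. Group the $2^s$ factors $T^{\b_1\eps_1 h_1+\cdots+\b_s\eps_s h_s}f_\ueps$ according to whether $\eps_s=0$ or $\eps_s=1$: the factors with $\eps_s=0$ do not involve $h_s$, and the factors with $\eps_s=1$ each carry a $T^{\b_s h_s}$. Absorbing the $h_s$-dependent product and the weight $u_\uh$ into a single new weight, apply the Cauchy–Schwarz step in the $h_s$ variable exactly as in the base case, which doubles the number of $f$-factors (replacing each $f_{(\ueps'',1)}$-type factor by a derivative $\Delta_{\b_s;h_s-h_s'}$ of the corresponding function) and produces, in place of $u_\uh$, the product $T^{-\b_s h_s'}(u_{\uh}\,\overline{u}_{\uh'}\text{-type terms})$. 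One then applies the inductive hypothesis for $s-1$ in the variables $h_1,\dots,h_{s-1}$ (now with $h_s,h_s'$ frozen and playing the role of parameters inside the weight), and finally reorganizes the accumulated shifts and conjugations. The bookkeeping identity that makes everything close up is precisely the definition $\uh^\ueps=(h_1^{\eps_1},\dots,h_s^{\eps_s})$ together with $\ueps\cdot(\uh-\uh')$-type cancellations, so that after all $s$ steps the weight becomes $T^{-(\b_1 h_1'+\cdots+\b_s h_s')}\prod_{\ueps\in\{0,1\}^s}\CC^{|\ueps|}u_{\uh^\ueps}$ and the single surviving $f$-factor is $\Delta_{\b_1,\dots,\b_s;\uh-\uh'}f_{\underline{1}}$.

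The main obstacle is purely organizational rather than conceptual: one has to set up the induction so that the weight $(u_\uh)$, which after $k$ Cauchy–Schwarz steps depends on $2^k$ copies of some of the shift variables, is threaded through correctly, and so that the translates $T^{-\b_i h_i'}$ accumulated at each step combine into the single clean translate $T^{-(\b_1 h_1'+\cdots+\b_s h_s')}$ claimed in the statement. A clean way to avoid a messy multi-index induction is to prove a slightly more general statement in which one allows an arbitrary $1$-bounded weight depending on all of $\uh$ and all previously introduced primed variables, and to verify that this generality is preserved by a single Cauchy–Schwarz step; the lemma as stated is then the $k=s$ instance. I would also note that since the relevant Cesàro limits over $\uh\in\Z^s$ are known to exist (by \cite[Lemma~1]{Ho09}, as used earlier in the excerpt), one may freely pass between finite $H$ and the limit, and the constants introduced by Cauchy–Schwarz are all equal to $1$ because every function involved is $1$-bounded.
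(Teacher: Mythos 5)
The paper does not actually prove Lemma~\ref{L:GCS}: it cites it verbatim from \cite{FrKu22}, so there is no internal proof to compare against. That said, your proposed induction on $s$ via iterated Cauchy--Schwarz is the standard and correct route, and the key mechanisms you identify (peeling off the last coordinate, grouping factors by $\eps_s$, absorbing the $h_s$-dependent product into the weight, applying the inductive hypothesis for $s-1$ with $h_s,h_s'$ treated as frozen parameters) are exactly what one needs. The crucial algebraic facts that make the bookkeeping close up are that $\Delta_{\b_s;h_s-h_s'}$ commutes with the translates $T^{\b_1\eps''_1 h_1+\cdots+\b_{s-1}\eps''_{s-1}h_{s-1}}$ applied to $f_{(\ueps'',1)}$, so one gets a clean product $\prod_{\ueps''}T^{\cdots}\bigl(\Delta_{\b_s;h_s-h_s'}f_{(\ueps'',1)}\bigr)$, and that the new weight $T^{-\b_s h_s'}\bigl(u_{(\uh'',h_s)}\overline{u}_{(\uh'',h_s')}\bigr)$ feeds through the $(s-1)$-dimensional inductive hypothesis to give exactly $\prod_{\ueps\in\{0,1\}^s}\CC^{|\ueps|}u_{\uh^\ueps}$ together with the single translate $T^{-(\b_1 h_1'+\cdots+\b_s h'_s)}$.

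Two small items should be made explicit in a full writeup. First, after the single Cauchy--Schwarz step in $h_s$ you have a bound of the form $|\text{LHS}|^2\leq \E_{h_s,h_s'}\,X_{h_s,h_s'}$ with $X_{h_s,h_s'}=\E_{\uh''}\int(\cdots)\,d\mu$, and the inductive hypothesis applies to $|X_{h_s,h_s'}|^{2^{s-1}}$; so you need the triangle inequality together with Jensen (power mean) in the form $\bigl(\E_{h_s,h_s'}X\bigr)^{2^{s-1}}\leq\E_{h_s,h_s'}|X|^{2^{s-1}}$ to interchange the power and the $(h_s,h_s')$-average. You never state this interchange, and it is the one point where the argument could go sideways if written carelessly. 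Second, the base case as you compute it actually produces $\overline{\Delta_{\b_1;h-h'}f_1}$ rather than $\Delta_{\b_1;h-h'}f_1$; this is harmless because one checks that the right-hand side is real (swap $h\leftrightarrow h'$ and shift by $T^{\b_1(h-h')}$), so the two expressions agree, but this conjugation bookkeeping should be acknowledged rather than swept under the rug. Finally, the closing remark about passing between finite $H$ and Ces\`aro limits via \cite[Lemma~1]{Ho09} is beside the point: the lemma as stated is a purely finitary inequality for each fixed $H$ and requires no convergence input.
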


We also record two simple observations.
\begin{lemma}\label{composing invariance}
Let $(X, \CX, \mu,T_1, \ldots, T_\ell)$ be a system, $\ba, \b\in\Z^\ell$ be vectors, and $f\in L^\infty(\mu)$ be a function. Then the following two properties hold.
\begin{enumerate}
    \item If $f$ is invariant under $T^\ba T^{-\b}$, then $\nnorm{f}_{s, T^\ba} = \nnorm{f}_{s, T^\b}$ for any $s\in\N$.
    \item If $f$ in invariant under $T^\ba$, then so is the function $\E(f|\CI(T^\b))$.
    %is invariant under $T^\ba T^{\b}$.
\end{enumerate}
\end{lemma}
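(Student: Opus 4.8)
\textbf{Proof plan for Lemma \ref{composing invariance}.}

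The plan is to handle the two parts separately, both by elementary manipulations with the seminorm definitions and conditional expectations; neither part requires any deep input.

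For part (i), first I would reduce to the case $s=1$ together with an induction on $s$. For $s=1$, recall $\nnorm{f}_{1,T^\ba} = \norm{\E(f|\CI(T^\ba))}_{L^2(\mu)}$. The point is that if $f$ is invariant under $T^\ba T^{-\b}$, i.e. $T^\ba f = T^\b f$, then $\CI(T^\ba)$ and $\CI(T^\b)$ agree when restricted to the closed subspace generated by $f$ and its shifts; more concretely, one checks that $\E(f|\CI(T^\ba)) = \E(f|\CI(T^\b))$. Indeed, writing $g := \E(f|\CI(T^\b))$, one verifies using the von Neumann ergodic theorem that $g = \lim_{N\to\infty}\E_{n\in[N]} T^{\b n} f$; since $T^\ba f = T^\b f$ one has $T^{\ba n} f = T^{\b n} f$ for all $n\geq 0$ (and similarly for negative $n$ using invertibility), so the same Ces\`aro average computes $\E(f|\CI(T^\ba))$, giving equality of the two conditional expectations and hence of the two seminorms at level $1$. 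For the inductive step at level $s+1$, I would use the identity
\begin{align*}
    \nnorm{f}_{s+1, T^\ba}^{2^{s+1}} = \lim_{H\to\infty}\E_{h\in[H]}\nnorm{\Delta_{T^\ba; h}f}_{s, T^\ba}^{2^s},
\end{align*}
and observe that $\Delta_{T^\ba; h}f = f\cdot T^{\ba h}\overline f = f\cdot T^{\b h}\overline f = \Delta_{T^\b; h}f$, precisely because $T^{\ba h}f = T^{\b h}f$ follows from the invariance of $f$ under $T^\ba T^{-\b}$. Moreover each function $\Delta_{T^\ba; h}f$ is again invariant under $T^\ba T^{-\b}$ (a product of a $T^\ba T^{-\b}$-invariant function with another such function is such), so the induction hypothesis applies to give $\nnorm{\Delta_{T^\ba; h}f}_{s, T^\ba} = \nnorm{\Delta_{T^\b; h}f}_{s, T^\b}$. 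Substituting, the right-hand side equals $\nnorm{f}_{s+1, T^\b}^{2^{s+1}}$, completing the induction.

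For part (ii), I would argue directly. Suppose $f$ is invariant under $T^\ba$; we must show $T^\ba\bigl(\E(f|\CI(T^\b))\bigr) = \E(f|\CI(T^\b))$. Since $T^\ba$ and $T^\b$ commute, $T^\ba$ maps $\CI(T^\b)$ to itself, hence $T^\ba$ commutes with the conditional expectation operator $\E(\cdot|\CI(T^\b))$ (it is unitary on $L^2(\mu)$ and preserves the sub-$\sigma$-algebra $\CI(T^\b)$, so it commutes with the orthogonal projection onto $L^2(\CI(T^\b))$). Therefore
\begin{align*}
    T^\ba\bigl(\E(f|\CI(T^\b))\bigr) = \E(T^\ba f|\CI(T^\b)) = \E(f|\CI(T^\b)),
\end{align*}
using $T^\ba f = f$ in the last step. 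This gives the claim.

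I do not anticipate a genuine obstacle here; the only point requiring mild care is the reduction in part (i), namely justifying $\E(f|\CI(T^\ba)) = \E(f|\CI(T^\b))$ from the invariance hypothesis and the fact that the multiplicative derivatives inherit the same invariance so that the induction closes cleanly. Everything else is a routine unwinding of definitions.
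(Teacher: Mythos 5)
Your proof is correct. For part (ii), your argument and the paper's are essentially the same in spirit: you phrase the key fact as ``$T^\ba$ commutes with the projection $\E(\cdot|\CI(T^\b))$'' while the paper computes directly via the mean ergodic theorem ($T^\ba\E(f|\CI(T^\b)) = \lim_H \E_{h\in[H]}T^\ba T^{h\b}f = \lim_H \E_{h\in[H]}T^{h\b}f$); both rest on commutativity of $T^\ba$ and $T^\b$.

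For part (i) your route is genuinely different from the paper's. The paper proves it in one line by substituting $T^{\ba n}f = T^{\b n}f$ (for all $n\in\Z$, a consequence of $T^\ba T^{-\b}f=f$ and commutativity) directly into the averaged integral formula $\nnorm{f}_{s,T}^{2^s}=\lim_{H\to\infty}\E_{\uh\in[H]^s}\int \Delta_{s,T;\uh}f\,d\mu$, replacing every occurrence of $T^{\ba(\ueps\cdot\uh)}f$ with $T^{\b(\ueps\cdot\uh)}f$. You instead induct on $s$ via the recursive definition $\nnorm{f}_{s+1,T}^{2^{s+1}}=\lim_H\E_{h\in[H]}\nnorm{\Delta_{T;h}f}_{s,T}^{2^s}$, observing that $\Delta_{T^\ba;h}f=\Delta_{T^\b;h}f$ and that this derivative inherits the $T^\ba T^{-\b}$-invariance so the induction closes. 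Both arguments are powered by the same single observation ($T^{\ba n}f=T^{\b n}f$), but the paper's is shorter and avoids the need to verify that the invariance propagates to derivatives; yours is a bit longer but gives the slightly stronger intermediate fact $\E(f|\CI(T^\ba))=\E(f|\CI(T^\b))$ at the base case, which the paper's proof does not isolate.
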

\begin{proof}
For part (i), we notice that
\begin{align*}
    \nnorm{f}_{s, T^\ba} &= \lim_{H\to\infty}\E_{\uh\in[H]^s}\int \prod_{\ueps\in\{0,1\}^s} \CC^{|\ueps|}T^{\ba(\ueps\cdot \uh)} f\, d\mu\\
    &= \lim_{H\to\infty}\E_{\uh\in[H]^s}\int \prod_{\ueps\in\{0,1\}^s} \CC^{|\ueps|}T^{\b(\ueps\cdot \uh)} f\, d\mu = \nnorm{f}_{s, T^\b}.
\end{align*}
For part (ii), we use the fact that $T^\ba, T^{\b}$ commute and the $T^\ba$-invariance of $f$ to observe that $T^\ba T^{h\b}f =  T^{h\b} T^\ba f = T^{h\b} f$. From this and the mean ergodic theorem it follows that
\begin{align*}
    T^{\ba}\E(f|\CI(T^\b)) = \lim_{H\to\infty}\E_{h\in[H]}T^\ba T^{h\b}f = \lim_{H\to\infty}\E_{h\in[H]}T^{h\b} f = \E(f|\CI(T^\b)),
\end{align*}
and so $\E(f|\CI(T^\b))$ is invariant under $T^\ba$.
%Since $\E(f|\CI(T^\b))$ is trivially invariant under $T^\b$, the claim follows by composing the two invariance properties.
\end{proof}

\begin{proof}[Proof of Proposition \ref{dual-difference interchange}]
Part (ii) follows from \cite[Proposition 5.7]{FrKu22}, and so we only prove (i). Letting $u_\uh := \overline{\E(\Delta_{{\b_1}, \ldots, {\b_{s}}; \uh}\E(f|\CI(T^\bc))|\CI(T^{\b_{s+1}}))}$, we deduce from \eqref{seminorm of conditional expectation} that
\begin{align*}
    \lim_{H\to\infty}\E_{\uh\in [H]^s} \int  \Delta_{{\b_1}, \ldots, {\b_{s}}; \uh}\E(f|\CI(T^\bc)) \cdot u_\uh\, d\mu > 0.
\end{align*}
The $T^\bc$-invariance of $\E(f|\CI(T^\bc))$ implies the $T^\bc$-invariance of $\Delta_{{\b_1}, \ldots, {\b_{s}}; \uh}\E(f|\CI(T^\bc))$, so the functions $u_\uh$ are invariant under $T^\bc$ by Lemma \ref{composing invariance} (their $T^{\b_{s+1}}$-invariance is trivial). Using the $T^\bc$-invariance of $u_\uh$ and the properties of conditional expectations, we deduce that

\begin{align*}%\label{invariant inverse}
&\lim_{H\to\infty}\E_{\uh\in [H]^s} \int  \Delta_{{\b_1}, \ldots, {\b_{s}}; \uh}\E(f|\CI(T^\bc)) \cdot u_\uh\, d\mu=\\
%&= \lim_{H\to\infty}\E_{\uh\in [H]^s} \int  \Delta_{{\b_1}, \ldots, {\b_{s}}; \uh}\E(f|\CI(T^\bc)) \cdot \E(u_\uh|\CI(T^\bc))\, d\mu\\
& \lim_{H\to\infty}\E_{\uh\in [H]^s} \int  \prod\limits_{{\ueps}\in \{0,1\}^s\setminus\{\underline{1}\}}\mathcal{C}^{|{\ueps}|} T^{\b_1 \eps_1 h_1+\cdots+ \b_s \eps_s h_s}\E(f|\CI(T^\bc))\cdot T^{\b_1 h_1+\cdots+\b_s h_s}f \cdot \E(u_\uh|\CI(T^\bc))\, d\mu.
\end{align*}
%where we use the properties of the conditional expectation twice. Replacing $u_\uh$ by $\E(u_\uh|\CI(T^\bc))$, we can assume that this function is both $T^{\b_{s+1}}$ and $T^{\bc}$-invariant.

For $\ueps\in\{0,1\}^s\setminus\{1\}$, we let $f_\ueps := \CC^{|\ueps|}\E(f|\CI(T^\bc))$.  We deduce from the previous identity and the fact    $f=\lim_{k\to\infty}\E_{n\in [N_k]}\, f_{n,k}$, where convergence is  in the weak sense, that
	$$
    \lim_{H\to\infty} \lim_{k\to\infty}\E_{n\in [N_k]}\, \E_{\uh\in  [H]^s}\,  \int \prod_{\ueps\in \{0,1\}^s\setminus \{\underline{1}\}} T^{\b_1 \eps_1 h_1+\cdots+ \b_s \eps_s h_s} f_\ueps \cdot T^{\b_1 h_1+\cdots+\b_s h_s} f_{n,k}\cdot u_\uh\, d\mu > 0.
	$$
For fixed $k,n, H\in \N$, we apply  Lemma~\ref{L:GCS} with $f_{\underline{1}}:= f_{n,k}$, obtaining
$$
    \liminf_{H\to\infty} \limsup_{k\to\infty}\E_{n\in [N_k]}\, \E_{\uh,\uh'\in  [H]^s}\,  \int\Delta_{\b_1, \ldots, \b_s; \uh-\uh'}f_{n,k} \cdot u_{\uh, \uh'} \, d\mu > 0
$$	
where
\begin{align*}
     u_{\uh, \uh'} := T^{-(\b_1 h_1'+\cdots+\b_s h'_s)}\left(\prod_{\ueps\in \{0,1\}^s}\mathcal{C}^{|\ueps|}u_{\uh^\ueps}\right).
\end{align*}
The functions $u_{\uh, \uh'}$ are both $T^{\b_{s+1}}$ and $T^{\bc}$ invariant given that each $(u_\uh)_{\uh\in\N^s}$ is and the transformations $T_1,\ldots, T_\ell$ commute. The result follows from the fact that the limsup of a sum is at most the sum  of the limsups.
\end{proof}
 The proposition below enables a transition between qualitative and soft quantitative results. Its proof  uses  rather abstract functional analytic arguments and  the mean convergence result of Walsh~\cite{Wal12}. If we instead use the mean convergence result of Zorin-Kranich~\cite{Zo15a} we can also get a variant that deals with averages over an arbitrary F\o lner sequence on $\Z^D$.
\begin{proposition}[Soft quantitative control {\cite[Proposition A.1]{FrKu22}}]\label{P:Us}
	Let $m, \ell, s\in\N$ with $m\in[\ell]$, $p_1, \ldots, p_\ell\in\Z[n]$ be polynomials,  and $(X, \CX, \mu, T_1,\ldots, T_\ell)$ be a system. Let $\CY_1, \ldots, \CY_\ell\subseteq\CX$ be sub-$\sigma$-algebras. Suppose that for all $f_j\in L^\infty(\CY_j, \mu)$, $j\in[\ell]$, the seminorm $\nnorm{f_m}_{s}$ controls   the average
	\begin{equation}\label{E:averages}
		\E_{n\in[N]}  \, T_1^{p_1(n)}f_1\cdots T_\ell^{p_\ell(n)}f_\ell
	\end{equation}
	in that \eqref{E:averages} converges to 0 in $L^2(\mu)$ whenever $\nnorm{f_m}_{s} = 0$.
	Then for every	$\varepsilon>0$	there exists  $\delta>0$ such that if
	$f_j\in L^\infty(\CY_j, \mu)$, $j\in[\ell]$,  are $1$-bounded and  $\nnorm{f_m}_{s}\leq \delta$, then
			$$
			\lim_{N\to\infty} \norm{\E_{n\in[N]} \,  T_1^{p_1(n)}f_1\cdots T_\ell^{p_\ell(n)}f_\ell}_{L^2(\mu)}\leq \varepsilon.
			$$
\end{proposition}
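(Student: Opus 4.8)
The plan is to argue by contradiction, using Walsh's mean convergence theorem \cite{Wal12} only to guarantee that the relevant multiple ergodic averages converge, and a compactness argument to upgrade the qualitative hypothesis to its quantitative form. So suppose the conclusion fails: there are $\varepsilon_0>0$ and, for each $k\in\N$, $1$-bounded $f_1^{(k)},\dots,f_\ell^{(k)}$ with $f_j^{(k)}\in L^\infty(\CY_j,\mu)$ such that $\nnorm{f_m^{(k)}}_s\le 1/k$ while $\norm{A(\vec f^{(k)})}_{L^2(\mu)}>\varepsilon_0$, where $A(\vec f):=\lim_{N\to\infty}\E_{n\in[N]}T_1^{p_1(n)}f_1\cdots T_\ell^{p_\ell(n)}f_\ell$, the limit existing in $L^2(\mu)$ by \cite{Wal12}.

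First I would put $\norm{A(\vec f^{(k)})}_{L^2(\mu)}^2$ into a form on which the seminorm hypothesis can bite. Expanding the square as $\langle A(\vec f^{(k)}),A(\vec f^{(k)})\rangle$, unfolding one copy of $A(\vec f^{(k)})$ as the ergodic average and transferring the iterates of $T_m$ to the other side of the inner product, and invoking \cite{Wal12} once more for the convergence of the resulting average, one gets $\varepsilon_0^2<\norm{A(\vec f^{(k)})}_{L^2(\mu)}^2=\int f_m^{(k)}\cdot\overline{h^{(k)}}\,d\mu$, where $h^{(k)}:=\lim_{N\to\infty}\E_{n\in[N]}\big(\prod_{j\neq m}T_m^{-p_m(n)}T_j^{p_j(n)}\overline{f_j^{(k)}}\big)\cdot T_m^{-p_m(n)}A(\vec f^{(k)})$ is a $1$-bounded multiple ergodic average. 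Next, since by hypothesis $A$ kills its $m$-th argument as soon as $\nnorm{\cdot}_s$ vanishes, the value $A(\vec f)$ depends on $f_m$ only through its conditional expectation onto $\CZ_{s-1}(T_m)$; thus I may replace $f_m^{(k)}$ by $\tilde f_m^{(k)}:=\E(f_m^{(k)}\mid\CZ_{s-1}(T_m))$ — still $1$-bounded, $\CZ_{s-1}(T_m)$-measurable, with $\nnorm{\tilde f_m^{(k)}}_s\le 1/k$ — and correspondingly $h^{(k)}$ by $\hat h^{(k)}:=\E(h^{(k)}\mid\CZ_{s-1}(T_m))$, leaving $\varepsilon_0^2<\int\tilde f_m^{(k)}\cdot\overline{\hat h^{(k)}}\,d\mu$ with both factors $1$-bounded and $\CZ_{s-1}(T_m)$-measurable.

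The heart of the argument is then to combine the counterexamples. By weak-$*$ compactness of the $L^\infty$-unit ball, pass to a subsequence along which $\hat h^{(k)}\rightharpoonup h^{*}$ and $\tilde f_m^{(k)}\rightharpoonup g^{*}$ weakly in $L^2(\mu)$, with $g^{*},h^{*}$ $1$-bounded and $\CZ_{s-1}(T_m)$-measurable. The seminorm $\nnorm{\cdot}_s$ is a bounded seminorm on $L^2(\mu)$, hence, being by Hahn--Banach the supremum of moduli of weakly continuous linear functionals, weakly lower semicontinuous, so $\nnorm{g^{*}}_s\le\liminf_k\nnorm{\tilde f_m^{(k)}}_s=0$; as $\nnorm{\cdot}_s$ restricts to a genuine norm on $L^2(\CZ_{s-1}(T_m))$, this forces $g^{*}=0$. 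One then wants to conclude $\int\tilde f_m^{(k)}\overline{\hat h^{(k)}}\,d\mu\to 0$, contradicting the previous display. Writing $\hat h^{(k)}=h^{*}+(\hat h^{(k)}-h^{*})$ and applying the dual decomposition (Proposition \ref{dual decomposition}) at level $s$ for $T_m$ to the \emph{fixed} function $h^{*}$, namely $h^{*}=\sum_i c_i\CD_{s,T_m}(\phi_i)+h_2^{*}+h_3^{*}$ with $\norm{h_2^{*}}_{L^1(\mu)}$ as small as desired and $\nnorm{h_3^{*}}_s=0$, the pairing $\langle\tilde f_m^{(k)},h^{*}\rangle$ is controlled: its structured part contributes $O\big((\sum_i|c_i|)\,\nnorm{\tilde f_m^{(k)}}_s\big)\to 0$ by the Gowers--Cauchy--Schwarz inequality \eqref{E:GCS}, which gives $|\int\tilde f_m^{(k)}\cdot\CD_{s,T_m}(\phi_i)\,d\mu|\le\nnorm{\tilde f_m^{(k)}}_s\,\nnorm{\phi_i}_s^{2^s-1}\le\nnorm{\tilde f_m^{(k)}}_s$, the $L^1$-small part is negligible, and the $h_3^{*}$-part vanishes because $\tilde f_m^{(k)}$ is $\CZ_{s-1}(T_m)$-measurable while $h_3^{*}\perp\CZ_{s-1}(T_m)$.

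The step I expect to be the main obstacle is the leftover pairing $\langle\tilde f_m^{(k)},\hat h^{(k)}-h^{*}\rangle$, in which both factors vary with $k$ and neither converges strongly, so it need not tend to $0$. Handling it requires some genuine compactness for the family $\{\hat h^{(k)}\}$ — equivalently, a bound on the $\ell^1$-mass $\sum_i|c_i|$ of the dual decompositions of the $\hat h^{(k)}$ that is uniform in $k$ — and this is precisely where one must exploit that the $\hat h^{(k)}$ are conditional expectations of multiple ergodic averages (not arbitrary functions), together with the mean convergence theorem \cite{Wal12}, rather than treating them as abstract $L^\infty$ functions; the role of the sub-$\sigma$-algebras $\CY_j$ is to keep these averages in a sufficiently restricted class for this to be possible. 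A convenient way to package the limiting procedure is to run it inside an ultraproduct of the system $(X,\CX,\mu,T_1,\dots,T_\ell)$, where a diagonal choice of the averaging scales in \cite{Wal12} makes Walsh's limits "internal" and the bound $\nnorm{f_m^{(k)}}_s\le 1/k$ becomes an infinitesimal seminorm, so that a transferred form of the hypothesis can be applied; turning this heuristic into a rigorous argument is the technical crux. The F\o lner-sequence variant is obtained by replacing \cite{Wal12} throughout with the mean convergence theorem of Zorin-Kranich \cite{Zo15a}.
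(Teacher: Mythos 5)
The paper does not actually prove Proposition~\ref{P:Us}: it is quoted verbatim from \cite[Proposition~A.1]{FrKu22}, and the text only remarks that the proof there ``uses rather abstract functional analytic arguments and the mean convergence result of Walsh''. So there is no in-paper proof to compare against line by line; I will assess your attempt on its own terms.

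Your setup (argue by contradiction, use Walsh to define $A(\vec f)$, expand $\norm{A(\vec f^{(k)})}^2=\int f_m^{(k)}\overline{h^{(k)}}\,d\mu$ with $h^{(k)}$ again a Walsh limit, extract weak-$*$ limits) is sensible and several steps are correct. Two points are off or unfinished. First, your justification of weak lower semicontinuity of $\nnorm{\cdot}_s$ is wrong: $\nnorm{\cdot}_{s,T}$ is \emph{not} a bounded seminorm on $L^2(\mu)$ when $s\ge 2$ (there is no bound $\nnorm{f}_s\le C\norm{f}_{L^2}$), so the Hahn--Banach/supremum-of-linear-functionals argument does not apply. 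The conclusion $\nnorm{g^*}_s\le\liminf_k\nnorm{\tilde f_m^{(k)}}_s$ is still true, but for a different reason: write $\nnorm{g^*}_s^{2^s}=\int g^*\,\CD_{s,T_m}(g^*)\,d\mu=\lim_k\int\tilde f_m^{(k)}\,\CD_{s,T_m}(g^*)\,d\mu$ and then apply Gowers--Cauchy--Schwarz to bound each term by $\nnorm{\tilde f_m^{(k)}}_s\,\nnorm{g^*}_s^{2^s-1}$. Second, and more seriously, the step where you replace $f_m^{(k)}$ by $\tilde f_m^{(k)}:=\E(f_m^{(k)}\mid\CZ_{s-1}(T_m))$ uses the hypothesis for $f_m^{(k)}-\tilde f_m^{(k)}$, which has zero seminorm but need not lie in $L^\infty(\CY_m,\mu)$; the hypothesis of the proposition only speaks about $\CY_m$-measurable inputs, and in the actual applications in this paper (e.g.\ Proposition~\ref{P:Host Kra characteristic quantitative}) $\CY_m$ is an invariant sub-$\sigma$-algebra that does not contain $\CZ_{s-1}(T_m)$. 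So this reduction requires additional justification.

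The main issue, however, is exactly the one you flag: the term $\langle\tilde f_m^{(k)},\hat h^{(k)}-h^*\rangle$ is a pairing of two sequences that converge only weakly, and such pairings need not vanish (e.g.\ $\langle e^{ikx},e^{ikx}\rangle=1$). Nor does the structural constraint $\nnorm{\tilde f_m^{(k)}}_s\to 0$ with $\tilde f_m^{(k)}$ $\CZ_{s-1}(T_m)$-measurable rescue the argument as stated, since $\nnorm{\cdot}_s$ induces a strictly weaker topology than $\norm{\cdot}_{L^2}$ on $L^2(\CZ_{s-1}(T_m))$, so $\tilde f_m^{(k)}$ can have $L^2$-norm bounded below while $\nnorm{\tilde f_m^{(k)}}_s\to 0$, and then taking $\hat h^{(k)}=\tilde f_m^{(k)}$ shows the abstract bilinear pairing does not go to zero. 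Incidentally, once you have $g^*=0$, the convergence $\langle\tilde f_m^{(k)},h^*\rangle\to 0$ is immediate from weak convergence; the dual decomposition of the \emph{fixed} $h^*$ buys you nothing, and what would be needed is a dual decomposition of the \emph{varying} $\hat h^{(k)}$ with an $\ell^1$-mass bound uniform in $k$. You correctly anticipate that this is where the argument has to exploit the specific structure of the $\hat h^{(k)}$ as (projections of) Walsh limits, and you suggest an ultraproduct packaging, but neither route is carried out, and the ``technical crux'' is precisely the content of \cite[Proposition~A.1]{FrKu22}. As written, the proof attempt is therefore incomplete: the hard functional-analytic step is identified but not proved.
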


 Finally, we need the following PET result  that gives box seminorm control for averages with extra terms involving dual functions. It extends  \cite[Theorem~2.5]{DFMKS21} that did not involve dual functions. We remark that these arguments  are proved by combining a complicated variant of Bergelson's original PET technique \cite{Be87a}  with  concatenation results of Tao and Ziegler \cite{TZ16}.
\begin{proposition}[Box seminorm control {\cite[Proposition B.1]{FrKu22}}]\label{strong PET bound}
Let $d, \ell, L\in\N$, $\eta\in[\ell]^\ell,$ and $p_1, \ldots, p_\ell, q_1, \ldots, q_L\in\Z[n]$ be polynomials with representation $p_j(n) = \sum_{i=0}^d a_{ji} n^i$. Suppose that $\deg p_\ell = d$ and $d_{\ell j} := \deg(p_\ell \be_{\eta_\ell} - p_j \be_{\eta_j})>0$ for every $j=0, \ldots, \ell-1$. Then there exist $s\in\N$ and nonzero vectors
\begin{align}\label{E:coefficientvectors}
    \b_1, \ldots, \b_s \in \{a_{\ell d_{\ell j}}\be_{\eta_\ell}- a_{j d_{\ell j}}\be_{\eta_j}:\; j = 0, \ldots, \ell-1\},
\end{align}
with the following property: for every system $(X, \CX, \mu, T_1, \ldots, T_\ell)$,  functions $f_1, \ldots, f_\ell\in L^\infty(\mu)$, and sequences of functions $\CD_1, \ldots, \CD_L\in\FD_d$, we have
\begin{align*}%\label{vanishing average strong PET bound}
     \lim_{N\to\infty}\norm{\E_{n\in[N]}\prod_{j\in[\ell]}T_{\eta_j}^{p_j(n)}f_j\cdot\prod_{j\in[L]}\CD_{j}(q_j(n))}_{L^2(\mu)}=0
\end{align*}
whenever $\nnorm{f_\ell}_{{\b_1}, \ldots, {\b_s}}=0$.
\end{proposition}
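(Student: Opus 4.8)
The statement to be proved is Proposition \ref{strong PET bound}, which should be regarded as an upgraded version of \cite[Theorem~2.5]{DFMKS21}. That earlier result handles exactly the same kind of polynomial average $\E_{n\in[N]}\prod_{j\in[\ell]}T_{\eta_j}^{p_j(n)}f_j$ with the same nondegeneracy hypotheses ($\deg p_\ell=D$ and $d_{\ell j}=\deg(p_\ell\be_{\eta_\ell}-p_j\be_{\eta_j})>0$ for all $j<\ell$), and produces $s\in\N$ and nonzero vectors $\b_1,\dots,\b_s$ drawn from the set \eqref{E:coefficientvectors} of leading-coefficient differences, such that box-seminorm control $\nnorm{f_\ell}_{\b_1,\dots,\b_s}=0\Rightarrow$ (limit $=0$) holds. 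The only genuinely new feature here is the presence of the extra factor $\prod_{j\in[L]}\CD_j(q_j(n))$, where each $\CD_j\in\FD_d$ is (a translate of) a dual function of level at most $d$ for one of the transformations. So the plan is to re-run the PET + concatenation argument of \cite{DFMKS21}, treating the dual terms as additional ``auxiliary'' iterates that are carried along passively through the van der Corput / PET iteration and only discarded at the very end, where the fact that dual sequences are $1$-bounded suffices.

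First I would set up the PET induction on the \emph{weight vector} of the tuple $\big((p_j\be_{\eta_j})_{j\in[\ell]},(q_j)_{j\in[L]}\big)$, exactly as in Bergelson's scheme and its refinement in \cite[\S\S2--3]{DFMKS21}, with the crucial bookkeeping that the polynomials $q_j$ attached to dual functions are never allowed to become the ``pivot'' of a van der Corput step: at each stage we differentiate against the genuine iterate whose degree is maximal among the $p_j\be_{\eta_j}$, shift the index $n\mapsto n+h$, take the conjugate, and apply Cauchy--Schwarz/van der Corput. Under the shift $n\mapsto n+h$ a dual term $\CD_j(q_j(n))$ becomes $\CD_j(q_j(n+h))$, and after multiplying by the conjugate we get $\CD_j(q_j(n+h))\cdot\overline{\CD_j(q_j(n))}$; both are still $1$-bounded sequences built from the same dual functions (their arguments are polynomials in $n$ of degree $\le\deg q_j\le D$), so the structural hypothesis ``$\CD_j\in\FD_d$, polynomial arguments of degree $\le D$'' is preserved. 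This is the step where the generalization over \cite{DFMKS21} is not literally black-box: one has to check that the combinatorics of the PET descent — which polynomial becomes the pivot, how the weight drops, when the process terminates — is governed solely by the genuine iterates $(p_j\be_{\eta_j})$ and is completely insensitive to the presence of the dual companions. The nondegeneracy hypothesis $d_{\ell j}>0$ for $j=0,\dots,\ell-1$ is precisely what guarantees that $p_\ell\be_{\eta_\ell}$ can serve as pivot and that the first differencing produces nonzero vectors of the stated form $a_{\ell d_{\ell j}}\be_{\eta_\ell}-a_{j d_{\ell j}}\be_{\eta_j}$; subsequent differencings introduce further linear combinations, but after finitely many steps one arrives at a ``linear'' configuration and the resulting seminorm is, after applying Lemma \ref{L:seminorm of power} to absorb integer multiples, of the form $\nnorm{f_\ell}_{\b_1,\dots,\b_s}$ with $\b_i$ in the set \eqref{E:coefficientvectors} (the concatenation theorems of Tao--Ziegler \cite{TZ16} are invoked exactly as in \cite{DFMKS21} to replace the long, possibly repetitive list of difference vectors produced by raw PET with the clean list \eqref{E:coefficientvectors} and to fix the final $s$ depending only on $d,D,\ell,L,\eta$).

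Once the PET/concatenation descent has reduced matters to a bound of the shape
\[
\limsup_{N\to\infty}\norm{\E_{n\in[N]}\prod_{j\in[\ell]}T_{\eta_j}^{p_j(n)}f_j\cdot\prod_{j\in[L]}\CD_j(q_j(n))}_{L^2(\mu)}^{2^s}
\lesssim \nnorm{f_\ell}_{\b_1,\dots,\b_s}^{2^s},
\]
the conclusion follows: if $\nnorm{f_\ell}_{\b_1,\dots,\b_s}=0$ the right-hand side vanishes, hence so does the average. The dual terms contribute only a harmless $1$-bounded factor throughout (they are never the function whose seminorm we control), so no dual-function-specific estimate beyond $\norm{\CD_j(\cdot)}_{L^\infty(\mu)}\le1$ is ever needed. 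I expect the main obstacle to be purely organizational rather than conceptual: carefully verifying that every step of the intricate PET weight-reduction argument of \cite{DFMKS21}, together with the Tao--Ziegler concatenation step that controls $s$, goes through verbatim in the presence of the passive dual companions — i.e. that none of the degree/weight bookkeeping, and in particular the identification of the admissible difference vectors with \eqref{E:coefficientvectors}, is disturbed by the extra $q_j$-iterates. Since this is exactly the content cited from \cite{FrKu22} as \cite[Proposition~B.1]{FrKu22}, the proof in the paper presumably consists of carrying out this verification in detail (or of literally quoting the argument of \cite{DFMKS21} with the dual terms inserted), and the honest summary is: adapt \cite[Theorem~2.5]{DFMKS21} by carrying the $1$-bounded dual iterates through the PET induction unchanged, and apply \cite{TZ16} to finalize $s$ and the form of the seminorm.
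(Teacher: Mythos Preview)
Your plan is accurate, and it matches the paper's own account: the paper does not prove this proposition but cites it as \cite[Proposition~B.1]{FrKu22}, remarking only that it ``extends \cite[Theorem~2.5]{DFMKS21} that did not involve dual functions'' and that the argument ``combin[es] a complicated variant of Bergelson's original PET technique \cite{Be87a} with concatenation results of Tao and Ziegler \cite{TZ16}.'' Your sketch---run the PET descent of \cite{DFMKS21} while carrying the $1$-bounded dual companions passively, then apply Tao--Ziegler concatenation to pin down $s$ and the vectors in \eqref{E:coefficientvectors}---is exactly this description fleshed out, so there is nothing to correct.
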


\section{Two motivating examples}\label{S:n^2, n^2, n^2+n}
 In this section, we prove  Theorem \ref{T:Host Kra characteristic} for the family $n^2, n^2, n^2+n$ and sketch the changes needed to handle the family   $n^2, n^2, 2n^2+n$. These two  cases illustrate some (but not all) key ideas needed in the proof of  Theorem \ref{T:Host Kra characteristic} in a simple setting.   Additional complications arise for more general families and the ideas needed to overcome them will be illustrated with  examples given on subsequent sections.

\begin{example}[Seminorm control for a monic family of length 3]\label{Ex: n^2, n^2, n^2+n}
%%In this section, we sketch the proof of Theorem \ref{T:Host Kra characteristic} for the family %%$n^2, n^2, n^2+n$ under the good ergodicity assumption on the system, as in the result below.
Our goal is to prove the following result.
\begin{proposition}[Seminorm control for $n^2,n^2,n^2+n$]\label{Seminorm control of n^2, n^2, n^2+n}
    There exists $s\in\N$ such that  for every system $(X, \CX, \mu, T_1, T_2, T_3)$ satisfying $\CI(T_1 T_2^{-1})=\CI(T_1)\cap\CI(T_2)$ and all   functions $f_1, f_2, f_3\in L^\infty(\mu)$, the average
    \begin{align}\label{E:n^2, n^2, n^2+n}
    \E_{n\in[N]}T_1^{n^2} f_1 \cdot T_2^{n^2}f_2 \cdot T_3^{n^2+n}f_3
\end{align}
    converges to 0 in $L^2(\mu)$ whenever $\nnorm{f_3}_{s, T_3} = 0$.
\end{proposition}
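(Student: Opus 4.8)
The plan is to argue by contradiction, chaining two ingredients. First, the PET bound of Proposition~\ref{strong PET bound} reduces everything to control of the average by a \emph{box} seminorm of $f_3$ in ``difference directions''. Then a smoothing (degree-lowering) induction, built out of Lemma~\ref{P:dual} and the new part~(i) of Proposition~\ref{dual-difference interchange}, upgrades that box seminorm to an honest Gowers--Host--Kra seminorm $\nnorm{f_3}_{s,T_3}$. The hypothesis $\CI(T_1T_2^{-1})=\CI(T_1)\cap\CI(T_2)$ is what makes the final collapse onto the $T_3$-direction legitimate, and the usual rotation example shows it is also necessary.

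Concretely, fix $s$ (its value to be read off at the end) and suppose for contradiction that $f_1,f_2,f_3$ are $1$-bounded, $\nnorm{f_3}_{s,T_3}=0$, yet the (existing, by Walsh) $L^2(\mu)$-limit of $\E_{n\in[N]}T_1^{n^2}f_1\cdot T_2^{n^2}f_2\cdot T_3^{n^2+n}f_3$ has norm $\geq\delta>0$. With $p_0=0$, $p_1=p_2=n^2$ and $p_3=n^2+n$, each $p_3\be_3-p_j\be_j$ ($j=0,1,2$) has degree $2$, so Proposition~\ref{strong PET bound} (with $L=0$ and $f_3$ in the distinguished slot) shows the average is controlled by a box seminorm $\nnorm{f_3}_{\b_1,\dots,\b_{s_1}}$ with each $\b_i\in\{\be_3,\ \be_3-\be_1,\ \be_3-\be_2\}$ and $s_1$ absolute; hence $\nnorm{f_3}_{\b_1,\dots,\b_{s_1}}>0$. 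If all the $\b_i$ equalled $\be_3$ this would already contradict $\nnorm{f_3}_{s,T_3}=0$, so the real work is to eliminate the ``mixed'' directions $\be_3-\be_1$ and $\be_3-\be_2$.

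This is the job of the smoothing induction. One first replaces $f_3$ by the averaged function $\tilde f_3=\lim_k\E_{n\in[N_k]}f_{n,k}$ produced by Lemma~\ref{P:dual} (applied with $m=3$, $\CA=\CX$), where $f_{n,k}=T_3^{-(n^2+n)}g_k\cdot (T_1T_3^{-1})^{n^2}T_3^{-n}\overline f_1\cdot (T_2T_3^{-1})^{n^2}T_3^{-n}\overline f_2$; the modified average is still bounded below in $L^2$, so the PET bound applied again gives $\nnorm{\tilde f_3}_{\b_1,\dots,\b_{s_1}}>0$. Because $\tilde f_3$ is a \emph{weak limit of genuine ergodic averages}, Proposition~\ref{dual-difference interchange}(i) can now be brought to bear on a mixed direction, say $\be_3-\be_1$: organising that direction as a conditional expectation onto $\CI(T_3T_1^{-1})$ and keeping a $\be_3$ coordinate in reserve, it trades positivity of the box seminorm for a positive averaged integral of the $f_{n,k}$ against $1$-bounded weights $u_{\uh,\uh'}$ that are invariant under both $T_3$ and $T_3T_1^{-1}$ --- hence, by Lemma~\ref{composing invariance}, under $T_1$. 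Expanding $f_{n,k}$ (and the truncated average $g_k$) back into products returns us to a multi-parameter polynomial ergodic average of the same three transformations, now carrying the fixed weights $u_{\uh,\uh'}$; since a $T_3$-invariant function is a level-$1$ dual function of $T_3$, these weights lie inside the $\FD$-factors permitted by Proposition~\ref{strong PET bound}, so PET may be re-applied (together with Lemma~\ref{L:seminorm of power} to normalise coefficients) to regain control of $f_3$ by a fresh box seminorm. The mechanism that makes this loop terminate is precisely the hypothesis: when the two equal-polynomial terms $T_1^{n^2}\cdot$ and $T_2^{n^2}\cdot$ (or their twisted descendants, which still differ by the power $(T_1T_2^{-1})^{n^2}$) are differenced against each other in a re-PET, they create a direction $\be_1-\be_2$, and $\CI(T^{\be_1-\be_2})=\CI(T_1)\cap\CI(T_2)\subseteq\CI(T_1)$ lets Lemma~\ref{bounding seminorms} replace it by $\be_1$. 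Combined with the reserved $\be_3$ coordinates and the $(T_1,T_3)$-invariance of the weights, each round strictly decreases a complexity measure of the direction list, so finitely many rounds --- their number bounded by absolute data --- leave us with $\nnorm{f_3}_{(\be_3)^{\times s_0}}=\nnorm{f_3}_{s_0,T_3}>0$ for an explicit $s_0$. Setting $s:=s_0$ contradicts $\nnorm{f_3}_{s,T_3}=0$.

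The main obstacle is the bookkeeping of this induction: each smoothing round removes one mixed direction but re-introduces, via the re-PET, fresh directions, fresh dual-function factors, and new auxiliary parameters, so one must formulate an induction hypothesis robust enough to close --- tracking the admissible direction multisets, the $\FD$-factors, and, crucially, the measurability of the replaced functions, which is exactly why the refined form of Lemma~\ref{P:dual} carrying the extra sub-$\sigma$-algebra $\CA$ is needed --- and then establish termination through the complexity measure. A secondary delicate point is checking that the hypothesis is genuinely \emph{sufficient}: the only transformation one needs to control is $T_1T_2^{-1}$ (and its powers), the appearances of $T_3T_1^{-1}$ and $T_3T_2^{-1}$ being resolved by the unfolding step itself, which ties their weights back to $T_3$-invariance. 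Finally, the family $n^2,n^2,2n^2+n$ is handled in exactly the same way: the admissible directions merely become $\{\be_3,\ 2\be_3-\be_1,\ 2\be_3-\be_2\}$, and Lemma~\ref{L:seminorm of power} is again used to pass between $2\be_3$ and $\be_3$.
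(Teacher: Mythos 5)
Your PET reduction to a box seminorm of $f_3$ in the directions $\{\be_3,\be_3-\be_1,\be_3-\be_2\}$ is correct, and you rightly identify that the crux is removing the mixed directions. But the mechanism you propose for removing them has a genuine gap. You apply Proposition~\ref{dual-difference interchange}(i) with $\bc=\be_3-\be_1$, claiming the resulting weights $u_{\uh,\uh'}$ are invariant under both $T_3$ and $T_3T_1^{-1}$, hence under $T_1$. The hypothesis of that proposition, however, is $\nnorm{\E(f|\CI(T^\bc))}_{\b_1,\dots,\b_{s+1}}>0$, not $\nnorm{f}_{\b_1,\dots,\b_{s+1},\bc}>0$: the former concerns derivatives of the projection of $f$ onto $\CI(T^\bc)$, the latter the projection of derivatives of $f$, and neither implies the other. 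Nor can you supply the conditional-expectation hypothesis through Lemma~\ref{P:dual}, since the option of projecting $\tilde f_m$ onto $\CA$ there requires $f_m$ itself to be $\CA$-measurable, and $f_3$ is not assumed to be $\CI(T_3T_1^{-1})$-measurable. Consequently the $T_3$-invariance of the weights --- the pivot of your ``re-PET with the weights as level-$1$ duals'' loop --- does not hold; with $\bc=\bzero$, which is the only admissible choice here, the weights are merely $T^{\b_{s+1}}$-invariant and are not covered by the $\FD$-factors in Proposition~\ref{strong PET bound}.

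The paper takes $\bc=\bzero$ and uses the $T^{\b_{s+1}}$-invariance not to produce dual functions but to \emph{flip} the last transformation: if $\b_{s+1}=\be_3-\be_2$, then $T_3^{n^2+n}u_{\uh,\uh'}=T_2^{n^2+n}u_{\uh,\uh'}$, and the average becomes one over $(T_1^{n^2},T_2^{n^2},T_2^{n^2+n})$. The ergodicity hypothesis enters precisely there, via Proposition~\ref{Smoothing of n^2, n^2, n^2+n special 1}: PET on this new tuple produces a box seminorm of $f_2$ in directions $\be_2,\ \be_2-\be_1$, and $\CI(T_1T_2^{-1})\subseteq\CI(T_2)$ together with Lemma~\ref{bounding seminorms} collapses it to a plain $T_2$-seminorm. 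This is the ``ping''. A second pass (the ``pong'') replaces $f_2$ by a dual function of $T_2$ and invokes the pairwise-independent result Proposition~\ref{Smoothing of n^2, n^2, n^2+n special 2} on $(T_1^{n^2},\ast,T_3^{n^2+n})$ to regain control by a genuine $T_3$-seminorm of $f_3$. Each application of Proposition~\ref{Smoothing of n^2, n^2, n^2+n} removes exactly one mixed coordinate, so the termination is immediate rather than controlled by the unspecified ``complexity measure'' you invoke. Your intuition about where the hypothesis is used is therefore slightly misplaced: it does not directly kill the $\be_1-\be_2$ direction in a re-PET of the original tuple, but rather kills the $\be_2-\be_1$ direction arising from the flipped tuple inside the ping step.
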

We subsequently show in Corollary	\ref{Seminorm control of n^2, n^2, n^2+n other terms} that the same conclusion holds if we assume $\nnorm{f_i}_{s, T_i} = 0$ for $i=1,2,$ instead.

By Proposition \ref{strong PET bound}, there exist vectors $\b_1, \ldots, \b_{s+1}\in\{\be_3, \be_3 - \be_1, \be_3 - \be_2\}$ such that
\begin{align}\label{n^2, n^2, n^2+n vanishes}
    \lim_{N\to\infty} \E_{n\in[N]}T_1^{n^2} f_1 \cdot T_2^{n^2}f_2 \cdot T_3^{n^2+n}f_3 = 0
\end{align}
whenever $\nnorm{f_3}_{\b_1, \ldots, \b_{s+1}} = 0$. The goal is to inductively replace all the vectors $\b_1, \ldots, \b_{s+1}$ in the seminorm different from $\be_3$ by $\be_3^{\times s'}$ for some $s'\in\N$, which is achieved in the following proposition.
\begin{proposition}[Box seminorm smoothing]\label{Smoothing of n^2, n^2, n^2+n}
    Let $\b_1, \ldots, \b_{s+1}\in\{\be_3, \be_3 - \be_1, \be_3 - \be_2\}$   and $(X, \CX, \mu, T_1, T_2, T_3)$ be a system satisfying $\CI(T_1 T_2^{-1})=\CI(T_1)\cap\CI(T_2)$. Suppose that \eqref{n^2, n^2, n^2+n vanishes} holds for all  functions $f_1, f_2, f_3\in L^\infty(\mu)$ whenever $\nnorm{f_3}_{\b_1, \ldots, \b_{s+1}} = 0$. Then  there exists $s'\in\N$ independent of the system and the functions, such that \eqref{n^2, n^2, n^2+n vanishes} holds whenever $\nnorm{f_3}_{\b_1, \ldots, \b_s, \be_3^{\times s'}}=0$.
\end{proposition}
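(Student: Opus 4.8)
The plan is as follows. If $\b_{s+1}=\be_3$ there is nothing to prove: by the monotonicity property \eqref{monotonicity property} we have $\nnorm{f_3}_{\b_1,\ldots,\b_s,\be_3}\le\nnorm{f_3}_{\b_1,\ldots,\b_s,\be_3^{\times s'}}$ for every $s'\ge 1$, so control by the former seminorm implies control by the latter. Since the hypotheses of the proposition are symmetric in $T_1$ and $T_2$, the only remaining case is $\b_{s+1}=\be_3-\be_1$. Recalling that $T^{\be_3}=T_3$, $T^{\be_3-\be_1}=T_3T_1^{-1}$ and $T^{\be_3-\be_2}=T_3T_2^{-1}$, the assumption $\CI(T_1T_2^{-1})=\CI(T_1)\cap\CI(T_2)$ has the crucial algebraic consequence
\[
\CI(T_3T_1^{-1})\cap\CI(T_3T_2^{-1})=\CI(T_1)\cap\CI(T_2)\cap\CI(T_3)\subseteq\CI(T_3),
\]
because a function invariant under $T_3T_1^{-1}$ and $T_3T_2^{-1}$ is invariant under the quotient $(T_3T_1^{-1})^{-1}(T_3T_2^{-1})=T_1^{-1}T_2$, hence under $T_1$ and $T_2$, hence under $T_3=(T_3T_1^{-1})T_1$. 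This is the only place the ergodicity assumption will be used, and it is exactly what distinguishes this case from the pairwise independent setting of \cite{FrKu22}.

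I would argue by contraposition. Fix $1$-bounded $f_1,f_2,f_3$ with $\limsup_{N\to\infty}\norm{\E_{n\in[N]}T_1^{n^2}f_1\cdot T_2^{n^2}f_2\cdot T_3^{n^2+n}f_3}_{L^2(\mu)}\ge\delta>0$; the goal is to bound $\nnorm{f_3}_{\b_1,\ldots,\b_s,\be_3^{\times s'}}$ from below by a quantity depending only on $\delta$ for a suitable $s'=s'(s)$, since Proposition~\ref{P:Us} then turns this into the desired qualitative statement. First I would apply Lemma~\ref{P:dual} with $m=3$ and $\CA=\CX$ to replace $f_3$ by the averaged function $\tilde f_3$, obtained as a weak limit of $\E_{n\in[N_k]}T_3^{-(n^2+n)}g_k\cdot T_3^{-(n^2+n)}T_1^{n^2}\overline{f}_1\cdot T_3^{-(n^2+n)}T_2^{n^2}\overline{f}_2$; the corresponding average then still has $L^2(\mu)$-limsup at least $\delta^4$, and the hypothesis \eqref{n^2, n^2, n^2+n vanishes} together with Proposition~\ref{P:Us} forces $\nnorm{\tilde f_3}_{\b_1,\ldots,\b_s,\be_3-\be_1}\ge\delta_1(\delta)>0$. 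Note that $\tilde f_3$ still carries the information about $f_3$ through the factor $g_k$, which is itself an ergodic average of the same shape truncated at $\tilde N_k$.

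Next I would feed this seminorm lower bound into the dual--difference interchange, Proposition~\ref{dual-difference interchange}(i), applied with $\be_3-\be_1$ in the role of $\b_{s+1}$ and with $\bc=\be_3-\be_2$, so that the resulting $1$-bounded functions $u_{\uh,\uh'}$ are invariant under \emph{both} $T_3T_1^{-1}$ and $T_3T_2^{-1}$; by the algebraic consequence of good ergodicity displayed above they are then automatically $T_1$-, $T_2$- and $T_3$-invariant. To be able to invoke version (i) with this $\bc$ one must first arrange, using the new $\CA$-measurability feature of Lemma~\ref{P:dual} (an intermediate application with $\CA=\CI(T_3T_2^{-1})$) together with Lemma~\ref{composing invariance}(ii), that the function whose seminorm is being analysed is $T_3T_2^{-1}$-invariant, so that passing to its conditional expectation onto $\CI(T_3T_2^{-1})$ costs nothing — this preliminary reduction, and the joint-invariance output of Proposition~\ref{dual-difference interchange}(i), are precisely the ingredients unavailable in \cite{FrKu22}. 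The interchange then yields
\[
\liminf_{H\to\infty}\E_{\uh,\uh'\in[H]^s}\limsup_{k\to\infty}\E_{n\in[N_k]}\int\Delta_{\b_1,\ldots,\b_s;\uh-\uh'}f_{n,k}\cdot u_{\uh,\uh'}\,d\mu>0,
\]
where $f_{n,k}=T_3^{-(n^2+n)}\bigl[g_k\cdot T_1^{n^2}\overline{f}_1\cdot T_2^{n^2}\overline{f}_2\bigr]$ is the integrand defining $\tilde f_3$.

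Finally I would substitute this explicit form of $f_{n,k}$, expand $\Delta_{\b_1,\ldots,\b_s;\uh-\uh'}f_{n,k}$ over $\ueps\in\{0,1\}^s$, and run a second PET via Proposition~\ref{strong PET bound} on the resulting polynomial average in $n$, treating the fully invariant factors $u_{\uh,\uh'}$ as harmless $1$-bounded coefficients and the shifted dual pieces coming from $g_k$ as members of $\FD$; the $T_3$-invariance of the $u_{\uh,\uh'}$ is what prevents the $T_3T_1^{-1}$-direction from reappearing, so that the output is control of the average in \eqref{n^2, n^2, n^2+n vanishes} by a box seminorm of $f_3$ whose defining vectors can be taken to be $\b_1,\ldots,\b_s$ together with only a bounded number of copies of $\be_3$. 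Comparing this seminorm with $\nnorm{f_3}_{\b_1,\ldots,\b_s,\be_3^{\times s'}}$ through Lemmas~\ref{bounding seminorms},~\ref{composing invariance} and~\ref{L:seminorm of power}, and unwinding the quantitative dependencies with Lemma~\ref{difference sequences} and a last appeal to Proposition~\ref{P:Us}, gives the required lower bound $\nnorm{f_3}_{\b_1,\ldots,\b_s,\be_3^{\times s'}}\gg_\delta 1$ with $s'$ depending only on $s$, which is the contrapositive of the assertion. The heart of the matter — and the step I expect to be the main obstacle — is the orchestration of the third paragraph: arranging the preliminary measurability reduction and the choice of $\bc$ so that the functions produced by the dual--difference interchange carry \emph{both} invariances that the good ergodicity property needs in order to collapse to $T_3$-invariance, and then tracking polynomial degrees through the second PET carefully enough that no new $\be_3-\be_1$ or $\be_3-\be_2$ directions survive and $s'$ remains a function of $s$ alone.
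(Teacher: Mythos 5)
Your first two paragraphs are fine (the algebraic observation $\CI(T_3T_1^{-1})\cap\CI(T_3T_2^{-1})\subseteq\CI(T_3)$ is correct, and the initial application of Lemma~\ref{P:dual} with $\CA=\CX$ is as in the paper), but there is a genuine gap in the third paragraph, and it sits exactly at the step you yourself flag as the ``heart of the matter.'' To invoke Proposition~\ref{dual-difference interchange}(i) with $\bc=\be_3-\be_2$ you need $\nnorm{\E(\tilde f_3|\CI(T_3T_2^{-1}))}_{\b_1,\ldots,\b_{s+1}}>0$, but at that point you only have $\nnorm{\tilde f_3}_{\b_1,\ldots,\b_{s+1}}>0$, and the conditional expectation may well have a vanishing box seminorm even when $\tilde f_3$ does not. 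You propose to repair this with an ``intermediate application'' of Lemma~\ref{P:dual} with $\CA=\CI(T_3T_2^{-1})$, but that lemma requires the \emph{input} function $f_m$ to already be $\CA$-measurable: its proof uses the $\CA$-measurability of $f_m$ precisely in the equality $\int f_m\cdot\tilde f_m\,d\mu=\int f_m\cdot\E(\tilde f_m|\CA)\,d\mu$. In Proposition~\ref{Smoothing of n^2, n^2, n^2+n}, $f_3$ is an arbitrary $L^\infty$ function with no $T_3T_2^{-1}$-invariance, so the intermediate reduction cannot be performed. (The $\CA$-measurability option in Lemma~\ref{P:dual} and the $\bc$-option in Proposition~\ref{dual-difference interchange}(i) are indeed new compared to \cite{FrKu22}, but they are designed for the general smoothing Proposition~\ref{P:smoothing}, where the functions in the tuple carry invariance assumptions inherited from earlier ``ping'' steps; for the base example here no such assumption is available for $f_3$.)

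There is also a secondary problem: even if you could manufacture $u_{\uh,\uh'}$ that are simultaneously $T_1$-, $T_2$- and $T_3$-invariant, they would not be constants (the good ergodicity property is strictly weaker than ergodicity of $T_1,T_2,T_3$), and when you expand $f_{n,k}=T_3^{-(n^2+n)}\bigl[g_k\cdot T_1^{n^2}\overline f_1\cdot T_2^{n^2}\overline f_2\bigr]$ and run PET in $n$, the leading-coefficient comparisons between the iterates acting on $\overline f_1$, $\overline f_2$, $g_k$ regenerate the directions $\be_1-\be_3$ and $\be_2-\be_3$ (the same, up to sign, as $\be_3-\be_1$, $\be_3-\be_2$); the $T_3$-invariance of the $u_{\uh,\uh'}$ cannot prevent this, since those functions carry no $n$-dependence. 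The paper's proof avoids both difficulties with a two-step ping--pong that your proposal tries to collapse into one move. In the ping step it applies Proposition~\ref{dual-difference interchange}(i) with the trivial choice $\bc=\bzero$, so the $u_{\uh,\uh'}$ are invariant only under $T^{\b_{s+1}}=T_3T_2^{-1}$; that single invariance is used to rewrite $T_3^{n^2+n}u_{\uh,\uh'}=T_2^{n^2+n}u_{\uh,\uh'}$, turning the tuple into $(T_1^{n^2},T_2^{n^2},T_2^{n^2+n})$. This tuple involves only $T_1,T_2$ with pairwise dependent quadratic iterates, and Proposition~\ref{Smoothing of n^2, n^2, n^2+n special 1} --- the one place the assumption $\CI(T_1T_2^{-1})=\CI(T_1)\cap\CI(T_2)$ actually enters, via Lemma~\ref{bounding seminorms} --- yields an \emph{auxiliary} control by a $T_2$-seminorm of $f_2$. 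In the pong step that auxiliary control is fed back through Lemma~\ref{P:dual} (now with $m=2$) and Proposition~\ref{dual-difference interchange}, replacing $f_2$ by a product of dual functions and leaving the pairwise independent pair $(T_1^{n^2},T_3^{n^2+n})$, which Proposition~\ref{Smoothing of n^2, n^2, n^2+n special 2} handles. That intermediate auxiliary control of $f_2$ is the mechanism that actually removes the $\be_3-\be_2$ direction, and it has no analogue in your one-shot scheme.
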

Proposition \ref{Seminorm control of n^2, n^2, n^2+n} follows from Proposition \ref{strong PET bound} and an iterative application of Proposition \ref{Smoothing of n^2, n^2, n^2+n}.

Passing from a control by $\nnorm{f_3}_{\b_1, \ldots, \b_{s+1}}$ in Proposition \ref{Smoothing of n^2, n^2, n^2+n} to a control by $\nnorm{f_3}_{\b_1, \ldots, \b_s, \be_\ell^{\times s'}}$ follows a two-step ping-pong strategy similar to the one used in \cite{FrKu22}. Using the control by $\nnorm{f_3}_{\b_1, \ldots, \b_{s+1}}$, we first pass to an auxiliary control by some seminorm $\nnorm{f_i}_{\b_1, \ldots, \b_s, \be_i^{\times s_1}}$ for some $i=1,2$ and $s_1\in\N$, and then we use this auxiliary control to go back and control the average \eqref{original average} by $\nnorm{f_3}_{\b_1, \ldots, \b_s, \be_\ell^{\times s'}}$ for some $s'\in\N$. We call the two steps outlined above \emph{ping} and \emph{pong}.

The assumption $\CI(T_1 T_2^{-1})=\CI(T_1)\cap\CI(T_2)$ is crucial for the following special case that will be invoked in the proof of the general case of Proposition \ref{Seminorm control of n^2, n^2, n^2+n}.
\begin{proposition}\label{Smoothing of n^2, n^2, n^2+n special 1}
    There exists $s\in\N$ such that  for every system $(X, \CX, \mu, T_1, T_2)$ satisfying $\CI(T_1 T_2^{-1})=\CI(T_1)\cap\CI(T_2)$ and all functions $f_1, f_2, f_3\in L^\infty(\mu)$, the average
\begin{align}\label{n^2, n^2, n^2+n special 1}
    \E_{n\in[N]}T_1^{n^2} f_1 \cdot T_2^{n^2}f_2 \cdot T_2^{n^2+n}f_3
\end{align}
converges to 0 in $L^2(\mu)$ whenever one of $\nnorm{f_1}_{s, T_1}$, $\nnorm{f_2}_{s, T_2}$, $\nnorm{f_3}_{s, T_2}$ is 0.
\end{proposition}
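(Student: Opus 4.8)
The plan is to derive all three conclusions directly, without any ping-pong maneuver, by combining the box seminorm control of Proposition~\ref{strong PET bound} with the $\sigma$-algebra comparison of Lemma~\ref{bounding seminorms}. The reason this suffices here is that the third iterate $T_2^{n^2+n}$ runs over $T_2$ rather than over an independent third transformation; consequently every coefficient vector produced by the PET analysis of Proposition~\ref{strong PET bound} is a scalar multiple of $\be_1$, $\be_2$, or $\be_1-\be_2$, and the hypothesis $\CI(T_1T_2\inv)=\CI(T_1)\cap\CI(T_2)$ is exactly what converts the box seminorm attached to $\be_1-\be_2$ into a Gowers--Host--Kra seminorm of a single transformation.

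First I would dispose of the case $\nnorm{f_3}_{s,T_2}=0$. Treating $f_3$ as the last function, one applies Proposition~\ref{strong PET bound} with $\ell=3$, $\eta=(1,2,2)$, polynomials $n^2,n^2,n^2+n$ (and a formal constant dual factor, to meet the hypothesis $L\geq 1$): since $\deg(n^2+n)=2$ and the degrees of $(n^2+n)\be_2$, $(n^2+n)\be_2-n^2\be_1$, and $(n^2+n)\be_2-n^2\be_2$ equal $2,2,1$ and are all positive, one obtains $s_3\in\N$ and vectors $\b_1,\dots,\b_{s_3}\in\{\be_2,\ \be_2-\be_1\}$ for which \eqref{n^2, n^2, n^2+n special 1} tends to $0$ in $L^2(\mu)$ whenever $\nnorm{f_3}_{\b_1,\dots,\b_{s_3}}=0$. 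Since $\CI(T^{\be_2})=\CI(T_2)$ and, by hypothesis, $\CI(T^{\be_2-\be_1})=\CI(T_1T_2\inv)=\CI(T_1)\cap\CI(T_2)\subseteq\CI(T_2)$, Lemma~\ref{bounding seminorms} gives $\nnorm{f_3}_{\b_1,\dots,\b_{s_3}}\le\nnorm{f_3}_{s_3,T_2}$, so $\nnorm{f_3}_{s_3,T_2}=0$ makes \eqref{n^2, n^2, n^2+n special 1} vanish.

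The other two cases follow the same scheme. Treating $f_1$ as the last function, the coefficient vectors lie in $\{\be_1,\ \be_1-\be_2\}$, each with invariant factor inside $\CI(T_1)$ (using the hypothesis again for $\be_1-\be_2$), so Lemma~\ref{bounding seminorms} reduces the control to $\nnorm{f_1}_{s_1,T_1}$. Treating $f_2$ as the last function, the only new wrinkle is that $f_2$ and $f_3$ share $T_2$ with polynomials differing in degree $1$; this is still admissible in Proposition~\ref{strong PET bound} because $\deg\big(n^2-(n^2+n)\big)=1>0$, and the new coefficient vector arising from this pair is $-\be_2$ (the coefficient of $n^1$ in $n^2$ being $0$), so all vectors lie in $\{\be_2,\ -\be_2,\ \be_2-\be_1\}$, again with invariant factor inside $\CI(T_2)$, and the control reduces to $\nnorm{f_2}_{s_2,T_2}$. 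Setting $s:=\max\{s_1,s_2,s_3\}$ and using the monotonicity $\nnorm{f_i}_{s_i,T_i}\le\nnorm{f_i}_{s,T_i}$ from \eqref{monotonicity property} completes the proof.

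The argument is almost entirely bookkeeping, and there is no real obstacle --- which is precisely the point of isolating this special case. The one substantive step is the inclusion $\CI(T^{\b})\subseteq\CI(T_i)$ for the relevant vectors $\b$ and index $i$; this is the sole place the ergodicity hypothesis is used, and it is exactly the step that fails for the genuinely three-transformation family $n^2,n^2,n^2+n$, where the vectors $\be_3-\be_1$ and $\be_3-\be_2$ are uncontrolled and one is forced into the two-step ping-pong of Proposition~\ref{Smoothing of n^2, n^2, n^2+n}.
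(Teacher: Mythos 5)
Your proof is correct and follows essentially the same route as the paper: apply Proposition~\ref{strong PET bound} to get box seminorm control with vectors from $\{\be_2,\be_2-\be_1\}$ (resp.\ $\{\be_1,\be_1-\be_2\}$, $\{\pm\be_2,\be_2-\be_1\}$), then use the hypothesis $\CI(T_1T_2^{-1})\subseteq\CI(T_i)$ together with Lemma~\ref{bounding seminorms} to upgrade to a Gowers--Host--Kra seminorm. The paper works out only the $f_3$ case explicitly and declares the others similar; you carry out all three, which is a harmless elaboration (and your note about inserting a trivial dual factor to satisfy $L\in\N$ is a correct, if pedantic, observation).
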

\begin{proof}
We prove that $\nnorm{f_3}_{s, T_2} = 0$ implies
\begin{align}\label{n^2, n^2, n^2+n special vanishes}
    \lim_{N\to\infty}\E_{n\in[N]}T_1^{n^2} f_1 \cdot T_2^{n^2}f_2 \cdot T_2^{n^2+n}f_3    = 0
\end{align}
for some $s\in\N$; the other cases follow similarly. By Proposition \ref{strong PET bound}, the equality \eqref{n^2, n^2, n^2+n special vanishes} holds under the assumption that $\nnorm{f_3}_{\be_2^{\times s_1}, (\be_2 - \be_1)^{\times s_2}} = 0$ for some $s_1, s_2\in\N_0$. The assumption $\CI(T_1 T_2^{-1})=\CI(T_1)\cap\CI(T_2)$ implies that $\CI(T_1 T_2\inv)\subseteq\CI(T_2)$. Together with Lemma \ref{bounding seminorms}, this gives $$\nnorm{f_3}_{\be_2^{\times s_1}, (\be_2 - \be_1)^{\times s_2}}\leq \nnorm{f_3}_{\be_2^{\times (s_1+s_2)}} = \nnorm{f_3}_{s_1+s_2, T_2},$$ and so \eqref{n^2, n^2, n^2+n special vanishes} holds whenever $\nnorm{f_3}_{s, T_2} = 0$ for $s=s_1+s_2$.
\end{proof}

Proposition \ref{Smoothing of n^2, n^2, n^2+n special 1} is invoked in the \textit{ping} step of the proof of Proposition \ref{Smoothing of n^2, n^2, n^2+n}; in the \textit{pong} step, we invoke the result below.
\begin{proposition}\label{Smoothing of n^2, n^2, n^2+n special 2}
    Let $d, L\in \N$. Then there exists $s\in\N$ such that for all systems $(X, \CX, \mu, T_1, T_2, T_3)$,  functions $f_1, f_3\in L^\infty(\mu)$, and sequences $\CD_1, \ldots, \CD_L\in\FD_d$, the average
\begin{align}\label{n^2, n^2, n^2+n special 2}
    \E_{n\in[N]}T_1^{n^2} f_1 \cdot \prod_{j=1}^L \CD_{j}(n^2) \cdot T_3^{n^2+n}f_3
\end{align}
converges to 0 in $L^2(\mu)$  whenever $\nnorm{f_3}_{s, T_3} = 0$.
\end{proposition}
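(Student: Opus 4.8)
The plan is to reduce the average \eqref{n^2, n^2, n^2+n special 2} to a situation where Proposition \ref{strong PET bound} applies directly, treating the dual-function terms $\CD_j(n^2)$ as the ``extra'' dual inputs that the box seminorm control proposition is designed to accommodate. Concretely, the polynomials involved are $p_1(n) = n^2$ attached to $T_1$ (i.e. direction $\be_1$), $p_3(n) = n^2 + n$ attached to $T_3$ (direction $\be_3$), and $q_j(n) = n^2$ for the $L$ dual sequences $\CD_j \in \FD_d$. First I would check the degree hypotheses of Proposition \ref{strong PET bound} with $\ell = 2$, $\eta = (\be_1, \be_3)$ (after suitable relabeling so the $T_3$-term is last): we need $\deg p_3 = 2 = D$ and $d_{2j} := \deg(p_3 \be_3 - p_j \be_{\eta_j}) > 0$ for $j = 0, 1$. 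For $j = 0$ this is $\deg(p_3 \be_3) = 2 > 0$; for $j = 1$ this is $\deg(n^2 \be_3 - n^2 \be_1) = 2 > 0$ since the leading coefficient vector $\be_3 - \be_1$ is nonzero. So the hypotheses hold, and Proposition \ref{strong PET bound} yields an $s_0 \in \N$, depending only on $d$ and $L$, and nonzero vectors $\b_1, \ldots, \b_{s_0} \in \{\be_3, \be_3 - \be_1\}$ such that the average vanishes whenever $\nnorm{f_3}_{\b_1, \ldots, \b_{s_0}} = 0$.

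The remaining step is to replace the box seminorm $\nnorm{\cdot}_{\b_1, \ldots, \b_{s_0}}$ by the Gowers--Host--Kra seminorm $\nnorm{\cdot}_{s, T_3}$, i.e. to ``smooth'' all the vectors $\be_3 - \be_1$ into copies of $\be_3$. Here is the crucial point: this works \emph{without} any ergodicity hypothesis on the system, because $T_3$ and $T_1$ enter through genuinely different directions (unlike in Proposition \ref{Smoothing of n^2, n^2, n^2+n special 1}, where the clash $T_1$ vs.\ $T_2$ in the same slot forced the use of $\CI(T_1 T_2^{-1}) = \CI(T_1) \cap \CI(T_2)$). To carry out the smoothing I would invoke the monotonicity property \eqref{monotonicity property} together with Lemma \ref{L:seminorm of power}: since each $\b_i$ is either $\be_3$ or $\be_3 - \be_1$, and in fact one could alternatively note directly from the Gowers--Cauchy--Schwarz inequality \eqref{E:GCS} or from the dual-difference interchange machinery of Proposition \ref{dual-difference interchange}(ii) that control by $\nnorm{f_3}_{\b_1, \ldots, \b_{s_0}}$ can be upgraded --- but the cleanest route is an iterated ping-pong as in Proposition \ref{Smoothing of n^2, n^2, n^2+n}, where in each round we peel off one vector $\be_3 - \be_1$ and replace it by $\be_3^{\times s'}$. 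However, since the point of this particular proposition is exactly to serve as the \emph{pong} step for the length-$3$ case, a more self-contained argument is preferable: apply Proposition \ref{dual-difference interchange}(ii) to convert a seminorm bound $\nnorm{f_3}_{\b_1, \ldots, \b_{s_0-1}, (\be_3 - \be_1)^{\times s'}} > 0$ into a positive average of twisted dual-function correlations, then re-run Proposition \ref{strong PET bound} (now with additional dual terms of $T_3$, still in $\FD$) to get control by fewer copies of $\be_3 - \be_1$, and iterate.

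I expect the main obstacle to be the bookkeeping in the smoothing iteration: each application of Proposition \ref{dual-difference interchange}(ii) followed by Proposition \ref{strong PET bound} introduces new dual-function terms $\CD_{j, \uh, \uh'}$ (of bounded level, depending only on the parameters so far), and one must verify that (a) the number and level of these terms stays bounded in terms of $d$, $L$, and the number of rounds, and (b) the degree hypotheses of Proposition \ref{strong PET bound} continue to hold after each re-application --- in particular that the polynomial $p_3(n) = n^2 + n$ keeps degree $2$ and that the relevant difference degrees $d_{\ell j}$ stay positive, which they do since $\be_3$ and $\be_3 - \be_1$ are both nonzero and distinct from the directions $\be_1$ of the remaining non-dual term. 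Once the iteration terminates (after $s_0$ rounds, say), all non-$\be_3$ vectors have been eliminated and we are left with control by $\nnorm{f_3}_{\be_3^{\times s}} = \nnorm{f_3}_{s, T_3}$ for some $s$ depending only on $d$ and $L$, as claimed. The argument is structurally identical to the pairwise-independent case in \cite{FrKu22}, and no new ideas beyond careful parameter tracking are needed here; the genuinely new difficulties (requiring the good ergodicity property) appear only in the \emph{ping} step, i.e.\ in Proposition \ref{Smoothing of n^2, n^2, n^2+n special 1} and its generalizations.
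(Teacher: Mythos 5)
The paper proves this by a one-line citation to \cite[Proposition~8.4]{FrKu22}, the analogous result for pairwise independent polynomials; your sketch tries to re-derive it. Your high-level diagnosis is correct: $n^2$ and $n^2+n$ are pairwise independent, so no ergodicity assumption is needed and a ping-pong smoothing is the right tool. But the ``more self-contained'' shortcut you actually prefer has a gap. You propose applying Proposition~\ref{dual-difference interchange}(ii) with $\b_{s+1}=\be_3-\be_1$ and claim the resulting dual terms are ``of $T_3$, still in $\FD$''. That is false: by construction, Proposition~\ref{dual-difference interchange}(ii) produces dual functions of $T^{\b_{s+1}}$, which here is $T_3T_1^{-1}$, a composition that is not one of $T_1,\dots,T_\ell$. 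Since $\FD_d$ is defined only for dual functions of the individual $T_j$, $j\in[\ell]$, these terms lie outside $\FD_d$, and Proposition~\ref{strong PET bound} cannot be re-run as you describe.

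What works instead (and what \cite[Proposition~8.4]{FrKu22} carries out) is the genuine ping-pong. In the ping step apply Proposition~\ref{dual-difference interchange}(i) with $\b_{s+1}=\be_3-\be_1$ to obtain $T_3T_1^{-1}$-invariant functions $u_{\uh,\uh'}$, use the invariance to replace $T_3^{n^2+n}u_{\uh,\uh'}$ by $T_1^{n^2+n}u_{\uh,\uh'}$, and then Proposition~\ref{strong PET bound} applied to the new average --- whose two non-dual terms both now sit in the $T_1$-slot with the pairwise independent iterates $n^2$ and $n^2+n$ --- gives control by a $T_1$-seminorm of $f_1$, with no ergodicity input needed. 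In the pong step apply Proposition~\ref{dual-difference interchange}(ii) with $\b_{s+1}=\be_1$, so the resulting dual functions are of $T_1$ and genuinely lie in $\FD$, and the remaining average with a single non-dual term $T_3^{n^2+n}f_3$ is controlled by $\nnorm{f_3}_{s',T_3}$ directly by Proposition~\ref{strong PET bound}. Iterating removes one mixed vector per round. Your worry about circularity (this proposition is used in the pong step of Proposition~\ref{Smoothing of n^2, n^2, n^2+n}) is well-founded, but the fix is that the ping-pong here bottoms out at length-one averages, not the shortcut you propose.
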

Proposition \ref{Smoothing of n^2, n^2, n^2+n special 2}
%can be proved by the same method as Proposition \ref{Smoothing of n^2, n^2, n^2+n}, except that we inductively invoke Proposition \ref{strong PET bound} twice in place of Propositions \ref{Smoothing of n^2, n^2, n^2+n special 1} and \ref{Smoothing of n^2, n^2, n^2+n special 2}. Alternatively, it
follows from \cite[Proposition 8.4]{FrKu22} since the two non-dual terms \eqref{n^2, n^2, n^2+n special 2} involve the pairwise independent polynomials $n^2$ and $n^2+n$.

Having stated all the needed auxiliary results, we are finally in the position to prove Proposition \ref{Smoothing of n^2, n^2, n^2+n}.
\begin{proof}[Proof of Proposition \ref{Smoothing of n^2, n^2, n^2+n}]

Suppose that \eqref{n^2, n^2, n^2+n vanishes} fails. Then $\nnorm{f_3}_{\b_1, \ldots, \b_{s+1}}>0$. If $\b_{s+1} = \be_3$, then $\nnorm{f_3}_{\b_1, \ldots, \b_{s}, \be_3}>0$, so we assume without loss of generality that $\b_{s+1} = \be_3 - \be_2$, the case $\b_{s+1} = \be_3 - \be_1$ being identical.

\smallskip

\textbf{Step 1 (\textit{ping}): Obtaining auxiliary control by a seminorm of $f_2$.}

\smallskip

By Proposition \ref{P:dual} (applied with $\CA = \CX$),  we replace $f_3$ by $\tilde{f}_3$ so that
\begin{align*}
    \lim_{N\to\infty}\norm{\E_{n\in[N]}T_1^{n^2} f_1 \cdot T_2^{n^2}f_2 \cdot T_2^{n^2+n}\tilde{f}_3}_{L^2(\mu)}>0.
\end{align*}
We set $f_{j,\uh, \uh'} := \Delta_{{\b_1}, \ldots, {\b_{s}}; \uh-\uh'} f_j$ for $j\in[3]$ and apply Proposition \ref{dual-difference interchange}(i) (with $\bc = \bzero$) to conclude that
$$
\liminf_{H\to\infty}\E_{\uh,\uh'\in [H]^{s}} \lim_{N\to\infty}\norm{ \E_{n\in[N]} \, T_1^{n^2} f_{1, \uh, \uh'}\cdot T_2^{n^2} f_{2, \uh, \uh'}\cdot T_3^{n^2+n}u_{\uh,\uh'}}_{L^2(\mu)}>0
$$
for some 1-bounded functions $u_{\uh, \uh'}$ invariant under $T_3 T_2^{-1}$. The invariance property implies that
\begin{align*}
    T_3^{n^2+n} u_{\uh, \uh'} = T_2^{n^2+n} u_{\uh, \uh'},
\end{align*}
and hence
$$
\liminf_{H\to\infty}\E_{\uh,\uh'\in [H]^{s}} \lim_{N\to\infty}\norm{ \E_{n\in[N]} \, T_1^{n^2} f_{1, \uh, \uh'}\cdot T_2^{n^2} f_{2, \uh, \uh'}\cdot T_2^{n^2+n}u_{\uh,\uh'}}_{L^2(\mu)}>0.
$$
Consequently, there exist a set $B\subseteq\N^{2s}$ of positive lower density and $\veps>0$ such that
\begin{align}\label{E:n^2, n^2, n^2+n bdd from below}
    \lim_{N\to\infty}\norm{ \E_{n\in[N]} \, T_1^{n^2} f_{1, \uh, \uh'}\cdot T_2^{n^2} f_{2, \uh, \uh'}\cdot T_2^{n^2+n}u_{\uh,\uh'}}_{L^2(\mu)}>\veps
\end{align}
for all $(\uh, \uh')\in B$. Each of the averages in \eqref{E:n^2, n^2, n^2+n bdd from below} takes the form \eqref{n^2, n^2, n^2+n special 1}; we therefore apply Propositions \ref{Smoothing of n^2, n^2, n^2+n special 1} and \ref{P:Us} to obtain $s_1\in\N$ and $\delta>0$ such that
\begin{align*}%\label{E: delta bound}
\nnorm{f_{2, \uh, \uh'}}_{s_1, T_2} > \delta
\end{align*}
for all $(\uh, \uh')\in B$. Hence,
\begin{align}\label{positivity of differences}
    \liminf_{H \to\infty}\E_{\uh,\uh'\in [H]^{s}} \nnorm{\Delta_{{\b_1}, \ldots, {\b_{s}}; \uh-\uh'} f_2}_{s_1, T_2} >0
\end{align}
for some $s_1\in\N$. Together with Lemma \ref{difference sequences}, the inductive formula for seminorms \eqref{inductive formula}  and H\"older inequality, the inequality \eqref{positivity of differences}  implies that
$$
\nnorm{f_2}_{{\b_1}, \ldots, {\b_{s}}, \be_2^{\times s_1}}>0.
$$
We deduce from this that the seminorm $\nnorm{f_2}_{{\b_1}, \ldots, {\b_{s}}, \be_2^{\times s_1}}$ controls the average \eqref{E:n^2, n^2, n^2+n}.

This seminorm control is not particularly useful as an independent result since the vectors $\b_1, \ldots, \b_s$ may not involve the transformation $T_2$ in any way. However, it is of great importance as an intermediate result applied in the next step of our argument.

\smallskip
\textbf{Step 2 (\textit{pong}): Obtaining control by a seminorm of $f_3$.}
\smallskip

Using our assumption that \eqref{n^2, n^2, n^2+n vanishes} fails, we now replace $f_2$ by $\tilde{f}_2$ and deduce from Proposition \ref{P:dual} (applied again with $\CA = \CX$) that
\begin{align*}
    \lim_{N\to\infty}\norm{\E_{n\in[N]}T_1^{n^2} f_1 \cdot T_2^{n^2}\tilde{f}_2 \cdot T_2^{n^2+n}f_3}_{L^2(\mu)}>0.
\end{align*}
From Proposition \ref{dual-difference interchange} (with $\bc = \bzero$) it follows that
$$
\liminf_{H\to\infty}\E_{\uh,\uh'\in [H]^{s}}\lim_{N\to\infty}\norm{ \E_{n\in[N]} \, T_1^{n^2} f_{1, \uh, \uh'}\cdot \CD_{\uh,\uh'}(n^2) \cdot T_3^{n^2+n} f_{3, \uh, \uh'}}_{L^2(\mu)}>0,
$$
where $\CD_{\uh,\uh'}$ is a product of $2^s$ elements of $\FD_{s_1}$. Once again, there exists a set $B'\subseteq\N^{2s}$ of positive lower density and $\veps>0$ such that
\begin{align}\label{E:n^2, n^2, n^2+n bdd from below 2}
    \norm{ \E_{n\in[N]} \, T_1^{n^2} f_{1, \uh, \uh'}\cdot \CD_{\uh,\uh'}(n^2) \cdot T_3^{n^2+n} f_{3, \uh, \uh'}}_{L^2(\mu)} > \veps
\end{align}
for $(\uh, \uh')\in B'$.
Each average indexed in \eqref{E:n^2, n^2, n^2+n bdd from below 2} is of the form
\begin{align}\label{n^2, n^2+n}
    \E_{n\in[N]} \, T_1^{n^2} g_1  \cdot \prod_{j=1}^{2^s}\CD_{j}(n^2)\cdot  T_3^{n^2+n} g_3
\end{align}
for $\CD_1, \ldots, \CD_{2^s}\in\FD_{s_1}$. By Proposition \ref{Smoothing of n^2, n^2, n^2+n special 2}, the averages \eqref{n^2, n^2+n} are controlled by $\nnorm{g_3}_{s', T_3}$ for some $s'\in\N$, and so Proposition \ref{P:Us} gives $\delta>0$ such that
\begin{align*}
    \nnorm{f_{3, \uh, \uh'}}_{s', T_3} > \delta
\end{align*}
for all $(\uh, \uh')\in B'$\footnote{Specifically, we invoke Proposition \ref{P:Us} for averages of the form $\E_{n\in[N]}\,  T_1^{n^2}g_1 \cdot \prod_{j=1}^{2^{s}} T_2^{n^2} g'_j \cdot T_3^{n^2+n}g_3$, where $g'_j$ is $\CZ_{s_1}(T_2)$-measurable for each $j\in[2^{s}]$. One can show that an average like this is qualitatively controlled by $\nnorm{g_3}_{s', T_3}$ by approximating functions $g'_1, \ldots, g'_{2^{s}}$ by linear combinations of dual functions using Proposition \ref{dual decomposition} and then applying Proposition \ref{strong PET bound}.}. Hence,
\begin{align*}
    \liminf_{H \to\infty}\E_{\uh,\uh'\in [H]^{s}} \nnorm{\Delta_{{\b_1}, \ldots, {\b_{s}}; \uh-\uh'} f_3}_{s', T_3} >0.
\end{align*}
Together with Lemma \ref{difference sequences}, the H\"older inequality and the inductive formula \eqref{inductive formula} for the seminorms, this implies that $\nnorm{f_3}_{\b_1, \ldots, \b_s, \be_3^{\times s'}}>0$, which is what we claim.
\end{proof}

Finally, we show how we can use Proposition \ref{Seminorm control of n^2, n^2, n^2+n} to obtain control of the average \eqref{E:n^2, n^2, n^2+n} by seminorms of other terms.
\begin{corollary}\label{Seminorm control of n^2, n^2, n^2+n other terms}
    There exists $s\in\N$ such that for every system $(X, \CX, \mu, T_1, T_2, T_3)$ satisfying $\CI(T_1 T_2^{-1})=\CI(T_1)\cap\CI(T_2)$ and all  functions $f_1, f_2, f_3\in L^\infty(\mu)$, the average \eqref{E:n^2, n^2, n^2+n} converges to 0 in $L^2(\mu)$ whenever one of $\nnorm{f_1}_{s, T_1}, \nnorm{f_2}_{s, T_2}, \nnorm{f_3}_{s, T_3}$ is 0.
\end{corollary}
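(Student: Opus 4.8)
The plan is to bootstrap from the case $\nnorm{f_3}_{s,T_3}=0$, already settled in Proposition~\ref{Seminorm control of n^2, n^2, n^2+n}, to the cases $\nnorm{f_1}_{s,T_1}=0$ and $\nnorm{f_2}_{s,T_2}=0$. Since $T_1$ and $T_2$ carry the same polynomial $n^2$ and the hypothesis $\CI(T_1T_2\inv)=\CI(T_1)\cap\CI(T_2)$ is unchanged under swapping $T_1\leftrightarrow T_2$, it suffices to treat $f_2$; the statement for $f_1$ then follows by applying the $f_2$-case to the system $(X,\CX,\mu,T_2,T_1,T_3)$ and relabelling. I would argue by contraposition: normalising $f_1,f_2,f_3$ to be $1$-bounded and assuming
\begin{align*}
\limsup_{N\to\infty}\norm{\E_{n\in[N]}T_1^{n^2}f_1\cdot T_2^{n^2}f_2\cdot T_3^{n^2+n}f_3}_{L^2(\mu)}>0,
\end{align*}
I aim to conclude $\nnorm{f_2}_{s,T_2}>0$ for some $s\in\N$ depending only on the exponent $s_0$ provided by Proposition~\ref{Seminorm control of n^2, n^2, n^2+n}.

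The first step is to absorb the third term into a dual function. Applying Proposition~\ref{dual decomposition} to $f_3$ at level $s_0$ with a small $\varepsilon$, the uniform component of $f_3$ contributes a vanishing average by Proposition~\ref{Seminorm control of n^2, n^2, n^2+n}, while the small component contributes an average of $L^2(\mu)$-norm $O(\sqrt{\varepsilon})$ (its $L^1(\mu)$-norm is at most $\varepsilon$ and its $L^\infty(\mu)$-norm is $O(1)$). Choosing $\varepsilon$ small and then invoking the triangle inequality and pigeonhole exactly as in the discussion following Proposition~\ref{dual decomposition}, I obtain a single sequence $\CD\in\FD_{s_0}$ of the form $\CD(m)=T_3^m\CD_{s_0,T_3}(g)$ with $\limsup_{N\to\infty}\norm{\E_{n\in[N]}T_1^{n^2}f_1\cdot T_2^{n^2}f_2\cdot \CD(n^2+n)}_{L^2(\mu)}>0$.

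The second step applies Proposition~\ref{strong PET bound} to this average, viewed as carried by two genuine functions $f_1,f_2$ attached to $T_1,T_2$ with polynomial $n^2$, one dual term $\CD\in\FD_{s_0}$ attached to $T_3$ with polynomial $n^2+n$, and --- so that $T_3$ is available among the transformations of the underlying system --- one dummy genuine slot occupied by the constant function $1$ with polynomial $0$. Ordering the genuine functions so that $f_2$ comes last, one checks that $\deg p_\ell=2=D$ and $d_{\ell j}>0$ for all $j$, and that every vector in \eqref{E:coefficientvectors} lies in $\{\be_2,\be_2-\be_1\}$; crucially none of them involves $\be_3$, since $T_3$ appears only in the dual term (and with the zero polynomial in the dummy slot). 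Thus Proposition~\ref{strong PET bound} yields $s_1\in\N$, depending only on $s_0$, and vectors $\b_1,\ldots,\b_{s_1}\in\{\be_2,\be_2-\be_1\}$ such that the displayed average vanishes whenever $\nnorm{f_2}_{\b_1,\ldots,\b_{s_1}}=0$, so that necessarily $\nnorm{f_2}_{\b_1,\ldots,\b_{s_1}}>0$.

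To finish, the hypothesis gives $\CI(T^{\be_2-\be_1})=\CI(T_2T_1\inv)=\CI(T_1T_2\inv)=\CI(T_1)\cap\CI(T_2)\subseteq\CI(T_2)=\CI(T^{\be_2})$, so Lemma~\ref{bounding seminorms} (with $\bc_i=\be_2$ for every $i$) gives $\nnorm{f_2}_{\b_1,\ldots,\b_{s_1}}\le\nnorm{f_2}_{\be_2^{\times s_1}}=\nnorm{f_2}_{s_1,T_2}$, whence $\nnorm{f_2}_{s_1,T_2}>0$; taking $s:=\max(s_0,s_1)$ and using monotonicity of the seminorms settles the case of $f_2$, hence, by the symmetry noted above, also that of $f_1$. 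I expect the only delicate point to be the bookkeeping in the second step: one must arrange that the box seminorm of $f_2$ output by Proposition~\ref{strong PET bound} involves only $\be_2$ and $\be_2-\be_1$. This is precisely why $f_3$ has to be turned into a dual function first --- otherwise the proposition would also produce vectors such as $\be_2-\be_3$, about which the hypothesis $\CI(T_1T_2\inv)=\CI(T_1)\cap\CI(T_2)$ says nothing.
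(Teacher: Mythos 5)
Your proposal is correct and follows essentially the same path as the paper's own proof: contraposition, dual decomposition of $f_3$ via Proposition~\ref{dual decomposition} together with Proposition~\ref{Seminorm control of n^2, n^2, n^2+n}, then Proposition~\ref{strong PET bound} to get $\nnorm{f_2}_{\b_1,\ldots,\b_{s_1}}>0$ with $\b_j\in\{\be_2,\be_2-\be_1\}$, and finally Lemma~\ref{bounding seminorms} with the hypothesis $\CI(T_1T_2\inv)\subseteq\CI(T_2)$. You are somewhat more explicit than the paper about the bookkeeping needed for the application of Proposition~\ref{strong PET bound} (the dummy slot so that $T_3$ is available among the system's transformations and the set of output vectors avoids $\be_3$) and about the symmetry reduction to $f_2$, but these are details the paper's proof elides rather than a different argument.
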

\begin{proof}
The statement that $\nnorm{f_3}_{s, T_3} = 0$ implies the vanishing of the $L^2(\mu)$ limit of \eqref{E:n^2, n^2, n^2+n} follows from Proposition \ref{strong PET bound} and Proposition \ref{Seminorm control of n^2, n^2, n^2+n}, so the content of Corollary \ref{Seminorm control of n^2, n^2, n^2+n other terms} is to show control by other terms. Suppose that
\begin{align*}
    \lim_{N\to\infty}     \E_{n\in[N]}T_1^{n^2} f_1 \cdot T_2^{n^2}f_2 \cdot T_3^{n^2+n}f_3  \neq 0.
\end{align*}
We then apply Proposition \ref{dual decomposition} and the pigeonhole principle to find a 1-bounded dual function $\CD_{s, T_3}g$ such that
\begin{align*}
    \lim_{N\to\infty}     \E_{n\in[N]}T_1^{n^2} f_1 \cdot T_2^{n^2}f_2 \cdot \CD(n^2+n),
\end{align*}
where $\CD(n) := T_3^n \CD_{s, T_3} g$. By Proposition \ref{strong PET bound}, we have $\nnorm{f_2}_{\be_2^{\times s'}, (\be_2-\be_1)^{\times s'}}>0$ for some $s'\in\N$. The ergodicity assumption of $T_1 T_2^{-1}$ and Lemma \ref{bounding seminorms} imply that $\nnorm{f_2}_{\be_2^{\times 2 s'}}>0$, and an analogous argument gives $\nnorm{f_1}_{\be_1^{\times 2 s'}}>0$.
\end{proof}
\end{example}

The argument in Example~\ref{Ex: n^2, n^2, n^2+n}
%% the proof of Proposition \ref{Smoothing of n^2, n^2, n^2+n}
 is relatively clean because the leading coefficients of the polynomials are all 1. When this is not the case, minor modifications are required as explained in the next example.
\begin{example}[Seminorm control for a non-monic family of length 3]\label{Ex: n^2, n^2, 2n^2+n}
Consider the average
    \begin{align}\label{E:n^2, n^2, 2n^2+n}
    \E_{n\in[N]}T_1^{n^2} f_1 \cdot T_2^{n^2}f_2 \cdot T_3^{2n^2+n}f_3.
\end{align}
By Proposition \ref{strong PET bound}, this average is controlled by $\nnorm{f_3}_{\b_1, \ldots, \b_{s+1}}$ for some $s\in\N$ and $\b_1, \ldots, \b_{s+1}\in\{2\be_3, 2\be_3 - \be_2\}$, and we want to pass towards the control by $\nnorm{f_3}_{\b_1, \ldots, \b_s, \be_3^{\times s'}}$. Suppose that the $L^2(\mu)$ limit of \eqref{E:n^2, n^2, 2n^2+n} does not vanish, and suppose moreover that $\b_{s+1} = 2\be_3 - \be_2$. Arguing as in the proof of Proposition \ref{Smoothing of n^2, n^2, n^2+n}, we arrive at the inequality
\begin{align*}
    \liminf_{H\to\infty}\E_{\uh,\uh'\in [H]^{s}} \lim_{N\to\infty}\norm{ \E_{n\in[N]} \, T_1^{n^2} f_{1, \uh, \uh'}\cdot T_2^{n^2} f_{2, \uh, \uh'}\cdot T_3^{2n^2+n}u_{\uh,\uh'}}_{L^2(\mu)}>0
\end{align*}
where $f_{j, \uh, \uh'}:=\Delta_{\b_1, \ldots, \b_s; \uh-\uh'}f_j$ for $j\in[2]$ and the functions $u_{\uh, \uh'}$ are $T_2 T_3^{-2}$ invariant. We can no longer apply the invariance property in the same way as before since the polynomial $2n^2+n$ is not divisible by 2. Instead, we first split $\N$ into the odd and even part and then apply the triangle inequality to deduce that
\begin{multline*}%\label{E:splitting integral}
    \E_{r\in\{0,1\}}\limsup_{H\to\infty}\E_{\uh,\uh'\in [H]^{s}}\\
    \lim_{N\to\infty}\norm{ \E_{n\in[N]} \, T_1^{(2n+r)^2} f_{1, \uh, \uh'}\cdot T_2^{(2n+r)^2} f_{2, \uh, \uh'}\cdot T_3^{2(2n+r)^2+(2n+r)}u_{\uh,\uh'}}_{L^2(\mu)}>0.
\end{multline*}
Only then can we apply the $T_2 T_3^{-2}$ invariance of $u_{\uh, \uh'}$ to obtain the identity
\begin{align*}
    T_3^{2(2n+r)^2+(2n+r)}u_{\uh,\uh'} = T_2^{4n^2 + (4r+1)n}T_3^{2r^2 + r}u_{\uh, \uh'}.
\end{align*}
It follows that for some $r_0\in \{0,1\}$, we have
\begin{multline*}
    \limsup_{H\to\infty}\E_{\uh,\uh'\in [H]^{s}} \\ \lim_{N\to\infty}\norm{ \E_{n\in[N]} \, T_1^{4n^2 + 4r_0 n} f'_{1, \uh, \uh'}\cdot T_2^{4n^2 + 4r_0 n} f'_{2, \uh, \uh'}\cdot T_2^{4n^2 + (4r_0+1)n}u'_{\uh,\uh'}}_{L^2(\mu)}>0
\end{multline*}
upon setting $f'_{j, \uh, \uh'} := T_j^{r_0^2} f_{j, \uh, \uh'}$ for $j\in[2]$ and $u'_{\uh, \uh'} := T_3^{2r_0^2 + r_0} u_{\uh, \uh'}$. The rest of the argument proceeds analogously except that we invoke an analogue of Proposition \ref{Smoothing of n^2, n^2, n^2+n special 1} for the tuple $(T_1^{4n^2 + 4r_0 n}, T_2^{4n^2 + 4r_0 n}, T_2^{4n^2 + (4r_0+1)n})$. The important part about this new tuple is that the first two polynomials are again pairwise dependent while the last one is pairwise independent with any of the first two, and that the new tuple retains the good ergodicity property.
\end{example}

\section{Formalism and general strategy for longer families}\label{S: longer families}

We move on towards deriving Theorem \ref{T:Host Kra characteristic} for longer families. To prove it for averages
\begin{align}\label{E: av Sec 6}
    \E_{n\in [N]} \,\prod_{j\in[\ell]}T_{j}^{p_j(n)}f_j,
\end{align}
 we need to analyse more complicated averages of the form
%The heart of the proof of Theorem \ref{T:Host Kra characteristic} is a seminorm smoothing argument which generalises the argument from Proposition \ref{Smoothing of n^2, n^2, n^2+n}. Assuming that the average
\begin{align}\label{general average}
    \E_{n\in [N]} \,\prod_{j\in[\ell]}T_{\eta_j}^{\rho_j(n)}f_j \cdot \prod_{j\in[L]}\CD_{j}(q_j(n))
\end{align}
that  appear at the intermediate steps of the proof of Theorem \ref{T:Host Kra characteristic}, much like averages \eqref{n^2, n^2, n^2+n special 1} and \eqref{n^2, n^2, n^2+n special 2} show up at the intermediate steps of the proof of Proposition \ref{Seminorm control of n^2, n^2, n^2+n}, a special case of Theorem \ref{T:Host Kra characteristic} for the family $n^2, n^2, n^2+n$. In \eqref{E: av Sec 6} and \eqref{general average}, $p_1, \ldots, p_\ell, \rho_1, \ldots, \rho_\ell,  q_1, \ldots, q_L$ are (not necessarily distinct) polynomials with integer coefficients and  zero constant terms, $(X, \CX, \mu, T_1, \ldots, T_\ell)$ is a system, $f_1, \ldots, f_\ell\in L^\infty(\mu)$ are 1-bounded functions, and $\CD_1, \ldots, \CD_L\in\FD$ are sequences of functions. Since $\CD_{j}(q_j(n))$ has the form $T_{\pi_j}^{q_j(n)}g_j$ for some $\pi_j\in[\ell]$ and $g_j\in L^\infty(\mu)$, the averages \eqref{general average} converge in $L^2(\mu)$ by \cite{Wal12}. The same comment applies for all limits involving dual sequences that appear in the rest of the paper.

The purpose of this section is to introduce a formalism that helps us meaningfully discuss averages \eqref{general average}. This will be done in Section \ref{SS:formalism}. Subsequently, we give in Section \ref{SS: strategy} an overview of the strategy used to prove Theorem \ref{T:Host Kra characteristic}. The details of various moves discussed in Section \ref{SS: strategy} will be presented in Section \ref{S:maneuvers}.

While discussing various examples in this and the next sections, we often say informally that for $j\in[\ell],$ the average \eqref{general average} is \textit{controlled} by a $T_{\eta_j}$-seminorm of $f_j$ (or that we have \textit{seminorm control} of \eqref{general average} by a $T_{\eta_j}$-seminorm of $f_j$) if for all $d, L\in\N$, there exists $s\in\N$ such that if the $T_{\eta_j}$-seminorm of $f_j$ vanishes, then  the $L^2(\mu)$ limit of \eqref{general average} is 0 for all sequences $\CD_1, \ldots, \CD_L\in\FD_d$ and all functions $f_1, \ldots, f_\ell\in L^\infty(\mu)$ satisfying some explicitly stated invariance properties. We also say informally that we have \textit{seminorm control over the average} \eqref{general average} if we have seminorm control by a $T_{\eta_j}$-seminorm of $f_j$ for every $j\in[\ell]$.

%, present an inductive framework using which we prove Theorem \ref{T:Host Kra characteristic}, and state and prove various partial results that are necessary in the process. We also discuss a number of obstructions that have to be overcome before we can give a complete proof of Theorem \ref{T:Host Kra characteristic}. All of the above is illustrated with examples that will hopefully make the abstract statements in this and the next section more comprehensible to the reader. We then move on in Section \ref{S: longer families} to prove Theorem \ref{T:Host Kra characteristic}.

\subsection{The formalism behind the induction scheme}\label{SS:formalism}
We start by introducing a handy formalism used for the induction scheme in the proof of Theorem \ref{T:Host Kra characteristic}. We often associate the average \eqref{general average} with the tuple $\brac{T_{\eta_j}^{\rho_j(n)}}_{j\in[\ell]}$. This tuple does not contain any information about the polynomials $q_1, \ldots, q_L$, but this is not necessary. These terms play no role in our inductive argument and can be easily disposed of using Proposition \ref{strong PET bound}. The only thing they do influence is the degree $s$ of the seminorm with which we end up controlling the average \eqref{general average}.

\begin{definition}[Indexing data] For an average \eqref{general average} or the associated tuple $\brac{T_{\eta_j}^{\rho_j(n)}}_{j\in[\ell]}$, we let $\ell$ be their \emph{length}, $d:= \max\limits_{j\in[\ell]}\deg \rho_j$ be its \emph{degree}, and $\eta$ be its \emph{indexing tuple}. For $j\in[\ell]$, we set $d_j := \deg \rho_j$.  We let $K_1$ be the maximum number of pairwise independent polynomials within the family $\rho_1,\ldots, \rho_\ell$ (we set $K_1:=1$ if $\ell=1$ or every two polynomials are pairwise dependent). We partition $[\ell]=\bigcup_{t\in[K_1]} \FI_t$, where $j_1, j_2$ belong to the same $\FI_t$ if and only if $\rho_{j_1}$, $\rho_{j_2}$ are linearly dependent. Thus, $\FI_1, \ldots, \FI_{K_1}$ partitions $[\ell]$ into index sets corresponding to families of pairwise dependent polynomials.
Furthermore, we define
\begin{align*}
    \FL := \{j\in[\ell]:\ \deg \rho_j = d\}
\end{align*}
to be the set of indices corresponding to polynomials of maximum degree, and we rearrange $\FI_1, \ldots, \FI_{K_1}$ so that $\FL =\bigcup_{t\in[K_2]}\FI_t$ for some $K_2\leq K_1$. We also let $K_3 := |\FL|$ be the number of maximum degree polynomials, and we notice that $K_2\leq K_3\leq \ell$.
\end{definition}
Sometimes, we denote $\FL = \FL(\rho_1, \ldots, \rho_\ell), \FI_t=\FI_t(\rho_1, \ldots, \rho_\ell)$, and $K_i = K_i(\rho_1, \ldots, \rho_\ell)$ to emphasise the dependence on a specific family of polynomials.

\begin{example}\label{Ex: 7 tuple}
Consider the tuple
\begin{align}\label{tuple example}
    (T_1^{n^2}, T_2^{n^2}, T_3^{n^2+n}, T_4^{2n^2+2n}, T_3^{n^2+2n}, T_6^n, T_2^{n^2+3n}).
\end{align}
It has length 7, degree 2, indexing tuple $\eta = (1, 2, 3, 4, 3, 6, 2)$, $K_1 = 5$ (corresponding to five pairwise independent polynomials $n^2, n^2+n, n^2 + 2n, n^2 + 3n, n$), $K_2 = 4$ (corresponding to four quadratic pairwise independent polynomials $n^2, n^2 + n, n^2 + 2n, n^2+3n$), $K_3 = 6$ (corresponding to six quadratic polynomials appearing in \eqref{tuple example}), $\FL = \{1, 2, 3, 4, 5, 7\}$, and partition $\FI_1 = \{1, 2\}, \FI_2 = \{3, 4\}, \FI_3 = \{5\}, \FI_4 = \{7\}, \FI_5 = \{6\}$.
\end{example}

The rationale behind introducing the partition $\FI_1, \ldots, \FI_{K_1}$ and the indexing tuple $\eta$ is as follows. As part of our assumptions, we know that  $T_i^{\beta_i} T_j^{-\beta_j}$ is ergodic whenever the polynomials $\rho_i, \rho_j$ are dependent, $b_i, b_j$ are their leading coefficients and $\beta_i := b_i/\gcd(b_i, b_j), \beta_j := b_j/\gcd(b_i, b_j)$. Introducing the partition $\FI_1, \ldots, \FI_{K_1}$ allows us to keep track of pairs $(i,j)$ for which we have these ergodicity properties. The reason for introducing the indexing tuple $\eta$ is that  in our induction procedure, we gradually replace a transformation $T_{\eta_m}$ in the tuple $(T_{\eta_j}^{\rho_j(n)})_{j\in[\ell]}$ by a different transformation $T_{\eta_i}$, and so $\eta$ keeps track of these substitutions.

\begin{definition}[Good ergodicity property along $\eta$] Let $\eta\in[\ell]^\ell$ be an indexing tuple and $b_1, \ldots, b_\ell$ be the leading coefficients of the polynomials $\rho_1, \ldots, \rho_\ell$. We say that  the system  $(X,\CX, \mu,T_1, \ldots, T_\ell)$ has the \emph{good ergodicity property along $\eta$ for the polynomials} $\rho_1, \ldots, \rho_\ell$ if for every distinct $\eta_{j_1}, \eta_{j_2}$ belonging to the same $\FI_t$ with $t\in[K_1]$,  we have
\begin{align*}
    \CI(T_{\eta_{j_1}}^{\beta_{j_1}}T_{\eta_{j_2}}^{-\beta_{j_2}}) = \CI(T_{\eta_{j_1}})\cap \CI(T_{\eta_{j_2}}),
\end{align*}
where $\beta_{j} := b_{j}/\gcd(b_{j_1}, b_{j_2})$ for $j=j_1, j_2$. In other words, the only functions invariant under $T_{\eta_{j_1}}^{\beta_{j_1}}T_{\eta_{j_2}}^{-\beta_{j_2}}$ are those simultaneously invariant under $T_{\eta_{j_1}}$ and $T_{\eta_{j_2}}$.
In particular, having the good ergodicity property corresponds to having the good ergodicity property for the polynomials $p_1,\ldots, p_\ell$ along the identity indexing tuple $\eta_0 := (1, \ldots, \ell)$. We similarly say that the tuple $(T_{\eta_j}^{\rho_j(n)})_{j\in[\ell]}$ has the \emph{good ergodicity property} if $(X, \CX, \mu, T_1, \ldots, T_\ell)$ has the good ergodicity property along $\eta$ for $\rho_1, \ldots, \rho_\ell$. For instance, the tuple \eqref{tuple example} has the good ergodicity property along $\eta$ precisely when
%$T_1 T_2^{-1}$, $T_3 T_4^{-2}$ are ergodic.
\begin{align*}
    \CI(T_1 T_2\inv) = \CI(T_1)\cap\CI(T_2)\quad \textrm{and}\quad \CI(T_3 T_4^{-2}) = \CI(T_3)\cap \CI(T_4).
\end{align*}
\end{definition}

The guiding principle behind our arguments is that we derive seminorm control of the average \eqref{general average} by inductively applying seminorm control of an average that is ``simpler'' than the original average in an appropriate sense. For instance, in Proposition \ref{Smoothing of n^2, n^2, n^2+n} and Corollary \ref{Seminorm control of n^2, n^2, n^2+n other terms}, we obtained seminorm control for the tuple $(T_1^{n^2}, T_2^{n^2}, T_3^{n^2+n})$ from Example \ref{Ex: n^2, n^2, n^2+n} by invoking seminorm control for the following tuples:
\begin{itemize}
    \item $(T_1^{n^2}, T_2^{n^2}, T_2^{n^2+n})$ in the \textit{ping} step of the smoothing argument in Proposition \ref{Smoothing of n^2, n^2, n^2+n};
    \item $(T_1^{n^2}, *, T_3^{n^2+n})$ in the \textit{pong} step of the smoothing argument in Proposition \ref{Smoothing of n^2, n^2, n^2+n} (the asterisk is introduced purely for convenience; it denotes the term replaced by a product of dual functions);
    \item $(T_1^{n^2}, T_2^{n^2}, *)$ in Corollary \ref{Seminorm control of n^2, n^2, n^2+n other terms}.
\end{itemize}

The relative complexity of a tuple or an average is captured by the following notion.
\begin{definition}[Type]
The \emph{type} of $\brac{T_{\eta_j}^{\rho_j(n)}}_{j\in[\ell]}$ is the tuple $w = (w_1, \ldots, w_{K_2})$, where
\begin{align*}
    w_t := |\{j\in \FL:\ {\eta_j}\in \FI_t\}| = |\{j\in[\ell]:\ \deg \rho_j = d,\; {\eta_j}\in \FI_t\}|
\end{align*}
counts the number of times the transformations $(T_j)_{j\in \FI_t}$ appear in $\brac{T_{\eta_j}^{\rho_j(n)}}_{j\in[\ell]}$ with a polynomial iterate of maximum degree\footnote{We note here that the type of $\brac{T_{\eta_j}^{\rho_j(n)}}_{j\in[\ell]}$ depends not just on $\eta$ and the polynomials $\rho_1, \ldots, \rho_\ell$, but also on the ordering of the sets $\FI_1, \ldots, \FI_{K_1}$. We do not record this dependence explicitly, instead fixing some ordering of $\FI_1, \ldots, \FI_{K_1}$ a priori.}. We note that $|w| := w_1 +\cdots + w_{K_2} = K_3$. We say that the type $w$
is \textit{basic} if it has  the form  $w=(K_3,0,\ldots, 0)$.
\end{definition}

For instance, the tuple \eqref{tuple example} has type $(3,3,0,0)$: this is because $T_1, T_2$ corresponding to $\FI_1$ occur thrice, as do the transformations $T_3, T_4$ corresponding to $\FI_2$, while the transformations $T_5, T_7$ corresponding to $\FI_3$ and $\FI_4$ do not occur at all. We do not care about the occurrence of $T_6$ since it has a linear iterate.

It is instructive to see what happens when the polynomials $\rho_1, \ldots, \rho_\ell$ are pairwise independent. In that case, $\FI_t = \{j_t\}$ for every $t\in[\ell]$ and $w_t$ counts the number of times the transformation $T_{j_t}$ appears among $(T_{\eta_j})_{j\in \FL}$, or equivalently the number of times that $T_{j_t}$ attains a polynomial iterate of maximal degree. So for pairwise independence polynomials, this notion of type recovers the concept of type from \cite[Section 8.2]{FrKu22} (up to permuting $\FI_1, \ldots, \FI_{K_1}$).

For the set of tuples in $\N_0^{K_2}$ of length $K_3$, we say $w'<w$ if there exists $\kappa\in[K_2-1]$ such that for all $t\in[\kappa]$, we have $w'_t = w_t$, and either $w'_{\kappa+1} = 0 < w_{\kappa+1}$ or $w'_{\kappa+1}>w_{\kappa+1}>0$. For instance, we have the following chain of type inequalities
\begin{align*}
    (4, 0, 0) &< (3, 0, 1) < (3, 1, 0) < (2, 0, 2) < (2, 2, 0)\\
    &< (2, 1, 1) < (1, 0, 3) < (1, 2, 1) < (1, 1, 2).
\end{align*}
The first, third, fifth, sixth, and eighth inequality follow from the condition $w'_{\kappa+1}>w_{\kappa+1}>0$ while the second, fourth, and seventh inequality are consequences of the condition $w'_{\kappa+1} = 0 < w_{\kappa+1}$.
This is a rather atypical ordering, but it turns out to determine well which of the tuples  $\brac{T_{\eta_j}^{\rho_j(n)}}_{j\in[\ell]}$, $\brac{T_{\eta'_j}^{\rho'_j(n)}}_{j\in[\ell]}$ is ``simpler'' than the other. The motivation for this particular choice of ordering is that in the arguments to come, we will be passing from a tuple $\brac{T_{\eta_j}^{\rho_j(n)}}_{j\in[\ell]}$ of type $w$ to another tuple $\brac{T_{\eta'_j}^{\rho'_j(n)}}_{j\in[\ell]}$ of type $w'<w$ in two ways. In one of them, the type $w'$ will meet the condition $w'_{\kappa+1}>w_{\kappa+1}>0$ while in the other one, it will satisfy the condition $w'_{\kappa+1} = 0 < w_{\kappa+1}$. Arguing this way, we arrive in finitely many steps at a tuple of a basic type $w=(K_3, 0, \ldots, 0)$, which constitutes the base case of our induction.  This transition will be explained in greater detail at the end of Section \ref{SS: strategy} and illustrated in Example \ref{Ex: induction scheme ex}.
\begin{lemma}\label{L: type order}
For fixed $K_2,K_3\in\N$, let  $A:=\{w\in\N_0^{K_2}:\ w_1+\cdots+w_{K_2} = K_3\}$. Then $<$ defines a strict partial order on $A$.
\end{lemma}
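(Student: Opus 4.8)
The plan is to verify the two axioms of a strict partial order, namely irreflexivity and transitivity, from which asymmetry follows as a formal consequence. I would first note that the condition $w_1+\cdots+w_{K_2}=K_3$ defining $A$ plays no role in the argument, so it suffices to prove that $<$ is a strict partial order on all of $\N_0^{K_2}$; restricting to $A$ is then immediate.

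Irreflexivity is immediate from unwinding the definition: if $w<w$ held with witness $\kappa\in[K_2-1]$, then the dichotomy at coordinate $\kappa+1$ would assert either $w_{\kappa+1}=0<w_{\kappa+1}$ or $w_{\kappa+1}>w_{\kappa+1}>0$, both absurd. For transitivity, the key preliminary observation — and essentially the only point requiring thought — is that the witness $\kappa$ in the relation $w'<w$ is uniquely determined whenever it exists: the dichotomy at coordinate $\kappa+1$ forces $w'_{\kappa+1}\neq w_{\kappa+1}$ while $w'_t=w_t$ for all $t\leq\kappa$, so $\kappa+1=\min\{t:\ w'_t\neq w_t\}$; in particular two distinct witnesses cannot coexist. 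With this in hand I would argue transitivity as follows. Given $w''<w'$ with witness $\kappa''$ and $w'<w$ with witness $\kappa$, set $\kappa^\ast:=\min(\kappa,\kappa'')$; then $\kappa^\ast\in[K_2-1]$ and $w''_t=w'_t=w_t$ for all $t\leq\kappa^\ast$, so it remains only to verify the dichotomy at coordinate $\kappa^\ast+1$. If $\kappa<\kappa''$, then coordinate $\kappa+1\leq\kappa''$ lies in the range where $w''$ and $w'$ agree, so substituting $w''_{\kappa+1}=w'_{\kappa+1}$ into the dichotomy coming from $w'<w$ gives $w''<w$ with witness $\kappa$; the case $\kappa''<\kappa$ is symmetric, using instead $w'_{\kappa''+1}=w_{\kappa''+1}$. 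In the boundary case $\kappa=\kappa''$ I would first note that $w'_{\kappa+1}=0$ is incompatible with the dichotomy from $w''<w'$, hence $w'_{\kappa+1}>w_{\kappa+1}>0$ from $w'<w$, and then the two branches of the dichotomy from $w''<w'$ yield respectively $w''_{\kappa+1}=0<w_{\kappa+1}$ and $w''_{\kappa+1}>w'_{\kappa+1}>w_{\kappa+1}>0$, so $w''<w$ in both. Finally, asymmetry follows formally: if $w<w'$ and $w'<w$ both held, transitivity would give $w<w$, contradicting irreflexivity.

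The expected main obstacle is purely the case analysis in transitivity — concretely, tracking at which coordinate the ``first disagreement'' lands after composing two instances of the relation. The uniqueness-of-witness remark is precisely what organizes this bookkeeping cleanly, and once it is available the remainder is a mechanical check of the three cases $\kappa<\kappa''$, $\kappa''<\kappa$, $\kappa=\kappa''$ against the two branches of the defining dichotomy.
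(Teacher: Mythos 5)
Your proof is correct and follows essentially the same strategy as the paper: transitivity is established by a case analysis on the relative positions of the two witnesses, and the other axioms are cheap. The uniqueness-of-witness observation (that $\kappa+1$ must be the first coordinate where the two tuples disagree) is a clean organizing device; the paper encodes it by defining its indices $\kappa_1,\kappa_2$ explicitly as first-disagreement points. Your treatment is in fact somewhat more careful than the paper's. The paper merges $\kappa''>\kappa$ and $\kappa''=\kappa$ into a single clause whose displayed inequalities — $w''_{\kappa_2+1}=w'_{\kappa_2+1}=0$ in one branch, $w''_{\kappa_2+1}\geq w'_{\kappa_2+1}$ in the other — do not quite parse when $\kappa_2=\kappa_1$, since in that boundary case $w''_{\kappa_2+1}\neq w'_{\kappa_2+1}$ by definition of the witness, and the branch $w''_{\kappa+1}=0<w'_{\kappa+1}$ of $w''<w'$ is genuinely incompatible with $w''_{\kappa+1}\geq w'_{\kappa+1}$. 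Your explicit three-way split, together with the remark that $w'_{\kappa+1}>0$ is forced in the boundary case, resolves exactly this point. The conclusion is the same either way, so this is a presentational improvement rather than a logical disagreement with the paper.
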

\begin{proof}
%It is clear that $<$ is asymmetric and irreflexive, so it remains to show that it is transitive. Suppose that $w''<w'$, $w'<w$, and let $\kappa_1, \kappa_2\in[K_2-1]$ be indices such that $w''_t=w'_t$ for all $t\in[\kappa_2]$ but not for $t=\kappa_2+1$ and $w'_t=w_t$ for all $t\in[\kappa_1]$ but not for $t=\kappa_1+1$. If $\kappa_2<\kappa_1$, then either $w''_{\kappa_2+1}=0<w'_{\kappa_2+1} = w_{\kappa_2+1}$ or $w''_{\kappa_2+1}>w'_{\kappa_2+1} = w_{\kappa_2+1}>0$, and so $w''<w$. Otherwise $\kappa_2\geq \kappa_1$, in which case either $w''_{\kappa_2+1}=w'_{\kappa_2+1} = 0 < w_{\kappa_2+1}$ or $w''_{\kappa_2+1} \geq w'_{\kappa_2+1} > w_{\kappa_2+1}>0$, implying $w''<w$ again.
It is clear that $<$ is asymmetric and irreflexive, so it remains to show that it is transitive.
Suppose that $w'' < w'$, $w' < w$, and let $\kappa_1, \kappa_2 \in [K_2 - 1]$ be indices such that
$w''_t = w'_t$ for all $t \in [\kappa_2]$ but not for $t = \kappa_2 + 1$, and
$w'_t = w_t$ for all $t \in [\kappa_1]$ but not for $t = \kappa_1 + 1$.
Let $\kappa = \min(\kappa_1, \kappa_2)$; it suffices to compare
$w_{\kappa+1}$, $w'_{\kappa+1}$, and $w''_{\kappa+1}$.

If $\kappa_1 > \kappa_2$, then either
$0 = w''_{\kappa+1} < w'_{\kappa+1} = w_{\kappa+1}$
or
$w''_{\kappa+1} > w'_{\kappa+1} = w_{\kappa+1} > 0$,
and so $w'' < w$.
Similarly, if $\kappa_1 < \kappa_2$, then either
$0 = w''_{\kappa+1} = w'_{\kappa+1} < w_{\kappa+1}$
or
$w''_{\kappa+1} = w'_{\kappa+1} > w_{\kappa+1} > 0$,
giving $w'' < w$ again.

Lastly, suppose that $\kappa_1 = \kappa_2$.
Then $w'_{\kappa+1}$ is necessarily nonzero (for otherwise $\kappa = \kappa_2$ forces $w''_{\kappa+1} > w'_{\kappa+1} = 0$, and hence $w''>w'$).
Thus we must have
$w'_{\kappa+1} > w_{\kappa+1} > 0$.
Now either $w''_{\kappa+1} = 0$, in which case
$0 = w''_{\kappa+1} < w_{\kappa+1}$,
or $w''_{\kappa+1} > 0$, in which case
$w''_{\kappa+1} > w'_{\kappa+1} > w_{\kappa+1} > 0$.
Either way, we have $w'' < w$.
\end{proof}

\subsection{The general strategy}\label{SS: strategy}
In this section, we outline how to obtain a seminorm control of a given tuple using seminorm control for tuples of lower type or shorter length.
\begin{definition}[Controllable and uncontrollable tuples] Let $t_w := \max\{t:\ w_t>0\}$ be the \textit{last nonzero index} of $w$. We call a tuple $\brac{T_{\eta_j}^{\rho_j(n)}}_{j\in[\ell]}$ of a non-basic type $w$ (or the corresponding average) \textit{controllable}, if there exists an index $m\in[\ell]$ such that:
\begin{itemize}
    \item $\eta_m\in\FI_{t_w}$;
    \item for every other $i\in[\ell]$ with $\eta_i=\eta_m$, we have $\rho_i\neq \rho_m$.
\end{itemize}
If $m$ satisfies the aforementioned assumption, we say that it satisfies the \textit{controllability condition}; in this case, Proposition \ref{strong PET bound} guarantees that the average \eqref{general average} is controlled by $\nnorm{f_m}_{\b_1, \ldots, \b_{s}}$ for \emph{nonzero} vectors $\b_1, \ldots, \b_{s}$. If no such index $m$ exists, we call the tuple $\brac{T_{\eta_j}^{\rho_j(n)}}_{j\in[\ell]}$ \textit{uncontrollable}.
\end{definition}
The previous notions of controllability are supposed to capture whether Proposition~\ref{strong PET bound} is applicable to the relevant tuples in our setting.
\begin{example}[Controllable vs. uncontrollable tuples]\label{Ex: controllable}
Consider the following two tuples
\begin{align}
    \label{controllable} &\brac{T_1^{n^2}, T_2^{n^2}, T_3^{n^2}, T_4^{n^2}, T_5^{n^2+n}, T_1^{n^2+n}, T_5^{n^2+2n}, T_5^{n^2+2n}}\\
    \label{uncontrollable} &\brac{T_1^{n^2}, T_2^{n^2}, T_3^{n^2}, T_4^{n^2}, T_1^{n^2+n}, T_1^{n^2+n}, T_5^{n^2+2n}, T_5^{n^2+2n}}.
\end{align}
Defining the partitions $\CI_1 = \{1, 2, 3, 4\}$, $\CI_2 = \{5, 6\}$, $\CI_3 = \{7, 8\}$ corresponding to the independent polynomials $n^2, n^2+n, n^2+2n$ respectively, the first tuple has type $(5, 3, 0)$ while the second one has type $(6, 2, 0)$, and for both tuples  we have $t_w=2$. The  first one is controllable because for the index  $m = 5$, the only values $i\neq m$ such that $\eta_i = 5$ are $i=7,8$ corresponding to the polynomial $n^2+2n$, which is distinct from $n^2+n$.

By contrast, the tuple \eqref{uncontrollable} does not possess an index satisfying the controllability condition: the only indices $m\in[8]$ with $\eta_m\in\FI_2$ are $m=7,8$, and we have both $\eta_7 = \eta_8$ and $\rho_7(n) = n^2+2n = \rho_8(n)$. Hence, this tuple is uncontrollable.
\end{example}

Our strategy for proving seminorm control will work rather differently for controllable and uncontrollable tuples. Suppose first that the average \eqref{general average} with tuple $\brac{T_{\eta_j}^{\rho_j(n)}}_{j\in[\ell]}$ of a non-basic type is controllable, and that an index $m$ satisfies the controllability condition. Then Proposition \ref{strong PET bound} guarantees that the average \eqref{general average} is controlled by $\nnorm{f_m}_{\b_1, \ldots, \b_{s+1}}$ for some vectors $\b_1, \ldots, \b_{s+1}$ from \eqref{E:coefficientvectors}, and the controllability implies that these vectors are indeed nonzero. We want to show that this average is also controlled by $\nnorm{f_m}_{\b_1, \ldots, \b_{s}, \be_{\eta_m}^{\times s'}}$ for some $s'\in\N$ via a seminorm smoothing argument that generalises Proposition \ref{Smoothing of n^2, n^2, n^2+n}. We then iterate this result $s$ more times to get control by a $T_{\eta_m}$-seminorm of $f_m$. The seminorm smoothing argument follows a ping-pong strategy much like in the proof of Proposition \ref{Smoothing of n^2, n^2, n^2+n}. We first show that control by $\nnorm{f_m}_{\b_1, \ldots, \b_{s+1}}$ implies control by $\nnorm{f_i}_{\b_1, \ldots, \b_{s}, \be_{\eta_i}^{\times s_1}}$ for some $s_1\in\N$ and $i\neq m$, and then we use this auxiliary result to obtain control by $\nnorm{f_m}_{\b_1, \ldots, \b_{s}, \be_{\eta_m}^{\times s'}}$.

The main idea behind the \textit{ping} step of the seminorm smoothing argument is to show that a seminorm control of the average \eqref{general average} can be deduced from a seminorm control of a family of averages of the form
\begin{align}\label{intermediate average}
    \lim_{N\to\infty}\E_{n\in [N]} \,\prod_{j\in[\ell]}T_{{\eta_j'}}^{\rho'_j(n)}f'_j \cdot \prod_{j\in[L']}\CD'_{j}(q'_j(n))
\end{align}
for some polynomials $\rho'_1, \ldots, \rho'_\ell, q'_1, \ldots, q'_{L'}\in\Z[n]$, 1-bounded functions $f'_1, \ldots, f'_\ell\in L^\infty(\mu)$  and sequences of functions $\CD'_1, \ldots, \CD'_{L'}\in\FD$. Moreover, the indexing tuple $\eta'\in[\ell]^\ell$ is obtained from $\eta$ by changing $\eta_m$ into $\eta_i$ for some $i\neq m$, i.e. the passage from \eqref{general average} to \eqref{intermediate average} goes by replacing $T_{\eta_m}$ at index $m$ with $T_{\eta_i}$. Importantly, the new average \eqref{intermediate average} satisfies several key properties:
\begin{enumerate}
    \item it has a lower type than the original average, so that we can argue by induction;
    %it is simpler (in an appropriate sense to be explained later) than the original average, so that we can argue by induction;
    \item the new average retains the good ergodicity property of the original average;
    \item the functions $f'_j$ in the new average satisfy some invariance properties;
    \item  as long as the aforementioned invariance properties are satisfied, the new average \eqref{intermediate average} is controlled by the seminorm $\nnorm{f_j'}_{s, T_{\eta'_j}}$  for each $j\in[\ell]$ and some $s\in\N$.
    %in the new average, at least one term $g_j$ has to be controlled by some seminorm $\nnorm{g_j}_{s, T_{\eta_j}}$. While in the original average \eqref{original average}, we want to obtain such a control for all $l\in[\ell]$, this strong property is neither attainable nor necessary for the intermediate averages \eqref{general average}; (STILL TRUE? WE SHOULD NOW GET CONTROL FOR ALL ASSUMING INVARIANCE PROPERTIES)
\end{enumerate}
%Ensuring that all these properties are satisfied is the whole trick and requires a lot of preparatory work that we embark on shortly.
Proposition \ref{P: type reduction} explains the exact way in which we pass from averages \eqref{general average} to \eqref{intermediate average} so that the property (i) is satisfied, and Proposition \ref{P: properties of descendants} establishes  the property (ii). Proposition \ref{propagation of invariance} then ensures that the functions $f_j'$ in \eqref{intermediate average} satisfy needed invariance properties.

We note though that the new average \eqref{intermediate average} need not be controllable. For instance, if we take the average corresponding to the tuple \eqref{controllable}, then in the \textit{ping} step we replace $T_5^{n^2+n}$ by the same iterate of one of $T_1, T_2, T_3, T_4$. The new average is then uncontrollable, as is the tuple \eqref{uncontrollable}, corresponding to replacing $T_5^{n^2+n}$ by $T_1^{n^2+n}$. Hence, controllability may not be preserved while performing the procedure outlined above.

In the \textit{pong} step of the smoothing argument for \eqref{general average}, we deal with averages of the form
\begin{align}\label{intermediate average 2}
    \E_{n\in [N]} \,\prod_{\substack{j\in[\ell],\\ j\neq i}}T_{\eta_j}^{\rho_j(n)}f''_j \cdot \prod_{j\in[L'']}\CD''_{j}(q''_j(n)).
\end{align}
%where $f''_1,\ldots, f''_\ell\in L^\infty(\mu)$ are 1-bounded functions and  $\CD''_1, \ldots, \CD''_{L''}\in\FD$ are sequences.
Crucially, each function $f''_j$ is invariant under some composition of $T_{\eta_j}$ and $T_j$. This allows us to replace (some iterate of) $T_{\eta_j}$ in \eqref{intermediate average 2} by (some iterate of) $T_j$, a procedure that we call \emph{flipping}, and show that an average \eqref{intermediate average 2} essentially equals an average of the form
\begin{align}\label{original average}
    \E_{n\in [N]} \,\prod_{\substack{j\in[\ell],\\ j\neq i}}T_j^{\rho'''_j(n)}f'''_j \cdot \prod_{j\in[L'']}\CD''_{j}(q'''_j(n))
\end{align}
of length $\ell -1$. The details of how flipping is performed are presented in Proposition \ref{P:flipping}. An inductive application of a suitable modification of Theorem \ref{T:Host Kra characteristic} then gives a control of \eqref{original average} by a $T_j$-seminorm of $f'''_j$ for each $j\neq i$, and the invariance property of $f''_j$ translates it into a control of \eqref{intermediate average 2} by a $T_{\eta_j}$-seminorm of $f''_j$ for each $j\neq i$. A straightforward argument analogous to one at the end of the proof of Proposition \ref{Smoothing of n^2, n^2, n^2+n} gives a control of \eqref{general average}  by a $T_{\eta_m}$-seminorm of $f_m$.

If the average \eqref{general average} is uncontrollable, then we proceed rather differently. The previous strategy breaks right at the start since there is no index $m$ satisfying the controllability condition. Consequently, whichever index $m$ with $\eta_m\in\FI_{t_w}$ we take, we cannot employ Proposition \ref{strong PET bound} to bound the seminorm by $\nnorm{f_m}_{\b_1, \ldots, \b_{s+1}}$ for nonzero vectors $\b_1, \ldots, \b_{s+1}$. What we use instead is the inductive assumption that the functions $f_j$ at indices $j$ with $\eta_j\in\FI_{t_w}$ are invariant under a composition of (some power of) $T_{\eta_j}$ and (some power of) $T_j\inv$. Using this invariance property, we perform flipping once more to replace the original average \eqref{general average} by a new average
\begin{align}\label{intermediate average 3}
    \lim_{N\to\infty}\E_{n\in [N]} \,\prod_{j\in[\ell]}T_{{\eta_j''''}}^{\rho''''_j(n)}f''''_j \cdot \prod_{j\in[L]}\CD_{j}(q''''_j(n)),
\end{align}
where $\eta''''_j = j$ whenever $\eta_j\in \FI_{t_w}$; the details are provided in Corollary \ref{C: Flipping uncontrollable tuples}. This new  average has the good ergodicity property and is controllable. Importantly, it has a lower type, which is established in Proposition \ref{P: induction scheme}. We can then obtain seminorm control of \eqref{general average} by inductively invoking the seminorm control of \eqref{intermediate average 3}. The seminorm control of \eqref{intermediate average 3} is proved in turn by the smoothing argument for controllable averages described above.

Thus, whether the tuple $\brac{T_{\eta_j}^{\rho_j(n)}}_{j\in[\ell]}$ of a non-basic type is controllable or not, the idea is to control it by a Gowers-Host-Kra seminorm by invoking seminorm control for tuples of lower type or smaller length that naturally appear when examining $\brac{T_{\eta_j}^{\rho_j(n)}}_{j\in[\ell]}$. If the tuple $\brac{T_{\eta_j}^{\rho_j(n)}}_{j\in[\ell]}$ of type $w$ is controllable, we will invoke seminorm control for tuples of type $w'$ satisfying $w'_t = w_t$ for $t\in[\kappa]$ and $w'_{\kappa+1}>w_{\kappa+1}>0$ for some $\kappa\in[K_2-1]$ in the \textit{ping} step of the smoothing argument. Specifically, the new type $w'$ is obtained from $w$ by the type operation defined in \eqref{definition sigma}. In the \textit{pong} step of the smoothing argument, we will use seminorm control for tuples of length $\ell-1$. If the tuple $\brac{T_{\eta_j}^{\rho_j(n)}}_{j\in[\ell]}$ is uncontrollable, we will invoke seminorm control for tuples of type $w'$ satisfying $w'_t = w_t$ for $t\in[\kappa]$ and $w'_{\kappa+1}=0<w_{\kappa+1}$ for some $\kappa\in [K_2-1]$; the details are given in Proposition \ref{P: induction scheme}(v). The way in which we apply seminorm control for tuples of lower type motivates the choice of our somewhat weird ordering on types.

Reducing to tuples of lower type this way and noting that the tuples of length $\ell$ can have at most $(\ell+1)^\ell$ distinct types, we arrive after finitely many steps at tuples of basic type $w = (K_3, 0, \ldots, 0)$, i.e. those in which all the transformations come from the same class $\FI_1$. Tuples of basic type will serve as the basis for our induction procedure. For instance, the tuple $(T_1^{n^2}, T_2^{n^2}, T_2^{n^2+n})$ from Example \ref{Ex: n^2, n^2, n^2+n} has basic type $(3, 0)$ because it only involves the transformations $T_1, T_2$ whose indices belong to the set $\FI_1 = \{1, 2\}$ (corresponding to the polynomial $n^2$); however the type $(2, 1)$ of $(T_1^{n^2}, T_2^{n^2}, T_3^{n^2+n})$ is not basic because this tuple involves both the transformations $T_1, T_2$ and the transformation $T_3$ with an index from the set $\FI_2 = \{3\}$ (corresponding to the polynomial $n^2 + n$).

\section{Further maneuvers and obstructions for longer families}\label{S:maneuvers}
Having presented the general strategy for proving Theorem \ref{T:Host Kra characteristic} for longer families, we move on to discuss in detail the specific maneuvers outlined in Section \ref{SS: strategy}. In this section, we state and prove various partial results that give substance to the moves discussed in Section \ref{SS: strategy}. We also discuss a number of obstructions that appear in the process and have to be overcome before we can give a complete proof of Theorem \ref{T:Host Kra characteristic}. All of the above is illustrated with examples that will hopefully make the abstract statements in this and the next section more comprehensible to the reader. We then move on in Section \ref{S:smoothing} to prove Theorem \ref{T:Host Kra characteristic}.

The plan for this section is as follows. In Section \ref{SS:ping}, we discuss how to obtain a tuple of lower type in the \textit{ping} step of the seminorm smoothing argument for controllable tuples. Section \ref{SS:flipping} exhibits the necessity of assuming that the functions appearing in the averages \eqref{intermediate average} have some invariance properties. In particular, we show on examples how these properties are essentially used to tackle tuples of basic type and to perform the \textit{pong} step of the seminorm smoothing argument for controllable tuples. We also give details of the flipping procedure that relies on these invariance properties. Subsequently, we discuss in Section \ref{SS:uncontrollable} how flipping can be used to reduce an uncontrollable tuple to a controllable tuple of a lower type. Finally, we combine the details of the aforementioned moves in Section \ref{SS:recapitulation} and show how we can reach a tuple of a basic type in a finite number of steps.

\subsection{Reducing controllable tuples to tuples of lower type in the \textit{ping} step}\label{SS:ping}
As explained in Section \ref{SS: strategy}, in the \textit{ping} part of the smoothing argument, we will replace the original tuple $\brac{T_{\eta_j}^{\rho_j(n)}}_{j\in[\ell]}$ by a new tuple $\brac{T_{\eta'_j}^{\rho'_j(n)}}_{j\in[\ell]}$. The new indexing tuple $\eta'$ will be defined via the operation
\begin{align}\label{definition tau}
    (\tau_{mi}\eta)_j := \begin{cases} \eta_j,\; &j\neq m\\ \eta_i,\; &j = m
    \end{cases}
\end{align}
for some distinct values $m, i\in\FL$. This indexing tuple corresponds to replacing the term $T_{\eta_m}^{\rho_m(n)}$ in $\brac{T_{\eta_j}^{\rho_j(n)}}_{j\in[\ell]}$ by $T_{\eta_i}^{\rho'_i(n)}$, and all the other terms $T_{\eta_j}^{\rho_j(n)}$ by $T_{\eta_j}^{\rho'_j(n)}$. The new tuple $\brac{T_{\eta'_j}^{\rho'_j(n)}}_{j\in[\ell]}$ has to be chosen carefully: it must preserve the good ergodicity property and allow for seminorm control. Lastly, it must have a lower type. For this reason, we let $\supp(w):= \{t\in[K_2]:\ w_t>0\}$, and if there exist distinct integers $t_1,t_2\in \supp(w)$, we define the type operation $\sigma_{t_1 t_2}w$ by the formula
\begin{align}\label{definition sigma}
    (\sigma_{t_1 t_2}w)_t := \begin{cases} w_t,\; &t \neq t_1, t_2\\
    w_{t_1}-1,\; &t = t_1\\
    w_{t_2} + 1,\; &t = t_2.
    \end{cases}.
\end{align}
For instance, $\sigma_{32}(3, 2, 2) = (3, 3, 1)$. As a consequence of our ordering on types, we have $\sigma_{t_1 t_2}w < w$ whenever $t_2 < t_1$ (the assumption $w_{t_2}>0$ is crucial here), so in particular $(3, 3, 1) < (3, 2, 2)$.

Proposition \ref{P: type reduction}, which we are about to state now, specifies how these tuples of lower type are picked, what form they take, and what properties they enjoy. It will be used in our smoothing argument in Proposition \ref{P:smoothing} in that the tuple of lower type for which we invoke the induction hypothesis in the \textit{ping} step is constructed in Proposition \ref{P: type reduction}.
%After the statement of Proposition \ref{P: type reduction}, we will illustrate its rather abstract content with several examples.
%Recall that the transformations $\tau_{m,i}$ and $\sigma_{m,i}$ were defined in \eqref{definition tau} and \eqref{definition sigma} respectively.

\begin{proposition}[Type reduction for controllable tuples]\label{P: type reduction}
Let $\ell\in\N$, $\eta\in[\ell]^\ell$ be an indexing tuple, $\rho_1, \ldots, \rho_\ell\in\Z[n]$ be polynomials with leading coefficients $b_1, \ldots, b_\ell$. Let also  $(X, \CX, \mu, T_1, \ldots, T_\ell)$ be a system and $\brac{T_{\eta_j}^{\rho_j(n)}}_{j\in[\ell]}$ be a tuple of a non-basic type $w$ whose last nonzero index is $t_w$. Suppose that $\brac{T_{\eta_j}^{\rho_j(n)}}_{j\in[\ell]}$ is controllable.
%and has the good ergodicity property.
 Then there exists $\lambda\in\N$ such that for every $r\in\{0, \ldots, \lambda-1\}$, there exist an index $t'_w\in\supp(w)$ distinct from $t_w$, an index $i\in[\ell]$ with $\eta_i\in\FI_{t'_w}$, and a tuple $\brac{T_{\eta'_j}^{\rho'_j(n)}}_{j\in[\ell]}$ satisfying the following properties.
\begin{enumerate}
    \item The type $w'$ of the tuple $\brac{T_{\eta'_j}^{\rho'_j(n)}}_{j\in[\ell]}$ satisfies $w' = \sigma_{t_w t'_w}w < w$.
    %for some $1\leq t' < t$, where $t$ is the last nonzero index of $w$.
    \item The indexing tuple $\eta'$ is given by $\eta' := \tau_{m i} \eta$ for some $m\in[\ell]$ satisfying the controllability condition (recall that $\tau_{mi}$ is defined in \eqref{definition tau}).
    \item The polynomials $\rho'_{1}, \ldots, \rho'_{\ell}$ have integer coefficients and zero constant terms, and they take the form
\begin{align*}
    \rho'_{j}(n) := \begin{cases} \rho_{j}(\lambda n + r) - \rho_{j}(r),\; &j\neq m\\ \frac{b_{i}}{b_{m}}(\rho_j(\lambda n + r) - \rho_j(r)),\; &j= m. \end{cases}
\end{align*}
%, where $b_1, \ldots, b_\ell$ are leading coefficients of the polynomials $\rho_1, \ldots, \rho_\ell$.
%In particular, the leading coefficients $b'_1, \ldots, b'_\ell$ of $\rho'_1, \ldots, \rho'_\ell$ take the form
%\begin{align}\label{leading coeff}
%    b'_j = b_{\eta_j}\lambda^{d_j},
    %\begin{cases} a_{kj}(a_{km_k})^d,\; &l\neq m_k\\ {a_{ki_k}}(a_{km_k})^d,\; &l= m_k.\end{cases}
%\end{align}
%where $d_j := \deg \rho_j$ for $j\in[\ell]$.
\end{enumerate}

\end{proposition}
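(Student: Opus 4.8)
The plan is to first use the controllability hypothesis to locate a good index $m$, then use a degree-counting argument to identify a suitable target class $\FI_{t'_w}$ and index $i$, and finally read off the polynomials $\rho'_j$ by a rescaling of the variable. First I would invoke the controllability of $\brac{T_{\eta_j}^{\rho_j(n)}}_{j\in[\ell]}$ to produce an index $m\in[\ell]$ with $\eta_m\in\FI_{t_w}$ such that $\rho_i\neq\rho_m$ for every $i\neq m$ with $\eta_i=\eta_m$. This $m$ will serve as the index to be "moved" out of the class $\FI_{t_w}$. Since $w$ is non-basic, $t_w\geq 2$, so $\supp(w)$ contains at least one index $t'_w<t_w$; the content of the proposition is that we may pick $t'_w$ and an index $i$ with $\eta_i\in\FI_{t'_w}$ so that after substituting $T_{\eta_m}^{\rho_m}$ by an appropriate iterate of $T_{\eta_i}$, the resulting tuple still has the good ergodicity property (so that it is legitimate to run the induction on it) — though I note that the good ergodicity claim is deferred to Proposition \ref{P: properties of descendants}, and here we only need the existence of \emph{some} valid choice of $t'_w, i$ realizing the type $\sigma_{t_w t'_w}w$.

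The key device is the change of variable $n\mapsto \lambda n+r$. The point of introducing $\lambda$ is to clear denominators: after the substitution $T_{\eta_m}^{\rho_m(n)}\rightsquigarrow T_{\eta_i}^{(b_i/b_m)(\rho_m(\lambda n+r)-\rho_m(r))}$, the exponent must be an integer polynomial in $n$ with zero constant term. Writing $\rho_m(n)=\sum_{k=1}^{d_m} c_k n^k$, the polynomial $\rho_m(\lambda n+r)-\rho_m(r)$ has zero constant term automatically, and its coefficient of $n^k$ is $\lambda^k\sum_{j\geq k}c_j\binom{j}{k}r^{j-k}$; multiplying by $b_i/b_m$ and choosing $\lambda$ to be a multiple of $b_m$ (e.g. $\lambda := |b_m|$ or a suitable multiple, depending on what Proposition \ref{P: properties of descendants} requires) guarantees integrality. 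I would verify that with this choice all $\rho'_j$ defined by the displayed formula lie in $\Z[n]$ and have zero constant term, which is property (iii). Property (ii) is immediate from the definition: $\eta' := \tau_{mi}\eta$ changes $\eta$ only in coordinate $m$, replacing $\eta_m$ by $\eta_i$, and $m$ satisfies the controllability condition by construction. Property (i) then follows from a bookkeeping check: the substitution $n\mapsto\lambda n+r$ is degree-preserving, so $\FL$ is unchanged and the leading coefficients are merely rescaled by powers of $\lambda$ (and by $b_i/b_m$ at index $m$); thus the partition into $\FI_t$'s of the \emph{new} polynomials refines or coincides appropriately, the index $m$ has been moved from $\FI_{t_w}$ to (the class containing) $\FI_{t'_w}$, so $w_{t_w}$ drops by one and $w_{t'_w}$ increases by one, giving $w'=\sigma_{t_w t'_w}w$, which is $<w$ because $t'_w<t_w$ and $w_{t'_w}>0$ by the defining rule \eqref{definition sigma} and the ordering on types.

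The main obstacle I expect is the verification that a valid pair $(t'_w,i)$ exists at all, i.e. that one can move the index $m$ into \emph{some} lower nonzero class while keeping the polynomials linearly-dependence structure consistent with the asserted type $\sigma_{t_w t'_w}w$ — this requires understanding which classes $\FI_{t'_w}$ are "compatible" with $\rho_m$ after rescaling, and is where the precise definition of type and the non-basicness of $w$ are used crucially. A secondary subtlety is the role of $r$: the statement quantifies over \emph{all} $r\in\{0,\dots,\lambda-1\}$, so the choice of $t'_w$ and $i$ is allowed to depend on $r$, and one must check that for each residue $r$ a valid choice exists; this is not automatic because the leading behavior of $\rho_m(\lambda n+r)-\rho_m(r)$ is independent of $r$ (it is $c_{d_m}\lambda^{d_m}n^{d_m}+\cdots$) but lower-order structure and the interaction with the other $\rho'_j$ can vary with $r$. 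Once these combinatorial points are settled, the remaining computations — integrality, zero constant term, degree preservation — are routine and can be dispatched quickly.
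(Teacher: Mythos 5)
Your proposal takes essentially the same approach as the paper: use controllability to produce $m$, use non-basicness to produce $t'_w\neq t_w$ in $\supp(w)$, define $\eta'=\tau_{mi}\eta$, pick $\lambda$ so as to clear denominators, and check (i)--(iii). Where you diverge from the paper is in your assessment of the difficulty of the remaining steps, and this is where I would push back.

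The ``main obstacle'' you expect --- finding a class $\FI_{t'_w}$ that is ``compatible'' with $\rho_m$ after rescaling so that the linear-dependence structure is consistent with the claimed type --- is not actually an obstacle, and the paper's proof treats it as a one-line remark. The reason is that the partition $\FI_1,\ldots,\FI_{K_1}$ of $[\ell]$ is determined by the pairwise \emph{linear dependence} relations among the polynomials $\rho_1,\ldots,\rho_\ell$ (equivalently, among the original $p_1,\ldots,p_\ell$, since the $\rho_j$ are descendants). Composing each $\rho_j$ with $n\mapsto\lambda n+r$, subtracting the constant term, and rescaling one of them by $b_i/b_m$ neither creates nor destroys any linear-dependence relation; hence $\FI_t(\rho'_1,\ldots,\rho'_\ell)=\FI_t(\rho_1,\ldots,\rho_\ell)$ (this is Proposition~\ref{P: properties of descendants}(i), whose proof is exactly this observation). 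Consequently the type of the new tuple is determined purely by counting how many $j\in\FL$ have $\eta'_j$ in each $\FI_t$; since $\eta'=\tau_{mi}\eta$ differs from $\eta$ only at $m$, where $\eta_m\in\FI_{t_w}$ has been replaced by $\eta_i\in\FI_{t'_w}$, the count at $t_w$ drops by one and the count at $t'_w$ rises by one, giving $w'=\sigma_{t_w t'_w}w$ verbatim. No compatibility constraint appears, and \emph{any} $t'_w\in\supp(w)\setminus\{t_w\}$ with \emph{any} $i$ such that $\eta_i\in\FI_{t'_w}$ works. For the same reason, your ``secondary subtlety'' about $r$-dependence is vacuous: the paper simply fixes $t'_w$ and $i$ once, before choosing $r$, and the type computation does not see $r$ at all (only the $\rho'_j$ themselves do). Finally, the paper's $\lambda$ is taken to be the \emph{smallest} natural number with $\frac{\lambda}{b_m}\rho_m\in\Z[n]$ rather than your $|b_m|$; both choices are correct, but the minimal one is what makes $\lambda$ depend only on $\rho_m$ in a canonical way. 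In short, your outline is right, but the hard part you flag dissolves once you notice that the partition $\FI_t$ is stable under taking descendants; without that observation the proof is indeed incomplete.
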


We remark that when the leading coefficients of $\rho_1, \ldots, \rho_\ell$ are 1, then $\rho'_j  = \rho_j$ for every $j\in[\ell]$, so the property (iii) becomes trivial.
%%The content of Proposition \ref{P: type reduction} in this case is simply how to pick the %%indexing tuple $\eta'$ from $\eta$ so that the tuple %%$\brac{T_{\eta'_j}^{\rho'_j(n)}}_{j\in[\ell]}$ has the good ergodicity property and lower type %%than the original tuple.

%We now prove Proposition \ref{P: type reduction}.
\begin{proof}%[Proof of Proposition \ref{P: type reduction}]
Let $t_w$ be the last nonzero index of the type $w$ of $\brac{T_{\eta_j}^{\rho_j(n)}}_{j\in[\ell]}$. By the controllability of the tuple, there exists $m\in[\ell]$ with $\eta_m\in\FI_{t_w}$ such that $\eta_{m'}=\eta_m$ implies that $\rho_{m'}$ and $\rho_m$ are distinct. Let $t'_w\in\supp(w)$ be an index different from $t_w$ (it exists since the type $w$ is non-basic) and $i\in[\ell]$ be an index with $\eta_i\in \FI_{t'_w}$. We define $\eta':=\tau_{mi}\eta$, meaning that we replace $T_{\eta_m}$ by $T_{\eta_i}$ and keep the other transformations the same.

%Let $k\in\N_0$ and suppose that we have already constructed the tuples $\brac{T_{\eta_{0j}}^{p_{0j}(n)}}_{j\in[\ell]}, \ldots, \brac{T_{\eta_{kj}}^{p_{kj}(n)}}_{j\in[\ell]}$. We show how to construct $\brac{T_{\eta_{(k+1)j}}^{p_{(k+1)j}(n)}}_{j\in[\ell]}$. At step $k$, we pick $t_k\in\supp(w_k)$ with the property that $w_{kt_k} = \min\limits_{t\in\supp w_k}w_{kt}$. That is, $t_k$ is the index such that the members of $\FI_{t_k}$ are the least frequently represented among the transformations appearing in the tuple $\brac{T_{\eta_{kj}}^{p_{kj}(n)}}_{j\in[\ell]}$ with highest degree iterates. Let $m_k\in [K_2]$ be an arbitrary index such that $\eta_{km_k}\in \FI_{t_k}$. Then one of the two things happen. Either the type $w_k$ is basic, meaning that $\supp(w_k) = \{t_k\}$, or it is not. If the type is basic, we halt the procedure, and so we consider the case when $w_k$ is not basic. In that case, there exists $t'_k\in\supp(w_k)$ different from $t_k$, and by assumption it satisfies the inequality $w_{kt'_k}\geq w_{kt_k}$. Moreover, there exists an index $i_k$ with $\eta_{ki_k}\in\FI_{t'_k}$. We then let $\eta_{k+1} := \tau_{m_k i_k} \eta_k$; that is, we replace the transformation $T_{\eta_{km_k}}$ at the index $m_k$ by the transformation $T_{\eta_{ki_k}}$.

%We emphasise here that while we have the full liberty to choose $t_k$ and $m_k$ satisfying the conditions above, the values $t'_k$ and $i_k$ will be given to us.

We let $\lambda\in\N$ be the smallest number for which $\frac{\lambda}{b_m}\rho_m\in\Z[n]$ (equivalently, $\lambda$ is the smallest number such that $b_m$ divides the coefficients of the polynomials $\rho_m(\lambda n + r) - \rho_m(r)$ for $r\in\Z$). We also fix an arbitrary $r\in\{0, \ldots, \lambda-1\}$. We then define the new polynomials $\rho'_1, \ldots, \rho'_\ell$ by the formula
\begin{align*}
    \rho'_{j}(n) := \begin{cases} \rho_{j}(\lambda n + r) - \rho_{j}(r),\; &j\neq m\\ \frac{b_{i}}{b_{m}}(\rho_j(\lambda n + r) - \rho_j(r)),\; &j= m. \end{cases}
\end{align*}
The new polynomials are in $\Z[n]$ by the choice of $\lambda$, and it is not hard to check that they have zero constant terms.
%We also note that their degree and their pairwise dependence or independence stay the same as for the old polynomials $\rho_j$, and so property (v) follows.
Lastly, the new tuple $\brac{T_{\eta'_j}^{\rho'_j(n)}}_{j\in[\ell]}$ has the type $w' = \sigma_{t_w t'_w}w$, which is strictly  smaller than $w$ by the assumption that $t'_w<t_w$ (which follows from $t'_w\neq t_w$ and the assumption that $t_w$ is the last nonzero index).
\end{proof}

\begin{example}[Examples of type reduction]\label{Ex: type reduction for earlier examples}
We show how Proposition \ref{P: type reduction} has been implicitly applied to  the two tuples from Section \ref{S:n^2, n^2, n^2+n}.
\begin{enumerate}
    \item The tuple $(T_1^{n^2}, T_2^{n^2}, T_3^{n^2+n})$ discussed at length in Section \ref{S:n^2, n^2, n^2+n} has type $(2, 1)$ corresponding to the partition $\FI_1 = \{1,2\}, \FI_2 = \{3\}$, and its good ergodicity property means that $\CI(T_1 T_2\inv)=\CI(T_1)\cap\CI(T_2)$, i.e. the only $T_1 T_2\inv$-invariant functions are those invariant simultaneously under $T_1$ and $T_2$. In the proof of Proposition \ref{Smoothing of n^2, n^2, n^2+n}, we applied the type reduction once (with the operation $\tau_{32}$) to obtain the new tuple $(T_1^{n^2}, T_2^{n^2}, T_2^{n^2+n})$ with indexing tuple $(1,2,2)$ and basic type $(3,0)$.
    \item The tuple $(T_1^{n^2}, T_2^{n^2}, T_3^{2n^2+n})$ presented at the end of Section \ref{S:n^2, n^2, n^2+n} also has type $(2, 1)$ corresponding to the partition $\FI_1 = \{1,2\}, \FI_2 = \{3\}$, and its good ergodicity property also states that $\CI(T_1 T_2\inv)=\CI(T_1)\cap\CI(T_2)$. In the \textit{ping} step of the smoothing argument, we obtained  (upon taking $r_0 = 1$) the new tuple $(T_1^{4n^2+4n}, T_2^{4n^2+4}, T_2^{4n^2+5n})$ by performing the operation $\tau_{32}$. This new tuple also has the indexing tuple $(1,2,2)$ and basic type $(3,0)$, and its ergodicity property is the same as for the original tuple.
\end{enumerate}
\end{example}

\begin{definition}[Descendants]
    Let $p_1, \ldots, p_\ell\in\Z[n]$ be polynomials with leading coefficients $a_1, \ldots, a_\ell$ and $\eta\in[\ell]^\ell$ be an indexing tuple. We say that the polynomials $\rho_1, \ldots, \rho_\ell$ are \textit{descendants of $p_1, \ldots, p_\ell$ along $\eta$}, if there exists $\lambda\in\N$ and $r\in\{0, \ldots, \lambda-1\}$ such that $\rho_j(n) = \frac{a_{\eta_j}}{a_j}(p_j(\lambda n+r) - p_j(r))$. If this is the case, we also say the tuple $\brac{T_{\eta_j}^{\rho_j(n)}}_{j\in[\ell]}$ is a \textit{descendant} of the tuple $\brac{T_j^{p_j(n)}}_{j\in[\ell]}$.
\end{definition}
 Descendancy enjoys the following transitivity property.
\begin{lemma}[Descendancy is transitive]\label{L: descendancy transitive}
Suppose that the polynomials  $\rho_1, \ldots, \rho_\ell\in \Z[n]$ are descendants of $p_1, \ldots, p_\ell\in \Z[n]$ along $\eta$, and let $\rho'_j(n) := \frac{a_{\eta'_j}}{a_{\eta_j}}(\rho_j(\lambda' n + r') - \rho_j(r'))$ for all $j\in[\ell]$, where $\eta'\in[\ell]^\ell$, $\lambda'\in\N$, $r'\in\{0, \ldots, \lambda-1\}$ and $a_1, \ldots, a_\ell$ are the leading coefficients of $p_1, \ldots, p_\ell$. Then $\rho'_1, \ldots, \rho'_\ell$ are descendants of $p_1, \ldots, p_\ell$ along $\eta'$.
\end{lemma}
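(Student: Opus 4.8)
The plan is to unwind the definition of descendancy twice and then recombine the two polynomial substitutions into a single one. Recall that $\rho_1,\ldots,\rho_\ell$ being descendants of $p_1,\ldots,p_\ell$ along $\eta$ means that there exist $\lambda\in\N$ and $r\in\{0,\ldots,\lambda-1\}$ such that $\rho_j(n)=\frac{a_{\eta_j}}{a_j}\bigl(p_j(\lambda n+r)-p_j(r)\bigr)$ for all $j\in[\ell]$. We are also given $\rho'_j(n)=\frac{a_{\eta'_j}}{a_{\eta_j}}\bigl(\rho_j(\lambda' n+r')-\rho_j(r')\bigr)$, and we want to exhibit $\Lambda\in\N$ and $R\in\{0,\ldots,\Lambda-1\}$ with $\rho'_j(n)=\frac{a_{\eta'_j}}{a_j}\bigl(p_j(\Lambda n+R)-p_j(R)\bigr)$.

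First I would substitute the expression for $\rho_j$ into the formula for $\rho'_j$. Writing $q_j(m):=p_j(\lambda m+r)-p_j(r)$, so that $\rho_j(m)=\frac{a_{\eta_j}}{a_j}q_j(m)$, we get
\[
\rho'_j(n)=\frac{a_{\eta'_j}}{a_{\eta_j}}\cdot\frac{a_{\eta_j}}{a_j}\bigl(q_j(\lambda' n+r')-q_j(r')\bigr)=\frac{a_{\eta'_j}}{a_j}\bigl(q_j(\lambda' n+r')-q_j(r')\bigr).
\]
Now $q_j(\lambda' n+r')=p_j(\lambda(\lambda' n+r')+r)-p_j(r)=p_j(\lambda\lambda' n+(\lambda r'+r))-p_j(r)$ and $q_j(r')=p_j(\lambda r'+r)-p_j(r)$, so the difference telescopes: $q_j(\lambda' n+r')-q_j(r')=p_j(\lambda\lambda' n+(\lambda r'+r))-p_j(\lambda r'+r)$. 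Hence, setting $\Lambda:=\lambda\lambda'$ and $R_0:=\lambda r'+r$, we obtain $\rho'_j(n)=\frac{a_{\eta'_j}}{a_j}\bigl(p_j(\Lambda n+R_0)-p_j(R_0)\bigr)$, which is exactly the descendancy relation along $\eta'$ except that $R_0$ need not lie in $\{0,\ldots,\Lambda-1\}$.

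The only genuinely fiddly point — and the one I would flag as the ``main obstacle,'' though it is really just bookkeeping — is normalizing the shift $R_0$ into the required range $\{0,\ldots,\Lambda-1\}$. Since $0\le r'\le\lambda'-1$ and $0\le r\le\lambda-1$, we have $0\le R_0=\lambda r'+r\le\lambda(\lambda'-1)+(\lambda-1)=\lambda\lambda'-1=\Lambda-1$ automatically, so in fact $R:=R_0$ already lies in the correct range and no reduction modulo $\Lambda$ is needed. (If one were worried that the definition of descendant as stated in the excerpt has a typo with $r'\in\{0,\ldots,\lambda-1\}$ rather than $\{0,\ldots,\lambda'-1\}$, one could alternatively replace $R_0$ by its residue $R\in\{0,\ldots,\Lambda-1\}$ modulo $\Lambda$: writing $R_0=\Lambda c+R$ for some $c\in\Z$, periodicity is not available, but one checks directly that $p_j(\Lambda n+R_0)-p_j(R_0)=p_j(\Lambda(n+c)+R)-p_j(\Lambda c+R)$, and replacing $n$ by $n-c$ — i.e. reindexing the averaging variable, which does not affect the conclusions for which descendancy is invoked — reduces to the shift $R$. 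In the setting of this paper the averages in question are over F\o lner-type sets, so a fixed translation of the index is harmless.) In any case, with $\Lambda=\lambda\lambda'$ and $R=\lambda r'+r$ we conclude that $\rho'_1,\ldots,\rho'_\ell$ are descendants of $p_1,\ldots,p_\ell$ along $\eta'$, as claimed.
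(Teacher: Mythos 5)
Your proof is correct and takes essentially the same approach as the paper's: substitute the descendancy formula for $\rho_j$ into the definition of $\rho'_j$, telescope, and read off $\Lambda=\lambda\lambda'$ and $R=\lambda r'+r$. The extra care you take to check that $R$ lies in $\{0,\ldots,\Lambda-1\}$ (and the observation that the statement's range for $r'$ is likely a typo for $\{0,\ldots,\lambda'-1\}$) is a sensible sanity check that the paper leaves implicit, but it does not change the argument.
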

\begin{proof}
Let $\lambda\in\N$ and $r\in\{0, \ldots, \lambda-1\}$ be such that $\rho_j(n) = \frac{a_{\eta_j}}{a_j}(p_j(\lambda n+r) - p_j(r))$. Then a direct computation gives that
\begin{align*}
    \rho'_j(n) = \frac{a_{\eta'_j}}{a_j}(p_j(\lambda\lambda' n + \lambda r' + r) - p_j(\lambda r' + r)),
\end{align*}
giving the claim.
\end{proof}
In particular, we get the following corollary of interest to us that follows from a straightforward combination of Proposition \ref{P: type reduction} and Lemma \ref{L: descendancy transitive}.
\begin{corollary}[Type reduction preserves descendancy]\label{C: ping reduction transitive}
    Let $\ell\in\N$, $\eta\in[\ell]^\ell$ be an indexing tuple, $p_1, \ldots, p_\ell, \rho_1, \ldots, \rho_\ell\in\Z[n]$ be polynomials, and $(X, \CX, \mu, T_1, \ldots, T_\ell)$ be a system. Suppose that $\brac{T_{\eta_j}^{\rho_j(n)}}_{j\in[\ell]}$ is a tuple of a non-basic type $w$ that is a descendant of $\brac{T_j^{p_j(n)}}_{j\in[\ell]}.$ Then the tuple $\brac{T_{\eta'_j}^{\rho'_j(n)}}_{j\in[\ell]}$ constructed from $\brac{T_{\eta_j}^{\rho_j(n)}}_{j\in[\ell]}$ in Proposition \ref{P: type reduction} is also a descendant of $\brac{T_j^{p_j(n)}}_{j\in[\ell]}.$
\end{corollary}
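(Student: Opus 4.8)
The plan is to show that Corollary~\ref{C: ping reduction transitive} follows immediately by composing the two preceding results: Proposition~\ref{P: type reduction}, which describes the form of the new tuple $\brac{T_{\eta'_j}^{\rho'_j(n)}}_{j\in[\ell]}$ constructed from a controllable tuple, and Lemma~\ref{L: descendancy transitive}, which says that descendancy is transitive. The only thing that needs checking is that the polynomials $\rho'_j$ produced by Proposition~\ref{P: type reduction} are of exactly the shape required to apply Lemma~\ref{L: descendancy transitive}, namely $\rho'_j(n) = \frac{a_{\eta'_j}}{a_{\eta_j}}(\rho_j(\lambda' n + r') - \rho_j(r'))$ for a suitable $\lambda'$, $r'$, where $a_1,\ldots,a_\ell$ are the leading coefficients of $p_1,\ldots,p_\ell$.

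First I would recall the setup: by hypothesis $\brac{T_{\eta_j}^{\rho_j(n)}}_{j\in[\ell]}$ is a descendant of $\brac{T_j^{p_j(n)}}_{j\in[\ell]}$, so there are $\lambda\in\N$ and $r\in\{0,\ldots,\lambda-1\}$ with $\rho_j(n) = \frac{a_{\eta_j}}{a_j}(p_j(\lambda n + r) - p_j(r))$ for all $j$. In particular the leading coefficient $b_j$ of $\rho_j$ equals $\frac{a_{\eta_j}}{a_j}\cdot a_j \lambda^{d_j} = a_{\eta_j}\lambda^{d_j}$ where $d_j = \deg \rho_j = \deg p_j$; the key consequence is the identity $\frac{b_i}{b_m} = \frac{a_{\eta_i}}{a_{\eta_m}}$ (using that $d_i = d_m = d$ for $i,m\in\FL$, so the powers of $\lambda$ cancel). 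Next I would invoke Proposition~\ref{P: type reduction}: it produces $\lambda'\in\N$, an index $i$ with $\eta_i\in\FI_{t'_w}$, an index $m$ satisfying the controllability condition, the new indexing tuple $\eta' = \tau_{mi}\eta$, and for each $r'\in\{0,\ldots,\lambda'-1\}$ the polynomials
\begin{align*}
    \rho'_j(n) = \begin{cases} \rho_j(\lambda' n + r') - \rho_j(r'), & j\neq m,\\ \frac{b_i}{b_m}(\rho_j(\lambda' n + r') - \rho_j(r')), & j = m.\end{cases}
\end{align*}

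It then remains to observe that this is precisely the form demanded by Lemma~\ref{L: descendancy transitive}. For $j\neq m$ we have $\eta'_j = \eta_j$, so $\frac{a_{\eta'_j}}{a_{\eta_j}} = 1$ and $\rho'_j(n) = \rho_j(\lambda' n + r') - \rho_j(r')$ matches. For $j = m$ we have $\eta'_m = \eta_i$, and by the coefficient identity established above $\frac{b_i}{b_m} = \frac{a_{\eta_i}}{a_{\eta_m}} = \frac{a_{\eta'_m}}{a_{\eta_m}}$, so again $\rho'_m(n) = \frac{a_{\eta'_m}}{a_{\eta_m}}(\rho_m(\lambda' n + r') - \rho_m(r'))$ matches. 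Hence the hypotheses of Lemma~\ref{L: descendancy transitive} are met (with $\eta$, $\lambda'$, $r'$ in the roles of $\eta$, $\lambda'$, $r'$ there), and we conclude that $\rho'_1,\ldots,\rho'_\ell$ are descendants of $p_1,\ldots,p_\ell$ along $\eta'$, i.e.\ $\brac{T_{\eta'_j}^{\rho'_j(n)}}_{j\in[\ell]}$ is a descendant of $\brac{T_j^{p_j(n)}}_{j\in[\ell]}$, as claimed.

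The main (and essentially only) obstacle is the bookkeeping in the previous paragraph: verifying that the scalar $\frac{b_i}{b_m}$ appearing in Proposition~\ref{P: type reduction} coincides with the ratio $\frac{a_{\eta'_m}}{a_{\eta_m}}$ of leading coefficients of the original polynomials. This uses two facts — that $i$ and $m$ both lie in $\FL$, so $\deg\rho_i = \deg\rho_m$, and that the leading coefficient of a descendant $\rho_j$ is $a_{\eta_j}$ times a common power of $\lambda$ depending only on $\deg\rho_j$ — after which the powers of $\lambda$ cancel in the ratio. Everything else is a direct substitution, and there are no analytic or combinatorial difficulties.
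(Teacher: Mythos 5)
Your proposal is correct and follows essentially the same route as the paper's own proof: both note that descendancy forces $b_j = a_{\eta_j}\lambda^{d_j}$, cancel the $\lambda$-powers to get $\frac{b_i}{b_m} = \frac{a_{\eta_i}}{a_{\eta_m}} = \frac{a_{\eta'_m}}{a_{\eta_m}}$, rewrite the $\rho'_j$ from Proposition~\ref{P: type reduction} in the exact form required by Lemma~\ref{L: descendancy transitive}, and conclude by that lemma. You are slightly more explicit than the paper in flagging that the cancellation uses $d_i=d_m$ (since $i,m\in\FL$), but the substance is identical.
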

\begin{proof}
Suppose that $\rho_1, \ldots, \rho_\ell$ are descendants of $p_1, \ldots, p_\ell$ along $\eta$ by assumption.
%, they take the form $\rho_j(n) = \frac{a_{\eta_j}}{a_j}(p_j(\lambda' n + r') - p_j(r'))$ for each $j\in[\ell]$, where $a_j$ is the leading coefficient of $p_j$. The leading coefficient $b_j$ of $\rho_j$ takes the form $b_j = a_{\eta_j} (\lambda')^{d_j}$, where $d_j = \deg p_j$, and so
Letting $a_j, b_j$ be the leading coefficients of $p_j, \rho_j$ respectively and $\rho'_j$ be as defined in Proposition \ref{P: type reduction}, we have $b_j = a_{\eta_j} (\lambda')^{d_j}$ for some $\lambda\in \Z$, where $d_j := \deg p_j = \deg \rho_j = \deg \rho'_j$. Thus,  $$\rho'_j(n) = \rho_j(\lambda n + r) - \rho_j(r) =\frac{a_{\eta'_j}}{a_{\eta_j}}(\rho_j(\lambda n + r) - \rho_j(r))$$ for $j\neq m$ and $$\rho'_j(n) = \frac{b_i}{b_m}(\rho_j(\lambda n + r) - \rho_j(r)) = \frac{a_{\eta_i}}{a_{\eta_m}}(\rho_j(\lambda n + r) - \rho_j(r)) = \frac{a_{\eta_m'}}{a_{\eta_m}}(\rho_j(\lambda n + r) - \rho_j(r))$$ for $j = m$, where we use $\eta'_m = \eta_i$ and $\eta'_j = \eta_j$ for $j\neq m$. Hence, the polynomials $\rho'_1, \ldots, \rho'_\ell$ satisfy the condition of Lemma \ref{L: descendancy transitive}, implying the claim.
\end{proof}

Descendant tuples enjoy the following important properties.
\begin{proposition}[Properties of descendants]\label{P: properties of descendants}
    Let $\ell\in\N$, $\eta\in[\ell]^\ell$ be an indexing tuple, $p_1, \ldots, p_\ell, \rho_1, \ldots, \rho_\ell\in\Z[n]$ be polynomials, and $(X, \CX, \mu, T_1, \ldots, T_\ell)$ be a system. Suppose that $\brac{T_j^{p_j(n)}}_{j\in[\ell]}$ has the good ergodicity property and $\brac{T_{\eta_j}^{\rho_j(n)}}_{j\in[\ell]}$  is its descendant. Then the following holds:
    \begin{enumerate}
        \item We have
        \begin{align*}
    \FL(\rho_1, \ldots, \rho_\ell) &= \FL(p_1, \ldots, p_\ell), \\
    K_i(\rho_1, \ldots, \rho_\ell) &=K_i(p_1, \ldots, p_\ell),\quad  i\in[3],\\
    \FI_t(\rho_1, \ldots, \rho_\ell)&=\FI_t(p_1, \ldots, p_\ell), \quad t\in[K_1].
        \end{align*}
        \item The tuple $\brac{T_{\eta_j}^{\rho_j(n)}}_{j\in[\ell]}$ has the good ergodicity property.
    \end{enumerate}
\end{proposition}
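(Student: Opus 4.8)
The plan is to unwind the definition of descendancy and check each claim directly. Fix $\lambda\in\N$ and $r\in\{0,\ldots,\lambda-1\}$ with $\rho_j(n) = \frac{a_{\eta_j}}{a_j}(p_j(\lambda n + r) - p_j(r))$ for all $j\in[\ell]$, where $a_j$ is the leading coefficient of $p_j$. The first observation is that each $\rho_j$ has the same degree as $p_j$: this is immediate since $\lambda\ge 1$, so $p_j(\lambda n + r)$ has the same degree and leading term (up to the nonzero scalar $\lambda^{\deg p_j}$) as $p_j(n)$, and subtracting the constant $p_j(r)$ does not affect the degree. In particular $d_j := \deg\rho_j = \deg p_j$, so $\FL(\rho_1,\ldots,\rho_\ell) = \FL(p_1,\ldots,p_\ell)$. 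Next, I would check that $\rho_{j_1}$ and $\rho_{j_2}$ are linearly dependent if and only if $p_{j_1}$ and $p_{j_2}$ are: since the constant terms of all these polynomials vanish (by the standing assumption and the remark that $\rho_j$ has zero constant term), linear dependence of $p_{j_1}, p_{j_2}$ is equivalent to $p_{j_1}/c_1 = p_{j_2}/c_2$ for suitable nonzero rationals, and the map $p(n)\mapsto p(\lambda n + r) - p(r)$ is linear and injective on the space of polynomials with zero constant term (it sends a nonzero such polynomial to a nonzero one, as just noted). Rescaling by the nonzero constants $a_{\eta_j}/a_j$ does not affect linear dependence either. Hence the partition $\FI_1,\ldots,\FI_{K_1}$ into linear-dependence classes is the same for $\rho_1,\ldots,\rho_\ell$ as for $p_1,\ldots,p_\ell$, which gives $K_1(\rho_\bullet) = K_1(p_\bullet)$ and $\FI_t(\rho_\bullet) = \FI_t(p_\bullet)$ for all $t$; combined with $\FL(\rho_\bullet) = \FL(p_\bullet)$ this yields $K_2(\rho_\bullet) = K_2(p_\bullet)$ and $K_3(\rho_\bullet) = |\FL(\rho_\bullet)| = |\FL(p_\bullet)| = K_3(p_\bullet)$. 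This proves part (i).

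For part (ii), I must verify the good ergodicity identity \eqref{E: good erg identity} for the tuple $\brac{T_{\eta_j}^{\rho_j(n)}}_{j\in[\ell]}$ along $\eta$. Take distinct indices $j_1, j_2$ with $\eta_{j_1}, \eta_{j_2}$ in the same cell $\FI_t$; by part (i) this is the same as $\eta_{j_1},\eta_{j_2}$ lying in the same linear-dependence class. Let $b_j$ denote the leading coefficient of $\rho_j$; then from $\rho_j(n) = \frac{a_{\eta_j}}{a_j}(p_j(\lambda n + r) - p_j(r))$ we read off $b_j = \frac{a_{\eta_j}}{a_j}\cdot a_j\lambda^{d_j} = a_{\eta_j}\lambda^{d_j}$. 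Since $\rho_{j_1}$ and $\rho_{j_2}$ are linearly dependent, so are $p_{\eta_{j_1}}$ and $p_{\eta_{j_2}}$ — here I use that $\rho_j$ is proportional to $p_{\eta_j}$ composed with the affine substitution, so $\rho_{j_1}\parallel\rho_{j_2}$ forces $p_{\eta_{j_1}}\parallel p_{\eta_{j_2}}$ — and in particular $d_{j_1} = \deg p_{\eta_{j_1}} = \deg p_{\eta_{j_2}} = d_{j_2} =: d$. Writing $g := \gcd(b_{j_1}, b_{j_2})$ and $\beta_{j} := b_j/g$, and $g' := \gcd(a_{\eta_{j_1}}, a_{\eta_{j_2}})$ with $\beta'_{\eta_j} := a_{\eta_j}/g'$, the relation $b_j = a_{\eta_j}\lambda^{d}$ gives $g = g'\lambda^d$ and hence $\beta_{j_1} = \beta'_{\eta_{j_1}}$ and $\beta_{j_2} = \beta'_{\eta_{j_2}}$. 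Therefore
\begin{align*}
    \CI(T_{\eta_{j_1}}^{\beta_{j_1}}T_{\eta_{j_2}}^{-\beta_{j_2}}) = \CI(T_{\eta_{j_1}}^{\beta'_{\eta_{j_1}}}T_{\eta_{j_2}}^{-\beta'_{\eta_{j_2}}}).
\end{align*}
Now $p_{\eta_{j_1}}, p_{\eta_{j_2}}$ are linearly dependent polynomials with zero constant terms, so — using the remark after the definition of the good ergodicity property together with the hypothesis that $\brac{T_j^{p_j(n)}}_{j\in[\ell]}$ has the good ergodicity property (applied with the pair of indices $\eta_{j_1}\ne\eta_{j_2}$, and the coprime coefficients $\beta'_{\eta_{j_1}}, \beta'_{\eta_{j_2}}$) — we get $\CI(T_{\eta_{j_1}}^{\beta'_{\eta_{j_1}}}T_{\eta_{j_2}}^{-\beta'_{\eta_{j_2}}}) = \CI(T_{\eta_{j_1}})\cap\CI(T_{\eta_{j_2}})$. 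Combining, $\CI(T_{\eta_{j_1}}^{\beta_{j_1}}T_{\eta_{j_2}}^{-\beta_{j_2}}) = \CI(T_{\eta_{j_1}})\cap\CI(T_{\eta_{j_2}})$, which is exactly \eqref{E: good erg identity}. One small point to handle is the edge case $\eta_{j_1} = \eta_{j_2}$: then the two polynomials $p_{\eta_{j_1}} = p_{\eta_{j_2}}$ are trivially dependent, $\beta'_{\eta_{j_1}} = \beta'_{\eta_{j_2}} = 1$, and both sides of \eqref{E: good erg identity} equal $\CI(T_{\eta_{j_1}})$, so there is nothing to prove.

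I expect the main obstacle to be purely bookkeeping: making sure that the affine substitution $n\mapsto\lambda n + r$ genuinely preserves linear (in)dependence among polynomials with zero constant terms and that it does not lower degrees, and then carefully tracking how the gcd of leading coefficients transforms under the substitution together with the rescaling by $a_{\eta_j}/a_j$, so that the normalized exponents $\beta_{j_1}, \beta_{j_2}$ in the descendant tuple match the normalized exponents $\beta'_{\eta_{j_1}}, \beta'_{\eta_{j_2}}$ governing the good ergodicity property of the original tuple. None of this requires a new idea, but the notational overhead — four families of leading coefficients and two gcds — is where care is needed.
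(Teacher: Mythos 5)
Your approach is essentially the paper's: part (i) by tracking degrees and linear dependence under the affine substitution, part (ii) by computing $b_j = a_{\eta_j}\lambda^{d_j}$, matching the normalized exponents $\beta_j$ to the original coprime coefficients $\alpha_{\eta_j} = a_{\eta_j}/\gcd(a_{\eta_{j_1}}, a_{\eta_{j_2}})$, and invoking the original good ergodicity property. Part (i) is fine. But in part (ii) the step justifying $d_{j_1}=d_{j_2}$ --- which is exactly what you need in order to cancel the powers of $\lambda$ inside $\gcd(b_{j_1},b_{j_2})$ --- is wrong.

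You write: ``Since $\rho_{j_1}$ and $\rho_{j_2}$ are linearly dependent, so are $p_{\eta_{j_1}}$ and $p_{\eta_{j_2}}$ --- here I use that $\rho_j$ is proportional to $p_{\eta_j}$ composed with the affine substitution.'' Neither half of this is correct. The hypothesis is that $\eta_{j_1},\eta_{j_2}$ lie in one cell $\FI_t$, not $j_1, j_2$; there is no premise that $\rho_{j_1}\parallel\rho_{j_2}$ (and $p_{\eta_{j_1}}\parallel p_{\eta_{j_2}}$ follows directly from $\eta_{j_1},\eta_{j_2}\in\FI_t$, with no detour). More seriously, the descendancy formula $\rho_j(n) = \frac{a_{\eta_j}}{a_j}\bigl(p_j(\lambda n+r)-p_j(r)\bigr)$ makes $\rho_j$ proportional to a rescaled version of $p_j$, not of $p_{\eta_j}$; hence $d_{j_1}=\deg p_{j_1}$, not $\deg p_{\eta_{j_1}}$, and your chain $d_{j_1}=\deg p_{\eta_{j_1}}=\deg p_{\eta_{j_2}}=d_{j_2}$ is broken. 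The equality $d_{j_1}=d_{j_2}$ can genuinely fail for a general descendant: with $p_1=p_2=n^2$, $p_3=n$, $\eta=(1,2,1)$, $\lambda=2$, $r=0$, $j_1=2$, $j_2=3$ one has $d_{j_1}=2\ne 1 = d_{j_2}$ and $\beta_{j_1}=4/\gcd(4,2)=2$, so the descendant's condition $\CI(T_2^2T_1^{-1})=\CI(T_2)\cap\CI(T_1)$ is not the original condition $\CI(T_1T_2^{-1})=\CI(T_1)\cap\CI(T_2)$. What rescues the proof is that the proposition is only ever applied to the \emph{proper} descendants of Proposition \ref{P: induction scheme}, where property (iv) forces $\eta_j\in\FL$ if and only if $j\in\FL$; thus $\eta_{j_1},\eta_{j_2}\in\FI_t$ implies $j_1,j_2\in\FL$ when $t\le K_2$ (hence both of maximum degree) and $\eta_{j_1}=j_1$, $\eta_{j_2}=j_2$ when $t>K_2$ (hence both in $\FI_t$), giving $d_{j_1}=d_{j_2}$ in either case. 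The paper leaves this point implicit; your proposal does not close it and the claimed justification should be replaced by this structural argument.
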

Property (i) ensures that when passing to descendants, we do not need to redefine the partition $\FI_1, \ldots, \FI_{K_1}$. Property (ii) is crucial because it shows that descendants retain the essential ergodicity properties of the original tuple.
\begin{proof}
Part (i) follows from the fact that for every $j\in[\ell]$, the polynomials $p_j$ and $\rho_j$ have the same degree, and that $p_{j_1}, p_{j_2}$ are linearly dependent if and only if $\rho_{j_1}, \rho_{j_2}$ are. We therefore move on to proving part (ii). Let $b_j$ be the leading coefficient of $\rho_j$, $a_j$ be the leading coefficient of $p_j$, and $d_j:= \deg p_j = \deg \rho_j$ for every $j\in[\ell]$. To check that the tuple $\brac{T_{\eta_j}^{\rho_j(n)}}_{j\in[\ell]}$ has the good ergodicity property, we need to show that if $\eta_{j_1}, \eta_{j_2}$ are distinct elements of the same set $\FI_{t}$, then
%$T_{\eta_{(k+1)j_1}}^{\alpha_{(k+1)j_1}} T_{\eta_{(k+1)j_2}}^{-\alpha_{(k+1)j_2}}$ is ergodic,
\begin{align}\label{E:erg con in type reduction}
    \CI(T_{\eta_{j_1}}^{\beta_{j_1}} T_{\eta_{j_2}}^{-\beta_{j_2}})=\CI(T_{\eta_{j_1}})\cap\CI(T_{\eta_{j_2}}),
\end{align}
where $$\beta_{j} := b_{j}/\gcd(b_{j_1}, b_{j_2})$$ for $j = j_1, j_2$.  By construction, $b_j = a_{\eta_j}\lambda^{d_j}$ for some $\lambda\in\N$, and so
\begin{align}\label{E: beta=alpha}
    \beta_{j} = a_{\eta_j}/\gcd(a_{\eta_{j_1}}, a_{\eta_{j_2}})=:\alpha_{\eta_j}
    %b_{\eta'_{j}}/\gcd(b_{\eta'_{j_1}}, b_{\eta'_{j_2}})=:\beta_{\eta'_{j}}
\end{align}
for $j= j_1, j_2$. The assumption $\eta_{j_1}, \eta_{j_2}\in\FI_t$ for some fixed $t$ implies that $p_{\eta_{j_1}}, p_{\eta_{j_2}}$ are linearly dependent, and additionally $p_{\eta_{j_1}}/\alpha_{\eta_{j_1}} = p_{\eta_{j_2}}/\alpha_{\eta_{j_2}}$. Since $\alpha_{\eta_{j_1}}, \alpha_{\eta_{j_2}}$ are coprime, the good ergodicity property of $\brac{T_j^{p_j(n)}}_{j\in[\ell]}$ implies that
\begin{align*}
    \CI(T_{\eta_{j_1}}^{\alpha_{\eta_{j_1}}} T_{\eta_{j_2}}^{-\alpha_{\eta_{j_2}}})=\CI(T_{\eta_{j_1}})\cap\CI(T_{\eta_{j_2}}).
\end{align*}
The equality \eqref{E:erg con in type reduction} follows from this and the identification \eqref{E: beta=alpha}.
\end{proof}

\begin{example}[Type reduction for non-monic polynomials]\label{Ex:type reduction example}
%The examples presented in Example \ref{Ex: type reduction for earlier examples} do not cover all the intricacies of Proposition \ref{P: type reduction}, in particular when the leading coefficients are not all 1.
We present one more example to show how Proposition \ref{P: type reduction} is applied iteratively for more complicated tuples, and how properties listed in Proposition \ref{P: properties of descendants} are retained when passing to lower-type descendant tuples.
Consider the tuple
\begin{align}\label{example tuple 20}
    (T_1^{n^2}, T_2^{3n^2}, T_3^{2n^2}, T_4^{2n^2+n}, T_5^{n^2+n}, T_6^{n^2+n}, T_7^n),
\end{align}
and assume that it has the good ergodicity property, i.e.
%$$T_1 T_2^{-3}, T_1 T_3^{-2}, T_2^3 T_3^{-2}, T_5 T_6^{-1}$$ are ergodic.
\begin{align*}
    \CI(T_1 T_2^{-3}) = \CI(T_1)\cap\CI(T_2),\quad \CI(T_1 T_3^{-2}) = \CI(T_1)\cap\CI(T_3),\\
    \CI(T_2^3 T_3^{-2}) = \CI(T_2)\cap\CI(T_3),\quad \CI(T_5 T_6^{-1}) = \CI(T_5)\cap\CI(T_6).
\end{align*}
This tuple has length 7, degree 2, $K_1 = 5$, $K_2 = 4$, $K_3=6$  and $\FL = \{1, 2, 3, 4, 5, 6\}$. If we define the partition\footnote{Perhaps a more natural way to define the partition would be to have $\FI_1=\{1, 2, 3\}$, $\FI_2 = \{4\}$, $\FI_3 = \{5,6\}$, $\FI_4 = \{7\}$. But then the tuple would have type $(3, 1, 2)$, and reducing to the tuple of basic type by iteratively applying Proposition \ref{P: type reduction} would take more steps. This shows that choosing the partition strategically can save on the number of iterations of Proposition \ref{P: type reduction} needed to reach a tuple of basic type.} $\FI_1=\{1, 2, 3\}$, $\FI_2 = \{5,6\}$, $\FI_3 = \{4\}$, $\FI_4 = \{7\}$, then the tuple has type $w_0 = (3,2, 1)$; we recall that the term $T_7^n$ plays no part in the type consideration since the polynomial $\rho_{07}(n)=n$ has a lower degree. The tuple \eqref{example tuple 20} has the basic indexing tuple $\eta_0 = (1, 2, 3, 4, 5, 6,7)$. The tuple is controllable, and 4 satisfies the controllability condition, so in the first step we replace $T_4$ (this corresponds to us wanting to first get a $T_4$-seminorm control over the tuple \eqref{example tuple 20}). We are then provided with an index $i_0\in\FI_1\cup\FI_2$ (say, $i_0 = 1$), and we get the new indexing tuple
$$\eta_1 := \tau_{m_0 i_0} \eta_0 = \tau_{41}\eta_0 = (1, 2, 3, 1, 5, 6, 7)$$
The leading coefficient 2 of $2n^2+n$ does not divide the linear coefficient, and the smallest $\lambda_0\in\N$ such that $2$ divides the coefficients of $\lambda_0(2n^2+n)$ is $\lambda_0 = 2$. In performing the \textit{ping} step of the seminorm smoothing argument for the tuple \eqref{example tuple 20}, we will want to apply the $T_1 T_4^{-2}$ invariance of some function $u$ to replace $T_4^{2n^2+n}u$ by $T_1^{q(n)}u'$ for some $q\in\Z[n]$ and a function $u'$ related in some way to $u$. We cannot do this directly since $\frac{1}{2}(2n^2+n)\notin\Z[n]$, but we can do this ``piecewise'' by splitting $\N$ into arithmetic progressions $(2\N + r)_{r=0, 1}$ and considering the two cases separately (see the sketch of the seminorm smoothing argument for $n^2, n^2, 2n^2+n$ at the end of Section \ref{S:n^2, n^2, n^2+n} to see how this was done for that family). We therefore replace the original polynomials $\rho_{01}, \ldots, \rho_{07}$ by new polynomials
\begin{align*}
    \rho_{1j}(n) := \begin{cases} \rho_{0j}(2 n + r_0) - \rho_{0j}(r_0),\; &j \neq 4\\
    \frac{1}{2}(\rho_{04}(2 n + r_0) - \rho_{04}(r_0)),\; &j =4
    \end{cases}
\end{align*}
for some $r_0\in\{0,1\}$ (the choice of $r_0$ is not ours).
Assuming that $r_0=1$, we obtain the new tuple
\begin{align}\label{example tuple 21}
    (T_1^{4n^2 + 4n}, T_2^{12n^2+12n}, T_3^{8n^2+8n}, T_1^{4n^2+5n}, T_5^{4n^2+6n}, T_6^{4n^2+6n}, T_7^{2n}).
\end{align}
The type of the new tuple is $w_1 = \sigma_{31}w_0 = (4, 2,0)$ since we now have four transformations with indices coming from $\FI_1$ and two transformations coming from $\FI_2$. This type is lower than the original type $w_0$, and so we have successfully obtained a tuple of lower type. The new tuple is controllable, with $m=5,6$ both satisfying the controllability condition.

Although we replaced the polynomials $\rho_{01}, \ldots, \rho_{07}$ by new ones, we note that for any $j_1, j_2\in[7]$, the polynomials $\rho_{1j_1}, \rho_{1j_2}$ are pairwise dependent if and only if $\rho_{0j_1}, \rho_{0j_2}$ are, and not only that: if they are pairwise dependent, then $\rho_{1j_1}/c_1 = \rho_{1j_2}/c_2$ if and only if $\rho_{0j_1}/c_1 = \rho_{0j_2}/c_2$ for  any nonzero integers $c_1, c_2$. Moreover, if $\eta_{1j_1} = \eta_{1j_2}$, then the leading coefficients of $\rho_{1j_1}$ and $\rho_{1j_2}$ are identical. These two observations ensure that the ergodicity conditions on $T_1 T_2^{-3}, T_1 T_3^{-2}, T_2^3 T_3^{-2}, T_5 T_6\inv$, which constitute the assumption that the original tuple \eqref{example tuple 20} has the good ergodicity property, carry on to the new tuple \eqref{example tuple 21}, implying that it also enjoys the good ergodicity property. This exemplifies the claim from Proposition \ref{P: properties of descendants} that descendants of tuples with the good ergodicity property inherit the property.
%We remark that to ensure this property, we no longer need the original assumption that $T_4^3 T_5^{-1}$ is totally ergodic because the transformation $T_5$ is no longer present.

The type $w_1$ is not basic, and so we continue the procedure. This time, we pick some $m_1\in\FI_2$, say $m_1 = 5$ (it satisfies the controllability condition, as does 6, the other possible choice). We are then handed an index $i_1\in\FI_1$ (say, $i_1 = 3$), so that
\begin{align*}
    \eta_2 := \tau_{m_1 i_1}\eta_1= \tau_{53}\eta_1 = (1, 2, 3, 1, 3, 6, 7).
\end{align*}
The leading coefficient $4$ of $4n^2+6n$ does not divide the linear term, and so we replace the polynomials $\rho_{11}, \ldots, \rho_{17}$ by new polynomials of the form
\begin{align*}
    \rho_{2j}(n) := \begin{cases} \rho_{1j}(2 n + r_1) - \rho_{1j}(r_1),\; &j \neq 5\\
    \frac{8}{4}(\rho_{15}(2 n + r_1) - \rho_{15}(r_1)),\; &j =5
    \end{cases}
\end{align*}
for some $r_1\in\{0,1\}$ (we pass from $n$ to $2n+r_1$ because 2 is the smallest natural number $\lambda_1$ such that the leading coefficient 4 of $\rho_{15}$ divides the coefficients of $\lambda_1 \rho_{15}$). Hence, the new tuple takes the form (upon assuming $r_1 = 0$)
\begin{align}\label{example tuple 22}
    (T_1^{16n^2 + 8n}, T_2^{48n^2+24n}, T_3^{32n^2+16n}, T_1^{16n^2+10n}, T_3^{32n^2+24n}, T_6^{16n^2+12n}, T_7^{4n}).
\end{align}
%This time only the polynomial $p_{15}$ was transformed into a new one since the leading coefficient $2$ of $4n^2+6n$ divided other coefficients.
%and has type $w_2=(5, 0, 1)$.
The tuple \eqref{example tuple 22} still has the good ergodicity property; this is once again a consequence of two facts:
\begin{itemize}
    \item for any $j_1, j_2\in[7]$ and nonzero $c_1, c_2\in\Z$, we have $\rho_{2j_1}/c_1 = \rho_{2j_2}/c_2$ if and only if $\rho_{0j_1}/c_1 = \rho_{0j_2}/c_2$;
    \item for any $j_1, j_2\in[7]$, if $\eta_{2j_1} = \eta_{2j_2}$, then $\rho_{2j_1}, \rho_{2j_2}$ have identical leading coefficients.
\end{itemize}
We remark though that to ensure the ergodicity property of \eqref{example tuple 22}, we no longer need the original assumption $\CI(T_5 T_6^{-1}) = \CI(T_5)\cap\CI(T_6)$ because the transformation $T_5$ is not present.

The new tuple \eqref{example tuple 22} has type $w_2 = \sigma_{21}w_1 = (5,1, 0)$, which is still not basic, and so we continue the procedure one more time. The only index left in $\FI_2$ is $6$, and it satisfies the controllability assumption, so we replace $T_6$ this time. We are given an index $i_2\in\FI_1$ (say, $i_2 = 1$), so that
\begin{align*}
    \eta_3 := \tau_{m_2 i_2}\eta_1 = (1, 2, 3, 1, 3, 1, 7).
\end{align*}
Since the leading coefficient 16 of $\rho_{21}$ does not divide the coefficients of the polynomial $\rho_{26}(n) = 16n^2 + 12 n$, and the smallest $\lambda_2\in\N$ for which 16 divides the coefficients of $\lambda_2 \rho_{26}(n) = \lambda_2(16n^2+12n)$ is $\lambda_2 = 4$, we define the new polynomials to be
\begin{align*}
    \rho_{3j}(n) := \begin{cases} \rho_{2j}(4 n + r_2) - \rho_{2j}(r_2),\; &j \neq 6\\
    \frac{16}{16}(\rho_{26}(4 n + r_2) - \rho_{26}(r_2)),\; &j =6
    \end{cases}
\end{align*}
for some $r_2 \in\{0, 1, 2, 3\}$.
Assuming, say, $r_2 = 3$, we get the new tuple
\begin{align*}%\label{example tuple 23}
    (T_1^{256n^2 + 416n}, T_2^{768n^2+1248n}, T_3^{512n^2+832n}, T_1^{256n^2+424n}, T_3^{512n^2+864n}, T_1^{256n^2+432n}, T_7^{16n}).
\end{align*}
This tuple has the basic type $w_2 = \sigma_{21}w_2 = (6,0,0)$, and so the procedure halts. A similar argument as before shows also that it enjoys the good ergodicity property.

Lastly, we observe that $\eta_3|_{\FI_1} = \eta_3|_{\{1, 2, 3\}}$ is constant, i.e. while performing the type reduction procedure, we did not replace the transformations at indices from $\FI_1$. This is a special case of property (vi) from Proposition \ref{P: induction scheme}, which will play an important role in the proof of Proposition \ref{control of basic types}, a seminorm control argument for tuples of basic type.
%We note that the sets $\FL, \FI_1, \FI_2, \FI_3$ and parameters $K_1, K_2$ of the new tuple remain the same. We also observe that the terms in which the transformations agree have polynomial iterates of the same leading coefficients (i.e., $T_1^{9n^2}$ and $T_1^{9n^2+3n}$ have the same leading coefficients, as do $T_3^{18n^2}$ and $T_3^{18n^2+6n}$). Together with the good ergodicity property of \eqref{example tuple 21} along $\eta_1$, this last observation ensures that the tuple \eqref{example tuple 22} has the good ergodicity property along $\eta_2$.

%The new tuple has the basic type (5, 0), and so the procedure halts. We note that $\eta_2|_{\FI_1} = \eta_2|_{\{1, 2, 3\}}$ is constant.
\end{example}

\subsection{The role of invariance properties}\label{SS:flipping}
Proposition \ref{P: type reduction} ensures that the lower type tuples to which we pass in the \textit{ping} step of the smoothing argument have the good ergodicity property. But this is not enough. For more complicated tuples, we also need to assume that the functions appearing in the associated average have some invariance properties, otherwise the induction breaks. The example that we present now displays the necessity of this extra information. We sketch how - reducing the original tuple to tuples of shorter length or lower type - we eventually arrive at averages for which we cannot obtain seminorm control unless the functions appearing in the averages satisfy certain invariance properties.  We emphasise that our goal in this example is not to give a complete proof of seminorm control, but rather to point out the necessity of the invariance assumptions. Therefore, we assume without proof when convenient that we have seminorm control over certain tuples of lower type or shorter length.

%Two examples that we present now display the necessity of this extra information. In both of these examples, we freely assume that tuples of shorter length or smaller type are good for seminorm control. The point in presenting these examples is very much not to give a full proof that these tuples can be controlled by Gowers-Host-Kra seminorms. Rather, we sketch how - reducing the original tuple to tuples of shorter length or lower type - we eventually arrive at averages for which we cannot obtain seminorm control unless the functions appearing in the averages satisfy certain invariance properties.

\begin{example}[The necessity of invariance properties]\label{Ex: n^2, n^2, n^2+n, n^2+n}%[Seminorm smoothing for the family $n^2, 2n^2, n^2+n, 3n^2+3n$]
Consider the average
\begin{align}\label{1234}
     \E_{n\in[N]}T_1^{n^2}f_1 \cdot T_2^{n^2}f_2 \cdot T_3^{n^2+n}f_3 \cdot T_4^{n^2 + n} f_4.
\end{align}
It has length 4, degree 2, and type $w=(2,2)$, corresponding to the partition $\FI_1 = \{1, 2\}, \FI_2 = \{3, 4\}$. Suppose that \eqref{1234} has the good ergodicity property, i.e.
\begin{align*}
    \CI(T_1 T_2^{-1}) = \CI(T_1)\cap \CI(T_2)\quad \textrm{and}\quad \CI(T_3 T_4\inv) = \CI(T_3)\cap\CI(T_4).
\end{align*}
%$T_1 T_2^{-2}$, $T_3 T_4\inv$ are ergodic.
We illustrate the steps that need to be taken in order to show that this average is controlled by $\nnorm{f_4}_{s, T_4}$ for some $s\in\N$.
%once this is done, Proposition \ref{dual decomposition} allows us to replace $T_4^n f_4$ by an element of $\FD_s$, and the control of other terms would follow by inductively invoking Proposition \ref{P:Host Kra characteristic 2} for averages of length 3.

By Proposition \ref{strong PET bound}, the average \eqref{1234} is controlled by the seminorm $\nnorm{f_4}_{{\b_1}, \ldots, {\b_{s+1}}}$ for some vectors
\begin{align*}%\label{vectors in exposition}
    \b_1, \ldots, \b_{s+1}\in\{\be_4, \be_4-\be_3, \be_4-\be_2, \be_4-\be_1\}.
\end{align*}
We want to replace the vector $\b_{s+1}$ by (multiple copies of) $\be_4$. Iterating this procedure gives a control of \eqref{1234} by a $T_4$-seminorm of $f_4$.

If $\b_{s+1}=\be_4$, then this follows easily from Lemma \ref{L:seminorm of power}. If $\b_{s+1} = \be_4-\be_3$, then this is the consequence of the good ergodicity property of the average and Lemma \ref{bounding seminorms}. So the only cases to check are when $\b_{s+1}$ equals $\be_4-\be_2$ or $\be_4-\be_1$. Without loss of generality, we assume that $\b_{s+1} = \be_4-\be_2$.

Suppose that the limit of \eqref{1234} is nonzero. Arguing as in the proof of Proposition \ref{Seminorm control of n^2, n^2, n^2+n}, we deduce  that
\begin{align*}
    \liminf_{H\to\infty}\E_{\uh,\uh'\in [H]^{s}}\lim_{N\to\infty}\norm{ \E_{n\in[N]} \, T_1^{n^2} f_{1, \uh, \uh'}\cdot T_2^{n^2} f_{2, \uh, \uh'}\cdot T_3^{n^2+n}f_{3, \uh, \uh'}\cdot T_4^{n^2+n}u_{\uh,\uh'}}_{L^2(\mu)}>0
%    \liminf_{H\to\infty}\E_{\uh,\uh'\in [H]^{s}}\lim_{N\to\infty}\norm{ \E_{n\in[N]} \, T_1^{n^2} (\Delta_{{\b_1}, \ldots, {\b_{s}}; \uh-\uh'} f_1)\cdot T_2^{n^2} (\Delta_{{\b_1}, \ldots, {\b_{s}}; \uh-\uh'} f_2)\cdot T_3^{n^2+n}\Delta_{{\b_1}, \ldots, {\b_{s}}; \uh-\uh'} f_3)\cdot T_4^{4n^2+4n}u_{\uh,\uh'}}_{L^2(\mu)}>0
\end{align*}
for some $T_4 T_2\inv$-invariant functions $u_{\uh, \uh'}$ as well as functions $f_{j, \uh,\uh'} := \Delta_{{\b_1}, \ldots, {\b_{s}}; \uh-\uh'} f_j$ for $j\in[4]$. The invariance property of the functions $u_{\uh, \uh'}$ implies that
\begin{align}\label{1232 2}
    \liminf_{H\to\infty}\E_{\uh,\uh'\in [H]^{s}}\lim_{N\to\infty}\norm{ \E_{n\in[N]} \, T_1^{n^2} f_{1, \uh, \uh'}\cdot T_2^{n^2} f_{2, \uh, \uh'}\cdot T_3^{n^2+n}f_{3, \uh, \uh'}\cdot T_2^{n^2+n}u_{\uh,\uh'}}_{L^2(\mu)}>0.
\end{align}
Each of the averages inside the liminf above is of the form
\begin{align}\label{1232}
     \E_{n\in[N]}T_1^{n^2} g_1\cdot T_2^{n^2} g_2\cdot T_3^{n^2+n}g_3\cdot T_2^{n^2+n}g_4
\end{align}
for 1-bounded functions $g_1, g_2, g_3, g_4\in L^\infty(\mu)$ of which $g_4$ is $T_4 T_2\inv$ invariant. The averages \eqref{1232} are controllable, with $3$ satisfying the controllability condition, and they have type $(3, 1)$, which is lower than the type $(2,2)$ of the original average \eqref{1234}. Assuming inductively that we have the seminorm control of averages \eqref{1232} by a $T_3$-seminorm of $f_3$\footnote{While we only use this particular control, our inductive assumption will guarantee that we control averages \eqref{1232} by a relevant seminorm of other functions, too.}, we can deduce from \eqref{1232 2} (like in the proof of Proposition \ref{Smoothing of n^2, n^2, n^2+n}) that
\begin{align*}%\label{f_3 seminorm}
    \nnorm{f_3}_{\b_1, \ldots, \b_s, \be_3^{\times s_1}}>0
\end{align*}
for some $s_1\in\N$, and similarly for other terms. This completes the \textit{ping} step. For the \textit{pong} step, this auxiliary control and Proposition \ref{dual-difference interchange} imply that
\begin{align}\label{1204}
    \liminf_{H\to\infty}\E_{\uh,\uh'\in [H]^{s}}\lim_{N\to\infty}\norm{ \E_{n\in[N]} \, T_1^{n^2} f_{1, \uh, \uh'}\cdot T_2^{n^2} f_{2, \uh, \uh'} \cdot \prod_{j=1}^{2^{s}}\CD_{j}(n^2+n)\cdot T_4^{n^2+n}f_{4, \uh, \uh'}}_{L^2(\mu)}>0
\end{align}
for some $\CD_j\in \FD_{s_1}$.  Each average in \eqref{1204} takes the form
\begin{align}\label{1204 2}
    \E_{n\in[N]} \, T_1^{n^2} g_1\cdot T_2^{n^2} g_2 \cdot \prod_{j=1}^{2^{s}}\CD_{j}(n^2+n)\cdot T_4^{n^2+n} g_4.
\end{align}
Assuming inductively that averages of the form \eqref{1204 2} are controlled by a $T_4$-seminorm of the last term, we get the desired claim $\nnorm{f_4}_{\b_1, \ldots, \b_s, \be_4^{\times s'}}>0$ for some $s'\in\N$ using an argument similar to one in the proof of Proposition \ref{Smoothing of n^2, n^2, n^2+n}.

We have showed how a seminorm control of the original average \eqref{1234} by a $T_4$-seminorm of $f_4$ follows from the seminorm control of the averages \eqref{1232} and \eqref{1204 2}. We have not proved, however, that these auxiliary averages are indeed controlled by Gowers-Host-Kra seminorms, assuming instead that this follows by induction. It turns out that obtaining a seminorm control of the averages \eqref{1232} involves an interesting twist in that the argument makes essential use of the assumption that the function $g_4$ is $T_4 T_2\inv$-invariant. We sketch the steps taken in the seminorm smoothing argument for this average under the extra invariance assumption to show where this invariance property comes up and why it is necessary.

In proving the seminorm control of \eqref{1232}, we first prove that the average is controlled by a $T_3$-seminorm of $g_3$ since $T_3$ is the only transformation with index in $\FI_2$. Arguing as above (using Proposition \ref{strong PET bound} for \eqref{1232}, assuming that the $L^2(\mu)$ limit of \eqref{1232} is nonzero and mimicking the proof of Proposition \ref{Smoothing of n^2, n^2, n^2+n}), we deduce that
\begin{align*}
    \liminf_{H\to\infty}\E_{\uh,\uh'\in [H]^{s}}\lim_{N\to\infty}\norm{ \E_{n\in[N]} \, T_1^{n^2} g_{1, \uh, \uh'}\cdot T_2^{n^2} g_{2, \uh, \uh'}\cdot T_3^{n^2+n}u_{\uh, \uh'}\cdot T_2^{n^2+n}g_{4, \uh,\uh'}}_{L^2(\mu)}>0
%    \liminf_{H\to\infty}\E_{\uh,\uh'\in [H]^{s}}\lim_{N\to\infty}\norm{ \E_{n\in[N]} \, T_1^{n^2} (\Delta_{{\b_1}, \ldots, {\b_{s}}; \uh-\uh'} f_1)\cdot T_2^{n^2} (\Delta_{{\b_1}, \ldots, {\b_{s}}; \uh-\uh'} f_2)\cdot T_3^{n^2+n}\Delta_{{\b_1}, \ldots, {\b_{s}}; \uh-\uh'} f_3)\cdot T_4^{4n^2+4n}u_{\uh,\uh'}}_{L^2(\mu)}>0
\end{align*}
for 1-bounded functions $u_{\uh, \uh'}$ that are all invariant either under $T_3 T_1\inv$ or under $T_3 T_2\inv$. Then we use the relevant invariance property to replace each $T_3^{n^2+n} u_{\uh, \uh'}$ by $T_1^{n^2+n} u_{\uh, \uh'}$ or $T_2^{n^2+n} u_{\uh, \uh'}$. Hence, in the \textit{ping} step of the seminorm smoothing argument for \eqref{1232}, we need to invoke seminorm control of averages of the form
\begin{align}
\nonumber    &\E_{n\in[N]}T_1^{n^2} g_1'\cdot T_2^{n^2} g_2'\cdot T_1^{n^2+n}g_3'\cdot T_2^{n^2+n}g_4'\\
\label{1222}    \textrm{and}\quad & \E_{n\in[N]}T_1^{n^2} g_1'\cdot T_2^{n^2} g_2'\cdot T_2^{n^2+n}g_3'\cdot T_2^{n^2+n}g_4',
\end{align}
where $g'_4$ is $T_4 T_2\inv$-invariant while $g'_3$ is invariant under $T_3 T_1\inv$ and $T_3 T_2\inv$ respectively. Both of them have basic type.

We show that for arbitrary $g_1', g_2', g_3', g_4'$, without the aforementioned invariance assumptions, we would not be able to control the average \eqref{1222} by Gowers-Host-Kra seminorms; specifically, we could not control it by a $T_2$-seminorm of $g'_4$. Conversely, this is achievable if $g'_3, g'_4$ are invariant under $T_3T_2\inv$, $T_4T_2\inv$ respectively. Assuming for simplicity that $g_1'=g_2':= 1$, we have that the second average equals
\begin{align*}
    \E_{n\in[N]}T_1^{n^2} g_1'\cdot T_2^{n^2} g_2'\cdot T_2^{n^2+n}g_3'\cdot T_2^{n^2+n}g_4' = \E_{n\in[N]}T_2^{n^2+n}(g_3'\cdot g_4'),
\end{align*}
and so without any additional assumptions, the average \eqref{1222} is in general not controlled by a $T_2$-seminorm of $g_3'$ or a $T_2$-seminorm of $g'_4$.
However, the invariance assumptions on $g'_3, g'_4$ give us
\begin{align}\label{1222 1}
    \E_{n\in[N]}T_2^{n^2+n}(g_3'\cdot g_4') = \E_{n\in[N]} T_3^{n^2+n}g_3'\cdot T_4^{n^2+n}g_4',
\end{align}
and by Proposition \ref{strong PET bound} these averages  can be controlled by $\nnorm{g_4'}_{\be_4^{\times s}, (\be_4-\be_3)^{\times s}}$ for some $s\in\N$.  Then the assumption $\CI(T_4 T_3\inv)\subseteq\CI(T_4)$ and Lemma \ref{bounding seminorms} give $\nnorm{g_4'}_{\be_4^{\times s}, (\be_4-\be_3)^{\times s}} \leq \nnorm{g'_4}_{2s, T_4}$, and so a $T_4$-seminorm of $g_4'$ does control the average \eqref{1222 1}. Using the invariance property once again, this time alongside with Lemma \ref{composing invariance}, we get $\nnorm{g_4'}_{s', T_4} =\nnorm{g_4'}_{s', T_2}$,
%Lemmas \ref{L:seminorm of power} and \ref{bounding seminorms}, we get
%\begin{align*}
 %   \nnorm{g_4'}_{s', T_4} \leq \nnorm{g_4'}_{s', T_4^3} = \nnorm{g_4'}_{s', T_2^2}\ll \nnorm{g_4'}_{s', T_2},
%\end{align*}
so a $T_2$-seminorm of $g'_4$ controls \eqref{1222 1} and hence \eqref{1222}.
%A longer argument gives a $T_2$-seminorm control of \eqref{1222} for nonconstant $g'_1, g'_2$.

To get control over \eqref{1222} by a $T_2$-seminorm of $g_4'$ without any simplifying assumptions on $g_1', g_2'$, we have to run a more complicated argument. Combining Proposition \ref{strong PET bound}, the ergodic condition on $T_1T_2\inv$ and Lemma \ref{bounding seminorms}, we first obtain control of \eqref{1222} by a $T_1$-seminorm of $g'_1$ and a $T_2$-seminorm of $g'_2$. Assuming that the $L^2(\mu)$ limit of the average \eqref{1222} is positive,  we use this newly established control, decompose $g'_1$ using Proposition \ref{dual decomposition} and apply the pigeonhole principle to show that the average
\begin{align}\label{0222}
    \E_{n\in[N]}\CD(n^2)\cdot T_2^{n^2} g_2'\cdot T_2^{n^2+n}g_3'\cdot T_2^{n^2+n}g_4'
\end{align}
has a nonvanishing limit. The invariance properties of $g'_3$ and $g'_4$ imply that the average \eqref{0222} equals \begin{align}\label{0234 0}
    \E_{n\in[N]}\CD(n^2)\cdot T_2^{n^2} g_2'\cdot T_3^{n^2+n}g_3'\cdot T_4^{n^2+n}g_4',
\end{align}
for which a seminorm control by a $T_4$-seminorm of $g'_4$  follows by inductively invoking seminorm control for averages of length 3. The invariance property of $g'_4$ implies once again that $\nnorm{g_4'}_{s', T_4} =\nnorm{g_4'}_{s', T_2}$ for any $s'\in\N$. It follows that a $T_2$-seminorm of $g'_4$ controls \eqref{0234 0}, and hence also \eqref{0222} and \eqref{1222}.

%$\nnorm{g'_4}_{s, T_2}$ for some $s\in\N$, and then the invariance property of $g'_4$ as well as Lemmas \ref{L:seminorm of power} and \ref{composing invariance} give
%\begin{align*}
 %   \nnorm{g'_4}_{s, T_2}\leq \nnorm{g'_4}_{s, T_2^2} = \nnorm{g'_4}_{s, T_4^3}\ll \nnorm{g'_4}_{s, T_4},
%\end{align*}
%from which the control of \eqref{1222} by $\nnorm{g'_4}_{s, T_4}$ follows.

The invariance properties also come up in the \textit{pong} step of the smoothing argument for \eqref{1232}. In this part, we encounter averages of the form
\begin{align}\label{0232}
    \E_{n\in[N]}\prod_{j=1}^L \CD_{j}(n^2)\cdot T_2^{n^2} g_2''\cdot T_3^{n^2+n}g_3''\cdot T_2^{n^2+n}g_4''.
\end{align}
Moreover, the function $g_4''$ is $T_4 T_2\inv$-invariant because it is essentially a multiplicative derivative of $g_4'$. By a similar reason as before, such averages could not be controlled by Gowers-Host-Kra seminorms for arbitrary $g_2'', g_3'', g_4''$ without the invariance assumption. However, thanks to the invariance assumption, the average \eqref{0232} equals
\begin{align}\label{0234}
        \lim_{N\to\infty} \E_{n\in[N]}\prod_{j=1}^L \CD_{j}(n^2)\cdot T_2^{n^2} g_2''\cdot T_3^{n^2+n}g_3''\cdot T_4^{n^2+n}g_4'',
\end{align}
which is controlled\footnote{We can assume this inductively, or we can prove that a $T_4$-seminorm of $g_4''$ controls \eqref{0234} in essentially the same way as we argued in Proposition \ref{Smoothing of n^2, n^2, n^2+n}.} by $\nnorm{g''_4}_{s', T_4}$ for some $s'\in\N$. Using the invariance property of $g''_4$ again together with Lemma \ref{composing invariance}, we deduce that $\nnorm{g_4''}_{s', T_4} = \nnorm{g_4''}_{s', T_2}$,
%Lemmas \ref{bounding seminorms} and \ref{composing invariance}, we deduce that
%\begin{align*}
%    \nnorm{g_4''}_{s', T_4} \leq \nnorm{g_4''}_{s', T_4^3} = \nnorm{g_4''}_{s', T_2^2}\ll \nnorm{g_4''}_{s', T_2},
%\end{align*}
and so a $T_2$-seminorm of $g''_4$ does control \eqref{0232}. An argument similar to the one at the end of the proof of Proposition \ref{Smoothing of n^2, n^2, n^2+n} implies that a $T_2$-seminorm of $g_4$ controls \eqref{1232}.
\end{example}
The example above shows that it is crucial to keep track of the invariance properties of the functions appearing in our averages; these invariance properties turn out to be indispensable for applying Proposition \ref{strong PET bound} to averages of basic type, while obtaining seminorm control in the \textit{pong} step of the argument, or - as we see later on - for handling uncontrollable averages. Using the invariance property to replace an average like \eqref{0222} and \eqref{0232} for which we cannot have seminorm control, by an average like \eqref{0234 0} and \eqref{0234} respectively,  which is controlled by Gowers-Host-Kra seminorms exemplifies the \textit{flipping} technique that will be presented in detail in Proposition \ref{P:flipping}.

Recalling how we have performed the \textit{ping} and \textit{pong} steps in Examples \ref{Ex: n^2, n^2, n^2+n}, \ref{Ex: n^2, n^2, 2n^2+n}, and \ref{Ex: n^2, n^2, n^2+n, n^2+n}, we observe that in the \textit{ping} step, we pass from the average
\begin{align}\label{gen average 2}
        \E_{n\in [N]} \,\prod_{j\in[\ell]}T_{\eta_j}^{\rho_j(n)}f_j \cdot \prod_{j\in[L]}\CD_{j}(q_j(n))
\end{align}
to averages
\begin{align}\label{ping average}
  \E_{n\in[N]}\,\prod_{\substack{j\in[\ell],\\ j\neq m}}T_{\eta_j}^{\rho'_j(n)}f_{j, \uh, \uh'} \cdot T_{\eta_i}^{\rho'_m(n)}u_{\uh, \uh'} \cdot \prod_{j\in[L']}\CD'_{j}(q'_j(n)),
\end{align}
where $f_{j, \uh, \uh'}:= \Delta_{\b_1, \ldots, \b_s; \uh-\uh'}$ for some vectors $\b_1, \ldots, \b_s\in\Z^\ell$. In particular, the functions $f_{j, \uh, \uh'}$ are invariant under whatever transformations the functions $f_j$ are invariant. Moreover, the functions $u_{\uh, \uh'}$ are invariant under $T_{\eta_m}^{a_m}T_{\eta_i}^{-a_i}$, where $a_m$ and $a_i$ are the leading coefficients of $\rho_m$ and $\rho_i$, but they also retain whatever invariance property $f_m$ has. Thus, by passing from \eqref{gen average 2} to \eqref{ping average}, we do not lose any invariance properties of the original functions, but rather gain new ones.

Similarly, in the \textit{pong} step, we pass to averages
\begin{align*}
  \E_{n\in[N]}\,\prod_{\substack{j\in[\ell],\\ j\neq i}}T_{\eta_j}^{\rho_j(n)}f_{j, \uh, \uh'} \cdot \prod_{j\in[L'']}\CD''_{j}(q_j''(n)),
  %\E_{n\in[N]}\,\prod_{\substack{j\in[\ell],\\ j\neq i}}T_{\eta_j}^{\rho_j(n)}f_{j, \uh, \uh'} \cdot \prod_{j=1}^{2^s}\CD_{j,\uh, \uh'}(\rho_i(n)) \cdot \prod_{j\in[L]}\CD_{j}(q_j(n)),
\end{align*}
and the functions $f_{j, \uh, \uh'}$ retain whatever invariance properties $f_j$ have.

Thus, the functions $(f_1, \ldots, f_\ell)$ get replaced by
\begin{align}\label{E: function tuple 1}
    (f_{1, \uh, \uh'}, \ldots, f_{m-1, \uh, \uh'}, u_{\uh, \uh'}, f_{m+1, \uh, \uh'}, \ldots, f_{\ell, \uh, \uh'})
\end{align}
in the \textit{ping} step and
\begin{align}\label{E: function tuple 2}
    (f_{1, \uh, \uh'}, \ldots, f_{i-1, \uh, \uh'}, 1, f_{i+1, \uh, \uh'}, \ldots, f_{\ell, \uh, \uh'})
\end{align}
in the \textit{pong} step. We now formalise the idea that these new families of functions retain the original invariance properties and gain new ones.

\begin{definition}[Good invariance property]
    Let $\gamma\in\N$. We say that the tuple of functions $(f_1, \ldots, f_\ell)$ has  the \emph{$\gamma$-invariance property along $\eta$ with respect to  polynomials $p_1, \ldots, p_\ell$} with leading coefficients $a_1, \ldots, a_\ell$,  if for every $j\in[\ell]$, the function $f_j$ is invariant under $\brac{T_{\eta_j}^{a_{\eta_j}}T_j^{-a_j}}^{\gamma}$.  Let $I$ be a (possibly infinite) indexing set.
    We say that a collection $(f_{i1}, \ldots, f_{i\ell})_{i\in I}$ has the \emph{good invariance property along $\eta$ with respect to  polynomials $p_1, \ldots, p_\ell$} if there exists $\gamma\in\N$ such that $(f_{i1}, \ldots, f_{i\ell})_{i\in I}$ has the $\gamma$-invariance property along $\eta$ with respect to $p_1, \ldots, p_\ell$ for every $i\in I$.
\end{definition}

If $\eta=(1, \ldots, \ell)$ is the identity tuple, there is nothing to check and any collection of functions has the $1$-invariance property with respect to any polynomial family. The property only becomes nontrivial when $\eta$ is not the identity tuple.

In our arguments, we will ensure that the functions $f_j$ in the average
\begin{align*}
    \E_{n\in[N]}\prod_{j\in[\ell]}T_{\eta_{j}}^{\rho_j(n)} f_j\cdot\prod_{j\in[L]}\CD_{j}(q_j(n))
\end{align*}
 obtained by a sequence of reductions from the original average $\E_{n\in[N]}\prod_{j\in[\ell]}T_{j}^{p_j(n)}$ have the good invariance property with respect to the original polynomials $p_1, \ldots, p_\ell$, i.e. there exists $\gamma\in\N$ such that for every $j\in[\ell]$, the function $f_j$ is invariant under $\brac{T_{\eta_j}^{a_{\eta_{j}}}T_j^{-a_{j}}}^{\gamma}$,  where $a_j$ is the leading coefficient of the polynomial $p_j$. %$\brac{T_{\eta_{(k+1)j}}^{a_{0\eta_{(k+1)j}}}T_j^{-a_{0j}}}^{(\lambda_{k-1}\cdots \lambda_0)^{d_j}}$. Fortunately, we do not need to keep track of the exact form of this rather formidable expression.
The need to keep track of the invariance property with respect to the original polynomials is explained in Example \ref{Ex: 6 tuple} below. Before we state this example, however, we prove that the invariance properties get preserved when passing from the tuple of functions $(f_1, \ldots, f_\ell)$ to the tuples \eqref{E: function tuple 1} and \eqref{E: function tuple 2}.

%In fact, at stage $k+2$, if $l = i_{k+1}$, then the function becomes invariant under $$T_{j_{k+1}}^{a_{(k+1)j_{k+1}}}T_{i_{k+1}}^{-a_{(k+1)i_{k+1}}} = \brac{T_{\eta_{(k+2)l}}^{a_{0\eta_{(k+2)l}}}T_{\eta_{(k+1)j}}^{-a_{0\eta_{(k+1)j}}}}^{(a_{km_k}\cdots a_{0m_0})^d},$$ and so composing the two invariance properties, we deduce that it is invariant under $\brac{T_{\eta_{(k+2)l}}^{a_{0\eta_{(k+2)l}}}T_l^{-a_{0j}}}^{(a_{km_k}\cdots a_{0m_0})^d}$. Thus, the invariant properties translate nicely.

%Recall how in the \textit{ping} step of the proof of Proposition \ref{Smoothing of n^2, n^2, n^2+n}, we replace an average involving functions $f_1, f_2, f_3$ with a family of averages indexed by $(\uh, \uh')\in\N^{2s}$ that involve functions $f_{1, \uh, \uh'}, f_{2, \uh, \uh'}, u_{\uh, \uh'}$. The next proposition shows that when we perform this procedure more generally, the new functions retain the invariance properties of the old functions.

\begin{proposition}[Propagation of invariance properties]\label{propagation of invariance}
Let $\ell, s\in\N$, $\eta\in[\ell]^\ell$ be an indexing tuple, $\eta' := \tau_{mi}\eta$ for distinct $m,i\in[\ell]$ be another indexing tuple, $p_1, \ldots, p_\ell\in\Z[n]$ be polynomials with leading coefficients $a_1, \ldots, a_\ell$, $(X, \CX, \mu, T_1, \ldots, T_\ell)$ be a system, and $\b_1, \ldots,\b_s\in\Z^\ell$ be vectors. Suppose that for some $\gamma\in\N$, the functions $(f_1, \ldots, f_\ell)$ have the $\gamma$-invariance property along $\eta$ with respect to the polynomials $p_1, \ldots, p_\ell$. Consider the functions $(f'_{1, \uh}, \ldots, f'_{\ell, \uh})_{\uh \in \Z^s}$, where $f'_{j, \uh} := \Delta_{\b_1, \ldots, \b_s; \uh}f_j$ for $j\neq m$, and $f_{m, \uh}'$ is a function invariant under both
$S_1 :=\brac{T_{\eta_m}^{a_{\eta_m}}T_m^{-a_m}}^{\gamma_1}$ and $S_2 := \brac{T_{\eta_i}^{a_{\eta_i}}T_{\eta_m}^{-a_{\eta_m}}}^{\gamma_2}$ for some $\gamma_1, \gamma_2\in\N$ independent of $\uh$.
%$T_{\eta_i}^{\gamma_1}T_i^{-\gamma_2}$ and $T_{\eta_j}^{\gamma_3}T_{\eta_i}^{-\gamma_4}$ for nonzero integers $\gamma_1, \ldots, \gamma_4$.
Then $(f'_{1, \uh}, \ldots, f'_{\ell, \uh})_{\uh \in \Z^s}$ has the good invariance property along $\eta'$ with respect to $p_1, \ldots, p_\ell$.
\end{proposition}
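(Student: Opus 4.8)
The plan is to verify that each coordinate function $f'_{j,\uh}$ is invariant under an appropriate power of the relevant commutator, with a single uniform exponent $\gamma'$ that works for all $j$ and all $\uh$.

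First I would treat the coordinates $j\neq m$. By hypothesis, for such $j$ the function $f_j$ is invariant under $S_j:=\bigl(T_{\eta_j}^{a_{\eta_j}}T_j^{-a_j}\bigr)^\gamma$. Since the transformations $T_1,\dots,T_\ell$ commute, $S_j$ commutes with every translate $T^{\b_1\eps_1 h_1+\cdots+\b_s\eps_s h_s}$ appearing in $\Delta_{\b_1,\dots,\b_s;\uh}$, and $S_j$ commutes with complex conjugation in the sense that $S_j\overline{g}=\overline{S_j g}$. Hence $S_j$ fixes the product $f'_{j,\uh}=\Delta_{\b_1,\dots,\b_s;\uh}f_j=\prod_{\ueps\in\{0,1\}^s}\CC^{|\ueps|}T^{\b_1\eps_1 h_1+\cdots+\b_s\eps_s h_s}f_j$ factor by factor. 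For $j\neq m$ we have $\eta'_j=\eta_j$, so $S_j=\bigl(T_{\eta'_j}^{a_{\eta'_j}}T_j^{-a_j}\bigr)^\gamma$ is exactly the transformation required by the $\gamma$-invariance property along $\eta'$, and this holds for every $\uh$.

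Next I would handle the coordinate $j=m$, where $\eta'_m=\eta_i$. We must show $f'_{m,\uh}$ is invariant under some fixed power of $T_{\eta_i}^{a_{\eta_i}}T_m^{-a_m}$. The key algebraic identity is that, because all $T_k$ commute,
\begin{align*}
\bigl(T_{\eta_i}^{a_{\eta_i}}T_m^{-a_m}\bigr)^{\gamma_1\gamma_2}
=\bigl(T_{\eta_i}^{a_{\eta_i}}T_{\eta_m}^{-a_{\eta_m}}\bigr)^{\gamma_1\gamma_2}\bigl(T_{\eta_m}^{a_{\eta_m}}T_m^{-a_m}\bigr)^{\gamma_1\gamma_2}
= S_2^{\gamma_1}\, S_1^{\gamma_2}.
\end{align*}
Since $f'_{m,\uh}$ is invariant under both $S_1$ and $S_2$ by hypothesis (and these two transformations commute, being words in the $T_k$), it is invariant under $S_2^{\gamma_1}S_1^{\gamma_2}$, hence under $\bigl(T_{\eta_i}^{a_{\eta_i}}T_m^{-a_m}\bigr)^{\gamma_1\gamma_2}=\bigl(T_{\eta'_m}^{a_{\eta'_m}}T_m^{-a_m}\bigr)^{\gamma_1\gamma_2}$, uniformly in $\uh$ since $\gamma_1,\gamma_2$ do not depend on $\uh$.

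Finally I would set $\gamma':=\gamma\cdot\gamma_1\cdot\gamma_2$ (or simply $\mathrm{lcm}$ of the relevant exponents): for $j\neq m$, invariance under $S_j=(T_{\eta'_j}^{a_{\eta'_j}}T_j^{-a_j})^\gamma$ trivially upgrades to invariance under its $\gamma_1\gamma_2$-th power, and for $j=m$ we have just shown invariance under the $\gamma_1\gamma_2$-th power. Thus $(f'_{1,\uh},\dots,f'_{\ell,\uh})_{\uh\in\Z^s}$ has the $\gamma'$-invariance property along $\eta'$ with respect to $p_1,\dots,p_\ell$, which is precisely the good invariance property along $\eta'$. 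The only mild subtlety — and the closest thing to an obstacle — is the bookkeeping check that the transformation prescribed by the $\gamma$-invariance property along $\eta'$ at coordinate $m$ is indeed $T_{\eta_i}^{a_{\eta_i}}T_m^{-a_m}$ (not $T_{\eta_i}^{a_{\eta_i}}T_m^{-a_{\eta_i}}$ or similar), which follows directly from the definition of the good invariance property read with the index tuple $\eta'$ in place of $\eta$; everything else is a routine commutativity argument.
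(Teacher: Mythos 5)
Your argument is correct and follows essentially the same route as the paper's proof: for $j\neq m$ one observes that $\Delta_{\b_1,\dots,\b_s;\uh}f_j$ is a product of translates and conjugates of a $\brac{T_{\eta_j}^{a_{\eta_j}}T_j^{-a_j}}^\gamma$-invariant function and hence inherits that invariance (using $\eta'_j=\eta_j$), while for $j=m$ one composes $S_1$ and $S_2$ to get invariance under $\brac{T_{\eta_i}^{a_{\eta_i}}T_m^{-a_m}}^{\gamma_1\gamma_2}=\brac{T_{\eta'_m}^{a_{\eta'_m}}T_m^{-a_m}}^{\gamma_1\gamma_2}$. The paper takes $\gamma'=\mathrm{lcm}(\gamma,\gamma_1\gamma_2)$ where you take $\gamma'=\gamma\gamma_1\gamma_2$, but this is an immaterial difference.
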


\begin{proof}[Proof of Proposition \ref{propagation of invariance}]
For $j\neq m$, the functions $f_{j}$ are invariant under $\brac{T_{\eta_j}^{a_{\eta_j}}T_j^{-a_j}}^\gamma$ for some nonzero $\gamma\in\Z$ independent of $\uh\in\Z^s$, and so are their translations $$\CC^{|\ueps|}T^{\eps_1 h_1 \b_1 + \cdots + \eps_{s} h_s \b_s}f_j.$$ The identity $\eta'_j = \eta_j$, which holds for $j\neq m$,  and the fact that $f'_{j, \uh}$ is a product of $\brac{T_{\eta_j}^{a_{\eta_j}}T_j^{-a_j}}^\gamma$-invariant functions, implies that $f'_{j, \uh}$ is itself invariant under $\brac{T_{\eta'_j}^{a_{\eta'_j}}T_j^{-a_j}}^\gamma$. For $j=m$, the functions $f'_{j, \uh}$ are invariant under
\begin{align*}
    S_1^{\gamma_2}S_2^{\gamma_1} = \brac{T_{\eta_m}^{a_{\eta_m}}T_m^{-a_m}T_{\eta_i}^{a_{\eta_i}}T_{\eta_m}^{-a_{\eta_m}}}^{\gamma_1\gamma_2} = \brac{T_{\eta_i}^{a_{\eta_i}}T_m^{-a_m}}^{\gamma_1\gamma_2}=\brac{T_{\eta_m'}^{a_{\eta_m'}}T_m^{-a_m}}^{\gamma_1\gamma_2}
    %\brac{T_{\eta_i}^{\gamma_1}T_i^{-\gamma_2}}^{\gamma_4}\brac{T_{\eta_j}^{\gamma_3}T_{\eta_i}^{-\gamma_4}}^{\gamma_1} = T_{\eta_j}^{\gamma_1\gamma_3}T_i^{-\gamma_2\gamma_4} = T_{\eta_i'}^{\gamma_1\gamma_3}T_i^{-\gamma_2\gamma_4}
\end{align*}
by noting $\eta'_m = \eta_i$ and combining the two invariance properties that these functions enjoy. Letting $\gamma' := \textrm{lcm}(\gamma, \gamma_1\gamma_2)$, it follows that for every $\uh\in\Z^s$, the collection $(f'_{1, \uh}, \ldots, f'_{\ell, \uh})$ has the $\gamma'$-invariance property along $\eta'$ with respect to $p_1, \ldots, p_\ell$.
\end{proof}

To get desirable seminorm control over the intermediate tuples encountered in Proposition \ref{P: type reduction}, it is not sufficient to keep track of the most immediate invariance properties. This is illustrated by the example below.

\begin{example}[The necessity of composed invariance properties]\label{Ex: 6 tuple}
Consider the average
\begin{align}\label{123456}
    \E_{n\in[N]}T_1^{n^2}f_1 \cdot T_2^{n^2}f_2\cdot T_3^{n^2+n}f_3\cdot T_4^{n^2+n}f_4 \cdot T_5^{n^2+2n}f_5\cdot T_6^{n^2+2n}f_6.
\end{align}
It has length $6$, degree 2 and type $(2,2,2)$ corresponding to the partition
\begin{align*}
    \FI_1 =\{1, 2\},\quad \FI_2 = \{3, 4\}, \quad \FI_3 = \{5, 6\}.
\end{align*}
We assume that it has the good ergodicity property, i.e.
\begin{align*}
    \CI(T_1 T_2\inv) = \CI(T_1)\cap\CI(T_2),\quad \CI(T_3 T_4\inv) = \CI(T_3)\cap \CI(T_4), \quad  \CI(T_5 T_6\inv) =\CI(T_5)\cap \CI(T_6).
\end{align*}
Suppose we want to perform the seminorm smoothing argument to obtain a control of the associated average by the $T_6$-seminorm of $f_6$. We iteratively pass to averages of lower type as in Proposition \ref{P: type reduction} (all of which turn out to be controllable), and we show that to get seminorm control for the average of basic type at which we arrive, we need to keep track of not just the latest invariance properties that the functions in the intermediate averages enjoy, but of \textit{all} the invariance properties that the functions in earlier intermediate averages enjoyed.

\smallskip
\textbf{Step 1: Reducing to an average of basic type.}
\smallskip

In the \textit{ping} part of the seminorm smoothing argument for \eqref{123456}, we replace $T_6$ in the original average \eqref{123456} by some $T_i$ with $i\in[4]=\FI_1\cup\FI_2$, arriving at, say, averages
\begin{align}\label{123454}
    \E_{n\in[N]}T_1^{n^2}f_{11} \cdot T_2^{n^2}f_{12}\cdot T_3^{n^2+n}f_{13}\cdot T_4^{n^2+n}f_{14} \cdot T_5^{n^2+2n}f_{15}\cdot T_4^{n^2+2n}f_{16}.
\end{align}
The new tuple \eqref{123454} has type $(2,3,1)$ and is controllable, with the index 5 satisfying the controllability condition. The functions inside take the form
\begin{align*}
    f_{1j} := \begin{cases}
    \Delta_{\b_{11}, \ldots, \b_{1s_1}; \uh-\uh'}f_{j},\; & j \neq 6\\
    u_{1,\uh, \uh'},\; &j=6,
    \end{cases}
\end{align*}
where the functions $u_{1, \uh, \uh'}$ are $T_6 T_4^{-1}$-invariant.

To obtain seminorm control of the average \eqref{123454}, we need to perform the seminorm smoothing argument for this tuple. We aim to control it first by a $T_5$-seminorm of $f_{15}$ since $5$ satisfies the controllability condition and is the only index left in $\FI_3$. As guided by Proposition \ref{P: type reduction}, in the \textit{ping} step of the smoothing argument, we replace $T_5$ in \eqref{123454} by some $T_i$ with $i\in[4]$. When $i=1$, for instance, we end up with averages
\begin{align}\label{123414}
    \E_{n\in[N]}T_1^{n^2}f_{21} \cdot T_2^{n^2}f_{22}\cdot T_3^{n^2+n}f_{23}\cdot T_4^{n^2+n}f_{24} \cdot T_1^{n^2+2n}f_{25}\cdot T_4^{n^2+2n}f_{26}
\end{align}
of type $(3, 3, 0)$. The functions in \eqref{123414} take the form
\begin{align*}
    f_{2j} := \begin{cases}
    \Delta_{\b_{21}, \ldots, \b_{2s_2}; \uh-\uh'}f_{1j},\; & j \neq 5\\
    u_{2,\uh, \uh'},\; &j=5,
    \end{cases}
\end{align*}
where $u_{2, \uh, \uh'}$ are $T_5 T_1\inv$-invariant. We note by Proposition \ref{propagation of invariance} that the functions $f_{26}$ retain the $T_6 T_4\inv$-invariance of $f_{16}$.

The indices $3, 4, 6$ in the average \eqref{123414} all satisfy the controllability condition, so if we want to get seminorm control of this average, we should first control it by a relevant seminorm of one of the functions $f_{23}, f_{24}, f_{26}$. Suppose that we choose to obtain a seminorm control of the tuple \eqref{123414} with respect to $f_{26}$ first.
%\footnote{There is nothing special about $f_{26}$; since the average \eqref{123414} is controllable and the indices $3, 4, 6$ all satisfy the controllability condition, we could aim at getting a seminorm control in terms of $f_{23}$ or $f_{24}$ instead.}.
Then we would replace $T_4$ at index 6 by $T_i$ for any $i\in[2] = \FI_1$ in the \textit{ping} step, getting, say, averages
\begin{align}\label{123411}
    \E_{n\in[N]}T_1^{n^2}f_{31} \cdot T_2^{n^2}f_{32}\cdot T_3^{n^2+n}f_{33}\cdot T_4^{n^2+n}f_{34} \cdot T_1^{n^2+2n}f_{35}\cdot T_1^{n^2+2n}f_{36}
\end{align}
of type $(4, 2, 0)$. The functions in \eqref{123411} take the form
\begin{align*}
    f_{3j} := \begin{cases}
    \Delta_{\b_{31}, \ldots, \b_{3s_3}; \uh-\uh'}f_{2j},\; & j \neq 6\\
    u_{3,\uh, \uh'},\; &j=6.
    \end{cases}
\end{align*}
The function $f_{35}$, being a multiplicative derivative of a $T_5 T_1\inv$-invariant function, is itself invariant under $T_5 T_1\inv$. The function $f_{36}$ is invariant not only under $T_4T_1\inv$, but also under $T_6 T_4\inv$ thanks to Proposition \ref{propagation of invariance}. It is crucial that  $f_{36}$ retains the $T_6 T_4\inv$-invariance of $f_{26}$, and we shall return to this point shortly.
%, and hence it is also invariant under their composition $T_6 T_1\inv$.

The average \eqref{123411} is controllable, and so to arrive at an average of a basic type, we need to perform this procedure two more times. Both the indices 3 and 4 satisfy the controllability condition, so we want to get seminorm control in terms of one of $f_{33}, f_{34}$ - say, we choose $f_{34}$. To obtain control of the tuple \eqref{123411} by a $T_4$-seminorm of $f_{34}$, we replace $T_4$ by $T_i$ for any $i\in[2] = \FI_1$ in the \textit{ping} step, arriving at, say, averages
\begin{align}\label{123211}
    \E_{n\in[N]}T_1^{n^2}f_{41} \cdot T_2^{n^2}f_{42}\cdot T_3^{n^2+n}f_{43}\cdot T_2^{n^2+n}f_{44} \cdot T_1^{n^2+2n}f_{45}\cdot T_1^{n^2+2n}f_{46}
\end{align}
of type $(5, 1, 0)$ if $i=2$. The functions in \eqref{123211} take the form
\begin{align*}
    f_{4j} := \begin{cases}
    \Delta_{\b_{41}, \ldots, \b_{4s_4}; \uh-\uh'}f_{3j},\; & j \neq 4\\
    u_{4,\uh, \uh'},\; &j=4.
    \end{cases}
\end{align*}
The functions $f_{45}$ and $f_{46}$ retain respectively the invariance under $T_5 T_1\inv$ of $f_{35}$ and the invariance under $T_4T_1\inv$ and $T_6 T_4\inv$ %and $T_6 T_1\inv$
of $f_{36}$. Moreover, the function $f_{44}$ is $T_4 T_1\inv$-invariant.

Finally, the only index in the average \eqref{123211} satisfying the controllability condition is $3$, so if we want to obtain control of the tuple \eqref{123211}, we first want to get this in terms of a $T_3$-seminorm of $f_{43}$. Applying Proposition \ref{P: type reduction}, we end up replacing $T_3$ by, say, $T_2$, getting averages
\begin{align}\label{122211}
    \E_{n\in[N]}T_1^{n^2}f_{51} \cdot T_2^{n^2}f_{52}\cdot T_2^{n^2+n}f_{53}\cdot T_2^{n^2+n}f_{54} \cdot T_1^{n^2+2n}f_{55}\cdot T_1^{n^2+2n}f_{56}.
\end{align}
 The functions in \eqref{122211} take the form
\begin{align*}
    f_{5j} := \begin{cases}
    \Delta_{\b_{51}, \ldots, \b_{5s_5}; \uh-\uh'}f_{4j},\; & j \neq 3\\
    u_{5,\uh, \uh'},\; &j=3,
    \end{cases}
\end{align*}
in particular, the functions $f_{54}, f_{55}, f_{56}$ retain the invariance properties of $f_{44}, f_{45}, f_{46}$ and $f_{53}$ is $T_3T_2\inv$-invariant.

\smallskip
\textbf{Step 2: Handling an average of basic type.}
\smallskip

The average \eqref{122211} has basic type $(6, 0, 0)$, and so we want to control it by appropriate seminorms using Proposition \ref{strong PET bound}.  We show that without the assumption that $f_{56}$ is invariant under both $T_4T_1\inv$ \textit{and} $T_6T_4\inv$, we cannot control this average by a $T_1$-seminorm of $f_{56}$, and conversely - that this goal can be achieved with both of these assumptions.

We first note that Proposition \ref{strong PET bound}, the ergodicity condition $\CI(T_1 T_2\inv) = \CI(T_1)\cap\CI(T_2)$ and Lemma \ref{bounding seminorms} allow us to control it by a $T_1$-seminorm of $f_{51}$ and by a $T_2$-seminorm of $f_{52}$\footnote{At the same time, Proposition \ref{strong PET bound}, our ergodicity assumptions, and Lemma \ref{bounding seminorms} alone cannot be used to control \eqref{122211} by $T_2$-seminorms of $f_{53}$ or $f_{54}$ or by $T_1$-seminorms of $f_{55}$ or $f_{56}$. Without additional information about the invariance properties of the functions, Proposition \ref{strong PET bound}, our ergodicity assumptions and Lemma \ref{bounding seminorms} could only give control of \eqref{122211} by a $T_2$-seminorms of $f_{53}f_{54}$ and by a $T_1$-seminorm of $f_{55}f_{56}$, which is insufficient for our purposes.}. Suppose that the $L^2(\mu)$ limit of \eqref{122211} is positive. Decomposing $f_{51}$ using Proposition \ref{dual decomposition} and then applying the pigeonhole principle, we deduce the existence of $\CD\in\FD$ such that
\begin{align}\label{122211 2}
    \lim_{N\to\infty}\norm{\E_{n\in[N]}\CD(n^2)\cdot T_2^{n^2}f_{52}\cdot T_2^{n^2+n}f_{53}\cdot T_2^{n^2+n}f_{54} \cdot T_1^{n^2+2n}f_{55}\cdot T_1^{n^2+2n}f_{56}}_{L^2(\mu)}>0.
\end{align}

We now want to obtain a seminorm control of the average in \eqref{122211 2} by inductively invoking seminorm control for some average of length 5. To this end, we attempt to proceed like in Example \ref{Ex: n^2, n^2, n^2+n, n^2+n}. That is, we use the invariance of $f_{53}, f_{54}, f_{55}, f_{56}$ under $T_3T_2\inv, T_4T_2\inv, T_5T_1\inv, T_4 T_1\inv$ respectively to conclude that the average in \eqref{122211 2} equals
\begin{align}\label{122211 3}
    \lim_{N\to\infty}\E_{n\in[N]}\CD(n^2)\cdot T_2^{n^2}f_{52}\cdot T_3^{n^2+n}f_{53}\cdot T_4^{n^2+n}f_{54} \cdot T_5^{n^2+2n}f_{55}\cdot T_4^{n^2+2n}f_{56}.
\end{align}

However, without any extra information, we could not control the average \eqref{122211 3} using a $T_2$-seminorm of $f_{55}$ or $T_1$-seminorm of $f_{56}$. Suppose for simplicity that $\CD$ is a constant sequence and $f_{52}=f_{53}=f_{54} = 1$. Then \eqref{122211 3} reduces to
\begin{align}\label{122211 4}
    \lim_{N\to\infty}\E_{n\in[N]}T_5^{n^2+2n}f_{55}\cdot T_4^{n^2+2n}f_{56}.
\end{align}
We know nothing about the composition $T_5 T_4\inv$, and so without additional input, we cannot control \eqref{122211 4} by a Gowers-Host-Kra seminorm.

This is the moment when we have to use the additional $T_6T_4\inv$-invariance of $f_{56}$. Since $T_4 f_{56} = T_6 f_{56}$, we can replace $T_4$ in \eqref{122211 4} by $T_6$. Then Proposition \ref{strong PET bound} gives us control over \eqref{122211 4} by $\nnorm{f_{56}}_{\be_6^{\times s}, (\be_6-\be_5)^{\times s}}$ for some $s\in\N$. The ergodicity condition on $T_6 T_5\inv$ and the $T_6 T_1\inv$-invariance of $f_{56}$ then give  $\nnorm{f_{56}}_{\be_6^{\times s}, (\be_6-\be_5)^{\times s}}\leq \nnorm{f_{56}}_{2s, T_6}= \nnorm{f_{56}}_{2s, T_1}$, and so this latter seminorm controls \eqref{122211 2}, and hence also \eqref{122211}.

If we want to control \eqref{122211 2} by a $T_1$-seminorm of $f_{56}$ without the simplifying assumptions on $\CD$ and $f_{52}, f_{53}, f_{54}$, we proceed similarly\footnote{There is no special reason why we would want to control \eqref{122211 2} by a seminorm of $f_{56}$ instead of other functions. We just aim to illustrate that  using the extra $T_6T_4\inv$-invariance of $f_{56}$, this can be done.}. Using the $T_6T_4\inv$-invariance of $f_{56}$, we rewrite \eqref{122211 3} as
\begin{align}\label{122211 5}
    \lim_{N\to\infty}\E_{n\in[N]}\CD(n^2)\cdot T_2^{n^2}f_{52}\cdot T_3^{n^2+n}f_{53}\cdot T_4^{n^2+n}f_{54} \cdot T_5^{n^2+2n}f_{55}\cdot T_6^{n^2+2n}f_{56}.
\end{align}
Then we inductively apply the fact that we have seminorm control for averages of length 5 of the form \eqref{122211 5} to control this average, and hence also \eqref{122211}, by a $T_6$-seminorm of $f_{56}$. Subsequently, the $T_6 T_1\inv$-invariance of $f_{56}$ (resulting from its $T_6 T_4\inv$- and $T_4T_1\inv$-invariance) and Lemma \ref{composing invariance} give control of \eqref{122211} by a $T_1$-seminorm of $f_{56}$.

We note that the argument above would not work if we only used the ``new'' property of $f_{56}$ of being invariant under $T_4 T_1\inv$ rather than the ``combined'' property of being invariant under $T_6 T_1\inv$. The important point that this example shows is that it is not enough to keep track of the new invariance properties that we obtain at each stage of the \textit{ping} argument and forget the old ones. Rather, we need to keep track of the invariance property with respect to a composition of the original and the most recent transformations, which in our example are $T_6$ and $T_1$.
\end{example}

Examples \ref{Ex: n^2, n^2, n^2+n, n^2+n} and \ref{Ex: 6 tuple} reveal that to obtain seminorm control of a controllable average of basic type and in the \textit{pong} step of the seminorm smoothing argument, we need to substitute the transformations $T_j$ for $T_{\eta_j}$ to arrive at an average for which we have seminorm control. The following proposition allows us to do just that.
%, subject to necessary ergodicity and invariance assumptions.
\begin{proposition}[Flipping]\label{P:flipping}
Let $\gamma, \ell, L\in\N$, $(X, \CX, \mu, T_1, \ldots, T_\ell)$ be a system, $\eta\in[\ell]^\ell$ be an indexing tuple, $A\subset[\ell]$ be a subset of indices and $p_1, \ldots, p_\ell, \rho_1, \ldots, \rho_\ell, q_1, \ldots, q_L\in\Z[n]$ be polynomials. Suppose that
\begin{enumerate}
%    \item the tuple $\brac{T_j^{p_j(n)}}_{j\in[\ell]}$ has the good ergodicity property;
    \item the tuple $\brac{T_{\eta_j}^{\rho_j(n)}}_{j\in[\ell]}$ is a descendant of $\brac{T_j^{p_j(n)}}_{j\in[\ell]}$;
    %\item for every $j\in[\ell]$, we have $\deg p_j = \deg \rho_j:=d_j$, and the leading coefficients $a_j$, $b_j$ of $p_j$, $\rho_j$ respectively satisfy $b_j = a_{\eta_j} C^{d_j}$ for some absolute $C\in\Z$; %, where $a_j$ is the leading coefficient of $\rho_j$;
    %\item for every distinct $j_1, j_2\in[\ell]$, the polynomials $\rho_{j_1}, \rho_{j_2}$ are linearly dependent if and only if $p_{j_1}, p_{j_2}$ are;
%    \begin{align*}
 %       \FL(p_1, \ldots, p_\ell) &= \FL(\rho_1, \ldots, \rho_\ell), \\
  %      K_i(p_1, \ldots, p_\ell) &=K_i(\rho_1, \ldots, \rho_\ell),\quad  i=1,2,\\
   %     \FI_t(p_1, \ldots, p_\ell)&=\FI_t(\rho_1, \ldots, \rho_\ell), \quad t\in[K_1].
    %\end{align*}
    %\item there exists $C\in\Z$ such that for every $j\in[\ell]$, the leading coefficient $b_j$ of $\rho_j$ satisfies $b_j = a_j C^{d_j}$, where
    \item $f_1, \ldots, f_{\ell}\in L^\infty(\mu)$ are 1-bounded functions having the $\gamma$-invariance property along $\eta$ with respect to $p_1, \ldots, p_\ell$.
\end{enumerate}
%$\rho_1, \ldots, \rho_\ell$ have the good ergodicity property along $\eta$, and that

Then there exist 1-bounded functions $f_1', \ldots, f'_{\ell}$, polynomials $\rho'_1, \ldots, \rho'_{\ell}, q_1', \ldots, q_L'\in\Z[n]$, and an indexing tuple $\eta'$ with the following properties:
\begin{enumerate}
    \item the tuple $\eta'$ takes the form
    \begin{align*}
        \eta'_j = \begin{cases} j,\; &j\in A\\ \eta_j,\; &j\notin A;
        \end{cases}
    \end{align*}
    %\item the tuple $\brac{T_{\eta'_j}^{\rho'_j(n)}}_{j\in[\ell]}$ has the good ergodicity property;
    \item  $\brac{T_{\eta'_j}^{\rho'_j(n)}}_{j\in[\ell]}$ is a descendant of $\brac{T_j^{p_j(n)}}_{j\in[\ell]}$;
    %$\rho'_1, \ldots, \rho'_{\ell}$ have the good ergodicity property for the system;
    \item $f'_1, \ldots, f'_{\ell}\in L^\infty(\mu)$ are 1-bounded, have the $\gamma$-invariance property along $\eta'$ with respect to $p_1, \ldots, p_\ell$, satisfy the bound $\nnorm{f'_j}_{s, T_{\eta'_j}}\ll_\gamma \nnorm{f_j}_{s, T_{\eta_{j}}}$ for every $s\geq 2$ and $j\in[\ell]$, and moreover $f'_j = 1$ whenever $f_j = 1$; %and have the good invariance property along $\eta^0$ with respect to;
    \item we have the inequality
    \begin{align}\label{E:flipping}
    &\lim_{N\to\infty}\norm{\E_{n\in [N]} \,\prod_{j\in[\ell]}T_{{\eta_{j}}}^{\rho_{j}(n)}f_{j} \cdot \prod_{j\in[L]}\CD_{j}(q_{j}(n))}_{L^2(\mu)}\leq \\
    \nonumber
    & \lim_{N\to\infty}\norm{\E_{n\in [N]} \,\prod_{j\in[\ell]}T_{{\eta'_j}}^{\rho'_j(n)}f'_j \cdot\prod_{j\in[L]}\CD_{j}(q'_j(n))}_{L^2(\mu)}.
\end{align}
\end{enumerate}
\end{proposition}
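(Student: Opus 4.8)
The plan is to handle the flipping one index at a time: set $A = \{a_1, \ldots, a_k\}$ and pass from $\eta$ to $\eta'$ by successively replacing $T_{\eta_{a_t}}$ by $T_{a_t}$, verifying at each step that the four conclusions are preserved. So it suffices to treat the case $A = \{a\}$ of a single index; the general case follows by iterating $k$ times and composing the resulting constants $C$ and the resulting values of $\gamma$ (which stay controlled since each single-step flip multiplies $\gamma$ by a bounded factor depending only on the leading coefficients of $p_1, \ldots, p_\ell$). Fix such an $a$ with $\eta_a \neq a$ (if $\eta_a = a$ there is nothing to do). Let $a_j$ denote the leading coefficient of $p_j$, and write $S := T_{\eta_a}^{a_{\eta_a}} T_a^{-a_a}$, so that by hypothesis (ii) the function $f_a$ is $S^\gamma$-invariant.

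The core of the argument is the change of variable in the iterate $T_{\eta_a}^{\rho_a(n)} f_a$. Because $\brac{T_{\eta_j}^{\rho_j(n)}}_{j\in[\ell]}$ is a descendant of $\brac{T_j^{p_j(n)}}_{j\in[\ell]}$, we may write $\rho_a(n) = \tfrac{a_{\eta_a}}{a_a}\bigl(p_a(\lambda n + r) - p_a(r)\bigr)$ for suitable $\lambda, r$, so $a_{\eta_a}$ divides $a_a$ times the coefficients of $\rho_a$, and $a_a \rho_a / a_{\eta_a}$ has integer coefficients. Splitting $\N$ into residue classes modulo $\gamma a_a$ (to clear denominators and to exploit the period-$\gamma$ invariance of $f_a$), on each progression $n \equiv r' \pmod{\gamma a_a}$ we can write $a_{\eta_a} \rho_a(n) = a_a \rho_a(n) + (a_{\eta_a} - a_a)\rho_a(n)$ and, using $S^\gamma$-invariance,
\begin{align*}
T_{\eta_a}^{\rho_a(n)} f_a = T_{\eta_a}^{\rho_a(n)} (S^\gamma)^{\rho_a(n)/\gamma} f_a = T_a^{(a_a/a_{\eta_a})\rho_a(n)} \cdot (\text{bounded correction}) \cdot f_a
\end{align*}
on that progression — here $(a_a/a_{\eta_a})\rho_a(n)$ has integer values along the progression, and the bounded correction is itself a translate of $f_a$, absorbed into a new $1$-bounded function $f_a'$ which inherits the $\gamma$-invariance property along $\eta'$ (and, if $f_a = 1$ to begin with, remains identically $1$). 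After this substitution the remaining polynomials $\rho_j$ ($j\neq a$) and $q_j$ must also be rewritten in the variable of the progression; this produces the new polynomials $\rho_j', q_j' \in \Z[n]$ and, via Lemma~\ref{L: descendancy transitive}, keeps $\brac{T_{\eta'_j}^{\rho'_j(n)}}_{j\in[\ell]}$ a descendant of $\brac{T_j^{p_j(n)}}_{j\in[\ell]}$, giving (ii). Applying the triangle inequality over the $\gamma a_a$ progressions and passing to the worst one yields the norm bound (iv). The seminorm comparison $\nnorm{f_a'}_{s, T_a} \le C\,\nnorm{f_a}_{s, T_{\eta_a}}$ for $s\ge 2$ follows by combining Lemma~\ref{composing invariance}(i) (which gives $\nnorm{f_a}_{s,T_{\eta_a}^{a_{\eta_a}}} = \nnorm{f_a}_{s,T_a^{a_a}}$ using $S^\gamma$-invariance, modulo a further dilation to handle the $\gamma$-th power) with Lemma~\ref{L:seminorm of power} to strip the integer scalars $a_{\eta_a}, a_a, \gamma$ at the cost of a constant depending only on those scalars; the functions $f_j$ for $j\neq a$ are unchanged up to harmless translations, so their seminorms are preserved. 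The invariance claim in (iii) for indices $j\neq a$ is immediate since $\eta_j' = \eta_j$ and translations preserve invariance, and for $j = a$ it is exactly what the construction of $f_a'$ arranges, much as in Proposition~\ref{propagation of invariance}.

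The main obstacle is bookkeeping rather than conceptual: making the residue-class decomposition interact cleanly with the $\gamma$-periodic invariance and simultaneously with the denominators forced by the ratios $a_a/a_{\eta_a}$, so that a single modulus $\gamma a_a$ (or a bounded multiple thereof) works for all the required substitutions at once, and ensuring that the correction terms that appear when passing from $T_{\eta_a}^{\rho_a(n)}$ to $T_a^{(a_a/a_{\eta_a})\rho_a(n)}$ are genuinely bounded translates of $f_a$ and can be folded into $f_a'$ without destroying either $1$-boundedness or the invariance property. This is precisely the phenomenon already rehearsed in Examples~\ref{Ex: n^2, n^2, 2n^2+n} and \ref{Ex: 6 tuple}, where one splits into odd/even (or finer) progressions before applying an invariance identity; the present proposition is the systematic version of that maneuver, and once the indexing is set up carefully the verification of (i)--(iv) is routine.
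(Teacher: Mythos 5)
Your approach is essentially the same as the paper's: split into residue classes with a suitable modulus so the divisibility forced by the invariance holds on each class, swap $T_{\eta_a}$ for $T_a$ using the invariance, pigeonhole to the worst class, and use Lemmas \ref{composing invariance} and \ref{L:seminorm of power} together with Lemma \ref{L: descendancy transitive} for the seminorm bound and descendancy. The paper carries out all indices in $A$ in one pass (the modulus is chosen so that, for every $j\in A$, $a_{\eta_j}\gamma_j$ divides the coefficients of $\rho_j(\lambda n + r)-\rho_j(r)$), while you do them one at a time and iterate; that is a harmless difference. However, there is a small arithmetical slip: the invariance is with respect to $\bigl(T_{\eta_a}^{a_{\eta_a}}T_a^{-a_a}\bigr)^{\gamma}$, so what must divide $\rho_a(n)-\rho_a(r')$ on each class is $a_{\eta_a}\gamma_a$ (so the exponent $(\rho_a(n)-\rho_a(r'))/(a_{\eta_a}\gamma_a)$ is an integer), and your chosen modulus $\gamma a_a$ only guarantees $\gamma a_a$-divisibility, not $a_{\eta_a}\gamma_a$-divisibility unless $a_{\eta_a}\mid a_a$. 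Replacing $\gamma a_a$ by $\gamma a_{\eta_a}$ (or a common multiple over $j\in A$, as the paper does) fixes this without any change to the strategy; likewise the exponent should be $(\rho_a(n)-\rho_a(r'))/(\gamma a_{\eta_a})$ rather than $\rho_a(n)/\gamma$.
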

We note that if the leading coefficients of $p_1, \ldots, p_\ell$ are all 1, and the good invariance property takes the form of $f_j$ being invariant under $T_{\eta_j}T_j^{-1}$ for all $j\in[\ell]$, then Proposition \ref{P:flipping} is straightforward, and in fact for every $N\in\N$, we have
\begin{align*}
    \E_{n\in [N]} \,\prod_{j\in[\ell]}T_{{\eta_{j}}}^{\rho_{j}(n)}f_{j} \prod_{j\in[L]}\CD_{j}(q_{j}(n))=\E_{n\in [N]} \,\prod_{j\in[\ell]}T_{\eta'_j}^{\rho_{j}(n)}f_{j} \prod_{j\in[L]}\CD_{j}(q_{j}(n))
\end{align*}
and $\nnorm{f_j}_{s, T_{\eta_j}} = \nnorm{f_j}_{s, T_{\eta'_j}}$ for all $j\in[\ell], s\in\N$.
The need for the more complicated statement of Proposition \ref{P:flipping} comes from tedious but uninspiring technicalities that appear when the polynomials $p_1, \ldots, p_\ell$ have  leading coefficients distinct from 1.

We have used the flipping technique twice in Example \ref{Ex: n^2, n^2, n^2+n, n^2+n}: in \eqref{1222 1} and when passing from \eqref{0232} to \eqref{0234} in order to obtain a seminorm control on the former using the seminorm control on the latter. We also used it in Example \ref{Ex: 6 tuple} to get seminorm control of \eqref{122211}. We will also use it shortly to handle uncontrollable tuples.

%\textbf{In the assumptions, say instead that $\rho_j(n) = \frac{a_{\eta_j}}{a_j}(p_j(\lambda n + r)-p_j(r))$. Say the polynomials $\rho_1, \ldots, \rho_\ell$ are descendants of $p_1, \ldots, p_\ell$ along $\eta$ if the above is satisfied (which it is). }

\begin{proof}[Proof of Proposition \ref{P:flipping}]
For each $j\in[\ell]$, let $a_j$ be the leading coefficient of $p_j$ and  $\gamma_j\in\N$ be the smallest natural number such that $f_j$ is invariant under $\brac{T_{\eta_j}^{a_{\eta_j}}T_j^{-a_j}}^{\gamma_j}$ (in particular, $\gamma_j = 1$ if $\eta_j = j$). Let $\gamma\in\N$ be the smallest natural number such that $a_{\eta_j} \gamma_j$ divides the coefficients of $\gamma \rho_j$ for every $j\in A$.
%We note that if $j\notin A$, then $a_j =  a_{\eta_j}$, otherwise it follows from  Proposition \ref{P: type reduction}(iii) that $a_{\eta_j}$ divides the coefficients of $\rho_j$.
%and  $a=\gamma\textrm{lcm}(a_j:\ j\in A)$.
We then define
\begin{align*}
    f'_j := T_{\eta_j}^{\rho_j(r)}f_j,\quad q'_j(n):=q_j(\gamma n + r),\quad \rho'_{j}(n) := \frac{a_{\eta'_j}}{a_{\eta_{j}}}\brac{\rho_{j}(\gamma n + r) - \rho_{j}(r)}
\end{align*}
%$f'_j = T_{\eta_j}^{\rho_j(r)}f_j$, $q'_j(n)=q_j(an + r)$ and
%\begin{align*}
 %   \rho'_{j}(n) = \begin{cases} \rho_{j}(an + r) - \rho_{j}(r),\; &j \not\in\FL\\
  %  \frac{a_{j}}{a_{\eta_{j}}}\brac{\rho_{j}(an + r) - \rho_{j}(r)},\; &j\in\FL
  % \end{cases}
%\end{align*}
for some $r\in\{0, \ldots, \lambda-1\}$ to be chosen later, and we observe that $\rho'_j\in\Z[n]$ for every $j\in [\ell]$. By definition of $\eta'$ and the $\gamma$-invariance of $f_1, \ldots, f_\ell$ along $\eta$, the functions $f_1', \ldots, f_\ell'$ are $\gamma$-invariant along $\eta'$. Moreover, if $f_j =1$, then so is $f'_j$. Lastly, they satisfy the bound $\nnorm{f'_j}_{s, T_{\eta'_j}}\ll_\gamma \nnorm{f_j}_{s, T_{\eta_{j}}}$ for every $s\geq 2$ and $j\in[\ell]$; this is trivial for $j\notin A$, and if $j\in A$, then
\begin{align*}
    \nnorm{f'_j}_{s, T_{\eta'_j}} = \nnorm{f'_j}_{s, T_{j}}= \nnorm{f_j}_{s, T_{j}} \leq \nnorm{f_j}_{s, T_{j}^{a_{j}\gamma}} = \nnorm{f_j}_{s, T_{\eta_{j}}^{a_{\eta_{j}}\gamma}}\ll_{a_{\eta_j} \gamma}\nnorm{f_j}_{s, T_{\eta_j}},
\end{align*}
where we use the fact that $f_j$ is a composition of $f'_j$ with respect to a measure preserving transformation, the invariance property of $f_j$, Lemma \ref{composing invariance}, and both directions of Lemma \ref{L:seminorm of power}.

We move on to prove the inequality \eqref{E:flipping}, For $j\notin A$, where $\eta'_j = \eta_j$, we simply have $T_{\eta_j}^{\rho_j(\gamma n+r)}f_j = T_{\eta'_j}^{\rho'_j(n)}f_j'$. For $j\in A$, the invariance property of $f_j$ gives us the identity
\begin{align*}
    T_{\eta_j}^{\rho_j(\gamma n+r)}f_j = \brac{T_{\eta_j}^{a_{\eta_j}\gamma_j}}^{\frac{\rho_j(\gamma n+r)-\rho_j(r)}{a_{\eta_j}\gamma_j}} T_{\eta_j}^{\rho_j(r)} f_j
    =  T_{\eta'_j}^{\rho'_j(n)}f_j'.
\end{align*}
Splitting $\N$ into $(\gamma\cdot\N + r)_{r\in\{0, \ldots, \gamma-1\}}$, we deduce from the pigeonhole principle that there exists $r\in\{0, \ldots, \gamma-1\}$ for which \eqref{E:flipping} holds.

From the construction of the polynomials $\rho_1', \ldots, \rho'_\ell$, the assumption that $\brac{T_{\eta_j}^{\rho_j(n)}}_{j\in[\ell]}$ is a descendant of $\brac{T_j^{p_j(n)}}_{j\in[\ell]}$ and Lemma \ref{L: descendancy transitive}, it follows that $\brac{T_{\eta'_j}^{\rho'_j(n)}}_{j\in[\ell]}$ is also a descendant of $\brac{T_j^{p_j(n)}}_{j\in[\ell]}$.
\end{proof}

\subsection{Handling uncontrollable tuples}\label{SS:uncontrollable}
We have explained in the previous sections that if a tuple $\brac{T_{\eta_j}^{\rho_j(n)}}_{j\in[\ell]}$ is controllable, then we control it by a Gowers-Host-Kra seminorm using a seminorm smoothing argument. If the tuple is uncontrollable, however, we use the following variant of the flipping technique from Proposition \ref{P:flipping} to bound the $L^2(\mu)$ norm of the associated average by an $L^2(\mu)$ norm of a controllable average.
\begin{corollary}[Flipping uncontrollable tuples]\label{C: Flipping uncontrollable tuples}
Let $\gamma, \ell, L\in\N$, $(X, \CX, \mu, T_1, \ldots, T_\ell)$ be a system, $\eta\in[\ell]^\ell$ be an indexing tuple, and $p_1, \ldots, p_\ell, \rho_1, \ldots, \rho_\ell, q_1, \ldots, q_L\in\Z[n]$ be polynomials. Suppose that
\begin{enumerate}
%    \item the tuple $\brac{T_j^{p_j(n)}}_{j\in[\ell]}$ has the good ergodicity property;
    \item the tuple $\brac{T_{\eta_j}^{\rho_j(n)}}_{j\in[\ell]}$ is uncontrollable of type $w$ with the last nonzero index $t_w$, and it is a descendant of $\brac{T_j^{p_j(n)}}_{j\in[\ell]}$;
    \item $f_1, \ldots, f_{\ell}\in L^\infty(\mu)$ are 1-bounded functions having the $\gamma$-invariance property along $\eta$ with respect to $p_1, \ldots, p_\ell$.
\end{enumerate}

Then there exist 1-bounded functions $f_1', \ldots, f'_{\ell}$, polynomials $\rho'_1, \ldots, \rho'_{\ell}, q_1', \ldots, q_L'\in\Z[n]$, and an indexing tuple $\eta'$ with the following properties:
\begin{enumerate}
    \item the tuple $\eta'$ takes the form
    \begin{align*}
        \eta'_j = \begin{cases} j,\; &\eta_j\in \FI_{t_w}\\ \eta_j,\; &\eta_j\notin \FI_{t_w};
        \end{cases}
    \end{align*}
    \item  $\brac{T_{\eta'_j}^{\rho'_j(n)}}_{j\in[\ell]}$ is a descendant of $\brac{T_j^{p_j(n)}}_{j\in[\ell]}$;
    \item $f'_1, \ldots, f'_{\ell}\in L^\infty(\mu)$ are 1-bounded, have $\gamma$-invariance property along $\eta'$ with respect to $p_1, \ldots, p_\ell$, satisfy the bound $\nnorm{f'_j}_{s, T_{\eta'_j}}\ll_\gamma \nnorm{f_j}_{s, T_{\eta_{j}}}$ for every $s\geq 2$ and $j\in[\ell]$, and moreover $f'_j = 1$ whenever $f_j = 1$;
    \item we have the inequality
    \begin{align*}%\label{E:flipping}
    &\lim_{N\to\infty}\norm{\E_{n\in [N]} \,\prod_{j\in[\ell]}T_{{\eta_{j}}}^{\rho_{j}(n)}f_{j} \cdot \prod_{j\in[L]}\CD_{j}(q_{j}(n))}_{L^2(\mu)}\leq \\
    \nonumber
    & \lim_{N\to\infty}\norm{\E_{n\in [N]} \,\prod_{j\in[\ell]}T_{{\eta'_j}}^{\rho'_j(n)}f'_j \cdot\prod_{j\in[L]}\CD_{j}(q'_j(n))}_{L^2(\mu)}.
\end{align*}
\end{enumerate}
\end{corollary}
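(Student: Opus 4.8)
The plan is to derive Corollary~\ref{C: Flipping uncontrollable tuples} as a direct consequence of the flipping proposition, Proposition~\ref{P:flipping}, applied with a carefully chosen index set $A$. The natural choice is
\begin{align*}
    A := \{j\in[\ell]:\ \eta_j\in\FI_{t_w}\}.
\end{align*}
With this choice, property (i) of Proposition~\ref{P:flipping} gives precisely the form of $\eta'$ asserted in property (i) of the corollary, namely $\eta'_j = j$ whenever $\eta_j\in\FI_{t_w}$ and $\eta'_j = \eta_j$ otherwise; and properties (ii), (iii), (iv) of the proposition transfer verbatim to properties (ii), (iii), (iv) of the corollary. The hypotheses of Proposition~\ref{P:flipping} are exactly those of the corollary: the tuple $\brac{T_{\eta_j}^{\rho_j(n)}}_{j\in[\ell]}$ is a descendant of $\brac{T_j^{p_j(n)}}_{j\in[\ell]}$ (hypothesis (i)), and $f_1,\ldots,f_\ell$ have the $\gamma$-invariance property along $\eta$ with respect to $p_1,\ldots,p_\ell$ (hypothesis (ii)). So the bulk of the statement follows immediately.

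The only genuine content beyond this bookkeeping is to verify the tacit claim implicit in the name of the corollary and in the way it is invoked in Section~\ref{SS: strategy}: that the resulting tuple $\brac{T_{\eta'_j}^{\rho'_j(n)}}_{j\in[\ell]}$ is in fact \emph{controllable} (and, as will be needed later in Proposition~\ref{P: induction scheme}, of strictly lower type). For controllability, I would argue as follows. After flipping, every index $j$ with $\eta_j\in\FI_{t_w}$ has $\eta'_j = j$. By Proposition~\ref{P: properties of descendants}(i), descendancy preserves the partition $\FI_1,\ldots,\FI_{K_1}$ and the sets $\FL$, so $\FI_{t_w}$ and $t_w$ are the same for the new tuple, and moreover the polynomials $\rho'_j$ for $j\in\FI_{t_w}$ all lie in the same linear-dependence class; since the $\rho'_j$ (for $j\in\FI_{t_w}$) arise from a common underlying polynomial up to scaling by leading-coefficient ratios, their pairwise dependence is governed by the $p_j$'s, which are pairwise dependent within $\FI_{t_w}$ but carried by \emph{distinct} transformations $T_j$ after flipping. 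Hence picking any $m\in\FI_{t_w}$ with $\eta'_m = m$: the only other indices $i$ with $\eta'_i = m = \eta'_m$ are indices that were flipped to $m$, but those $i$ must also satisfy $\eta_i\in\FI_{t_w}$, whence $\eta'_i = i\neq m$ — a contradiction unless $i = m$. So in fact $m$ is the unique index with $\eta'_i = \eta'_m$, which trivially satisfies the controllability condition. Thus the flipped tuple is controllable. The type reduction claim is then a short computation: flipping $T_{\eta_j}$ to $T_j$ for $j\in\FI_{t_w}$ redistributes the $w_{t_w}$ occurrences among the classes of the new transformations' indices; since these all land in classes $\FI_t$ with $t\neq t_w$, the entry $w'_{t_w} = 0 < w_{t_w}$ while $w'_t = w_t$ for $t < t_{w}$, so $w' < w$ by the definition of our ordering.

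The main obstacle I anticipate is not in the logical structure — which is a one-line appeal to Proposition~\ref{P:flipping} — but in correctly tracking why the flipped index set $A = \{j: \eta_j\in\FI_{t_w}\}$ is a \emph{legitimate} choice, i.e.\ that Proposition~\ref{P:flipping} imposes no constraint on $A$ beyond $A\subseteq[\ell]$ (it does not), and in spelling out that the $\gamma$-invariance hypothesis is precisely what licenses flipping each $T_{\eta_j}$ to $T_j$ for exactly those $j\in A$. One subtlety worth a sentence: when $\eta_j = j$ already (which can happen for some $j\in A$), flipping is vacuous and $\gamma_j = 1$, so no harm is done and the asserted form of $\eta'$ still holds. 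Beyond that, the verification that descendancy, the $\gamma$-invariance property, the seminorm comparison $\nnorm{f'_j}_{s,T_{\eta'_j}}\leq C\nnorm{f_j}_{s,T_{\eta_j}}$, and the norm inequality \eqref{E:flipping} all descend from the proposition to the corollary is entirely routine. Hence the proof is short: it is essentially "apply Proposition~\ref{P:flipping} with $A = \{j\in[\ell]:\ \eta_j\in\FI_{t_w}\}$ and read off the conclusions."
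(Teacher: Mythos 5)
Your first paragraph is exactly the paper's proof: apply Proposition~\ref{P:flipping} with $A=\{j\in[\ell]:\ \eta_j\in\FI_{t_w}\}$ and read off properties (i)--(iv); the paper gives precisely this one-line argument. Everything from your second paragraph onward, however, is not part of the corollary and in places oversteps what the corollary's hypotheses can deliver. The corollary does not assert that the flipped tuple is controllable or of strictly lower type — indeed, immediately after stating it, the paper emphasises that "Corollary~\ref{C: Flipping uncontrollable tuples} by itself does not guarantee that the tuple $\brac{T_{\eta'_j}^{\rho'_j(n)}}_{j\in[\ell]}$ has a lower type." Your type-reduction sketch implicitly uses the invariant that after flipping, the new indices $j$ (for $j\in A$) land only in classes $\FI_t$ with $t>t_w$; but this is not a consequence of the corollary's hypotheses, it is property~(iv) of Proposition~\ref{P: induction scheme}, which is established inductively along the full reduction sequence. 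Without that invariant you could have $j\in\FI_{t_w}$ with $\eta_j=j$, in which case flipping is vacuous and $w'_{t_w}$ would not drop to zero. Likewise, your controllability argument picks "any $m\in\FI_{t_w}$ with $\eta'_m=m$" without establishing that such an $m$ exists, and even uniqueness of the index with $\eta'_i=\eta'_m$ is not the right criterion — controllability is about indices $m$ with $\eta'_m\in\FI_{t_{w'}}$ (the last nonzero index of the \emph{new} type), which need not be $t_w$. These claims are deferred in the paper to Proposition~\ref{P: induction scheme}, where the needed inductive structure is in place; they should not be asserted as consequences of the corollary. The proof of the corollary itself is just your first paragraph.
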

Corollary \ref{C: Flipping uncontrollable tuples} follows from Proposition \ref{P:flipping} by taking $A=\{j\in[\ell]: \eta_j\in \FI_{t_w}\}$.

We emphasise that Corollary \ref{C: Flipping uncontrollable tuples} by itself does not guarantee that the tuple $\brac{T_{\eta'_j}^{\rho'_j(n)}}_{j\in[\ell]}$ has a lower type than the tuple $\brac{T_{\eta_j}^{\rho_j(n)}}_{j\in[\ell]}$. However, this will be the case when we apply it to all the tuples $\brac{T_{\eta_j}^{\rho_j(n)}}_{j\in[\ell]}$ that appear in our inductive procedure. The crucial ingredient in achieving this type reduction will be the property (iv) in Proposition \ref{P: induction scheme} enjoyed by all the tuples showing up in our arguments.

\begin{example}\label{Ex: uncontrollable}
Consider the tuple
\begin{align}\label{E: uncontrollable 2}
    \brac{T_1^{n^2}, T_2^{n^2}, T_3^{n^2}, T_4^{n^2}, T_1^{n^2+n}, T_1^{n^2+n}, T_5^{n^2+2n}, T_5^{n^2+2n}}
\end{align}
from Example \ref{Ex: controllable}. It is a descendant of the tuple
\begin{align}\label{E: uncontrollable 2 ancestor}
    \brac{T_1^{n^2}, T_2^{n^2}, T_3^{n^2}, T_4^{n^2}, T_5^{n^2+n}, T_6^{n^2+n}, T_7^{n^2+2n}, T_8^{n^2+2n}},
\end{align}
obtained by four applications of Proposition \ref{P: type reduction}, in which we substitute $T_5$ for $T_8$ at the index 8, $T_5$ for $T_7$ at the index 7, $T_1$ for $T_5$ at the index 5 and $T_1$ for $T_6$ at the index 6. Corollary \ref{C: Flipping uncontrollable tuples} gives that if $f_1, \ldots, f_8\in L^\infty(\mu)$ are functions such that $f_5, f_6, f_7, f_8$ are invariant under $T_5 T_1\inv, T_6T_1\inv, T_7T_5\inv, T_8T_5\inv$ respectively, then we have
\begin{align*}
    \lim_{N\to\infty}\norm{\E_{n\in[N]}T_1^{n^2}f_1\cdot T_2^{n^2}f_2\cdot T_3^{n^2}f_3\cdot T_4^{n^2}f_4\cdot T_1^{n^2+n}f_5\cdot T_1^{n^2+n}f_6\cdot T_5^{n^2+2n}f_7\cdot T_5^{n^2+2n}f_8}_{L^2(\mu)}\\
    \leq \lim_{N\to\infty}\norm{\E_{n\in[N]}T_1^{n^2}f_1\cdot T_2^{n^2}f_2\cdot T_3^{n^2}f_3\cdot T_4^{n^2}f_4\cdot T_1^{n^2+n}f_5\cdot T_1^{n^2+n}f_6\cdot T_7^{n^2+2n}f_7\cdot T_8^{n^2+2n}f_8}_{L^2(\mu)}
\end{align*}
(in fact, a closer look guarantees that we get an equality and not just for the $L^2(\mu)$ limits, but for each finite average). We moreover obtain the equality of seminorms $\nnorm{f_7}_{s, T_7} = \nnorm{f_5}_{s, T_5},  \nnorm{f_8}_{s, T_8} = \nnorm{f_5}_{s, T_5},$
%$$\nnorm{f_5}_{s, T_5} = \nnorm{f_5}_{s, T_1}, \quad \nnorm{f_6}_{s, T_6} = \nnorm{f_6}_{s, T_1}, \quad \nnorm{f_7}_{s, T_7} = \nnorm{f_5}_{s, T_5}, \quad \nnorm{f_8}_{s, T_8} = \nnorm{f_5}_{s, T_5},$$
and the new tuple
\begin{align}\label{E: uncontrollable flipped}
    \brac{T_1^{n^2}, T_2^{n^2}, T_3^{n^2}, T_4^{n^2}, T_1^{n^2+n}, T_1^{n^2+n}, T_7^{n^2+2n}, T_8^{n^2+2n}}
\end{align}
is a descendant of  \eqref{E: uncontrollable 2 ancestor}. Importantly, the new tuple \eqref{E: uncontrollable flipped} has type $(6, 0, 2)$, which is lower than the type $(6, 2, 0)$ of \eqref{E: uncontrollable 2}. Applying Corollary \ref{C: Flipping uncontrollable tuples} to \eqref{E: uncontrollable 2}, we have thus successfully replaced it by a tuple of lower type. Lastly, the new tuple \eqref{E: uncontrollable flipped} is controllable as the indices $7, 8$ satisfy the controllability condition.
%, and so we can obtain the seminorm control for \eqref{E: uncontrollable flipped} by a seminorm smoothing argument.
\end{example}

\subsection{Recapitulation}\label{SS:recapitulation}
We conclude this section with Proposition \ref{P: induction scheme}, which forms an inductive framework for the proof of Theorem \ref{T:Host Kra characteristic} in the next section. Combining the content of Proposition \ref{P: type reduction} and Corollary \ref{C: Flipping uncontrollable tuples}, it shows that - starting with a tuple $\brac{T_j^{p_j(n)}}_{j\in[\ell]}$ satisfying the good ergodicity property - we reach a tuple of basic type in a finite number of steps. For tuples of basic type, seminorm control will follow from arguments made in Proposition \ref{control of basic types}, and then we will use this fact and induction to go back and get seminorm control for the original tuple $\brac{T_j^{p_j(n)}}_{j\in[\ell]}$.

\begin{proposition}[Inductive framework]\label{P: induction scheme}
Let $\ell\in\N$, $p_1, \ldots, p_\ell\in\Z[n]$ be polynomials, and $(X, \CX, \mu, T_1, \ldots, T_\ell)$ be a system. Suppose that the tuple $\brac{T_j^{p_j(n)}}_{j\in[\ell]}$ has the good ergodicity property. Then there exists $r\in\N$ and a sequence of tuples
\begin{align*}%\label{tuple sequence}
    \brac{T_{\eta_{0j}}^{\rho_{0j}(n)}}_{j\in[\ell]} \to \brac{T_{\eta_{1j}}^{\rho_{1j}(n)}}_{j\in[\ell]} \to \brac{T_{\eta_{2j}}^{\rho_{2j}(n)}}_{j\in[\ell]}\to \ldots \to \brac{T_{\eta_{rj}}^{\rho_{rj}(n)}}_{j\in[\ell]}
\end{align*}
with types $w_0, \ldots, w_r$ such that $\brac{T_{\eta_{0j}}^{\rho_{0j}(n)}}_{j\in[\ell]} := \brac{T_j^{p_j(n)}}_{j\in[\ell]}$ and for $k\in\{0, \ldots, r-1\}$, the following properties hold.
\begin{enumerate}
    \item If the tuple $\brac{T_{\eta_{kj}}^{\rho_{kj}(n)}}_{j\in[\ell]}$ is controllable, then the tuple $\brac{T_{\eta_{(k+1)j}}^{\rho_{(k+1)j}(n)}}_{j\in[\ell]}$ is chosen using Proposition \ref{P: type reduction}.
    \item If the tuple $\brac{T_{\eta_{kj}}^{\rho_{kj}(n)}}_{j\in[\ell]}$ is uncontrollable, then the tuple $\brac{T_{\eta_{(k+1)j}}^{\rho_{(k+1)j}(n)}}_{j\in[\ell]}$ is chosen using Corollary \ref{C: Flipping uncontrollable tuples}.
    \item The tuple $\brac{T_{\eta_{(k+1)j}}^{\rho_{(k+1)j}(n)}}_{j\in[\ell]}$ is a descendant of $\brac{T_j^{p_j(n)}}_{j\in[\ell]}$. In particular, it has the good ergodicity property.
    \item For the indexing tuple $\eta_{k+1}=(\eta_{(k+1)1}, \ldots, \eta_{(k+1)\ell})$, if $ j\in\FI_t$,  then either $\eta_{(k+1)j} = j$ or $\eta_{(k+1)j}\in\FI_{t'}$ for some $t'<t$.
    \item We have $w_{k+1}<w_k$, i.e. the tuple $\brac{T_{\eta_{(k+1)j}}^{\rho_{(k+1)j}(n)}}_{j\in[\ell]}$ has a lower type than the tuple $\brac{T_{\eta_{kj}}^{\rho_{kj}(n)}}_{j\in[\ell]}$.
    \item The tuple $\brac{T_{\eta_{rj}}^{\rho_{rj}(n)}}_{j\in[\ell]}$ has basic type and the restriction $\eta_r|_{\FI_1}$ is the identity sequence.
\end{enumerate}
\end{proposition}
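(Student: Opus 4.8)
The plan is to construct the chain one step at a time — letting Proposition~\ref{P: type reduction} act on controllable tuples and Corollary~\ref{C: Flipping uncontrollable tuples} on uncontrollable ones — and then verify (iii)--(vi) by induction on $k$, with termination provided by the well-foundedness of the order on types (Lemma~\ref{L: type order}). Concretely, set $\brac{T_{\eta_{0j}}^{\rho_{0j}(n)}}_{j\in[\ell]}:=\brac{T_j^{p_j(n)}}_{j\in[\ell]}$, so $\eta_0=(1,\ldots,\ell)$ and $\rho_{0j}=p_j$; this tuple is trivially its own descendant and $\eta_0$ trivially obeys the conclusion of (iv). Given $\brac{T_{\eta_{kj}}^{\rho_{kj}(n)}}_{j\in[\ell]}$ of type $w_k$: if $w_k$ is basic, stop and set $r:=k$; if the tuple is controllable, let $\brac{T_{\eta_{(k+1)j}}^{\rho_{(k+1)j}(n)}}_{j\in[\ell]}$ be any tuple furnished by Proposition~\ref{P: type reduction}; if it is uncontrollable, let it be any tuple furnished by Corollary~\ref{C: Flipping uncontrollable tuples}. (The conclusion only concerns tuples, so the functions in these statements are immaterial and one may take all of them equal to $1$, which has the $\gamma$-invariance property for any $\gamma$.) This gives (i) and (ii) by construction, and a straightforward induction gives (iii): the base case is trivial, the controllable step is Corollary~\ref{C: ping reduction transitive}, and the uncontrollable step is part~(ii) of Corollary~\ref{C: Flipping uncontrollable tuples}; since $\brac{T_j^{p_j(n)}}_{j\in[\ell]}$ has the good ergodicity property, Proposition~\ref{P: properties of descendants}(ii) propagates it to every tuple in the chain.

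Next I would establish (iv) by induction on $k$ (writing $\FI_{c(j)}$ for the cell containing $j$). It is vacuous for $\eta_0$. In the uncontrollable step $\eta_{(k+1)j}$ is either $j$ (for the flipped indices) or $\eta_{kj}$ (for the rest), so (iv) is immediate from the inductive hypothesis. In the controllable step $\eta_{k+1}=\tau_{mi}\eta_k$ with $\eta_{km}\in\FI_{t_{w_k}}$ and $\eta_{ki}\in\FI_{t'_{w_k}}$ for some $t'_{w_k}<t_{w_k}$; only the coordinate $m$ changes, and the inductive hypothesis at $j=m$ forces $c(m)\ge t_{w_k}$ (the index $\eta_{km}\in\FI_{t_{w_k}}$ cannot lie in a cell strictly below $\FI_{c(m)}$ unless $\eta_{km}=m$, in which case $c(m)=t_{w_k}$), whence $\eta_{(k+1)m}=\eta_{ki}\in\FI_{t'_{w_k}}$ with $t'_{w_k}<t_{w_k}\le c(m)$, as needed. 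As for (v), the controllable step is free: Proposition~\ref{P: type reduction}(i) gives $w_{k+1}=\sigma_{t_{w_k}t'_{w_k}}w_k$, and since $t'_{w_k}\in\supp(w_k)$ and $t'_{w_k}<t_{w_k}$ this is $<w_k$.

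\textbf{The heart of the matter} is that the uncontrollable step also strictly lowers the type, and here property (iv) is precisely the lever. Flipping only alters the contributions of those indices $j$ with $\eta_{kj}\in\FI_{t_{w_k}}$, and by (iv) such $j$ all lie in cells $\FI_{c(j)}$ with $c(j)\ge t_{w_k}$; consequently $w_{k+1,t}=w_{k,t}$ for every $t<t_{w_k}$, while $w_{k+1,t_{w_k}}=|\{j\in\FL\cap\FI_{t_{w_k}}:\eta_{kj}=j\}|$ and $w_{k,t_{w_k}}$ equals this same quantity plus $|\{j\in\FL:c(j)>t_{w_k},\ \eta_{kj}\in\FI_{t_{w_k}}\}|$ (again using (iv) to exclude $c(j)<t_{w_k}$). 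So it suffices to exhibit one index $j^*\in\FL$ with $c(j^*)>t_{w_k}$ and $\eta_{kj^*}\in\FI_{t_{w_k}}$, for then $w_{k+1,t_{w_k}}<w_{k,t_{w_k}}$, hence $w_{k+1}<w_k$ with distinguishing coordinate $t_{w_k}$. To find it, pick any $j_0\in\FL$ with $\eta_{kj_0}\in\FI_{t_{w_k}}$ (one exists as $w_{k,t_{w_k}}>0$); since the tuple is uncontrollable, the failure of the controllability condition at $m=j_0$ produces $i_0\ne j_0$ with $\eta_{ki_0}=\eta_{kj_0}=:v\in\FI_{t_{w_k}}$ and $\rho_{ki_0}=\rho_{kj_0}$, so $\deg\rho_{ki_0}=d$ and $i_0\in\FL$. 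By (iv) each of $j_0,i_0$ either coincides with $v$ (hence lies in $\FI_{t_{w_k}}$) or lies in a cell strictly above $\FI_{t_{w_k}}$; as $j_0\ne i_0$ they cannot both equal $v$, so one of them is the desired $j^*$.

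Finally, by (v) the types $w_0,w_1,\ldots$ form a strictly descending chain in the finite set $A=\{w\in\N_0^{K_2}:w_1+\cdots+w_{K_2}=K_3\}$, on which $<$ is a strict partial order by Lemma~\ref{L: type order}; the construction therefore halts after finitely many steps, necessarily at a tuple whose type is basic (a non-basic tuple is always controllable or uncontrollable, so the procedure never gets stuck before then), which is the first half of (vi), and $r\le|A|-1$ is bounded purely in terms of $K_2,K_3\le\ell$, crudely by $(\ell+1)^\ell$. For the second half of (vi), property (iv) applied to $\eta_r$ at any $j\in\FI_1$ forces $\eta_{rj}=j$, since there is no cell strictly below $\FI_1$; hence $\eta_r|_{\FI_1}$ is the identity. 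I expect the only genuine obstacle to be the type reduction for the uncontrollable step just described: unlike the controllable step, where $\sigma$ is built to decrease the type, this requires combining uncontrollability with the inherited property (iv) to locate a maximal-degree index that flipping sends back into a higher cell. The remaining points — stability of descendancy (already packaged in Corollaries~\ref{C: ping reduction transitive} and~\ref{C: Flipping uncontrollable tuples}), stability of (iv), and well-foundedness of the type order — are routine bookkeeping.
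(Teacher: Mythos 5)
Your overall construction and the verification of properties (i)–(iv) and (vi) match the paper's proof. The one genuine gap is in property (v) for the uncontrollable step. You correctly compute
\[
w_{k+1,\,t_{w_k}}=\bigl|\{j\in\FL\cap\FI_{t_{w_k}}:\eta_{kj}=j\}\bigr|
\]
and $w_{k,t_{w_k}}$ is this quantity plus $|\{j\in\FL:c(j)>t_{w_k},\ \eta_{kj}\in\FI_{t_{w_k}}\}|$. You then produce one index $j^*$ with $c(j^*)>t_{w_k}$ and conclude $w_{k+1,t_{w_k}}<w_{k,t_{w_k}}$, ``hence $w_{k+1}<w_k$.'' But the paper's ordering on types is \emph{not} lexicographic: the definition requires that at the first coordinate of disagreement $\kappa+1$, either $w'_{\kappa+1}=0<w_{\kappa+1}$ or $w'_{\kappa+1}>w_{\kappa+1}>0$. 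A strict decrease to a \emph{positive} value (e.g.\ $(2,1,1)$ versus $(2,2,0)$) does not give $w'<w$. So to establish $w_{k+1}<w_k$ you must show $w_{k+1,t_{w_k}}=0$, i.e.\ that \emph{no} $j\in\FI_{t_{w_k}}$ has $\eta_{kj}=j$ at an uncontrollable step — which is exactly what the paper asserts ($w_{(k+1)t_k}=0$), though its justification is also quite terse.

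To close the gap: suppose some $j\in\FI_{t_{w_k}}$ (hence $j\in\FL$) had $\eta_{kj}=j$. Applying the failure of the controllability condition at $m=j$ gives $i\neq j$ with $\eta_{ki}=j$ and $\rho_{ki}=\rho_{kj}$; by (iv), since $\eta_{ki}=j\neq i$, we get $c(i)>t_{w_k}$. Now invoke the descendancy formula $\rho_{k j'}(n)=\frac{a_{\eta_{kj'}}}{a_{j'}}\bigl(p_{j'}(\lambda n + r)-p_{j'}(r)\bigr)$ with the \emph{same} $\lambda,r$ for all $j'$. Writing out $\rho_{ki}=\rho_{kj}$ with $\eta_{kj}=j$, $\eta_{ki}=j$, one finds $\frac{a_j}{a_i}\bigl(p_i(x)-p_i(r)\bigr)=p_j(x)-p_j(r)$ as a polynomial identity in $x$, and evaluating at $x=0$ (using that the $p_{j'}$ have zero constant term) gives $\frac{a_j}{a_i}p_i=p_j$. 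Thus $p_i$ and $p_j$ are linearly dependent and lie in the same cell, contradicting $c(i)>t_{w_k}=c(j)$. Hence the set $\{j\in\FI_{t_{w_k}}:\eta_{kj}=j\}$ is empty, $w_{k+1,t_{w_k}}=0<w_{k,t_{w_k}}$, and $w_{k+1}<w_k$ follows. (Your own argument producing $j_0,i_0$ already contains the raw materials for this; you just need to run the descendancy computation to rule out $j_0=v$ \emph{and} $i_0=v$ rather than merely ``not both.'')
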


For the rest of the paper, we call a tuple $\brac{T_{\eta_{j}}^{\rho_{j}(n)}}_{j\in[\ell]}$ a \textit{proper descendant} of the tuple $\brac{T_j^{p_j(n)}}_{j\in[\ell]}$ if it appears in one of the sequences of tuples constructed from $\brac{T_j^{p_j(n)}}_{j\in[\ell]}$ using Proposition \ref{P: induction scheme}.
%An important consequence of Proposition \ref{P: properties of descendants} is that each proper descendant of a tuple with the good ergodicity property itself has the good ergodicity property.

\begin{proof}
Let $k\in\{0, \ldots, r-1\}$. Suppose that the tuples $\brac{T_{\eta_{0j}}^{\rho_{0j}(n)}}_{j\in[\ell]}, \ldots, \brac{T_{\eta_{kj}}^{\rho_{kj}(n)}}_{j\in[\ell]}$ are already constructed and satisfy the properties listed in the statement of the proposition. If the tuple $\brac{T_{\eta_{kj}}^{\rho_{kj}(n)}}_{j\in[\ell]}$ has basic type, we halt. Otherwise, we choose the tuple $\brac{T_{\eta_{(k+1)j}}^{\rho_{(k+1)j}(n)}}_{j\in[\ell]}$ using Proposition \ref{P: type reduction} if $\brac{T_{\eta_{kj}}^{\rho_{kj}(n)}}_{j\in[\ell]}$ is controllable\footnote{We remark that by the property (iv) applied to the tuple $\brac{T_{\eta_{kj}}^{\rho_{kj}(n)}}_{j\in[\ell]}$, we have $\eta_{kj} = j$ whenever $j\in\FI_1$, hence the type $w_k=(w_{k1}, \ldots, w_{k\ell})$ of $\brac{T_{\eta_{kj}}^{\rho_{kj}(n)}}_{j\in[\ell]}$ satisfies $w_{k1}$>0. By the assumption that $w_k$ is not basic, we also have $w_{kt} >0$ for some $t>1$, therefore we have at least two nonzero indices in $w_k$ and we can act as in Proposition \ref{P: type reduction}.} and using Corollary \ref{C: Flipping uncontrollable tuples} otherwise, so that the properties (i) and (ii) are satisfied. We note from Corollaries \ref{C: ping reduction transitive} and \ref{C: Flipping uncontrollable tuples} that the new tuple is a descendant of $\brac{T_j^{p_j(n)}}_{j\in[\ell]}$, and by Proposition \ref{P: properties of descendants} it has the good ergodicity property, hence the property (iii) holds as well. The property (iv) holds by induction and the way the tuple $\brac{T_{\eta_{(k+1)j}}^{\rho_{(k+1)j}(n)}}_{j\in[\ell]}$ is constructed using Proposition \ref{P: type reduction} or Corollary \ref{C: Flipping uncontrollable tuples}.

For the property (v), we first note that if $\brac{T_{\eta_{kj}}^{\rho_{kj}(n)}}_{j\in[\ell]}$ is controllable, then the property (v) holds for $\brac{T_{\eta_{(k+1)j}}^{\rho_{(k+1)j}(n)}}_{j\in[\ell]}$ by Proposition \ref{P: type reduction}. If $\brac{T_{\eta_{kj}}^{\rho_{kj}(n)}}_{j\in[\ell]}$ is uncontrollable, then we get from Corollary \ref{C: Flipping uncontrollable tuples} that $w_{(k+1)t_{k}} = 0<w_{kt_k}$, where $t_{k}:=t_{w_{k}}$ is the last nonzero index of $w_k$, and we deduce from property (iv) that $w_{(k+1)t} = w_{kt}$ for $1\leq t < t_k$. This point is important, so we explain it in words. What happens is that when we apply Corollary \ref{C: Flipping uncontrollable tuples}, the index $w_{(k+1)t_k}$ goes down to 0 (since all the transformations with indices from $\FI_{t_k}$ get flipped), but the new transformations appearing in their place have indices from $\FI_{t_k+1}, \ldots, \FI_{K_2}$, as given by the property (iv). Hence, $w_{k+1}<w_k$ in this case as well.

It follows from the property (v) and the fact that there are at most $(K_3+1)^{K_2}$ possible types for tuples in $\N_0^{K_2}$ with sum of coordinates  $K_3$, that the sequence eventually terminates. And since, by our construction, it can only terminate on the basic type
	$(K_3,0,\ldots, 0)$,
%% Moreover, the set $A:=\{w\in\N_0^{K_2}:\ |w|=K_3\}$ ordered by < has the (unique) smallest %%element $(K_2, 0, \ldots, 0)$. Hence,
there exists $r<(K_3+1)^{K_2}\leq (\ell+1)^\ell$ such that the tuple $\brac{T_{\eta_{rj}}^{\rho_{rj}(n)}}_{j\in[\ell]}$ has basic type $(K_3,0,\ldots, 0)$. The second part of property (vi) for this tuple follows from the property (iv) by taking $t=1$.
\end{proof}

\begin{example}[Iterative reduction to tuples of lower type]\label{Ex: induction scheme ex}
For the tuple \eqref{E: uncontrollable 2 ancestor} from Example \ref{Ex: uncontrollable}, Proposition \ref{P: induction scheme} would give, among other options, the following sequence of tuples:
\begin{align*}
        &\brac{T_1^{n^2}, T_2^{n^2}, T_3^{n^2}, T_4^{n^2}, T_5^{n^2+n}, T_6^{n^2+n}, T_7^{n^2+2n}, T_8^{n^2+2n}}\\
        \rightarrow & \brac{T_1^{n^2}, T_2^{n^2}, T_3^{n^2}, T_4^{n^2}, T_5^{n^2+n}, T_6^{n^2+n}, T_7^{n^2+2n}, T_5^{n^2+2n}}\\
        \rightarrow & \brac{T_1^{n^2}, T_2^{n^2}, T_3^{n^2}, T_4^{n^2}, T_5^{n^2+n}, T_6^{n^2+n}, T_5^{n^2+2n}, T_5^{n^2+2n}}\\
        \rightarrow & \brac{T_1^{n^2}, T_2^{n^2}, T_3^{n^2}, T_4^{n^2}, T_1^{n^2+n}, T_6^{n^2+n}, T_5^{n^2+2n}, T_5^{n^2+2n}}\\
        \rightarrow & \brac{T_1^{n^2}, T_2^{n^2}, T_3^{n^2}, T_4^{n^2}, T_1^{n^2+n}, T_1^{n^2+n}, T_5^{n^2+2n}, T_5^{n^2+2n}}\\
        \rightarrow & \brac{T_1^{n^2}, T_2^{n^2}, T_3^{n^2}, T_4^{n^2}, T_1^{n^2+n}, T_1^{n^2+n}, T_7^{n^2+2n}, T_8^{n^2+2n}}\\
        \rightarrow & \brac{T_1^{n^2}, T_2^{n^2}, T_3^{n^2}, T_4^{n^2}, T_1^{n^2+n}, T_1^{n^2+n}, T_7^{n^2+2n}, T_2^{n^2+2n}}\\
        \rightarrow & \brac{T_1^{n^2}, T_2^{n^2}, T_3^{n^2}, T_4^{n^2}, T_1^{n^2+n}, T_1^{n^2+n}, T_2^{n^2+2n}, T_2^{n^2+2n}}.
\end{align*}
Their types, starting from the top tuple, are
\begin{align*}
    (4, 2, 2) > (4, 3, 1) > (4, 4, 0) > (5, 3, 0) > (6, 2, 0) > (6, 0, 2) > (7, 0, 1) > (8, 0, 0),
\end{align*}
which shows that at each step, the new tuple has a lower type than its predecessor. The sixth tuple, counting from the top, has been obtained via Corollary \ref{C: Flipping uncontrollable tuples} (since the fifth tuple is uncontrollable, as explained in Examples \ref{Ex: controllable} and \ref{Ex: uncontrollable}) while all the other tuples have been obtained using Proposition \ref{P: type reduction}. Each subsequent tuple is a descendant of the original tuple, and so each of them has the good ergodicity property thanks to Proposition \ref{P: properties of descendants}. Lastly, the final tuple has basic type, and moreover for its indexing tuple $\eta$, the restriction $\eta|_{\FI_1}=\eta|_{[4]}$ is an identity because no substitution has taken place at the first four indices.
\end{example}

\section{The proof of Theorem \ref{T:Host Kra characteristic}}\label{S:smoothing}
\subsection{Induction scheme}

In all statements in this section, we work in the setting of Proposition~\ref{P: induction scheme}, i.e. all the lower type tuples at which we arrive from some original average are those constructed in Proposition \ref{P: induction scheme} and have the properties listed there.

Theorem \ref{T:Host Kra characteristic} follows by induction from the result below upon setting  $d=L=0$ and letting $\eta$ be the identity tuple.
%by induction on type.
%For fixed $\ell\in\N$, Theorem \ref{T:Host Kra characteristic} is proved by inducting the following equivalent result on $m$.
\begin{proposition}[Seminorm control restated]\label{P:Host Kra characteristic 2}
    Let $d, \ell, L\in\N$ and $q_1, \ldots, q_L\in\Z[n]$ be polynomials. Suppose that the polynomials $p_1, \ldots, p_\ell\in \Z[n]$ have the good ergodicity property for the system $(X,\CX, \mu,T_1, \ldots, T_\ell)$.  Let  $\brac{T_{\eta_{j}}^{\rho_{j}(n)}}_{j\in[\ell]}$ be a proper descendant of the tuple $\brac{T_j^{p_j(n)}}_{j\in[\ell]}$.
    Then there exists $s\in\N$ independent of  $(X,\CX, \mu,T_1, \ldots, T_\ell)$ such that for all functions $f_1, \ldots, f_\ell\in L^\infty(\mu)$ with the good invariance property along $\eta$ with respect to $p_1, \ldots, p_\ell$ and all sequences of functions $\CD_{1}, \ldots, \CD_L\in\FD_d$, we have
	\begin{align}\label{general average vanishes 2}
	    \lim_{N\to\infty}\norm{\E_{n\in [N]} \,\prod_{j\in[\ell]}T_{{\eta_j}}^{\rho_j(n)}f_j \cdot \prod_{j\in[L]}\CD_{j}(q_j(n))}_{L^2(\mu)} = 0
	\end{align}
	whenever $\nnorm{f_j}_{s, T_{\eta_j}} = 0$ for some $j\in[\ell]$.
\end{proposition}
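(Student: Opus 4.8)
The plan is to prove Proposition~\ref{P:Host Kra characteristic 2} by a double induction: an outer induction on the length $\ell$ of the tuple, and an inner induction on the type $w$ of the proper descendant $\brac{T_{\eta_j}^{\rho_j(n)}}_{j\in[\ell]}$ with respect to the strict partial order $<$ on types (Lemma~\ref{L: type order}). The base of the outer induction, $\ell=1$, is immediate: the average is $\E_{n\in[N]}T_{\eta_1}^{\rho_1(n)}f_1\cdot\prod_j\CD_j(q_j(n))$, which by Proposition~\ref{strong PET bound} is controlled by a box seminorm of $f_1$ in directions that are all multiples of $\be_{\eta_1}$, and Lemma~\ref{L:seminorm of power} converts this to control by $\nnorm{f_1}_{s,T_{\eta_1}}$. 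For the base of the inner induction, i.e. tuples of \emph{basic type} $(K_3,0,\ldots,0)$, I would invoke the (as yet unstated in this excerpt) Proposition~\ref{control of basic types}; heuristically, for a basic-type tuple all transformations with maximal-degree iterates have indices in $\FI_1$, so the good ergodicity property together with the good invariance property and Proposition~\ref{P:flipping} lets one reduce to a situation where Proposition~\ref{strong PET bound} plus Lemma~\ref{bounding seminorms} directly yield seminorm control, exactly as illustrated for the tuples \eqref{1222} and \eqref{122211} in Examples~\ref{Ex: n^2, n^2, n^2+n, n^2+n} and~\ref{Ex: 6 tuple}.

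For the inductive step, fix a proper descendant of non-basic type $w$ and split into two cases according to controllability. If the tuple is \textbf{controllable}, pick $m$ satisfying the controllability condition with $\eta_m\in\FI_{t_w}$; Proposition~\ref{strong PET bound} gives control of \eqref{general average vanishes 2} by $\nnorm{f_m}_{\b_1,\ldots,\b_{s+1}}$ for nonzero vectors $\b_i$ of the form \eqref{E:coefficientvectors}. I then run the ping–pong smoothing argument (the general-length analogue of Proposition~\ref{Smoothing of n^2, n^2, n^2+n}) to replace $\b_{s+1}$ by copies of $\be_{\eta_m}$, iterating to reach $\nnorm{f_m}_{s',T_{\eta_m}}$. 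In the \emph{ping} step one applies Proposition~\ref{P:dual} (with $\CA=\CX$), then Proposition~\ref{dual-difference interchange}(i) to produce functions $u_{\uh,\uh'}$ invariant under $T_{\eta_m}^{a_m}T_{\eta_i}^{-a_i}$; passing to an arithmetic-progression subsequence (as in Example~\ref{Ex: n^2, n^2, 2n^2+n}) one uses this invariance to flip $T_{\eta_m}$ to $T_{\eta_i}$, arriving at the average associated with the tuple $\brac{T_{\eta'_j}^{\rho'_j(n)}}_{j\in[\ell]}$ of Proposition~\ref{P: type reduction}, which has type $\sigma_{t_w t'_w}w<w$, retains the good ergodicity property (Proposition~\ref{P: properties of descendants}), and whose functions retain the good invariance property (Proposition~\ref{propagation of invariance}). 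The inner induction hypothesis applies to this strictly smaller type, giving, after Proposition~\ref{P:Us} and Lemma~\ref{difference sequences} plus the inductive seminorm formula \eqref{inductive formula}, auxiliary control $\nnorm{f_i}_{\b_1,\ldots,\b_s,\be_{\eta_i}^{\times s_1}}>0$. In the \emph{pong} step one applies Proposition~\ref{P:dual} to $f_i$ and Proposition~\ref{dual-difference interchange}(ii) to convert the $f_i$-term into dual functions, obtaining an average of length $\ell-1$ (with extra dual terms) to which the \emph{outer} induction hypothesis applies; unwinding via Proposition~\ref{P:Us}, Lemma~\ref{difference sequences}, Hölder and \eqref{inductive formula} yields $\nnorm{f_m}_{\b_1,\ldots,\b_s,\be_{\eta_m}^{\times s'}}>0$, closing the smoothing iteration. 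Finally the dual-decomposition/pigeonhole argument of Corollary~\ref{Seminorm control of n^2, n^2, n^2+n other terms} upgrades control by a seminorm of $f_m$ to control by a seminorm of $f_j$ for the remaining $j$.

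If the tuple is \textbf{uncontrollable}, the functions $f_j$ with $\eta_j\in\FI_{t_w}$ carry, by the good invariance property, invariance under a power of $T_{\eta_j}^{a_{\eta_j}}T_j^{-a_j}$; Corollary~\ref{C: Flipping uncontrollable tuples} then flips all such transformations to the corresponding $T_j$, producing a descendant tuple that by property~(iv)--(v) of Proposition~\ref{P: induction scheme} has strictly lower type and is controllable, with the good ergodicity and good invariance properties preserved and $\nnorm{f'_j}_{s,T_{\eta'_j}}\leq C\nnorm{f_j}_{s,T_{\eta_j}}$. Applying the controllable case (the inner induction hypothesis for the strictly smaller type) to the flipped average, and using \eqref{E:flipping} together with the seminorm comparison in part~(iii) of Corollary~\ref{C: Flipping uncontrollable tuples} and Lemma~\ref{composing invariance} to translate seminorms of $f'_j$ back to seminorms of $f_j$, gives the desired conclusion for the original tuple. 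A uniform bound on $s$ depending only on $d,D,\ell,L$ follows because Proposition~\ref{P: induction scheme} bounds the number of reduction steps by $(\ell+1)^\ell$ and each step increases the degree $d$ of the dual sequences and the number $L$ of dual terms in a controlled, $\ell$- and $D$-dependent way, so only finitely many invocations of Propositions~\ref{strong PET bound} and~\ref{P:Us} occur.

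The main obstacle I expect is the \emph{pong} step of the controllable case together with the bookkeeping of invariance and descendancy: one must verify that after replacing $f_i$ by dual functions and flipping via Proposition~\ref{P:flipping} the resulting length-$(\ell-1)$ tuple is genuinely a proper descendant of a length-$(\ell-1)$ ancestor to which the outer induction hypothesis applies, that the functions there still carry the good invariance property relative to the \emph{original} polynomials (the subtlety dramatized in Example~\ref{Ex: 6 tuple}, where composed rather than most-recent invariances are needed), and that the dual-function degree $d$ and count $L$ stay bounded in terms of $\ell,D$ alone so that the final $s$ is uniform. Ensuring the two kinds of type-reduction interact correctly with the chosen ordering $<$ — so that the induction is genuinely well-founded and the flipped uncontrollable tuples really do drop in type — is where the combinatorial heart of the argument lies, and is precisely what Propositions~\ref{P: type reduction}, \ref{P: properties of descendants}, \ref{propagation of invariance} and~\ref{P: induction scheme} have been set up to handle.
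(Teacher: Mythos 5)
Your proposal reproduces the paper's argument essentially step for step: the double induction (outer on length, inner on type), the base cases via Proposition~\ref{strong PET bound} and Proposition~\ref{control of basic types}, the ping--pong smoothing for controllable tuples (formalised in the paper as Propositions~\ref{P:smoothing} and~\ref{P:iterated smoothing} and fed through the soft quantitative Proposition~\ref{P:Host Kra characteristic quantitative}), the dual-decomposition-plus-flipping upgrade to seminorm control of the remaining $f_j$, and the reduction of uncontrollable tuples via Corollary~\ref{C: Flipping uncontrollable tuples} together with property~(v) of Proposition~\ref{P: induction scheme}. The obstacles you flag in your final paragraph are indeed the ones the paper's auxiliary Propositions~\ref{P: type reduction}, \ref{P: properties of descendants}, \ref{propagation of invariance}, \ref{P:flipping} and~\ref{P: induction scheme} are designed to resolve, and your identification of them is accurate.
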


A word of explanation is necessary for the statement of Proposition \ref{P:Host Kra characteristic 2}. We need both polynomials $p_1, \ldots, p_\ell$ and $\rho_1, \ldots, \rho_\ell$. The reason is that for our induction to work, we need the functions $f_1, \ldots, f_\ell$ to have the good invariance property with respect to the original family $p_1, \ldots, p_\ell$ rather than the descendant family $\rho_1, \ldots, \rho_\ell$. This is necessary for a number of reasons: to prove seminorm control for averages of basic types in the proof of Proposition \ref{control of basic types}; to apply Proposition \ref{P:flipping} in the \textit{pong} step of Proposition \ref{P:smoothing}; to derive Proposition \ref{P:Host Kra characteristic 2} from Proposition \ref{P:iterated smoothing} for controllable tuples; and to invoke Corollary \ref{C: Flipping uncontrollable tuples} for uncontrollable tuples in Proposition \ref{P:Host Kra characteristic 2}.
%and , we can use the invariance properties to invoke the $\ell-1$ case of Proposition \ref{P:Host Kra characteristic 2}.
The necessity of keeping track of the invariance property with regards to the original polynomial family has also been explained in Step 2 of Example \ref{Ex: 6 tuple}.

We first prove Proposition \ref{P:Host Kra characteristic 2} for averages \eqref{general average} of basic type. This will serve as the base for induction for Propositions \ref{P:Host Kra characteristic 2}, \ref{P:iterated smoothing}, and \ref{P:smoothing}.

\begin{proposition}[Seminorm control of basic types]\label{control of basic types}
    Let $d, \ell, L\in\N$ and $q_1, \ldots, q_L\in\Z[n]$ be polynomials. Suppose that the polynomials $p_1, \ldots, p_\ell\in \Z[n]$ have the good ergodicity property for the system $(X,\CX, \mu,T_1, \ldots, T_\ell)$. Let  $\brac{T_{\eta_{j}}^{\rho_{j}(n)}}_{j\in[\ell]}$ be a proper descendant of the tuple $\brac{T_j^{p_j(n)}}_{j\in[\ell]}$, and suppose that the type $w$ of $\brac{T_{\eta_{j}}^{\rho_{j}(n)}}_{j\in[\ell]}$ is basic.
    Then there exists $s\in\N$ independent of  $(X,\CX, \mu,T_1, \ldots, T_\ell)$ such that for all 1-bounded functions $f_1, \ldots, f_\ell\in L^\infty(\mu)$ with the good invariance property along $\eta$ with respect to $p_1, \ldots, p_\ell$ and all sequences of functions $\CD_{1}, \ldots, \CD_L\in\FD_d$, we have \eqref{general average vanishes 2}	whenever $\nnorm{f_j}_{s, T_{\eta_j}} = 0$ for some $j\in[\ell]$.
\end{proposition}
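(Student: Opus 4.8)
The idea is to reduce a basic-type tuple to averages involving pairwise independent polynomials, which are handled by the results already imported from \cite{FrKu22}. Since $\brac{T_{\eta_j}^{\rho_j(n)}}_{j\in[\ell]}$ has basic type $w=(K_3,0,\ldots,0)$, all the maximum-degree transformations $T_{\eta_j}$ with $j\in\FL$ have indices $\eta_j$ in the single class $\FI_1$; by the second part of property (vi) in Proposition~\ref{P: induction scheme}, the restriction $\eta|_{\FI_1}$ is the identity. The polynomials $\rho_j$ for $j$ with $\eta_j\in\FI_1$ are all pairwise dependent (they are descendants of pairwise dependent $p_j$'s, so Proposition~\ref{P: properties of descendants}(i) keeps the partition intact), and by the good ergodicity property together with the descendancy (Proposition~\ref{P: properties of descendants}(ii)), whenever $\eta_{j_1},\eta_{j_2}\in\FI_1$ are distinct we have $\CI(T_{\eta_{j_1}}^{\beta_{j_1}}T_{\eta_{j_2}}^{-\beta_{j_2}})=\CI(T_{\eta_{j_1}})\cap\CI(T_{\eta_{j_2}})$ for the appropriate coprime $\beta$'s.

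\textbf{First main step: control by a seminorm of a maximum-degree term.} Fix $j^\ast\in\FL$ (so $\eta_{j^\ast}\in\FI_1$). I would first use Proposition~\ref{strong PET bound}: it provides $s_0\in\N$ and nonzero vectors $\b_1,\ldots,\b_{s_0}$ from the set $\{a_{\ell d_{\ell j}}\be_{\eta_\ell}-a_{j d_{\ell j}}\be_{\eta_j}\}$ such that \eqref{general average vanishes 2} holds whenever $\nnorm{f_{j^\ast}}_{\b_1,\ldots,\b_{s_0}}=0$. Each $\b_i$ is either a multiple of $\be_{\eta_{j^\ast}}$, or of the form $c\be_{\eta_{j^\ast}}-c'\be_{\eta_i}$ where either $\eta_i\in\FI_1$ (so the good ergodicity property applies) or $\eta_i$ corresponds to a lower-degree polynomial. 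In the first subcase, the ergodicity identity $\CI(T_{\eta_{j^\ast}}^{\beta}T_{\eta_i}^{-\beta'})=\CI(T_{\eta_{j^\ast}})\cap\CI(T_{\eta_i})\subseteq\CI(T_{\eta_{j^\ast}})$ combined with Lemma~\ref{bounding seminorms} and Lemma~\ref{L:seminorm of power} lets me replace that vector by a multiple of $\be_{\eta_{j^\ast}}$ at the cost of increasing the length of the seminorm. The genuinely new issue is the second subcase — vectors involving a transformation $T_{\eta_i}$ attached to a \emph{lower-degree} polynomial, for which we have no ergodicity hypothesis. Here I would run the ping-pong smoothing argument exactly as in Proposition~\ref{Smoothing of n^2, n^2, n^2+n}: apply Proposition~\ref{P:dual} and Proposition~\ref{dual-difference interchange}(i) to introduce an averaged term invariant under the relevant composition $T_{\eta_{j^\ast}}^c(T_{\eta_i}^{c'})^{-1}$, use that invariance to flip $T_{\eta_i}$ into $T_{\eta_{j^\ast}}$ inside the average, invoke the pairwise-independence results from \cite{FrKu22} (applicable because the resulting max-degree polynomials are then pairwise independent from the lower-degree ones) together with Proposition~\ref{P:Us} and Lemma~\ref{difference sequences}, and conclude via the inductive formula \eqref{inductive formula} and Hölder. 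This is where the extra $\gamma$-invariance hypothesis on the $f_j$'s is essential — exactly as illustrated in Examples~\ref{Ex: n^2, n^2, n^2+n, n^2+n} and \ref{Ex: 6 tuple} — because to flip $T_{\eta_i}$ we may need the invariance of $f_i$ with respect to the \emph{original} polynomials, not the descendant ones.

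\textbf{Second main step: control by seminorms of the remaining terms.} Having obtained control by $\nnorm{f_{j^\ast}}_{s_1,T_{\eta_{j^\ast}}}$ for some $s_1$, I would next obtain control by seminorms of the other functions. For another index $j'\in\FL$ with $\eta_{j'}\in\FI_1$ and $\eta_{j'}\neq\eta_{j^\ast}$, I would apply Proposition~\ref{dual decomposition} to $f_{j^\ast}$, reducing to an average where $T_{\eta_{j^\ast}}^{\rho_{j^\ast}(n)}f_{j^\ast}$ is replaced by a dual sequence (which by Proposition~\ref{strong PET bound} still yields box-seminorm control of $f_{j'}$), then use the good ergodicity identity for the pair $(\eta_{j^\ast},\eta_{j'})$ with Lemma~\ref{bounding seminorms} and Lemma~\ref{L:seminorm of power} to pass to a $T_{\eta_{j'}}$-seminorm — this is precisely the maneuver in Corollary~\ref{Seminorm control of n^2, n^2, n^2+n other terms}. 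For $j$ with $\rho_j$ of strictly smaller degree, Proposition~\ref{strong PET bound} applied with $\ell$ replaced (after permuting so $\deg p_\ell$ is maximal) already gives control by $\nnorm{f_j}_{\b_1,\ldots,\b_{s}}$ where the vectors are again either multiples of $\be_{\eta_j}$ or differences that can be reduced using the ergodicity hypotheses; when such a $j$ lies in $\FI_1$ the good ergodicity property handles it, and if it lies in a singleton class the box seminorm already has the right form after applying Lemma~\ref{bounding seminorms} using $\CI(T_{\eta_j}^{\beta}T_j^{-\beta'})\subseteq\CI(T_{\eta_j})$ coming from the $\gamma$-invariance. Throughout, every uniformity-of-$s$ claim follows because the constants produced by Propositions~\ref{strong PET bound}, \ref{P:Us} and the \cite{FrKu22} inputs depend only on the degrees and the combinatorial data $d,D,\ell,L$, and the finitely many choices of $\eta$ contribute only a bounded factor.

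\textbf{Expected main obstacle.} The delicate point is the flipping step in the first main step: one must verify that the averaged function produced by Proposition~\ref{P:dual}/Proposition~\ref{dual-difference interchange}(i) actually carries the invariance needed to flip $T_{\eta_i}$ to $T_{\eta_{j^\ast}}$, and that after flipping the resulting family of max-degree polynomials is genuinely pairwise independent from the lower-degree ones so that the \cite{FrKu22} machinery applies — this requires carefully tracking leading coefficients through the descendancy (the $\lambda n+r$ substitutions of Proposition~\ref{P: type reduction}) and using Lemma~\ref{composing invariance} to transfer seminorm equalities between $T_{\eta_j}$ and $T_j$. The bookkeeping is exactly the content of Proposition~\ref{P:flipping}, which is invoked here; once it is in place, the argument is a direct adaptation of the length-3 proof in Section~\ref{S:n^2, n^2, n^2+n}.
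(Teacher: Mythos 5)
Your overall skeleton (first control by a maximum-degree term, then pass to the other terms) matches the paper, but two of your steps diverge from the paper's argument, and one of them is a genuine misunderstanding that would create a circularity problem.

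\textbf{The main gap in your first step.} You worry about a ``genuinely new issue'' consisting of vectors from Proposition~\ref{strong PET bound} of the form $c\be_{\eta_{j^\ast}} - c'\be_{\eta_i}$ with $\eta_i$ attached to a lower-degree polynomial, and you propose to handle them with a ping-pong smoothing as in Proposition~\ref{Smoothing of n^2, n^2, n^2+n}. In fact this subcase is vacuous. In Proposition~\ref{strong PET bound} the vector indexed by $j$ is $a_{\ell d_{\ell j}}\be_{\eta_\ell} - a_{j d_{\ell j}}\be_{\eta_j}$, where $d_{\ell j} = \deg(\rho_\ell\be_{\eta_\ell} - \rho_j\be_{\eta_j})$. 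If $\deg\rho_j < d$, then $d_{\ell j} = d$ and the coefficient $a_{j d}$ of $\rho_j$ of degree $d$ is $0$, so the vector is $a_{\ell d}\be_{\eta_\ell}$, a nonzero multiple of $\be_{\eta_\ell}$ — the trivial case, requiring no further argument. This is exactly why in the paper's proof the set \eqref{allowed coefficients} is restricted to $i\in\FL\cup\{0\}\setminus\{\ell\}$: lower-degree indices are indistinguishable from $i=0$. Consequently the only nontrivial vectors come from $i\in\FL$, and since the type is basic, $\eta_i\in\FI_1$ for all such $i$; these are dispatched directly with the good ergodicity property along $\eta$ (Proposition~\ref{P: properties of descendants}(ii)) together with Lemmas~\ref{bounding seminorms} and \ref{L:seminorm of power}. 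This is important for the structure of the argument: Proposition~\ref{control of basic types} is the base of the type induction that Proposition~\ref{P:smoothing} runs on, so the base case must not itself require the smoothing machinery — the cleanliness of basic types is what makes them a valid base case in the first place.

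\textbf{The second step is underspecified and would not directly close.} After obtaining control by $\nnorm{f_\ell}_{s, T_\ell}$, the paper uses Proposition~\ref{dual decomposition} and pigeonholing to replace $T_\ell^{\rho_\ell(n)}f_\ell$ by a dual sequence, and then applies Proposition~\ref{P:flipping} to \emph{all} remaining indices simultaneously, flipping the residual tuple into one with the identity indexing tuple and length $\ell-1$. Control by the other $T_{\eta_j}$-seminorms then comes from the inductive hypothesis (Proposition~\ref{P:Host Kra characteristic 2} for length $\ell-1$), translated back through the bound $\nnorm{f'_j}_{s,T_j}\leq C\nnorm{f_j}_{s,T_{\eta_j}}$ from Proposition~\ref{P:flipping}. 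Your proposal instead tries to re-apply Proposition~\ref{strong PET bound} directly to the residual tuple for each remaining $j'$, but nothing guarantees that $j'$ satisfies the controllability condition in the residual tuple (for example if two remaining indices share the same $\eta_{j'}$ and the same polynomial, PET gives a zero vector). The flipping step is precisely what sidesteps this: it restores the identity indexing so the length-$\ell-1$ induction applies without a controllability hypothesis. Relatedly, the role of the $\gamma$-invariance hypothesis in this proof is exactly to make Proposition~\ref{P:flipping} applicable in this reduction to length $\ell-1$, not (as you suggest) to enable a ping-pong flip inside the first step.

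\textbf{What you get right.} You correctly read $\eta|_{\FI_1}$ being the identity from Proposition~\ref{P: induction scheme}(vi), you correctly note that descendancy preserves both the partition $\FI_t$ (Proposition~\ref{P: properties of descendants}(i)) and the good ergodicity property (Proposition~\ref{P: properties of descendants}(ii)), and you correctly identify the role of Lemmas~\ref{bounding seminorms} and \ref{L:seminorm of power} and the uniformity of $s$ over the boundedly many indexing tuples. It is the two points above — the superfluous smoothing subcase and the missing flipping-to-length-$\ell-1$ reduction — that separate your proposal from a working proof.
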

A special case of Proposition \ref{control of basic types} has been sketched in Step 2 of Example \ref{Ex: 6 tuple}, and we invite the reader to compare the abstract proof presented below with the argument in Step 2 of Example \ref{Ex: 6 tuple}.
\begin{proof}
We induct on the length $\ell$ of the average. If $\ell = 1$, the statement holds by Proposition \ref{strong PET bound}. We therefore assume that $\ell > 1$, and we will prove Proposition \ref{control of basic types} for fixed $\ell>1$ by invoking Proposition \ref{P:Host Kra characteristic 2} for an average of length $\ell-1$. More specifically, we will show first that there exists an index $m$ satisfying the controllability condition, and that
%if $m$ satisfies the controllability assumption (such an index exists by Proposition \ref{P: induction scheme}(vi)), then
we can control the average by a $T_{\eta_m}$-seminorm of $f_m$. Then we will replace $f_m$ by a dual function using Proposition \ref{dual decomposition} and the pigeonhole principle, flip the other transformations $T_{\eta_j}$ into $T_j$ using Proposition \ref{P:flipping}, and invoke Proposition \ref{P:Host Kra characteristic 2} for averages of length $\ell-1$ to obtain seminorm control in terms of other functions.

Take any $m\in\FI_1$; we assume for simplicity that $m = \ell$. Proposition \ref{P: induction scheme}(vi) implies that $\eta_\ell=\ell$, and moreover that $\ell$ satisfies the controllability condition. This fact and Proposition \ref{strong PET bound} imply that %and Proposition \ref{P: type reduction}(vi),
the identity \eqref{general average vanishes 2} holds whenever $\nnorm{f_\ell}_{\b_1, \ldots, \b_s} = 0$ for some
\begin{align}\label{allowed coefficients}
    \b_1, \ldots, \b_s \in\{b_\ell\be_\ell - b_i\be_{\eta_i}:\ i\in\FL\cup\{0\}\setminus\{\ell\}\},
\end{align}
where $b_\ell, b_i$ are the coefficients of $\rho_\ell, \rho_i$ of degree $d_{\ell i}:= \deg(\rho_\ell \be_\ell - \rho_i\be_{\eta_i})$.
Since the tuple $\brac{T_{\eta_j}^{\rho_j(n)}}_{j\in[\ell]}$ has a basic type, it follows that the indices $\eta_i$ in \eqref{allowed coefficients} come from the set $\FI_1$. We have to show that each transformation $T^{\b_1}, \ldots, T^{\b_s}$ is either a nonzero iterate of $T_\ell$ or its invariant functions are invariant under a bounded power of $T_\ell$. For $k\in[s]$, let $\b_k:= b_\ell\be_\ell - b_i\be_{\eta_i}$. If $i = 0$, then $T^{\b_k}$ is indeed a nonzero iterate of $T_\ell$. If $i\in\FI_1$, then $\eta_i = i\neq \ell$ by the property from Proposition \ref{P: induction scheme}(vi) that $\eta|_{\FI_1}$ is the identity tuple.
Then $T^{\b_k} = T_\ell^{b_\ell}T_i^{-b_i} = \brac{T_\ell^{\beta_\ell}T_i^{-\beta_i}}^{\gcd(b_\ell, b_i)}$ for coprime integers $\beta_\ell, \beta_i$, and the good ergodicity property of $\rho_1, \ldots, \rho_\ell$ along $\eta$ (Propositions \ref{P: induction scheme}(iii)) implies that $\CI(T_\ell^{\beta_\ell}T_i^{-\beta_i})\subseteq \CI(T_\ell)$.
%Hence $T^{\b_k} = T_\ell^{b_\ell}T_i^{-b_i}$ is a power of an ergodic transformation by the good ergodicity property of $\rho_1, \ldots, \rho_\ell$ along $\eta$, which is a consequence of Proposition \ref{P: type reduction}(v).
If $i\notin\FI_1\cup\{0\}$, then we split into the cases $\eta_i \neq \ell$ and $\eta_i = \ell$. In the former case, we once again use the good ergodicity property of $\rho_1, \ldots, \rho_\ell$ along $\eta$ to conclude that $T^{\b_k} = \brac{T_\ell^{\beta_\ell}T_i^{-\beta_i}}^{\gcd(b_\ell, b_i)}$ and $\CI(T_\ell^{\beta_\ell}T_i^{-\beta_i})\subseteq \CI(T_\ell)$.
%there are two more cases. Either $\eta_i \neq \ell$, in which case  $T^{\b_k}$ is a power of an ergodic transformation by the good ergodicity property of $\rho_1, \ldots, \rho_\ell$ along $\eta$, or $\eta_i = \ell$.
In the latter case, the pairwise independence of $\rho_i$ and $\rho_\ell$ implies that $\b_k$ is nonzero, and hence $T^{\b_k}$ is a nonzero iterate of $T_\ell$. Rearranging $\b_1, \ldots, \b_s$, it follows that
\begin{align*}
    \nnorm{f_\ell}_{\b_1, \ldots, \b_s} =  \nnorm{f_\ell}_{c_1 \be_\ell, \ldots, c_{s'} \be_\ell, \b_{s'+1}, \ldots, \b_{s}}
\end{align*}
for some $0 \leq s'\leq s$, nonzero integers $c_1, \ldots, c_{s'}$ and transformations $T^{\b_{s'+1}}, \ldots, T^{\b_s}$ with the property that for every $j\in\{s'+1, \ldots, s\}$, there exists $\b'_j\in\Z^\ell$ and nonzero $c_j\in\Z$ such that $\b_j = c_j \b'_j$ and $\CI(T^{\b'_j})\subseteq\CI(T^{\be_\ell})$. By Lemmas \ref{L:seminorm of power} and \ref{bounding seminorms}, we have
\begin{align*}
    \nnorm{f_\ell}_{\b_1, \ldots, \b_s}= \nnorm{f_\ell}_{c_1 \be_\ell, \ldots, c_{s'} \be_\ell, c_{s'+1}\b'_{s'+1}, \ldots, c_s \b'_s}\ll_{c_1, \ldots, c_s}\nnorm{f_\ell}_{s, T_\ell},
\end{align*}
implying that \eqref{general average vanishes 2} holds whenever $\nnorm{f_\ell}_{s, T_\ell} = 0$.

We now use the seminorm control at $\ell$ with Proposition \ref{dual decomposition} and the pigeonhole principle to deduce that if \eqref{general average vanishes 2} fails, then it also fails when $T_{\eta_\ell}^{\rho_\ell(n)} f_\ell$ is replaced by a sequence $\CD_{L+1}(\rho_\ell(n))$ with $\CD_{L+1}\in\FD_s$. Letting $q_{L+1} := \rho_\ell$ for simplicity, it is enough to show that
	\begin{align*}%\label{general average 2 vanishes}
	    \lim_{N\to\infty}\norm{\E_{n\in [N]} \,\prod_{j\in[\ell-1]}T_{{\eta_j}}^{\rho_j(n)}f_j \cdot \prod_{j\in[L+1]}\CD_{j}(q_j(n))}_{L^2(\mu)} >0
	\end{align*}
implies $\nnorm{f_j}_{s', T_{\eta_j}} > 0$ for some $s'\in\N$ and every $j\in[\ell-1]$. By Proposition \ref{P:flipping}, there exist 1-bounded functions $f_1', \ldots, f_{\ell-1}'$ and polynomials $\rho_1', \ldots, \rho_{\ell-1}'$ with the good ergodicity property for the system such that
\begin{align*}
    \lim_{N\to\infty}\norm{\E_{n\in [N]} \,\prod_{j\in[\ell-1]}T_{{j}}^{\rho'_j(n)}f'_j \cdot \prod_{j\in[L+1]}\CD_{j}(q_j(n))}_{L^2(\mu)} >0
\end{align*}
and $\nnorm{f'_j}_{s', T_j} \ll \nnorm{f_j}_{s', T_{\eta_j}}$ for $j\in[\ell-1]$ and $s'\geq 1$. Invoking inductively the case $\ell-1$ of Proposition \ref{P:Host Kra characteristic 2}, we deduce that $\nnorm{f'_j}_{s', T_j}>0$ for some $s'\in\N$, and hence $\nnorm{f_j}_{s', T_{\eta_j}}>0$. This proves the claim.
\end{proof}

We also need the following quantitative version of Proposition \ref{P:Host Kra characteristic 2}.
\begin{proposition}[Soft quantitative estimates]\label{P:Host Kra characteristic quantitative}
    Let $d,\gamma, \ell, L\in\N$ and  $q_1, \ldots, q_L\in\Z[n]$ be  polynomials. Suppose that the polynomials  $p_1, \ldots, p_\ell\in \Z[n]$ have the good ergodicity property for the system $(X,\CX, \mu,T_1, \ldots, T_\ell)$. Let  $\brac{T_{\eta_{j}}^{\rho_{j}(n)}}_{j\in[\ell]}$ be a proper descendant of the tuple $\brac{T_j^{p_j(n)}}_{j\in[\ell]}$.
    Then there exists $s\in\N$ independent of  $(X,\CX, \mu,T_1, \ldots, T_\ell)$ with the following property: for any $\veps>0$ there exists $\delta>0$, such that for all $1$-bounded functions $f_1, \ldots, f_\ell\in L^\infty(\mu)$ that are $\gamma$-invariant along $\eta$ with respect to $p_1, \ldots, p_\ell$ and all sequences of  functions $\CD_{1}, \ldots, \CD_L\in\FD_d$, we have
	\begin{align*}%\label{general average vanishes quant}
	    \lim_{N\to\infty}\norm{\E_{n\in [N]} \,\prod_{j\in[\ell]}T_{{\eta_j}}^{\rho_j(n)}f_j \cdot \prod_{j\in[L]}\CD_{j}(q_j(n))}_{L^2(\mu)} < \veps
	\end{align*}
	whenever $\nnorm{f_j}_{s, T_{\eta_j}} < \delta$ for some $j\in[\ell]$.
\end{proposition}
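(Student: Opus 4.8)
The plan is to derive Proposition~\ref{P:Host Kra characteristic quantitative} from the qualitative statement Proposition~\ref{P:Host Kra characteristic 2} by a soft functional-analytic argument, essentially an application of Proposition~\ref{P:Us} (soft quantitative control) adapted to the present setting. First I would fix $d, D, \gamma, \ell, L$, the indexing tuple $\eta$, the polynomials $p_1,\ldots,p_\ell$ with the good ergodicity property, the proper descendant tuple $\brac{T_{\eta_j}^{\rho_j(n)}}_{j\in[\ell]}$, and the target index $m\in[\ell]$; I would take $s\in\N$ to be the value furnished by Proposition~\ref{P:Host Kra characteristic 2}, which depends only on $d, D, \ell, L$. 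The key point is that qualitative seminorm control plus the mean convergence theorem of Walsh~\cite{Wal12} upgrades automatically to a soft quantitative estimate: this is precisely the content of Proposition~\ref{P:Us}, whose hypothesis is exactly that for functions measurable with respect to fixed sub-$\sigma$-algebras, vanishing of $\nnorm{f_m}_{s,T_{\eta_m}}$ forces the average to converge to $0$ in $L^2(\mu)$.

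The main issue to address is that Proposition~\ref{P:Us} as stated handles averages of the form $\E_{n\in[N]}\,T_1^{p_1(n)}f_1\cdots T_\ell^{p_\ell(n)}f_\ell$ with the $f_j$ ranging over $L^\infty(\CY_j,\mu)$ for sub-$\sigma$-algebras $\CY_j$, whereas here we have (i) the extra dual-sequence factors $\prod_{j\in[L]}\CD_j(q_j(n))$, and (ii) the $\gamma$-invariance constraint along $\eta$ on the functions $f_1,\ldots,f_\ell$. For (i), I would absorb the dual factors into the framework by writing $\CD_j(q_j(n)) = T_{\pi_j}^{q_j(n)}g_j$ for suitable $\pi_j\in[\ell]$ and fixed $1$-bounded $g_j\in L^\infty(\mu)$ (as noted right after \eqref{general average}), so that the whole average becomes a polynomial multiple ergodic average of the shape covered by Walsh's theorem and by Proposition~\ref{P:Us}; one must only check that Proposition~\ref{P:Us}'s proof, which is purely functional-analytic, goes through with these extra frozen terms present—this is routine since those terms carry no seminorm constraint. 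Alternatively, and more cleanly, one can quote the variant of Proposition~\ref{P:Us} with the $\CD_j$'s built in, exactly in the form used throughout Section~\ref{S:smoothing}. For (ii), I would observe that the set of $1$-bounded functions that are $\gamma$-invariant along $\eta$ with respect to $p_1,\ldots,p_\ell$ is precisely the set of $1$-bounded functions in $L^\infty(\CY_j,\mu)$, where $\CY_j := \CI\bigl((T_{\eta_j}^{a_{\eta_j}}T_j^{-a_j})^\gamma\bigr)$ is a fixed sub-$\sigma$-algebra (with $a_1,\ldots,a_\ell$ the leading coefficients of $p_1,\ldots,p_\ell$). So the invariance constraint is exactly the $\CY_j$-measurability hypothesis of Proposition~\ref{P:Us}.

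Concretely, the argument runs as follows. Having fixed $s$ as above, I apply Proposition~\ref{P:Host Kra characteristic 2} to conclude that for all $f_1,\ldots,f_\ell\in L^\infty(\mu)$ with the good invariance property along $\eta$ with respect to $p_1,\ldots,p_\ell$ and all $\CD_1,\ldots,\CD_L\in\FD_d$, the average \eqref{general average vanishes 2} converges to $0$ in $L^2(\mu)$ whenever $\nnorm{f_m}_{s,T_{\eta_m}}=0$. A fortiori the same holds for the more restrictive class of $\gamma$-invariant functions, i.e.\ for $f_j\in L^\infty(\CY_j,\mu)$ with $\CY_j$ as above. This is exactly the hypothesis of Proposition~\ref{P:Us} (with $\CD_1,\ldots,\CD_L$ treated as fixed frozen data inside the averaged expression, or by invoking the dual-sequence variant of that proposition), so Proposition~\ref{P:Us} yields, for every $\veps>0$, a $\delta>0$ depending on $\veps$ (and on the system, the polynomials, $\gamma$, and the $\CD_j$'s, but not on the $f_j$'s) such that $\nnorm{f_m}_{s,T_{\eta_m}}<\delta$ forces $\lim_{N\to\infty}\norm{\E_{n\in[N]}\prod_{j\in[\ell]}T_{\eta_j}^{\rho_j(n)}f_j\cdot\prod_{j\in[L]}\CD_j(q_j(n))}_{L^2(\mu)}<\veps$. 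The only subtlety worth spelling out is the uniformity of $\delta$ over all choices of $\CD_1,\ldots,\CD_L\in\FD_d$: since $\FD_d$ consists of $1$-bounded sequences and the number of ``relevant shapes'' is controlled by $d$ and $L$, one runs Proposition~\ref{P:Us}'s compactness argument once with the $\CD_j$'s incorporated as an additional averaging parameter, or one simply notes that $\FD_d$ embeds into the unit ball of a suitable $L^\infty$ product, on which the same weak-compactness argument applies; either way $\delta$ can be taken independent of the particular dual sequences, as in the analogous passages of \cite{FrKu22}. This completes the proof modulo the bookkeeping, and the main (minor) obstacle is purely the verification that the frozen dual factors and the $\CY_j$-measurability reformulation fit the literal hypotheses of Proposition~\ref{P:Us}.
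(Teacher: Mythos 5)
Your overall strategy is the paper's: take $s$ from Proposition~\ref{P:Host Kra characteristic 2}, encode the $\gamma$-invariance constraint as $\CY_j$-measurability with $\CY_j := \CI\bigl((T_{\eta_j}^{a_{\eta_j}}T_j^{-a_j})^\gamma\bigr)$, write each $\CD_j(q_j(n))=T_{\pi_j}^{q_j(n)}g_j$, and feed everything into Proposition~\ref{P:Us}. That much is right. But there is a genuine gap in the uniformity step, and it is precisely the place where the paper has to do real work.

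You correctly observe that applying Proposition~\ref{P:Us} with the $\CD_j$'s ``frozen'' gives a $\delta$ depending on the $\CD_j$'s, and then you propose to recover uniformity either by ``incorporating the $\CD_j$'s as an additional averaging parameter'' or by ``embedding $\FD_d$ into the unit ball of a suitable $L^\infty$ product, on which the same weak-compactness argument applies.'' Neither of these works as stated, because $\FD_d$ is not weakly closed: its weak closure inside the $L^\infty$ unit ball is (contained in) the unit ball of $L^\infty(\CZ_d(T_{\pi_j}))$, which is strictly larger. To run the compactness argument of Proposition~\ref{P:Us} on this larger set, you must first know that the \emph{qualitative} vanishing statement holds for \emph{all} $1$-bounded $\CZ_d(T_{\pi_j})$-measurable functions $g_j$ replacing the dual functions, and Proposition~\ref{P:Host Kra characteristic 2} only asserts it for $\CD_j\in\FD_d$. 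Bridging this gap requires an extra argument: fix $\pi\in[\ell]^L$ and suppose the limit is nonzero for some $\CZ_d(T_{\pi_j})$-measurable $g_j$'s; decompose each $g_j$ via Proposition~\ref{dual decomposition} into a finite linear combination of dual functions plus small and uniform errors, pigeonhole, and then invoke Proposition~\ref{P:Host Kra characteristic 2}. Only after this upgrade can you apply Proposition~\ref{P:Us} to the length-$(\ell+L)$ average with $\CY_{\ell+j}:=\CZ_d(T_{\pi_j})$, obtaining a $\delta_\pi$; the final $\delta$ is $\min_{\pi\in[\ell]^L}\delta_\pi$, which is legitimate since the index set $[\ell]^L$ is finite. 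Your proposal names the uniformity issue but skips the dual-decomposition bridge that actually resolves it, so as written the compactness argument would run against a set on which the hypothesis of Proposition~\ref{P:Us} is not verified.

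One smaller note: your ``Alternatively, and more cleanly, one can quote the variant of Proposition~\ref{P:Us} with the $\CD_j$'s built in'' is circular, since Proposition~\ref{P:Host Kra characteristic quantitative} \emph{is} that variant and no such lemma is stated elsewhere in the paper.
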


\begin{proof}
	We prove Proposition \ref{P:Host Kra characteristic quantitative} for fixed $d, \gamma, \ell, L, \eta$ by assuming Proposition \ref{P:Host Kra characteristic 2} for the same parameters.
	
	Let $s\in\N$ be as in the statement of Proposition \ref{P:Host Kra characteristic 2} and $b_1, \ldots, b_\ell$ be the leading coefficients of $\rho_1, \ldots, \rho_\ell$. Fix $\pi\in[\ell]^L$. We first prove the following qualitative claim: for all  $f_1, \ldots, f_\ell, g_1, \ldots, g_L\in L^\infty(\mu)$, where $f_j$ is $\brac{T_{\eta_j}^{b_{\eta_j}}T_j^{-b_j}}^\gamma$-invariant  for each $j\in[\ell]$ and $g_j$ is $\CZ_d(T_{\pi_j})$-measurable for each $j\in[L]$, we have
	\begin{align}\label{general average vanishes 11}
		\lim_{N\to\infty}\norm{\E_{n\in [N]} \,\prod_{j\in[\ell]}T_{\eta_j}^{\rho_j(n)}f_j \cdot  \prod_{j\in[L]}T^{q_j(n)}_{\pi_j}g_j}_{L^2(\mu)} = 0
	\end{align}
	whenever $\nnorm{f_j}_{s, T_{\eta_j}} = 0$ for some $j\in[\ell]$.
	
	Fix $f_1, \ldots, f_\ell, g_1, \ldots, g_L$ and suppose that \eqref{general average vanishes 11} fails. Using Proposition \ref{dual decomposition} and the pigeonhole principle, we deduce that there exist dual functions $g_1', \ldots, g_L'$ of $T_{\pi_1}, \ldots, T_{\pi_L}$ of level $d$ such that
	\begin{align*}
		\lim_{N\to\infty}\norm{\E_{n\in [N]} \,\prod_{j\in[\ell]}T_{\eta_j}^{\rho_j(n)}f_j \cdot  \prod_{j\in[L]}T^{q_j(n)}_{\pi_j}g_j'}_{L^2(\mu)} > 0.
	\end{align*}
	Setting $\CD_j(n) := T_{\pi_j}^{n}g'_j$ and using Proposition \ref{P:Host Kra characteristic 2}, we deduce that $\nnorm{f_j}_{s, T_{\eta_j}}>0$ for all $j\in[\ell]$, and so the claim follows.
	
	We combine the claim above with Proposition \ref{P:Us} for
	\begin{align*}
	    \CY_j := \begin{cases} \CI\brac{\brac{T_{\eta_j}^{b_{\eta_j}}T_j^{-b_j}}^\gamma},\; &1\leq j \leq \ell\\
	    \CZ_d(T_{\pi_j}),\; &\ell+1\leq j \leq L,
	    \end{cases}
	\end{align*}
    deducing the following: for every $\varepsilon>0$ there exists $\delta_\pi>0$ such that for all 1-bounded functions $f_1, \ldots, f_\ell, g_1, \ldots, g_L\in L^\infty(\mu)$ with $f_j\in L^\infty\brac{\CI\brac{\brac{T_{\eta_j}^{b_{\eta_j}}T_j^{-b_j}}^\gamma}}$ for $j\in[\ell]$ and $g_j\in L^\infty(\CZ_d(T_{\pi_j}))$ for $j\in [L]$, we have
	\begin{align*}
		\lim_{N\to\infty}\norm{\E_{n\in [N]} \,\prod_{j\in[\ell]}T_{\eta_j}^{\rho_j(n)}f_j \cdot  \prod_{j\in[L]}T^{q_j(n)}_{\pi_j}g_j}_{L^2(\mu)} < \varepsilon
	\end{align*}
	whenever $\nnorm{f_j}_{s, T_{\eta_j}} <\delta_\pi$ for some $j\in[\ell]$.
	
	Corollary \ref{P:Host Kra characteristic quantitative} follows by taking
	$\delta:= \min(\delta_\pi\colon \, \pi\in[\ell]^L)$ and recalling that dual functions of $T_j$ of order $d$ are $\CZ_d(T_j)$-measurable, hence for every $j\in[L]$, the sequence $\CD_{j}(q_j(n))$ has the form $\CD_j(q_j(n)) = T_{\pi_j}^{q_j(n)}g_j$ for some $\pi_j\in[\ell]$ and a 1-bounded $\CZ_d(T_j)$-measurable  function $g_j\in L^\infty(\mu)$.
\end{proof}

For controllable tuples, Proposition \ref{P:Host Kra characteristic 2} will be deduced from the following result.
\begin{proposition}[Iterated box seminorm smoothing]\label{P:iterated smoothing}
    Let $d, \ell, L\in\N$ and $q_1, \ldots, q_L\in\Z[n]$ be polynomials. Suppose that the polynomials $p_1, \ldots, p_\ell\in \Z[n]$ have the good ergodicity property for the system $(X,\CX, \mu,T_1, \ldots, T_\ell)$. Let  $\brac{T_{\eta_{j}}^{\rho_{j}(n)}}_{j\in[\ell]}$ be a proper descendant of the tuple $\brac{T_j^{p_j(n)}}_{j\in[\ell]}$.
    %Let $w$ be the type of $\brac{T_{\eta_{j}}^{\rho_{j}(n)}}_{j\in[\ell]}$ with the last nonzero index $t$,
    Suppose that $\brac{T_{\eta_{j}}^{\rho_{j}(n)}}_{j\in[\ell]}$ of a non-basic type is controllable, and let $m$ be an index satisfying the controllability condition.
    %Let the indexing tuple $\eta\in[\ell]^\ell$, polynomials $\rho_1, \ldots, \rho_\ell\in\Z[n]$, and type $w\in\N_0^{K_2}$ of $\brac{T_{\eta_j}^{\rho_j(n)}}_{j\in[\ell]}$ be $k$-step descendants of $p_1, \ldots, p_\ell$. Let $t\in\N$ be an index with the property $w_t = \min\limits_{t'\in\supp(w)} w_{t'}$ and $m\in[\ell]$ be any index with $\eta_m\in \FI_t$.
    Then there exist $s\in\N$ independent of the system such that for all functions $f_1, \ldots, f_\ell\in L^\infty(\mu)$ with the good invariance property along $\eta$ with respect to $p_1, \ldots, p_\ell$ and all sequences of functions $\CD_{1}, \ldots, \CD_L\in\FD_d$, we obtain \eqref{general average vanishes 2} whenever $\nnorm{f_m}_{s, T_{\eta_m}} = 0$.
\end{proposition}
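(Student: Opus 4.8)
The plan is to prove Proposition \ref{P:iterated smoothing} by establishing a \emph{single} box-seminorm smoothing step and iterating it a bounded number of times, all within the simultaneous induction underlying Section \ref{S:smoothing}: the induction runs on the pair (length $\ell$, type $w$), ordered by shorter length first and, for equal length, by the partial order on types of Lemma \ref{L: type order}; at every stage we may invoke Proposition \ref{P:Host Kra characteristic 2} and its soft quantitative companion Proposition \ref{P:Host Kra characteristic quantitative} for proper descendants of strictly shorter length or strictly lower type, with Proposition \ref{control of basic types} as the base case.

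\textbf{Starting point.} Since $m$ satisfies the controllability condition, disposing of the dual terms $\CD_j(q_j(n))$ and applying Proposition \ref{strong PET bound} yields $s_0 \geq 2$, depending only on $d, D, \ell, L$, and \emph{nonzero} vectors $\b_1, \ldots, \b_{s_0}$ of the form in \eqref{E:coefficientvectors} — each $\b_k$ being $c_k\be_{\eta_m} - c_k'\be_{\eta_{i_k}}$ for some $i_k \in \FL \cup \{0\}$ with $i_k \ne m$ (and $\be_{\eta_0} := \mathbf{0}$) — such that \eqref{general average vanishes 2} holds whenever $\nnorm{f_m}_{\b_1,\ldots,\b_{s_0}} = 0$. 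If $\eta_{i_k} = \eta_m$ or $i_k = 0$, controllability forces $\b_k$ to be a nonzero multiple of $\be_{\eta_m}$; if $\eta_{i_k} \ne \eta_m$ lies in the same class $\FI_t$ as $\eta_m$, the good ergodicity property of the proper descendant (Proposition \ref{P: induction scheme}(iii)) together with Lemmas \ref{bounding seminorms} and \ref{L:seminorm of power} bound $\nnorm{f_m}_{\ldots,\b_k,\ldots}$ by a multiple of $\nnorm{f_m}_{\ldots, \be_{\eta_m},\ldots}$. Thus only vectors with $\eta_{i_k}$ in a class $\FI_{t'} \ne \FI_t$ require genuine work, and we reduce them one at a time.

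\textbf{One smoothing step (ping-pong).} We claim — this is the single-step box-seminorm smoothing of Proposition \ref{P:smoothing} — that control by $\nnorm{f_m}_{\b_1,\ldots,\b_s,\b_{s+1}}$ upgrades to control by $\nnorm{f_m}_{\b_1,\ldots,\b_s,\be_{\eta_m}^{\times s'}}$ for some $s'$ depending only on $d,D,\ell,L$. Assume \eqref{general average vanishes 2} fails. \emph{Ping:} by Lemma \ref{P:dual} replace $f_m$ by an averaged function $\tilde f_m$, then apply Proposition \ref{dual-difference interchange}(i) to extract functions $u_{\uh,\uh'}$ invariant under $T_{\eta_m}^{a_{\eta_m}}T_{\eta_{i}}^{-a_{\eta_{i}}}$ for a suitable $i$; splitting $\N$ into residue classes, this invariance lets us flip $T_{\eta_i}$ into $T_{\eta_m}$ on the $u$-term, and after the renormalisation of Proposition \ref{P: type reduction} we arrive at the average of a strictly-lower-type proper descendant (type $\sigma_{t_w t'_w}w < w$), which retains the good ergodicity property by Proposition \ref{P: properties of descendants} and whose functions retain the good invariance property by Proposition \ref{propagation of invariance}. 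The inductive hypothesis (Proposition \ref{P:Host Kra characteristic quantitative} at strictly lower type) applies, and combining it with Lemma \ref{difference sequences}, Hölder's inequality, and the inductive seminorm identity \eqref{inductive formula} gives $\nnorm{f_i}_{\b_1,\ldots,\b_s,\be_{\eta_i}^{\times s_1}} > 0$ for some $i \ne m$. \emph{Pong:} feeding this auxiliary control back, replace $f_i$ by $\tilde f_i$ via Lemma \ref{P:dual}, apply Proposition \ref{dual-difference interchange}(ii) to absorb the $T_{\eta_i}$-term into a product of dual sequences, and flip the remaining transformations via Proposition \ref{P:flipping}; this produces an average of length $\ell - 1$ — still a proper descendant with the good ergodicity and good invariance properties — to which the length-$(\ell-1)$ case of Proposition \ref{P:Host Kra characteristic quantitative} applies, yielding $\nnorm{f_m}_{\b_1,\ldots,\b_s,\be_{\eta_m}^{\times s'}} > 0$.

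\textbf{Iteration and the main obstacle.} Starting from $\nnorm{f_m}_{\b_1,\ldots,\b_{s_0}}$, apply the one smoothing step together with the two trivial reductions above to turn one non-$\be_{\eta_m}$ vector at a time into copies of $\be_{\eta_m}$; after at most $s_0$ rounds every vector is a multiple of $\be_{\eta_m}$, and Lemmas \ref{L:seminorm of power} and \ref{bounding seminorms} collapse the box seminorm into $\nnorm{f_m}_{s, T_{\eta_m}}$ for some $s$ bounded purely in terms of $d, D, \ell, L$ (the dependence on $\eta$ disappears since there are at most $\ell^\ell$ indexing tuples). I expect the main difficulty to be the bookkeeping in the ping step: one must verify that each intermediate average is genuinely a proper descendant of strictly lower type with the good ergodicity and good invariance properties intact — exactly the content of Propositions \ref{P: type reduction}, \ref{P: properties of descendants}, and \ref{propagation of invariance} — and that non-monic leading coefficients do not obstruct flipping, which forces the passage to arithmetic progressions inside Proposition \ref{P:flipping}. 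Finally, keeping the exponent $s'$ uniform over all systems at each round is precisely why we must invoke the soft quantitative Proposition \ref{P:Host Kra characteristic quantitative} rather than the merely qualitative Proposition \ref{P:Host Kra characteristic 2}.
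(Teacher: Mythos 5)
Your proposal is correct and follows the same route as the paper: Proposition \ref{P:iterated smoothing} is derived there as a consequence of Proposition \ref{strong PET bound} followed by an iterated application of Proposition \ref{P:smoothing}. Your description of the single smoothing step --- the trivial reductions for vectors already aligned with $\be_{\eta_m}$ or within the same class $\FI_t$, the ping via Lemma \ref{P:dual} and Proposition \ref{dual-difference interchange}(i) combined with the type reduction and invariance propagation of Propositions \ref{P: type reduction}, \ref{P: properties of descendants}, \ref{propagation of invariance}, the pong via Proposition \ref{dual-difference interchange}(ii) and flipping (Proposition \ref{P:flipping}), and the use of the soft quantitative Proposition \ref{P:Host Kra characteristic quantitative} at each stage --- matches the paper's proof of Proposition \ref{P:smoothing}.
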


Proposition \ref{P:iterated smoothing} is a consequence of Proposition \ref{strong PET bound}, followed by an iterated application of the smoothing result given below.
\begin{proposition}[Box seminorm smoothing]  \label{P:smoothing}
    Let $d, \ell, L\in\N$ and $q_1, \ldots, q_L\in\Z[n]$ be polynomials. Suppose that the polynomials $p_1, \ldots, p_\ell\in \Z[n]$ have the good ergodicity property for the system $(X,\CX, \mu,T_1, \ldots, T_\ell)$. Let  $\brac{T_{\eta_{j}}^{\rho_{j}(n)}}_{j\in[\ell]}$ be a proper descendant of the tuple $\brac{T_j^{p_j(n)}}_{j\in[\ell]}$.
    %Let $w$ be the type of $\brac{T_{\eta_{j}}^{\rho_{j}(n)}}_{j\in[\ell]}$ with the last nonzero index $t$,
    Suppose that $\brac{T_{\eta_{j}}^{\rho_{j}(n)}}_{j\in[\ell]}$ of a non-basic type is controllable, and let $m$ be an index satisfying the controllability condition.
    Then for all vectors  $\b_1, \ldots, \b_{s+1}$ satisfying \eqref{E:coefficientvectors} there exists $s'\in\N$, independent of the system, with the following property: for all  functions $f_1, \ldots, f_\ell\in L^\infty(\mu)$ with the good invariance property along $\eta$ with respect to $p_1, \ldots, p_\ell$ and all sequences of functions $\CD_{1}, \ldots, \CD_L\in\FD_d$, if $\nnorm{f_m}_{{\b_1},\ldots, {\b_{s+1}}}= 0$ implies \eqref{general average vanishes 2}, then \eqref{general average vanishes 2} also holds under the assumption that $\nnorm{f_m}_{{\b_1},\ldots, {\b_{s}}, \be_{\eta_m}^{\times s'}}= 0$.
\end{proposition}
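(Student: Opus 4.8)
The strategy follows the \emph{ping-pong} scheme illustrated in Proposition~\ref{Smoothing of n^2, n^2, n^2+n} and Examples~\ref{Ex: n^2, n^2, n^2+n, n^2+n} and \ref{Ex: 6 tuple}, but with careful bookkeeping of invariance and descendancy. Assume \eqref{general average vanishes 2} fails, so the limit of the average is positive even though $\nnorm{f_m}_{\b_1,\ldots,\b_s,\be_{\eta_m}^{\times s'}}=0$; we derive a contradiction by showing $\nnorm{f_m}_{\b_1,\ldots,\b_{s+1}}>0$, which forces the limit to be $0$ by hypothesis. If $\b_{s+1}=\be_{\eta_m}$ there is nothing to prove, and if $\CI(T^{\b_{s+1}})\subseteq\CI(T_{\eta_m})$ (which happens, by the good ergodicity property along $\eta$ from Proposition~\ref{P: induction scheme}(iii), whenever $\b_{s+1}$ arises from two indices in the same $\FI_t$), then Lemma~\ref{bounding seminorms} gives $\nnorm{f_m}_{\b_1,\ldots,\b_s,\be_{\eta_m}}\geq\nnorm{f_m}_{\b_1,\ldots,\b_{s+1}}$ and we are done. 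So we may assume $\b_{s+1}=a_{m d_{m i}}\be_{\eta_m}-a_{i d_{mi}}\be_{\eta_i}$ for some $i$ with $\eta_i\neq\eta_m$ and $\eta_i\notin\FI_{t_w}$, i.e.\ the ``cross'' case.

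\textbf{Ping step.} Apply Lemma~\ref{P:dual} with $\CA=\CX$ to replace $f_m$ by the averaged function $\tilde f_m$, so that the modified average still has positive limit. Then apply Proposition~\ref{dual-difference interchange}(i) with $\bc=\bzero$ to the weakly convergent family, obtaining $1$-bounded functions $u_{\uh,\uh'}$ invariant under $T^{\b_{s+1}}=T_{\eta_m}^{a_{md_{mi}}}T_{\eta_i}^{-a_{id_{mi}}}$, together with
\[
\liminf_{H\to\infty}\E_{\uh,\uh'\in[H]^s}\lim_{N\to\infty}\Bigl\|\E_{n\in[N]}\,\prod_{\substack{j\in[\ell]\\ j\neq m}}T_{\eta_j}^{\rho_j(n)}f_{j,\uh,\uh'}\cdot T_{\eta_m}^{\rho_m(n)}u_{\uh,\uh'}\cdot\prod_{j\in[L]}\CD_j(q_j(n))\Bigr\|_{L^2(\mu)}>0,
\]
where $f_{j,\uh,\uh'}:=\Delta_{\b_1,\ldots,\b_s;\uh-\uh'}f_j$. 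The invariance of $u_{\uh,\uh'}$ lets us flip $T_{\eta_m}^{\rho_m(n)}u_{\uh,\uh'}$ into $T_{\eta_i}^{\rho'_m(n)}u'_{\uh,\uh'}$: since the leading coefficient $a_{m d_{mi}}$ need not divide the lower coefficients of $\rho_m$, we first split $\N$ into residue classes mod $\lambda$ (with $\lambda$ as in Proposition~\ref{P: type reduction}) and pigeonhole to fix a class, exactly as in Example~\ref{Ex: n^2, n^2, 2n^2+n}. This produces a family of averages indexed by $(\uh,\uh')$ in a set $B$ of positive lower density whose tuples are the descendant $\brac{T_{\eta'_j}^{\rho'_j(n)}}_{j\in[\ell]}$ of Proposition~\ref{P: type reduction}, of strictly lower type $w'=\sigma_{t_w t'_w}w<w$. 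By Proposition~\ref{P: properties of descendants} this descendant has the good ergodicity property, and by Proposition~\ref{propagation of invariance} the functions $(f_{1,\uh,\uh'},\ldots,u'_{\uh,\uh'},\ldots)$ have the good invariance property along $\eta'$ with respect to $p_1,\ldots,p_\ell$. Invoking the induction hypothesis (Proposition~\ref{P:Host Kra characteristic 2}, hence Proposition~\ref{P:Host Kra characteristic quantitative}) for this lower-type tuple at index $i$, combined with Lemma~\ref{difference sequences}, the inductive formula \eqref{inductive formula}, and H\"older's inequality, we obtain $\nnorm{f_i}_{\b_1,\ldots,\b_s,\be_{\eta_i}^{\times s_1}}>0$ for some $s_1\in\N$ depending only on $d,D,\ell,L,s$, i.e.\ an auxiliary control by a box seminorm of $f_i$.

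\textbf{Pong step.} Now use the failure of \eqref{general average vanishes 2} again, but this time apply Lemma~\ref{P:dual} to the index $i$ (with $\CA=\CX$), replacing $f_i$ by $\tilde f_i$, and apply Proposition~\ref{dual-difference interchange}(ii) with $s'=s_1$ to the vectors $\b_1,\ldots,\b_s,\be_{\eta_i}$. This turns the term at index $i$ into a product of $2^s$ dual functions of $T_{\eta_i}$ of level $s_1$, which by definition belong to $\FD_{s_1}$, and absorbs the $\Delta_{\b_1,\ldots,\b_s;\uh-\uh'}$ operators onto the remaining $f_j$'s. Again on a set of positive lower density we get averages of the form \eqref{general average} of \emph{length $\ell$} but with the term $T_{\eta_i}^{\rho_i(n)}f_i$ replaced by dual sequences, i.e.\ an average whose non-dual part is indexed by $[\ell]\setminus\{i\}$; these have the good invariance property, and the tuple is still a proper descendant of $\brac{T_j^{p_j(n)}}_{j\in[\ell]}$. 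By Proposition~\ref{strong PET bound} these averages are controlled by $\nnorm{\cdot}_{\b'_1,\ldots,\b'_{s''}}$ for nonzero vectors from the coefficient set at index $m$, and then — since the good ergodicity property along $\eta$ together with Lemmas~\ref{L:seminorm of power} and \ref{bounding seminorms} upgrades each such box seminorm to a power of $\nnorm{\cdot}_{s,T_{\eta_m}}$, exactly as in the basic-type argument of Proposition~\ref{control of basic types} — they are controlled by $\nnorm{f_m}_{s'', T_{\eta_m}}$ for some $s''$ depending only on $d,D,\ell,L,s$. Feeding this into Proposition~\ref{P:Host Kra characteristic quantitative} (applied to averages of length $\ell$ with extra dual terms, using the $\CZ_d$-measurability trick of the footnote to Proposition~\ref{Smoothing of n^2, n^2, n^2+n}) yields, via Lemma~\ref{difference sequences}, \eqref{inductive formula} and H\"older, the inequality $\nnorm{f_m}_{\b_1,\ldots,\b_s,\be_{\eta_m}^{\times s'}}>0$ for $s':=s''$, contradicting our assumption.

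\textbf{Main obstacle.} The delicate point is \emph{not} any single inequality but the coherence of the bookkeeping: one must verify at each of the two applications of Lemma~\ref{P:dual}/Proposition~\ref{dual-difference interchange} that (a) the replacement of $f_m$, resp.\ $f_i$, by an averaged weak limit preserves the good invariance property (this uses Proposition~\ref{propagation of invariance} and the fact that $u_{\uh,\uh'}$ inherits the invariance of $f_m$), (b) the new tuple is again a proper descendant of $\brac{T_j^{p_j(n)}}_{j\in[\ell]}$ in the precise sense of Proposition~\ref{P: induction scheme} so that Proposition~\ref{P:Host Kra characteristic 2} is legitimately applicable, and (c) the residue-class splitting needed for flipping interacts correctly with the $\Delta$-operators — the constants $\gamma,\lambda$ must be chosen uniformly in $(\uh,\uh')$. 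The degree and length parameters strictly decrease in the right partial order (type goes down in the ping step, the non-dual length effectively drops by one in the pong step), so the ambient induction in Proposition~\ref{P:Host Kra characteristic 2} is well-founded; isolating the argument to that induction rather than proving the full theorem here is what keeps this proposition self-contained.
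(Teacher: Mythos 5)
Your overall ping-pong strategy matches the paper, and your attention to the type ordering, descendancy, and residue-class splitting is on track. However, two of the concrete steps do not work as stated, and these are exactly the subtle points the paper's proof is designed to handle.

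\smallskip
\textbf{Gap 1: the invariance of $u_{\uh,\uh'}$ is not tracked.}
You apply Lemma~\ref{P:dual} with $\CA=\CX$ and then Proposition~\ref{dual-difference interchange}(i) with $\bc=\bzero$. With these choices the functions $u_{\uh,\uh'}$ you obtain are invariant \emph{only} under $T^{\b_{s+1}}$, not under the composition that records the good invariance of $f_m$. But Proposition~\ref{propagation of invariance} requires the replacement function at index $m$ to be invariant \emph{simultaneously} under $S_1=\bigl(T_{\eta_m}^{a_{\eta_m}}T_m^{-a_m}\bigr)^{\gamma_1}$ (the ``old'' invariance inherited from $f_m$) and $S_2=\bigl(T_{\eta_i}^{a_{\eta_i}}T_{\eta_m}^{-a_{\eta_m}}\bigr)^{\gamma_2}$ (the ``new'' one from $T^{\b_{s+1}}$); only their combination yields the composed invariance under $\bigl(T_{\eta_i}^{a_{\eta_i}}T_m^{-a_m}\bigr)^{\gamma_1\gamma_2}$ that gives the good invariance property along $\eta'=\tau_{mi}\eta$. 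Your sentence ``$u_{\uh,\uh'}$ inherits the invariance of $f_m$'' is precisely what fails with $\CA=\CX$ and $\bc=\bzero$: the weak limit $\tilde f_m$ need not be $T^\bc$-invariant, and $\bc=\bzero$ in the interchange gives no extra invariance. The paper fixes this by taking $\bc:=\gamma(a_{\eta_m}\be_{\eta_m}-a_m\be_m)$, applying Lemma~\ref{P:dual} with $\CA=\CI(T^\bc)$ so that the replacement is $\E(\tilde f_m|\CI(T^\bc))$, and then applying Proposition~\ref{dual-difference interchange}(i) with that $\bc$; the resulting $u_{\uh,\uh'}$ are then invariant under both $T^{\b_{s+1}}$ and $T^\bc$, and Proposition~\ref{propagation of invariance} applies. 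This is automatic when $\eta_m=m$, but the proposition is stated for arbitrary proper descendants, where $\eta_m\neq m$ is the typical situation.

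\smallskip
\textbf{Gap 2: the pong step misuses the basic-type argument.}
After replacing $f_i$ by dual functions, you apply Proposition~\ref{strong PET bound} directly to the tuple $(T_{\eta_j}^{\rho_j(n)})_{j\in[\ell]\setminus\{i\}}$ and claim that, ``exactly as in the basic-type argument of Proposition~\ref{control of basic types},'' the good ergodicity property upgrades the resulting box seminorm to $\nnorm{f_m}_{s'',T_{\eta_m}}$. This does not work: the key step in Proposition~\ref{control of basic types} relies on all remaining transformations having indices in a single cell $\FI_1$, which is what forces every PET vector $b_\ell\be_{\eta_\ell}-b_j\be_{\eta_j}$ to be within one $\FI_t$ so that the good ergodicity property applies. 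After removing only the index $i$ from a non-basic tuple, the remaining transformations can still span several $\FI_t$'s, and PET produces cross-cell vectors for which the good ergodicity property says nothing. The paper avoids this by invoking Proposition~\ref{P:flipping} to replace each $T_{\eta_j}$ by $T_j$ (legitimate because the functions have the good invariance property), reducing to a length-$(\ell-1)$ tuple with identity indexing and the good ergodicity property, and then inductively applying Proposition~\ref{P:Host Kra characteristic quantitative} for length $\ell-1$. This is a genuinely different (and necessary) move.

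\smallskip
In short, your architecture is right, but the two concrete technical devices you need — Lemma~\ref{P:dual} with the correct conditioning sub-$\sigma$-algebra $\CI(T^\bc)$ in the ping step, and the flipping + length reduction in the pong step — are exactly what you replaced with simpler shortcuts that break down for general proper descendants.
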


We explain now the induction scheme whereby we prove Propositions \ref{P:Host Kra characteristic 2}-\ref{P:smoothing}. Roughly speaking, the proofs proceed by the induction on the length $\ell$ of an average and - for fixed $\ell$ - by induction on type, where the base case are averages of basic types. More precisely, the induction scheme goes as follows:
\begin{enumerate}
    \item For tuples $(T_{\eta_j}^{\rho_j(n)})_{j\in[\ell]}$ of length $\ell$ of type $w$, Proposition \ref{P:Host Kra characteristic quantitative} follows from Proposition \ref{P:Host Kra characteristic 2} as proved before.
    \item Tuples $(T_{\eta_j}^{\rho_j(n)})_{j\in[\ell]}$ of length $\ell = 1$ have necessarily basic type, and Propositions \ref{P:Host Kra characteristic 2} and \ref{control of basic types} %\ref{P:iterated smoothing}, and \ref{P:smoothing}
     follow easily from Proposition \ref{strong PET bound}.
    \item For tuples $(T_{\eta_j}^{\rho_j(n)})_{j\in[\ell]}$ of length $\ell>1$ and basic type, Proposition \ref{P:Host Kra characteristic 2} is a  consequence of Proposition \ref{control of basic types}.
    %Propositions \ref{P:Host Kra characteristic 2} and \ref{P:iterated smoothing} are consequences of Proposition \ref{control of basic types}, and a straightforward adaptation of that argument yields Proposition \ref{P:smoothing}.
    \item For tuples $(T_{\eta_j}^{\rho_j(n)})_{j\in[\ell]}$ of length $\ell>1$ and non-basic type $w$, we prove Proposition \ref{P:smoothing} only under the assumption of controllability. This proof goes by inductively invoking Proposition \ref{P:Host Kra characteristic quantitative} in two cases: for tuples of length $\ell$ and type $w'<w$, and for tuples of length $\ell-1$. An iterative application of Proposition \ref{P:smoothing} then yields Proposition \ref{P:iterated smoothing} for tuples of length $\ell$ and type $w$.
    \item  For controllable tuples $(T_{\eta_j}^{\rho_j(n)})_{j\in[\ell]}$ of length $\ell>1$ and non-basic type $w$, we prove Proposition \ref{P:Host Kra characteristic 2} by invoking Proposition \ref{P:iterated smoothing} for tuples of length $\ell$ and type $w$ followed by an application of Proposition \ref{P:Host Kra characteristic 2} for tuples of length $\ell-1$.
    \item Lastly, for uncontrollable tuples $(T_{\eta_j}^{\rho_j(n)})_{j\in[\ell]}$ of length $\ell>1$ and non-basic type $w$, we prove Proposition \ref{P:Host Kra characteristic 2} by inductively invoking Proposition \ref{P:Host Kra characteristic 2} for tuples of length $\ell$ and type $w'<w$.
\end{enumerate}

The way in which step (vi) is carried out has been illustrated in Example \ref{Ex: uncontrollable}, in which seminorm control for the uncontrollable average \eqref{E: uncontrollable 2} is deduced from seminorm control for the lower-type controllable average \eqref{E: uncontrollable flipped}. The example below summarises how steps (iv)-(v) proceed for a controllable average.

\begin{example}[Inductive steps for a controllable average]
Consider the tuple
\begin{align}\label{1232 ind}
    (T_1^{n^2}, T_2^{n^2}, T_3^{n^2+n}, T_2^{n^2 + n})
\end{align}
of type $(3, 1)$,
which is a descendant of the tuple
\begin{align}\label{1234 ind}
    (T_1^{n^2}, T_2^{n^2}, T_3^{n^2+n}, T_4^{n^2 + n})
\end{align}
from Example \ref{Ex: n^2, n^2, n^2+n, n^2+n} of type $(2,2)$.  While proving Proposition \ref{P:smoothing} for \eqref{1232 ind}, we invoke in the \textit{ping} step Proposition \ref{P:Host Kra characteristic quantitative} for tuples
\begin{align*}
    (T_1^{n^2}, T_2^{n^2}, T_1^{n^2+n}, T_2^{n^2 + n})\quad \textrm{and}\quad     (T_1^{n^2}, T_2^{n^2}, T_2^{n^2+n}, T_2^{n^2 + n}).
\end{align*}
They are proper descendants of the original tuple \eqref{1234 ind}, have basic type $(4, 0)$, and Proposition \ref{P:Host Kra characteristic 2} follows for them from Proposition \ref{control of basic types}. In the \textit{pong} step of the proof of Proposition \ref{P:smoothing} for \eqref{1232 ind}, we inductively invoke Proposition \ref{P:Host Kra characteristic quantitative} for the following tuples of length 3, obtained by replacing the first and second term respectively by dual functions:
\begin{align*}
    (*, T_2^{n^2}, T_3^{n^2+n}, T_2^{n^2 + n})\quad \textrm{and}\quad     (T_1^{n^2}, *, T_3^{n^2+n}, T_2^{n^2 + n}).
\end{align*}
Finally, once we prove the $T_3$-seminorm control of the third term in \eqref{1232 ind} using Proposition \ref{P:iterated smoothing}, an iterated version of Proposition \ref{P:smoothing}, we derive seminorm control of other terms in \eqref{1232 ind} as follows. Replacing the third term in \eqref{1232 ind} by a dual function using the newly established $T_3$-control, Proposition \ref{dual decomposition}, and the pigeonhole principle, we get a tuple
\begin{align*}
    (T_1^{n^2}, T_2^{n^2}, *, T_2^{n^2 + n}),
\end{align*}
and then we apply Proposition \ref{P:flipping} to flip the $T_2$ in the last term into $T_4$, obtaining the tuple
\begin{align*}
    (T_1^{n^2}, T_2^{n^2}, *, T_4^{3n^2 + 3n}).
\end{align*}
This tuple has length 3, and it is good for seminorm control by Proposition \ref{P:Host Kra characteristic 2} applied inductively to tuples of length 3. Going back, this gives us seminorm control of the other terms in \eqref{1232 ind}.
\end{example}

\subsection{Proof of Proposition \ref{P:smoothing}}
%When the type $w$ is basic, then Proposition \ref{P:smoothing} follows from the same argument as Proposition \ref{control of basic types}. We assume therefore that the type $w$ is not basic.

We prove Proposition \ref{P:smoothing} for the tuple $\brac{T_{\eta_{j}}^{\rho_{j}(n)}}_{j\in[\ell]}$ of non-basic type $w$ with the last nonzero index $t$, which is
a proper descendant of $\brac{T_j^{p_j(n)}}_{j\in[\ell]}$,  by assuming that Proposition \ref{P:Host Kra characteristic 2} holds for tuples of length $\ell-1$ as well as  length $\ell$ and type $w'<w$. For simplicity of notation, we assume that $m = \ell$ satisfies the controllability condition.

 By Proposition \ref{strong PET bound}, the vector $\b_{s+1}$ is nonzero and takes the form $\b_{s+1} = b_\ell \be_{\eta_\ell} - b_i \be_{\eta_i}$ for some $i\in\{0, \ldots, \ell-1\}$, where $b_\ell, b_i$ are the coefficients of $p_\ell, p_i$ of degree $d_{\ell i} := \deg(p_\ell \be_{\eta_\ell} - p_i\be_{\eta_i})$. If $i = 0$, then $b_\ell\neq 0$ since $\b_{s+1}$ is nonzero. If $\eta_i = \eta_\ell$, then the controllability condition implies that $\rho_i, \rho_\ell$ are independent, and so $T^{\b_{s+1}}$ is a nonzero iterate of $T_{\eta_\ell}$. In both these cases, the result follows from the bound
\begin{align*}
     \nnorm{f_\ell}_{{\b_1}, \ldots, {\b_{s}}, c \be_{\eta_\ell}}\ll_{|c|} \nnorm{f_\ell}_{{\b_1}, \ldots, {\b_{s}}, \be_{\eta_\ell}}
 \end{align*}
for any $c\neq 0$, which is a consequence of Lemma \ref{L:seminorm of power}.

For the case $\eta_i \neq \eta_\ell$ and $\eta_i\in \FI_t$, recall first that $\eta_\ell\in\FI_t$ by assumption and the good ergodicity property of $p_1, \ldots, p_\ell$ implies that $\rho_1, \ldots, \rho_\ell$ have the good ergodicity property along $\eta$. Hence, we have $b_\ell = \beta_\ell \gcd(b_\ell, b_i)$, $b_i = \beta_i \gcd(b_\ell, b_i)$ for coprime integers $\beta_\ell, \beta_i\in\Z$ such that $\CI(T^{\beta_\ell \be_{\eta_\ell} - \beta_i \be_{\eta_i}})\subseteq \CI(T_\ell)$. By Lemmas \ref{L:seminorm of power} and \ref{bounding seminorms}, we have
\begin{align*}
    \nnorm{f_\ell}_{{\b_1}, \ldots, {\b_{s+1}}}\ll \nnorm{f_\ell}_{\b_1, \ldots, \b_s, \beta_\ell \be_{\eta_\ell} - \beta_i \be_{\eta_i}}\leq \nnorm{f_\ell}_{{\b_1}, \ldots, {\b_{s}}, \be_{\eta_\ell}}.
\end{align*}

The last remaining case to consider, and the most difficult one, is when $\eta_i\in\FI_{t'}$ with $t'\neq t$ and $b_\ell, b_i\neq 0$. The proof of Proposition \ref{P:smoothing} in this case follows the same two-step strategy that was explained in Example~\ref{Ex: n^2, n^2, n^2+n}, but we also have to take into account additional complications explained in Examples
\ref{Ex: n^2, n^2, n^2+n, n^2+n} and \ref{Ex: 6 tuple}. We first obtain the control of \eqref{general average} by $\nnorm{f_i}_{{\b_1}, \ldots, {\b_{s}}, \be_{\eta_i}^{\times s_1}}$ for some $s_1\in\N$. This is accomplished by using the control by $\nnorm{f_\ell}_{{\b_1}, \ldots, {\b_{s+1}}}$, given by assumption, for an appropriately defined function $\tilde{f_\ell}$ in place of $f_\ell$. Subsequently, we repeat the procedure by applying the newly established control by $\nnorm{f_i}_{{\b_1}, \ldots, {\b_{s}}, \be_{\eta_i}^{\times s_1}}$ for a function $\tilde{f}_i$ in place of $f_i$. This gives us the claimed result.

%For each $j\in[\ell]$, let $a_j$ and $b_j$ be the leading coefficients of $p_j$ and $\rho_j$ respectively and $d_j = \deg p_j = \deg \rho_j$. Since $\rho_1, \ldots, \rho_\ell$ are descendants of $p_1, \ldots, p_\ell$, it follows from Proposition \ref{P: type reduction}(iii) that $b_j = a_{\eta_j} C^{d_j}$ for some nonzero integer $C$ independent of $j$.

\smallskip
\textbf{Step 1 (ping): Obtaining control by a seminorm of $f_i$.}
\smallskip

Suppose that
\begin{align}\label{general average is positive 2}
    \lim_{N\to\infty}\norm{\E_{n\in [N]} \,\prod_{j\in[\ell]}T_{{\eta_j}}^{\rho_j(n)}f_j \cdot \prod_{j\in[L]}\CD_{j}(q_j(n))}_{L^2(\mu)}>0.
\end{align}
The good invariance property  of $f_1, \ldots, f_\ell$ implies that $f_\ell$ is invariant under $\brac{T_{\eta_\ell}^{a_{\eta_\ell}}T_\ell^{-a_\ell}}^\gamma = T^{\bc}$ for some nonzero $\gamma\in\Z$ and $\bc := \gamma(a_{\eta_\ell}\be_{\eta_\ell}-a_\ell\be_\ell)$, where $a_{\eta_\ell}, a_\ell$ are the leading coefficients of $p_{\eta_\ell}$ and $p_\ell$.
Combining this with Lemma \ref{P:dual}, we deduce that
\begin{align*}
	\lim_{N\to\infty}\norm{\E_{n\in [N]} \,\prod_{j\in[\ell-1]}T_{{\eta_j}}^{\rho_j(n)}f_j \cdot T_{\eta_\ell}^{\rho_\ell(n)}\E(\tilde{f}_\ell|\CI(T^\bc))\cdot \prod_{j\in[L]}\CD_{j}(q_j(n))}_{L^2(\mu)}>0
\end{align*}
for some function
	\begin{equation*}
	\tilde{f}_\ell:=\lim_{k\to\infty} \E_{n\in [N_k]}
	 \, T_{\eta_\ell}^{-\rho_\ell(n)}g_k\cdot \prod_{\substack{j\in [\ell-1]}}T_{\eta_\ell}^{-\rho_\ell(n)} T_{{\eta_j}}^{\rho_j(n)}\overline{f}_j\cdot \prod_{j\in[L]}T_{\eta_\ell}^{-\rho_\ell(n)}\CD_{j}(q_j(n)),
	\end{equation*}
where the limit is a weak limit. Then our assumption gives
$$
\nnorm{\E(\tilde{f}_\ell|\CI(T^\bc))}_{{\b_1}, \ldots, {\b_{s+1}}}>0.
$$
By Proposition \ref{dual-difference interchange}, we get
\begin{multline*}
\liminf_{H\to\infty}\E_{\uh,\uh'\in [H]^{s}}\\
\lim_{N\to\infty}\norm{\E_{n\in[N]} \, \prod_{j\in[\ell-1]} T_{{\eta_j}}^{\rho_j(n)} (\Delta_{{\b_1}, \ldots, {\b_s}; \uh-\uh'} f_j)\cdot T_{\eta_\ell}^{\rho_\ell(n)}u_{\uh,\uh'}\cdot \prod_{j\in[L]} \CD'_{j, \uh, \uh'}(q_j(n))}_{L^2(\mu)}>0,
\end{multline*}
where $u_{\uh,\uh'}$ are 1-bounded and invariant under both $T^{\b_{s+1}}$ and $T^\bc$, and
\begin{align*}
    \CD'_{j, \uh, \uh'}(n) := \Delta_{{\b_1}, \ldots, {\b_s}; \uh-\uh'}\CD_{j}(n)
\end{align*}
is a product of $2^s$ elements of $\FD_d$.
As a consequence of the $T^{\b_{s+1}}$-invariance of $u_{\uh,\uh'}$, we have
\begin{equation}\label{invariance prop general}
T_{\eta_\ell}^{b_\ell}u_{\uh,\uh'}=T_{\eta_i}^{b_i}u_{\uh,\uh'},
\end{equation}
where we use the identity $T_{\eta_\ell}^{b_\ell}=T_{\eta_i}^{b_i}T^{\b_{s+1}}$.  Let $\lambda\in\N$ be the smallest natural number such that $b_\ell$ divides the coefficients of $\lambda \rho_\ell$.
By the triangle inequality,
\begin{multline*}
    \liminf_{H\to\infty}\E_{\uh,\uh'\in [H]^{s}}\E_{r\in \{0, \ldots, \lambda-1\}}\\
    \lim_{N\to\infty}\norm{ \E_{n\in[N]} \,\prod_{j\in[\ell-1]} T_{{\eta_j}}^{\rho_j(\lambda n + r)} (\Delta_{{\b_1}, \ldots, {\b_s}; \uh-\uh'} f_j)\cdot T_{\eta_\ell}^{\rho_\ell(\lambda n + r)}u_{\uh,\uh'} \cdot \prod_{j\in[L]}\CD'_{j, \uh, \uh'}(q_j(\lambda n + r))}_{L^2(\mu)}>0.
\end{multline*}
Using  \eqref{invariance prop general} and the pigeonhole principle, and setting $\eta'=\tau_{\ell i}\eta$, we get that for some $r_0\in \{0, \ldots, \lambda-1\}$ we have
\begin{multline}\label{E:positive13 2}
	\limsup_{H\to\infty}\E_{\uh,\uh'\in [H]^{s}}
	\lim_{N\to\infty}\norm{\E_{n\in[N]} \, \prod_{j\in[\ell]} T_{{\eta'_j}}^{\rho'_j(n)} f_{j, \uh, \uh'}\cdot \prod_{j\in[L]}\CD'_{j, \uh, \uh'}(q'_j(n))}_{L^2(\mu)}>0,
\end{multline}
where
\begin{align*}
    \rho'_j(n) &:= \begin{cases} \rho_j(\lambda n+r_0) - \rho_j(r_0),\; &j\in[\ell-1]\\
\frac{b_i}{b_\ell}(\rho_\ell(\lambda n+r_0)-\rho_\ell(r_0)), &j = \ell
\end{cases},\\
f_{j, \uh, \uh'} &:= \begin{cases} \Delta_{{\b_1}, \ldots, {\b_s}; \uh-\uh'} T_{\eta_j}^{\rho_j(r_0)} f_j,\; &j\in[\ell-1]\\
T_{\eta_\ell}^{\rho_\ell(r_0)} u_{\uh,\uh'},\; &j=\ell
    \end{cases},
\end{align*}
and $q'_j(n):=q_j(\lambda n+r_0)$. We note that the polynomials $\rho'_1, \ldots, \rho'_\ell$ are as in Proposition \ref{P: type reduction}.
%the assumption on $\lambda$ implies that the polynomials $\rho'_1, \ldots, \rho'_\ell$ have integer coefficients.

It follows from \eqref{E:positive13 2} that there exists a set $B\subset\N^{2s}$ of positive upper density and $\veps>0$ such that
\begin{align}\label{h-averages 1}
	\lim_{N\to\infty}\norm{\E_{n\in[N]} \, \prod_{j\in[\ell]} T_{{\eta'_j}}^{\rho'_j(n)} f_{j, \uh, \uh'}\cdot \prod_{j\in[L]}\CD'_{j, \uh, \uh'}(q'_j(n))}_{L^2(\mu)}\geq \veps
\end{align}
for every $(\uh,\uh')\in B$.

By assumption, $t$ is the last nonzero index of $w$, implying that $t'<t$. Hence, the new tuple $\brac{T_{\eta'_j}^{\rho'_j(n)}}_{j\in[\ell]}$ has type $w'=\sigma_{tt'}w<w$. Furthermore, by Proposition \ref{propagation of invariance}, the functions $f_{j, \uh, \uh'}$ have the good invariance property along $\eta'$ with respect to $p_1, \ldots, p_\ell$. We therefore inductively apply Proposition \ref{P:Host Kra characteristic quantitative} for tuples of length $\ell$ and type $w'$ to each average \eqref{h-averages 1}. This allows us to conclude that there exist $s_1\in\N$ (independent of the system or the functions) and $\delta>0$ such that
$$\nnorm{\Delta_{{\b_1}, \ldots, {\b_s}; \uh-\uh'} f_i}_{s_1, T_{\eta_i}} \geq \delta$$
for $(\uh, \uh')\in B$.
Hence,
\begin{align}\label{positivity of differences 3}
	\limsup_{H\to\infty}\E_{\uh,\uh'\in [H]^{s}} \nnorm{\Delta_{{\b_1}, \ldots, {\b_s}; \uh-\uh'} f_i}_{s_1, T_{\eta_i}}>0.
\end{align}
Together with Lemma \ref{difference sequences}, the inductive formula for seminorms \eqref{inductive formula}  and H\"older inequality, the inequality \eqref{positivity of differences 3}  implies that
$$
\nnorm{f_i}_{{\b_1}, \ldots, {\b_s}, \be_{\eta_i}^{\times s_1}}>0,
$$
and so the seminorm $\nnorm{f_i}_{{\b_1}, \ldots, {\b_s}, \be_{\eta_i}^{\times s_1}}$ controls the average \eqref{general average}.

\textbf{Step 2 (pong): Obtaining control by a seminorm of $f_\ell$.}
~\

To get the claim that $\nnorm{f_\ell}_{{\b_1}, \ldots, {\b_s}, \be_{\eta_\ell}^{\times s'}}$ controls the average for some $s'\in\N$, we repeat the procedure once more with $f_i$ in place of $f_\ell$. From \eqref{general average is positive 2} it follows that
\begin{align*}
\lim_{N\to\infty}\norm{\E_{n\in [N]} \,\prod_{\substack{j\in[\ell],\\ j\neq i}}T_{{\eta_j}}^{\rho_j(n)}f_j \cdot T_{\eta_i}^{\rho_i(n)}\tilde{f_i} \cdot \prod_{j\in[L]}\CD_{j}(q_j(n))}_{L^2(\mu)}>0
\end{align*}
for some function
	\begin{equation*}
	\tilde{f}_i:=\lim_{k\to\infty} \E_{n\in [N_k]}
	 \, T_{\eta_i}^{-\rho_i(n)}g_k\cdot \prod_{\substack{j\in [\ell],\\ j\neq i}}T_{\eta_i}^{-\rho_i(n)} T_{{\eta_j}}^{\rho_j(n)}\overline{f}_j\cdot \prod_{j\in[L]}T_{\eta_i}^{-\rho_i(n)}\CD_{j}(q_j(n)),
	\end{equation*}
where the limit is a weak limit. Then the previous result gives
$$
\nnorm{\tilde{f}_i}_{{\b_1}, \ldots, {\b_s}, \be_{\eta_i}^{\times s_1}}>0.
$$
By Proposition \ref{dual-difference interchange}, we get
\begin{align*}
    \liminf_{H\to\infty}\E_{\uh,\uh'\in [H]^{s}}\lim_{N\to\infty}\norm{\E_{n\in[N]} \, \prod_{\substack{j\in[\ell],\\ j\neq i}} T_{{\eta_j}}^{\rho_j(n)} (\Delta_{{\b_1}, \ldots, {\b_{s}}; \uh-\uh'} f_j)\cdot \prod_{j\in[L+1]}\CD_{j,\uh,\uh'}(q_j(n))}_{L^2(\mu)}>0
\end{align*}
where
\begin{align*}
    \CD_{j,\uh,\uh'}(n):=\begin{cases} \Delta_{{\b_1}, \ldots, {\b_s}; \uh-\uh'}\CD_{j}(n),\; &j \in[L]\\
    T_{\eta_i}^n T^{-(\b_1 h_1'+\cdots+\b_s h'_s)}\prod\limits_{\ueps\in\{0,1\}^s}\CC^{|\ueps|}\CD_{s_1, T_{\eta_i}}(\Delta_{{\b_1}, \ldots, {\b_{s}}; \uh^{\ueps}}\tilde{f}_i),\; &j=L+1.
    \end{cases}
\end{align*}
Thus, the sequence of functions $\CD_{j,\uh,\uh'}$ is a product of $2^s$ elements of $\FD_d$ if $j\in[L]$, and it is a product of $2^{s}$ elements of $\FD_{s_1}$ for $j=L+1$. Consequently, there exists $\veps>0$ and a set $B'\subset\N^{2s}$ of positive lower density such that for every $(\uh, \uh')\in B'$, we have
\begin{align*}%\label{average2step 1}
    \lim_{N\to\infty}\norm{\E_{n\in[N]} \, \prod_{\substack{j\in[\ell],\\ j\neq i}} T_{{\eta_j}}^{\rho_j(n)} (\Delta_{{\b_1}, \ldots, {\b_{s}}; \uh-\uh'} f_j)\cdot \prod_{j\in[L+1]}\CD_{j,\uh,\uh'}(q_j(n))}_{L^2(\mu)} > \veps.
\end{align*}
Proposition \ref{propagation of invariance} implies that the functions $(g_{1,\uh, \uh'}, \ldots, g_{\ell, \uh, \uh'})_{(\uh, \uh')\in \N^{2s}}$ given by
\begin{align*}
    g_{j, \uh, \uh'} := \begin{cases}\Delta_{{\b_1}, \ldots, {\b_{s}}; \uh-\uh'} f_j,\; &j \neq i\\
    1,\; &j = i
    \end{cases}
\end{align*}
have the good invariance property along $\eta$ with respect to $p_1, \ldots, p_\ell$. Proposition \ref{P:flipping} then gives polynomials $\rho_1', \ldots, \rho_\ell', q'_1, \ldots, q'_L\in\Z[n]$ and 1-bounded functions $g'_{1,\uh, \uh'}, \ldots, g'_{\ell,\uh, \uh'}$ with $g'_{i, \uh, \uh'} := 1$ such that
\begin{align}\label{average2step 2}
    \lim_{N\to\infty}\norm{\E_{n\in[N]} \, \prod_{\substack{j\in[\ell],\\ j\neq i}} T_j^{\rho'_j(n)} g'_{j,\uh, \uh'}\cdot \prod_{j\in[L+1]}\CD_{j,\uh,\uh'}(q'_j(n))}_{L^2(\mu)} > \veps
\end{align}
and
\begin{align}\label{seminorm bound in flipping}
\nnorm{g'_{j, \uh, \uh'}}_{s', T_j}\ll \nnorm{\Delta_{{\b_1}, \ldots, {\b_{s}}; \uh-\uh'} f_j}_{s', T_{\eta_j}}
\end{align}
for every $j\in[\ell]\setminus\{i\}$, $(\uh,\uh')\in B'$ and $s'\geq 2$. We note from Proposition \ref{P:flipping} that the absolute constant in \eqref{seminorm bound in flipping} does not depend on $(\uh, \uh')$. Observing that the averages \eqref{average2step 2} have length $\ell-1$ and $\rho'_1, \ldots, \rho'_{i-1}, \rho'_{i+1}, \ldots, \rho'_\ell$ have the good ergodicity property for the system $(X, \CX, \mu, T_1, \ldots, T_{i-1}, T_{i+1}, \ldots, T_\ell)$ (another consequence of Propositions \ref{P:flipping} and \ref{P: properties of descendants}), we conclude from \eqref{seminorm bound in flipping} and Proposition \ref{P:Host Kra characteristic quantitative} that there exists $\delta>0$ and $s'\in\N$ satisfying
\begin{align*}
    \nnorm{\Delta_{{\b_1}, \ldots, {\b_{s}}; \uh-\uh'} f_\ell}_{s', T_{\eta_\ell}}>\delta
\end{align*}
for every $(\uh, \uh')\in B'$. Consequently, we deduce that
\begin{align*}%\label{smoothing:inductive inequality}
    \liminf_{H\to\infty}\E_{\uh, \uh'\in [H]^{s}}\nnorm{\Delta_{{\b_1}, \ldots, {\b_{s}}; \uh-\uh'} f_\ell}_{s', T_{\eta_\ell}}>0.
\end{align*}
It then follows from Lemma \ref{difference sequences} and the H\"older inequality that $\nnorm{f_\ell}_{{\b_1}, \ldots, {\b_{s}}, \be_{\eta_\ell}^{\times s'}}>0$, as claimed.

\subsection{Proof of Proposition \ref{P:Host Kra characteristic 2}}
We induct on the length $\ell$ of the average, and for each fixed $\ell$ we further induct on type.
In the base case $\ell=1$, Proposition \ref{P:Host Kra characteristic 2} follows directly from Proposition \ref{strong PET bound}. We assume therefore that the average has length $\ell>1$ and type $w$ and the statement holds for averages of length $\ell-1$ as well as length $\ell$ and type $w'<w$. If the type $w$ is basic, then Proposition \ref{P:Host Kra characteristic 2} follows from Proposition \ref{control of basic types}, so we assume that $w$ is not basic. We argue differently depending on whether the average is controllable or not.

\smallskip
\textbf{Case 1: controllable averages.}
\smallskip

If the average is controllable, then Proposition \ref{P:iterated smoothing} implies that there exist $m\in[\ell]$ and $s\in\N$ such that \eqref{general average vanishes 2} holds whenever $\nnorm{f_m}_{s, T_{\eta_m}}=0$. Suppose now that
\begin{align*}%\label{general average does not vanish}
	\lim_{N\to\infty}\norm{\E_{n\in [N]} \,\prod_{j\in[\ell]}T_{{\eta_j}}^{\rho_j(n)}f_j \prod_{j\in[L]}\CD_{j}(q_j(n))}_{L^2(\mu)} > 0.
\end{align*}
Applying the fact that $\nnorm{f_m}_{s, T_{\eta_m}}$ controls this average, Proposition \ref{dual decomposition} and the pigeonhole principle, we replace  $f_m$ by a dual function of level $s$, so that we have
\begin{align*}%\label{general average does not vanish 2}
	\lim_{N\to\infty}\norm{\E_{n\in [N]} \,\prod_{\substack{j\in[\ell],\\ j\neq m}}T_{{\eta_j}}^{\rho_j(n)}f_j \cdot \CD(\rho_j(n)) \cdot\prod_{j\in[L]}\CD_{j}(q_j(n))}_{L^2(\mu)} > 0
\end{align*}
for some $\CD\in\FD_s$. By Proposition \ref{P:flipping}, there exist 1-bounded functions $(f_j')_{j\in[\ell]}$ with $f'_m = 1$ and polynomials $\rho_1', \ldots, \rho_{\ell}', q_1', \ldots, q_L'\in\Z[n]$ such that
\begin{align*}
	\lim_{N\to\infty}\norm{\E_{n\in [N]} \,\prod_{\substack{j\in[\ell],\\ j\neq m}}T_j^{\rho_j'(n)}f_j' \cdot \CD(\rho_j(n)) \cdot\prod_{j\in[L]}\CD_{j}(q_j'(n))}_{L^2(\mu)} > 0,
\end{align*}
 and $\nnorm{f'_j}_{s, T_j} \ll \nnorm{f_j}_{s, T_{\eta_j}}$ provided $s\geq 2$ (which we can assume without loss of generality). Moreover, the fact that $\rho'_1, \ldots, \rho'_\ell$ are descendants of $p_1, \ldots, p_\ell$ and Proposition \ref{P: properties of descendants} imply that $\brac{T_j^{\rho_j'(n)}}_{j\in[\ell], j\neq m}$ has the good ergodicity property. By the case $\ell-1$ of Proposition \ref{P:Host Kra characteristic 2}, we deduce that $\nnorm{f'_j}_{s, T_j}>0$ for $j\neq m$, and hence $\nnorm{f_j}_{s, T_{\eta_j}}>0$ for $j\neq m$.

\smallskip
\textbf{Case 2: uncontrollable averages.}
\smallskip

If the average is uncontrollable, then we apply Proposition \ref{P: induction scheme} to deduce the existence of polynomials $q_1', \ldots, q_L'\in\Z[n]$, a tuple $\brac{T_{\eta_j'}^{\rho'_j(n)}}_{j\in[\ell]}$ of type $w'<w$ that is a proper descendant of $\brac{T_j^{p_j(n)}}_{j\in[\ell]}$ as well as functions $f_1', \ldots, f_\ell'\in L^\infty(\mu)$ satisfying
\begin{align*}
    &\lim_{N\to\infty}\norm{\E_{n\in [N]} \,\prod_{j\in[\ell]}T_{{\eta_j}}^{\rho_j(n)}f_j \prod_{j\in[L]}\CD_{j}(q_j(n))}_{L^2(\mu)}\leq \\
     &\lim_{N\to\infty}\norm{\E_{n\in [N]} \,\prod_{j\in[\ell]}T_{{\eta'_j}}^{\rho'_j(n)}f'_j \prod_{j\in[L]}\CD_{j}(q'_j(n))}_{L^2(\mu)}
\end{align*}
and $\nnorm{f'_j}_{s, T_{\eta'_j}}\ll \nnorm{f_j}_{s, T_{\eta_j}}$ for every $s\geq 2$ and $j\in[\ell]$. Moreover, the functions $f'_1, \ldots, f'_\ell$ have the good invariance property along $\eta'$ with respect to $p_1, \ldots, p_\ell$. By the induction hypothesis, there exists $s\in\N$ such that the second average above vanishes whenever $\nnorm{f'_j}_{s, T_{\eta'_j}}=0$, and so the first average also vanishes whenever $\nnorm{f_j}_{s, T_{\eta_j}}=0$. This establishes the seminorm control over the tuple $\brac{T_{\eta_j}^{\rho_j(n)}}_{j\in[\ell]}$.

\section{Proofs of joint ergodicity results}\label{S: other proofs}
In this section, we derive Theorem \ref{T: weak joint ergodicity} and Corollaries \ref{C: joint ergodicity} and \ref{C:DKS conjecture holds}. We start with two observations that connect the notions of joint ergodicity and weak joint ergodicity. Their proofs are straightforward, hence we skip them.
\begin{lemma}\label{L:ergodic sequence}
Let $(X, \CX, \mu, T)$ be a system. Suppose that there exists a sequence $a:\N\to\Z$ such that $(T^{a(n)})_{n\in\N}$ is ergodic for $\mu$. Then $T$ is ergodic.
\end{lemma}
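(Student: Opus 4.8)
\textbf{Proof plan for Lemma \ref{L:ergodic sequence}.}

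The statement to prove is elementary: if $(T^{a(n)})_{n\in\N}$ is ergodic for $\mu$ for some sequence $a\colon\N\to\Z$, then $T$ is ergodic. I would argue by contradiction, or equivalently by a direct computation with conditional expectations. Suppose $f\in L^2(\mu)$ is $T$-invariant, i.e. $Tf=f$. Then $T^{a(n)}f=f$ for every $n\in\N$, so
\begin{align*}
    \frac{1}{N}\sum_{n=1}^N T^{a(n)}f = f
\end{align*}
for every $N$, and hence the ergodicity of $(T^{a(n)})_{n\in\N}$ forces $f=\int f\,d\mu$ in $L^2(\mu)$, i.e. $f$ is constant. Since every $T$-invariant function is constant, $T$ is ergodic.

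The only mild subtlety is that ergodicity of $(T^{a(n)})_{n\in\N}$ is stated in the excerpt for $f\in L^\infty(\mu)$ rather than all of $L^2(\mu)$, so strictly I would take $f\in L^\infty(\mu)$ with $Tf=f$ (bounded $T$-invariant functions suffice to test ergodicity of $T$, since $\CI(T)$ is generated by its bounded elements, or one simply truncates). Applying the displayed identity and the definition of ergodicity of the sequence gives $\norm{f-\int f\,d\mu}_{L^2(\mu)}=0$, so $f$ is a.e. constant; this is exactly the assertion that $T$ is ergodic.

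There is no real obstacle here — the lemma is a one-line consequence of the definitions, included only to bridge between the sequence-ergodicity hypotheses appearing in Conjecture \ref{C:DKS conjecture} and the transformation-ergodicity hypotheses in Corollary \ref{C: joint ergodicity}. The authors themselves note that the proof is straightforward and omit it; I would either likewise omit it or include the two-sentence argument above.
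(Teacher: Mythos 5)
Your proof is correct and is precisely the elementary argument one would supply here; the paper explicitly omits the proof as "straightforward," so there is no proof to compare against, but yours is the obvious (and right) one. Your observation about passing from bounded invariant functions to all of $L^2(\mu)$ is a fair point of care and handled correctly.
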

\begin{lemma}\label{L:joint vs. weak joint}
Let $a_1, \ldots, a_\ell:\N\to\Z$ be sequences and $(X, \CX, \mu, T_1, \ldots, T_\ell)$ be a system. The sequences are jointly ergodic for the system if and only if they are weakly jointly ergodic and the transformations $T_1, \ldots, T_\ell$ are ergodic.
\end{lemma}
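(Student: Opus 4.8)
\textbf{Proof proposal for Lemma \ref{L:joint vs. weak joint}.}

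The plan is to prove the two implications separately, using the characterizations of the invariant factors $\CI(T_j)$ in terms of mean ergodic averages. First I would assume the sequences $a_1,\ldots,a_\ell$ are jointly ergodic for $(X,\CX,\mu,T_1,\ldots,T_\ell)$. Testing \eqref{E:convergence to product of integrals} with $f_2=\cdots=f_\ell=1$ and an arbitrary $f_1\in L^\infty(\mu)$, and using $T_j 1 = 1$, the average $\frac1N\sum_{n=1}^N T_1^{a_1(n)}f_1$ converges in $L^2(\mu)$ to $\int f_1\,d\mu$. By Lemma \ref{L:ergodic sequence} (applied to the system $(X,\CX,\mu,T_1)$ with the sequence $a_1$), this forces $T_1$ to be ergodic; the same argument with the $f_j=1$ in every slot except the $m$-th shows $T_m$ is ergodic for every $m\in[\ell]$. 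Since each $T_m$ is ergodic, $\E(f_m|\CI(T_m)) = \int f_m\,d\mu$ for every $f_m\in L^\infty(\mu)$, so the limit expression in \eqref{E:convergence to expectations} coincides with the one in \eqref{E:convergence to product of integrals}, and weak joint ergodicity follows immediately from joint ergodicity. Conversely, if the $a_j$ are weakly jointly ergodic and each $T_m$ is ergodic, then again $\E(f_m|\CI(T_m)) = \int f_m\,d\mu$ for all $m$, so the two limit identities coincide and weak joint ergodicity gives joint ergodicity directly.

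No step here is a serious obstacle: the only mildly nontrivial input is the direction ``joint ergodicity $\Rightarrow$ ergodicity of each $T_m$'', which is precisely what Lemma \ref{L:ergodic sequence} supplies, and the remaining manipulations are just the observation that ergodicity of $T_m$ collapses $\E(\cdot|\CI(T_m))$ to $\int\cdot\,d\mu$. I would write this out in a few lines, invoking Lemma \ref{L:ergodic sequence} for the forward direction and the elementary identity $\E(f|\CI(T))=\int f\,d\mu$ for ergodic $T$ in both directions.
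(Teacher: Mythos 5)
Your argument is correct and is exactly the ``straightforward'' proof the paper has in mind (the paper explicitly skips the proofs of Lemmas \ref{L:ergodic sequence} and \ref{L:joint vs. weak joint}, calling them straightforward). Specializing joint ergodicity to $f_j=1$ for $j\neq m$ yields ergodicity of the sequence $(T_m^{a_m(n)})_n$ for $\mu$, which by Lemma \ref{L:ergodic sequence} gives ergodicity of $T_m$, and once each $T_m$ is ergodic the identity $\E(f_m\mid\CI(T_m))=\int f_m\,d\mu$ makes the two convergence statements \eqref{E:convergence to product of integrals} and \eqref{E:convergence to expectations} identical, handling both directions.
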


We continue with the proof of Theorem \ref{T: weak joint ergodicity}.
%Before we do this, we recall several definitions from \cite{FrKu22}.

\begin{proof}[Proof of Theorem \ref{T: weak joint ergodicity}]
Suppose first that the polynomials $p_1, \ldots, p_\ell$ have the good ergodicity property for the system $(X, \CX, \mu, T_1, \ldots, T_\ell)$. By Theorem \ref{T:Host Kra characteristic}, this implies that $p_1, \ldots, p_\ell$ are good for the seminorm control for the system $(X, \CX, \mu, T_1, \ldots, T_\ell)$. This result, property (ii), and Theorem \ref{T: criteria for joint ergodicity} imply that $p_1, \ldots, p_\ell$ are weakly jointly ergodic for the system.

Conversely, suppose that the polynomials are weakly jointly ergodic for the system. The condition (ii) follows by taking $f_1, \ldots, f_\ell$ to be nonergodic eigenfunctions of respective transformations. To prove condition (i), suppose that $p_i/c_i = p_j/c_j$ for some $i\neq j$ and coprime integers $c_i, c_j$, and there exists a function $f$ invariant under $T_i^{c_i}T_j^{-c_j}$ that is not simultaneously invariant under $T_i$ and $T_j$. The invariance property of $f$ gives $T_i^{c_i} f = T_j^{c_j}{f}$, and the same holds for $\overline{f}$. The coprimeness of $c_i, c_j$ implies that the polynomial $p_i/c_i = p_j/c_j$ has integer coefficients, and so we have
\begin{align*}
    \E_{n\in[N]}T_i^{p_i(n)}f \cdot T_j^{p_j(n)}\overline{f} = \E_{n\in[N]} T_i^{p_i(n)}|f|^2 = \E_{n\in[N]} T_j^{p_j(n)}|f|^2,
\end{align*}
%where $q = p_i/c_i = p_j/c_j$.
which by the weak joint ergodicity of $p_1, \ldots, p_\ell$ converges to $\E(|f|^2|\CI(T_i)) = \E(|f|^2|\CI(T_j))$ in $L^2(\mu)$. On the other hand, the weak joint ergodicity of $p_1, \ldots, p_\ell$  implies that
\begin{align*}
    \lim_{N\to\infty}\E_{n\in[N]}T_i^{p_i(n)}f \cdot T_j^{p_j(n)}\overline{f} = \E(f|\CI(T_i))\cdot \overline{\E(f|\CI(T_j))}
\end{align*}
in $L^2(\mu)$. Hence, $\E(|f|^2|\CI(T_i)) = \E(f|\CI(T_i))\cdot \overline{\E(f|\CI(T_j))}$. The properties of the conditional expectation and the Cauchy-Schwarz inequality imply that
\begin{align*}
    \int \abs{\E(f|\CI(T_i))\cdot \overline{\E(f|\CI(T_j))}}\, d\mu &\leq \norm{\E(f|\CI(T_i))}_{L^2(\mu)} \cdot \norm{\E(f|\CI(T_j))}_{L^2(\mu)}\\
    &\leq \norm{f}_{L^2(\mu)}^2 = \int |f|^2\, d\mu = \int \E(|f|^2|\CI(T_i))\, d\mu.
\end{align*}
The two inequalities above become an equality precisely when $f = \E(f|\CI(T_i)) = \E(f|\CI(T_j))$ holds $\mu$-a.e., i.e. when $f$ is simultaneously invariant under $T_i$ and $T_j$, and so either this is the case, contradicting the assumptions on $f$, or $\E(|f|^2|\CI(T_i)) \neq \E(f|\CI(T_i))\cdot \overline{\E(f|\CI(T_j))}$, contradicting the weak joint ergodicity of $p_1, \ldots, p_\ell$.
\end{proof}

We now derive Corollary \ref{C: joint ergodicity} from Theorem \ref{T: weak joint ergodicity}.
\begin{proof}[Proof of Corollary \ref{C: joint ergodicity}]
By Lemma \ref{L:joint vs. weak joint}, the polynomials $p_1, \ldots, p_\ell$ are jointly ergodic for the system $(X, \CX, \mu, T_1, \ldots, T_\ell)$ if and only if they are weakly jointly ergodic for this system and the transformations $T_1, \ldots, T_\ell$ are ergodic. Theorem \ref{T: weak joint ergodicity} in turn implies that this is equivalent to the system having the good ergodicity property, the transformations $T_1, \ldots, T_\ell$ being ergodic, and the equation \eqref{E: good for eq nonerg} holding for all eigenfunctions. Since the transformations are ergodic, all the eigenfunctions $\chi_j$ of $T_j$ satisfy $T_j \chi_j = \lambda_j \chi_j$ for a constant $\lambda_j$, and so the condition \eqref{E: good for eq nonerg} reduces to \eqref{E: good for eq} upon taking $\lambda_j = e(\alpha_j)$ and realising that $\int \chi_j\, d\mu= 0$ unless $\alpha_j = 0$. Lastly, the good ergodicity property and the ergodicity of the transformations $T_1, \ldots, T_\ell$ jointly imply the very good ergodicity property.
\end{proof}

Finally, we prove Corollary \ref{C:DKS conjecture holds}.
\begin{proof}[Proof of Corollary \ref{C:DKS conjecture holds}]
The forward direction follows from \cite[Proposition 5.3]{DKS19}, and so it is enough to deduce the reverse direction. Our goal is to show that the conditions (i) and (ii) in the statement of Conjecture \ref{C:DKS conjecture} imply the conditions (i) and (ii) in the statement of Corollary \ref{C: joint ergodicity}. The condition (ii) in Conjecture \ref{C:DKS conjecture}, i.e. the ergodicity of $(T_1^{p_1(n)}, \ldots, T_\ell^{p_\ell(n)})_{n\in\N}$, implies the condition (ii) in Corollary \ref{C: joint ergodicity} by taking eigenfunctions. By Lemma \ref{L:ergodic sequence}, it also implies the ergodicity of $T_1, \ldots, T_\ell$ because each sequence $(T_j^{p_j(n)})_{n\in\N}$ is ergodic.

To establish the very good ergodicity property of $p_1, \ldots, p_\ell$, suppose that $p_i/c_i = p_j/c_j$ for some relatively prime $c_i, c_j\in\Z$, and $f$ is a nonconstant function invariant under $T_i^{c_i}T_j^{-c_j}$. Letting $q := p_i/c_i = p_j/c_j$ and noting that it has integer coefficients due to the coprimeness of $c_i, c_j$, we observe that
\begin{align*}
    \lim_{N\to\infty}\E_{n\in[N]}T_i^{p_i(n)}T_j^{-p_j(n)}f = \lim_{N\to\infty}\E_{n\in[N]}(T_i^{c_i}T_j^{-c_j})^{q(n)} f = f\neq \int f\, d\mu,
\end{align*}
contradicting the ergodicity of $(T_i^{p_i(n)} T_j^{-p_j(n)})_{n\in\N}$.
\end{proof}

\end{document}